\documentclass[11pt]{article}

\usepackage{silence}
\usepackage[left=1in,top=1in,right=1in,bottom=1in,head=.1in]{geometry}

\usepackage{mystyle}

\defcitealias{das2023understanding}{DS23}
\defcitealias{pareek2024understanding}{PDO24}
\defcitealias{ildiz2025high}{IGT+25}
\defcitealias{moniri2025on}{MH25}
\defcitealias{garg2025preventing}{GBS25}

\renewcommand{\bm}[1]{#1}

\renewcommand{\mathbf}[1]{#1}

\newcommand{\fresh}{{\mathrm{fr}}}
\newcommand{\freshmix}{{\mathrm{frmix}}}
\newcommand{\freshaffine}{{\mathrm{fraff}}}
\newcommand{\same}{{\mathrm{same}}}
\newcommand{\barR}{{\bar{R}}}
\newcommand{\barC}{{\bar{C}}}

\newcommand{\titletext}{Optimal Unconstrained Self-Distillation in Ridge Regression:\\Strict Improvements, Precise Asymptotics, and One-Shot Tuning}

\title{\titletext}

\author{
    Hien Dang\footremember{utsds}{Department of Statistics and Data Sciences, University of Texas, Austin, TX 78712, USA.} \\ {\footnotesize \texttt{\url{hiendang@utexas.edu}}}
    \and
    Pratik Patil\footrecall{utsds} \\ {\footnotesize \texttt{\url{pratikpatil@utexas.edu}}} 
    \and
    Alessandro Rinaldo\footrecall{utsds} \\ {\footnotesize \texttt{\url{alessandrorinaldo@utexas.edu}}} 
}

\date{\vspace{-15pt}}

\begin{document}

\maketitle

\begin{abstract}
Self-distillation (SD) is the process of retraining a student on a mixture of ground-truth labels and the teacher’s own predictions using the same architecture and training data.  Although SD has been empirically shown to often improve generalization, its formal guarantees remain limited. We study SD for ridge regression in unconstrained setting in which the mixing weight $\xi$ may be outside the unit interval. Conditioned on the training data and without any distributional assumptions, we prove that for any squared prediction risk (including out-of-distribution), the optimally mixed student strictly improves upon the ridge teacher for every regularization level $\lambda > 0$ at which the teacher ridge risk $R(\lambda)$ is nonstationary (i.e., $R'(\lambda) \neq 0$). We obtain a closed-form expression for the optimal mixing weight $\xi^\star(\lambda)$ for any value of $\lambda$ and show that it obeys the sign rule: $\operatorname{sign}(\xi^\star(\lambda))=-\operatorname{sign}(R'(\lambda))$. In particular, $\xi^\star(\lambda)$ can be negative, which is the case in over-regularized regimes. To quantify the risk improvement due to SD, we derive exact deterministic equivalents for the optimal SD risk in the proportional asymptotics regime (where the sample and feature sizes $n$ and  $p$ both diverge but their the aspect ratio $p/n$ converges) under general anisotropic covariance and deterministic signals. Our asymptotic analysis extends standard second-order ridge deterministic equivalents to their fourth-order analogs using block linearization, which may be of independent interest. From a practical standpoint, we propose a consistent one-shot tuning method to estimate $\xi^\star$ without grid search, sample splitting, or refitting. Experiments on real-world datasets and pretrained neural network features support our theory and the one-shot tuning method.
\end{abstract}

\section{Introduction}
\label{sec:introduction}

Knowledge distillation (KD), introduced by \citet{bucilua2006model, ba2014deep, hinton2015distilling}, is conventionally used for model compression, transferring knowledge from a large teacher to a smaller student.
Recently, this paradigm has been adapted to the setting where teacher and student share the same architecture and training data, a process known as \emph{self-distillation} (SD) \citep{furlanello2018born, zhang2021self}.
While it may seem counterintuitive that a model would improve by learning from its own predictions, extensive empirical evidence shows that SD can in fact boost generalization \citep{chen2017learning, chen2022knowledge, li2017learning, ahn2019variational, li2021align, gou21knowledge}.
Despite these successes, it remains unclear whether and when such improvements can be guaranteed.

Formally, let $f_{\te}$ be a teacher trained on $\{(x_i,y_i)\}_{i=1}^n$ using a loss function $\ell$.
Self-distillation trains a student $f_{\sd}$ on the \emph{same} data by minimizing a mixed objective that is an affine interpolation of the losses incurred with respect to the ground-truth labels $y_i$'s and the teacher’s predictions $f_{\te}(x_i)$'s. In detail, the SD procedure seeks to find the student model  $f_{\sd}$ minimizing
\begin{equation}
\label{eq:sd_mixed_loss}
\frac{1}{n}\sum_{i=1}^{n}\Big[(1-\xi) \cdot \ell\big(y_i,f_{\sd}(x_i)\big)
+\xi \cdot \ell\big(f_{\te}(x_i),f_{\sd}(x_i)\big)\Big],
\end{equation}
where $\xi$ is the mixing parameter \citep{lopez2015unifying}; see \Cref{fig:illustration}.
When $\xi=1$, the student learns solely from the teacher’s predictions; we call this \emph{pure-distillation} (PD) and denote the resulting predictor by $f_{\pd}$.

\begin{figure}[!t]
    \centering
    \includegraphics[width=0.8\textwidth]{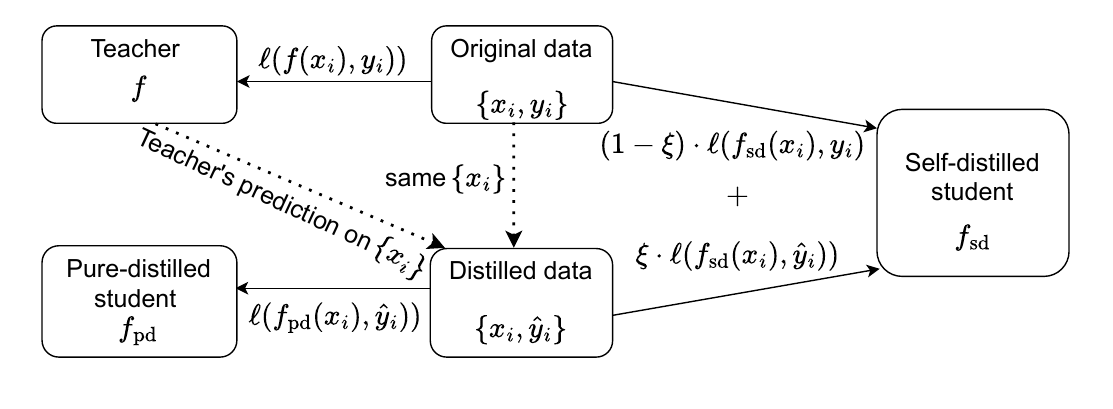}
    \caption{Visual illustration of the self-distillation process.}
    \label{fig:illustration}
\end{figure}

The mixing parameter $\xi$ balances the influence of ground-truth labels against teacher predictions.
Standard distillation methods restrict $\xi$ to lie in $[0,1]$, interpreting the target loss as a convex combination.
Recent work by \citet{das2023understanding} shows that this constraint can be suboptimal under high label noise, where the optimal mixing weight $\xi^\star$ may in fact be found to be greater than $1$.
Motivated by this, we adopt a fully unconstrained perspective and allow $\xi\in\mathbb{R}$, including negative values.
Note that setting $\xi=0$ recovers the teacher predictor, hence optimizing over $\xi$ cannot perform worse than the teacher. With this in mind, we pose the key questions  about SD:

\begin{itemize}[nosep,itemsep=2pt]
    \item[(\Qsnoparen{1})]
    When does the optimally mixed student $f_{\sd}$ trained  using an optimal $\xi^\star \in \RR$ strictly outperform the teacher $f$, and how large can the gain be?
    \item[(\Qsnoparen{2})]
    Can optimal SD from a suboptimal teacher achieve performance comparable to an optimally tuned teacher?
    \item[(\Qsnoparen{3})]
    How can we efficiently tune the optimal $\xi^\star\in\mathbb{R}$ without computationally expensive grid search?
\end{itemize}

We provide complete answers to all these questions for ridge regression, a model in which SD admits an explicit affine path (in the response) and the risk of both the teacher and the students can be characterized sharply, capturing the interplay between regularization and distillation.

\begin{figure*}[!t]
  \centering
    \begin{subfigure}[t]{0.32\textwidth}
    \centering
    \includegraphics[width=\textwidth]{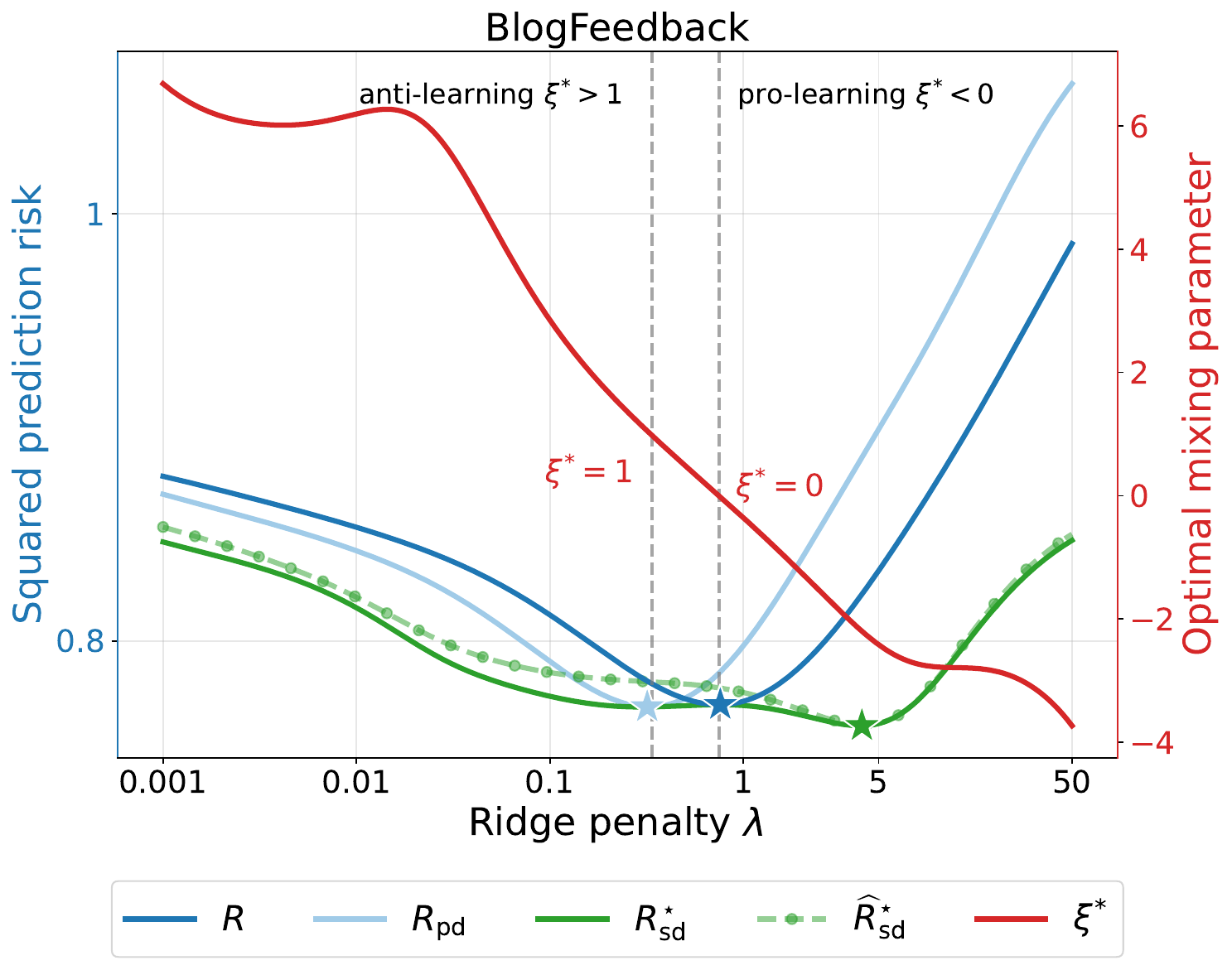}
    \caption{$n = 2619, p = 280, \gamma \approx 0.1$}
    \label{fig:blog_main}
  \end{subfigure}
  \hfill
    \begin{subfigure}[t]{0.32\textwidth}
    \centering
    \includegraphics[width=\textwidth]{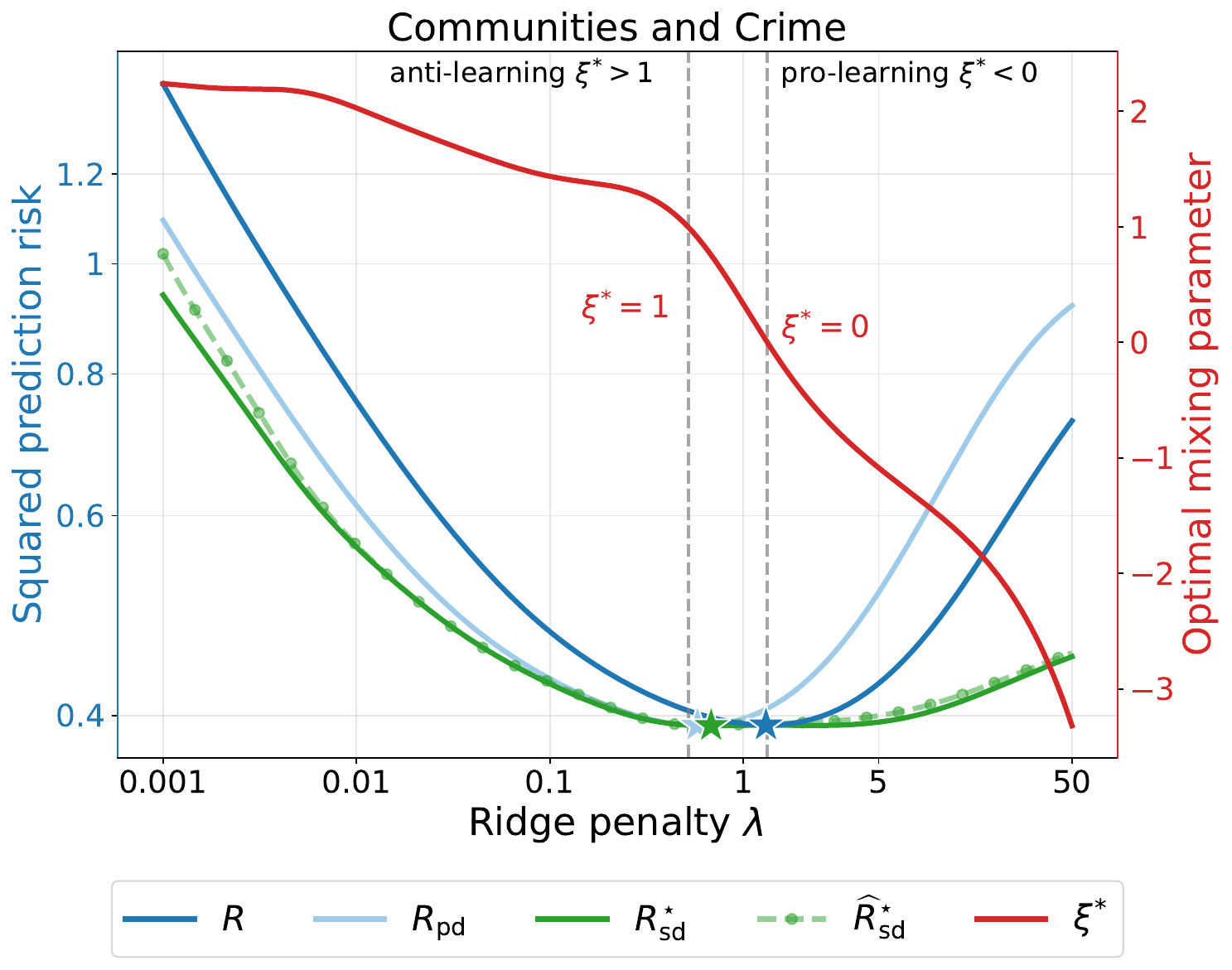}

    \caption{$n = 99, p = 99, \gamma = 1$}
    \label{fig:CC_main}
  \end{subfigure}
  \hfill
  \begin{subfigure}[t]{0.32\textwidth}
    \centering
    \includegraphics[width=\textwidth]{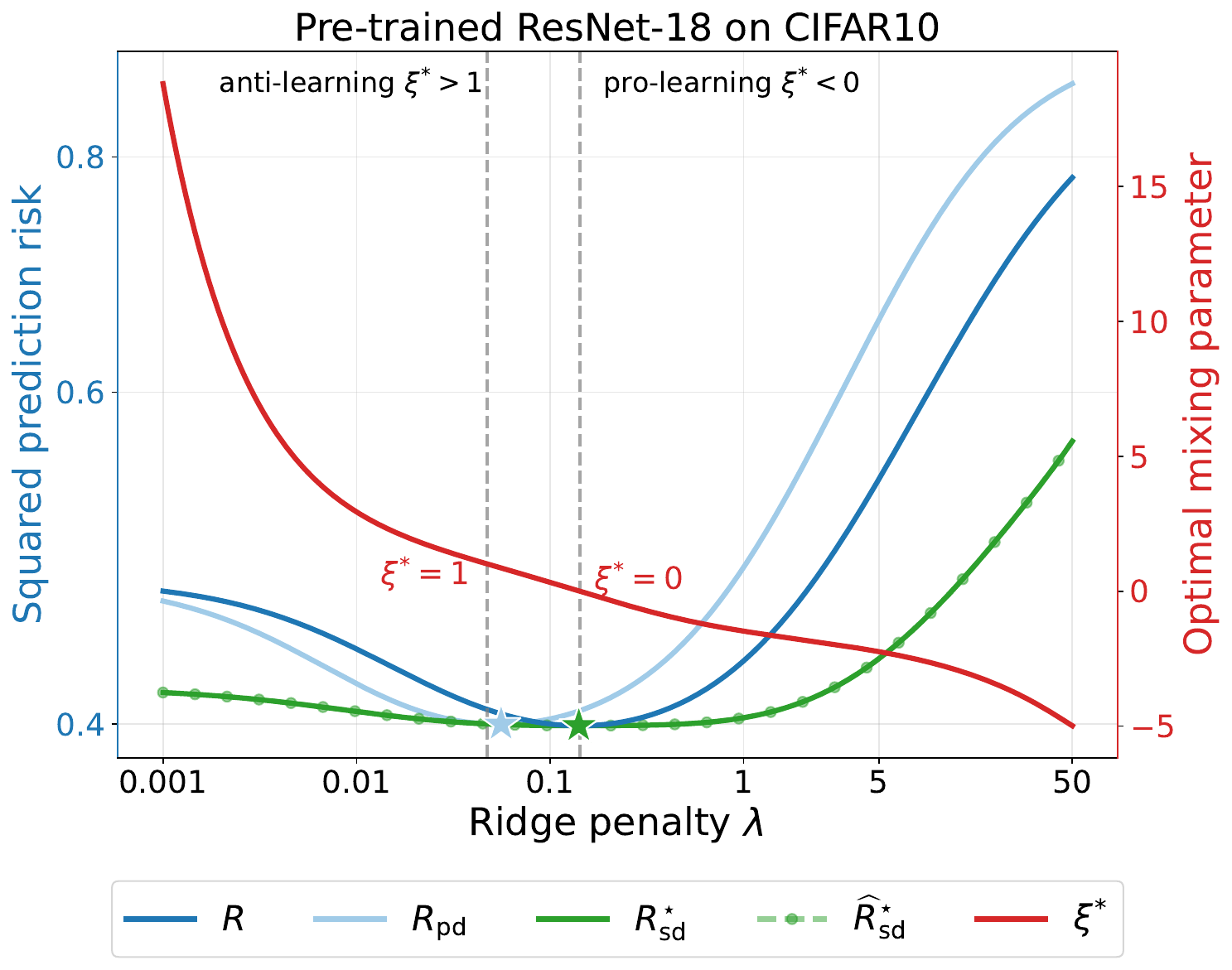}

    \caption{$n = 2000, p = 5120, \gamma \approx 2.6$}
    \label{fig:cifar_main}
  \end{subfigure}
  \caption{
  \textbf{Strict improvement of SD risk with unconstrained mixing.} 
  Test squared prediction risk of ridge regression ($R$, in {\color{pyblue} blue}), pure-distilled ridge ($R_{\pd}$, in {\color{pylightblue} light blue}) and optimal self-distilled ridge ($R^{\star}_{\sd}$, in {\color{pygreen} green}) as functions of the ridge penalty $\lambda$. 
  Results are shown on raw features from real-world datasets: \dataset{BlogFeedback} and \dataset{Communities and Crime} datasets, and on pretrained ResNet-18 features. 
  The optimal mixing parameter $\xi^{\star}(\lambda)$ is in {\color{pyred} red} and the one-shot risk estimate $\widehat R_{\sd}^\star(\lambda)$ computed from the training data is shown in {\color{pygreen} green dashed} line. 
  Note that $\xi^\star(\lambda)$ lies in $[0,1]$ only for a narrow range of $\lambda$ and can be strongly negative for large $\lambda$.
  We also observe that: (i) $R_{\sd}^{\star}(\lambda)$ is strictly smaller than $R(\lambda)$ at every $\lambda$ that is not the stationary point of $R(\lambda)$, (ii) the sign of $\xi^{\star}(\lambda)$ is opposite to the sign of $R^{\prime}(\lambda)$, and (iii) the sign change of $\xi^{\star}$ happens at the stationary point of $R(\lambda)$.
  (Experiments with $\xi$ restricted to $[0,1]$ appear in \Cref{fig:exp_real_world_restricted}.)
  } 
  \label{fig:exp_real_main}
\end{figure*}

\subsection{Summary of Paper Contributions and Outline}

Below we describe in detail the main contributions of the paper; see \Cref{fig:exp_real_main} for a visual summary.

\textbf{Structural nonasymptotic guarantees (\Cref{sec:structural}).}
Addressing \Qs{1}, we derive deterministic identities for self-distilled ridge that hold conditionally on the observed training data, without any distributional assumptions, and for \emph{any} squared prediction risk (including the out-of-distribution risk).
In particular, we show that for every $\lambda>0$ such that the teacher ridge-path risk $\lambda \mapsto R(\lambda)$ is nonstationary (i.e., $R'(\lambda) \neq 0$), optimal mixing yields a \emph{strict} improvement over the teacher (\Cref{thm:tangent_sign}).
Addressing \Qs{2}, we provide a curvature-based sufficient condition under which the global minimum over $\lambda$ of the SD risk $R_{\sd}^\star(\lambda)$, obtained using the optimal mixing $\xi^\star(\lambda)$,  is strictly smaller than the smallest ridge risk of the teacher (\Cref{prop:curvature_test_main}).

\textbf{Precise proportional asymptotics (\Cref{sec:asymptotics}).}
Returning to \Qs{1} in the proportional regime -- in which the sample and feature sizes $n,p\to\infty$ but their aspect ratio $p/n\to\gamma \in (0, \infty)$ --  we derive exact deterministic equivalents for the optimal SD risk and mixing weight under general anisotropic covariance and deterministic signals (\Cref{thm:risk-asymptotics}).
These formulas quantify the SD gains in terms of $\gamma$, the signal-to-noise ratio, and the signal--covariance alignment. In particular, they characterize precisely when the optimal mixing weight $\xi^\star$ becomes \emph{negative} (\Cref{cor:xi_asymptotic}).
Regarding \Qs{2}, under an isotropic random signal we show that even for extremely under- or over-regularized teachers, optimal SD can closely approach the risk of the optimally tuned ridge predictor (\Cref{prop:compare_extreme_lambda}).

\textbf{One-shot tuning (\Cref{sec:tuning}).}
Addressing \Qs{3}, we propose a consistent one-shot estimator of $\xi^\star$ based on generalized cross-validation (\Cref{thm:oneshot_consistency}).
The method avoids sample splitting and grid search over $\xi$ (and hence does not require any refitting across candidate mixing weights), while remaining consistent in proportional asymptotics.
We validate our theory and tuning methodology through extensive experiments on synthetic data, UCI regression benchmarks, and pretrained ResNet feature representations.

\textbf{Extensions and variants (\Cref{sec:extensions}).}
We analyze several natural variants of the SD ridge framework, including multi-round distillation with a risk monotonicity property (\Cref{app:multiround_proofs}), self-distillation using teacher predictions on fresh features, which we show to be dominated by the same-$X$ setup in an isotropic setting (\Cref{sec:supp_freshX_isotropic}), and extensions to generalized ridge and kernel ridge regression (\Cref{sec:supp_extensions_ridge_variants}).

\subsection{Related Works and Comparisons}
\label{sec:related_work}

\textbf{Fixed-design analyses.}
Early theoretical work on distillation focused on when and how well a student can mimic a teacher \citep{phuong2019towards, ji2020knowledge}; for regression problems, this has typically been addressed using fixed-design.
In kernel ridge regression, \citet{mobahi2020self} show that repeated SD shrinks the effective function class, resembling an increase in regularization; their emphasis is on the implicit regularization induced by SD rather than on guaranteeing performance improvements.
In ridge regression, \citet{das2023understanding} analyze the bias--variance trade-off induced by the mixing weight $\xi$ and give conditions under which the \emph{globally} optimal SD risk (obtained by optimizing over $\lambda$) can go below the optimally tuned teacher ridge risk.
\citet{pareek2024understanding} extend this line of work to multiple SD rounds, quantifying gains under an alignment condition when $\lambda$ and $\xi$ are tuned in each round.
A key gap in these analyses is the question of pointwise improvement: whether SD can \emph{strictly} improve a mis-regularized teacher at a fixed, suboptimal $\lambda$, which we show in \Cref{sec:structural}.
Such pointwise gains are important because finding global optimal $\lambda$ is often challenging; for example, \citet{stephenson2021can} show that the leave-one-out cross-validation loss is generally neither convex nor even quasi-convex in $\lambda$.

A further distinction is the data model.
Both the linear regression analysis in \citet{das2023understanding, pareek2024understanding} operate under fixed design and a well-specified linear response model.
While analytically convenient, fixed-design analyses do not capture the contribution of test-feature randomness to prediction error, which is central in high-dimensional generalization \citep{hastie2022surprises}.
In contrast, our structural results hold conditionally on the observed training data under random design and for \emph{any} squared prediction risk, including out-of-distribution risk.
We also work in proportional random-design asymptotics ($n,p\to\infty$ with $p/n\to\gamma$) in \Cref{sec:asymptotics} and provide exact characterizations of generalization effects driven by the feature and signal structures.

\textbf{Random-design analyses.}
Closer to our setting, \citet{ildiz2025high} analyze KD under random design with anisotropic covariance, focusing on scaling laws and ``weak-to-strong'' generalization, a phenomenon where a strong student model is supervised by a weaker teacher \citep{burns2023weak}. 
They provide nonasymptotic characterizations of pure-distilled ridge ($\xi = 1$ in our notation) and its scaling behavior. 
They primarily focus on the \textit{ridgeless} case (minimum $\ell_2$-norm interpolator where $\lambda \to 0^+$). 
While interesting, such analysis obscures the interaction between explicit ridge penalty ($\lambda > 0$) and distillation weight $\xi$, which is central to our results.
In particular, we identify over-regularized regimes where optimal SD requires a {\it negative} optimal mixing $\xi^\star$ to account for excessive shrinkage and thus strictly improve the teacher.
In contrast, in pure-distillation ridge with $\lambda > 0$, the student can only improve upon the teacher for a range of small $\lambda$, and can fail to improve under over-regularization (or even being worse than the teacher)  \citep{moniri2025on}; see \Cref{fig:exp_real_main}.

\begin{table*}[!t]
\centering
\tiny
\caption{\textbf{Settings overview in the self-distillation ridge/ridgeless literature}. Unless stated otherwise, risks in the proportional asymptotics setting are in-distribution squared prediction risks.}
\label{tab:comparison}

\begin{tabularx}{\textwidth}{
  C{0.6cm}    %
  C{0.5cm}    %
  C{1.5cm}    %
  C{1.0cm}    %
  C{0.5cm}    %
  C{1.7cm}    %
  C{1.1cm}    %
  C{1.3cm}    %
  C{0.5cm}    %
  C{0.3cm}    %
  C{1.5cm}    %
  C{1.0cm}    %
}
\toprule

\textbf{Paper} & \multicolumn{2}{c}{\textbf{Focus}} & \multicolumn{3}{c}{\textbf{Nonasymptotic setting}} & \multicolumn{3}{c}{\textbf{Proportional asymptotics}} & \multicolumn{2}{c}{\textbf{Weights}} & \textbf{Multi-rounds} \\

\cmidrule(lr){2-3} \cmidrule(lr){4-6} \cmidrule(lr){7-9} \cmidrule(lr){10-11}
& \textbf{Type} & \textbf{Family} & \textbf{Design} & \textbf{Response} & \textbf{Risk} & \textbf{Features} & \textbf{Signal} & \textbf{Model} & \textbf{Range} & \textbf{Tuning} & \\

\midrule \arrayrulecolor{black!10}
   
\citetalias{das2023understanding}
& SD & Ridge 
  & Fixed 
  & Linear 
  & Estimation
  & \colorgray 
  & \colorgray 
  & \colorgray 
  & $\mathbb{R}$ 
  & \colorgray 
  & \colorgray \\

\midrule

\citetalias{pareek2024understanding}  & SD & Ridge 
  & Fixed 
  & Linear 
  & Fixed-X prediction
  & \colorgray 
  & \colorgray 
  & \colorgray 
  & $\mathbb{R}$ 
  & Split CV (three refits)
  & Yes \\

\midrule

\citetalias{ildiz2025high}   & PD & Ridgeless 
  & Random 
  & Linear 
  & In-distribution prediction 
  & \colorgray 
  & \colorgray 
  & \colorgray 
  & $\{ 1 \}$ 
  & \colorgray
  & \colorgray \\

\midrule

\citetalias{moniri2025on}  & PD & Ridge (generalized) 
  & \colorgray 
  & \colorgray 
  & \colorgray 
  & Isotropic
  & Isotropic
  & Linear 
  & $\{ 1 \}$ 
  & \colorgray 
  & \colorgray \\

\midrule

\citetalias{garg2025preventing}  & SD & Ridge (infinite rounds)
  & \colorgray 
  & \colorgray 
  & \colorgray 
  & Anisotropic
  & Deterministic
  & Linear
  & $[0,1]$ 
  & \colorgray 
  & Infinite rounds only \\

\midrule

\colorhighlight Ours    & \colorhighlight SD & \colorhighlight Ridge 
  & \colorhighlight Random 
  & \colorhighlight Any 
  & \colorhighlight Any squared prediction
  & \colorhighlight Anisotropic
  & \colorhighlight Deterministic 
  & \colorhighlight Any 
  & \colorhighlight $\mathbb{R}$ 
  & \colorhighlight GCV (one-shot) 
  & \colorhighlight Yes \\

\arrayrulecolor{black} 
\bottomrule
\end{tabularx}
\end{table*}

A large body of recent work has focussed ridge regression risk and its variants under proportional asymptotics using tools from random matrix theory and statistical physics (e.g., \citealp{dobriban2018high, hastie2022surprises, patil2024optimal} and references therein), shedding light on phenomena such as benign overfitting \citep{bartlett2020benign} and double descent \citep{belkin2019reconcling}.
We extend this literature by deriving exact deterministic equivalents for optimal SD risk under anisotropic feature covariance and deterministic signals.
Because the teacher and student are trained on the \emph{same} data, SD risks involve dependent resolvent-type quantities; unlike settings with independent refits where one can directly combine known deterministic equivalents, our analysis requires higher-order deterministic equivalents obtained via block linearization (see \Cref{app:asymptotics_proofs-equivalents-outline,app:DE_Q_U}).

\textbf{Unconstrained mixing and ``anti-learning''.}
Standard KD typically restricts $\xi\in[0,1]$.
In the presence of label noise, \citet{das2023understanding} show that optimal mixing can satisfy $\xi^\star>1$, corresponding to a negative weight on the noisy ground truth (``anti-learning'').
In binary classification, \citet{javanmard2025self} derive a nonlinear Bayes-optimal aggregation effect (using approximate message passing tools) that also effectively subtracts noisy labels during retraining.
In our ridge setting, closed-form risks allow a clean three-regime picture: interpolation ($0\le\xi\le 1$), extrapolation/anti-learning ($\xi>1$), and over-regularization correction ($\xi<0$), which we term ``pro-learning''.

\textbf{Synthetic-label retraining and model collapse.}
Beyond one-round SD, recent work has raised concerns about recursively training on model-generated labels leading to performance degradation, referred to as \emph{model collapse} \citep{shumailov2024ai, alemohammad2023self, dohmatob2024model, gerstgrasser2024model}.
In contrast to the notion of ``strong model collapse'' in \citet{dohmatob2024strong} (in which a model trained in the presence of synthetic data has worse asymptotic risk than a model trained solely on ground-truth data), optimal SD in our ridge setting exhibits no such degradation. Provided the nondegeneracy condition in \Cref{thm:tangent_sign} holds, the optimally mixed student strictly improves upon the teacher whenever $R'(\lambda) \neq 0$, with the two risks coinciding only at stationary points.

Closer to our setting, \cite{he2025golden, garg2025preventing} analyze infinite-round schemes that mix ground-truth and synthetic labels with a fixed weight $w$ (analogous to our $\xi$) to prevent degradation under repeated synthetic training; the optimal mixing weight $w^\star$ in their settings lies in $[0, 1]$.
In contrast, we characterize the risk-minimizing mixing for one-round SD and show that $\xi^\star$ can lie outside $[0,1]$, including $\xi^\star<0$ in over-regularized regimes.
To highlight the distinction, we run a synthetic experiment (\Cref{app:vs_multi_round}) comparing optimal one-round SD risk to $20$-round SD with optimal fixed constrained weight in $[0,1]$ for every round.
As shown in \Cref{fig:vs_multi_round}, unconstrained mixing can be crucial for improving the teacher's performance, particularly in over-regularized regimes where negative weights becomes necessary.
Finally, we also study a recursive multi-round variant with per-round unconstrained optimal mixing that yields a monotone (weakly) decreasing risk sequence (\Cref{sec:extensions}).

\textbf{Tuning and risk estimation.}
A notable gap in existing theoretical results about SD is the choice of the optimal mixing hyperparameter $\xi$ in practice.
Most analyses assume oracle access to population risks or detailed spectral knowledge.
In practice, cross-validation over $\xi$ is computationally costly due to grid search and repeated refitting, and it can be statistically inefficient in high dimensions due to sample splitting (see, e.g., \citet{rad2020scalable}).
Building on consistent risk estimation for high-dimensional ridge/ridgeless regression (see, e.g., \citealp{patil2021uniform, patil2022estimating, wei2022more, han2023distribution, bellec2025corrected, koriyama2024precise} and references therein), we propose a one-shot generalized cross-validation estimator for SD ridge and prove its consistency in proportional asymptotics (\Cref{thm:oneshot_consistency}).
This enables data-efficient selection of unconstrained $\xi$ without grid search over candidate mixing weights or hold-out sets.

Overall, relative to prior SD ridge analyses, we  provide the following novel results: (i) pointwise strict improvement guarantees at any nonstationary $\lambda>0$ together with a sign characterization of $\xi^\star$ (\Cref{thm:tangent_sign}); (ii) exact proportional-asymptotic risk characterizations under anisotropic feature covariance and deterministic signals (\Cref{thm:risk-asymptotics}); and (iii) a consistent one-shot GCV-based tuning method (\Cref{thm:oneshot_consistency}).
At the same time, our settings are more general than those considered in the literature so far: (a) a nonasymptotic setting with no distributional assumptions and any squared prediction risk; and (b) proportional asymptotics with general feature covariance and deterministic signals, without imposing a well-specified response model.
A compact comparison of settings and assumptions across several related works is provided in \Cref{tab:comparison}.

\section{Structural Nonasymptotic Results}
\label{sec:structural}

This section described deterministic ``structural'' identities for self-distillation in ridge regression under any squared prediction risk (including the out-of-distribution risk).  
All the statements hold with virtually no distributional assumptions (other than the existence of second moments) and conditionally on the  training data  $\mathcal{D} = (X, y)$, where $X \in \mathbb{R}^{n \times p}$ is the design matrix containing $p$-dimensional observations $x_i \in \RR^p$ as rows and $y \in \mathbb{R}^{n}$ is the response vector containing scalar ground-truth labels $y_i$ for $i = 1, \dots, n$.

\subsection{Self-Distillation with Ridge Regression}

Given a regularization parameter $\lambda > 0$, define the \emph{teacher} ridge predictor $x \in \mathbb{R}^p \mapsto f_{\lambda}(x) \in \RR$ trained on $\mathcal{D}$ as
\begin{equation}
    \label{eq:ridge_predictor}
x^\top \argmin_{\beta \in \RR^p}\big\{ \| y - X \beta \|_2^2 / n + \lambda \| \beta \|_2^2\big\}.
\end{equation}
Let \( \hat{ y}_\lambda = f_{\lambda}(X)  = (f_\lambda(x_1) ,\ldots, f_\lambda(x_n) )^\top\in\RR^n\) denote the teacher's training predictions. %
The \emph{pure-distilled} predictor \( f_{\pd, \lambda} \) is ridge regression trained on the pseudo-labeled or ``distilled'' dataset $(X, \hat{y}_\lambda)$ with the same penalty $\lambda$.
For a mixing parameter \(\xi\in\RR\), the  \emph{self-distilled} predictor $x \in \R^{p} \mapsto f_{\sd, \lambda}(x) \in \RR$ is the ridge fit obtained using the mixed-label loss:
\begin{equation}
    \label{eq:sd_predictor}
    x^\top \argmin_{\beta \in \RR^p}\big\{ (1 - \xi) \, \| y - X \beta \|_2^2 / n + \xi \, \| \hat{y}_{\lambda} - X \beta \|_2^2 / n + \lambda \| \beta \|_2^2\big\}.
\end{equation}
A key simplification in ridge regression is that the ridge map is linear in the response vector. 
As a result, the SD predictor lies on a linear path between the teacher and the PD fit (see \Cref{app:structural_proofs}):
\begin{equation}
\label{eq:affine_path_pred}
f_{\sd,\lambda,\xi}(x)=(1-\xi)\,f_\lambda(x)+\xi\,f_{\pd,\lambda}(x).
\end{equation}

For a test point $(x_0, y_0)$ (possibly out-of-distribution) of finite (data) conditional $L_2$ norm\footnote{We do not need $(x_0,y_0)$ to be independent of $\cD$ for results in this section. Thus they also apply when $(x_0,y_0)$ is drawn from the empirical measure on $\cD$, in which case $R(f)$ is simply the training error of $f$.}, we measure the performance of any (possibly data-dependent) predictor $f$ by the conditional squared prediction risk
\begin{equation}
  \label{eq:pred-risk}
  R(f):=\EE\!\big[(y_0-f(x_0))^2\mid \cD\big].
\end{equation}
For convenience, write $R(\lambda):=R(f_\lambda)$, $R_{\pd}(\lambda):=R(f_{\pd,\lambda})$, and $R_{\sd}(\lambda,\xi):=R(f_{\sd,\lambda,\xi})$. We note that these different types of risk are random, as they depend on training data.

\subsection{Optimal SD Risk Decomposition}
\label{sec:structural_closed_form}

We first provide a useful decomposition of the optimal SD risk in terms of three key quantities, defined conditionally on $\mathcal{D}$\footnote{Throughout this section, any identity, inequality, or stated property involving such quantities is implicitly assumed to hold with probability one with respect to the distribution of the training data.}.
First, we let 
\begin{equation}
    \label{eq:Clambda}
  C(\lambda) :=\EE\big[(y_0 - f_{\lambda}(x_0)) (y_0-f_{\pd,\lambda}(x_0))\mid \cD\big],
\end{equation}
be the conditional correlation between the residual errors of the teacher and PD predictors and 
\begin{equation}
    \label{eq:Dlambda}
    D(\lambda) = \EE\big[(f_\lambda(x_0)-f_{\pd,\lambda}(x_0))^2\mid \cD\big] \geq 0.
\end{equation}
be the conditional expected squared difference between the predictions of the teacher and PD student predictors (which is also the expected squared difference between their residuals).

The difference $D(\lambda)$ also admits an equivalent form $D(\lambda) = R(\lambda) + R_{\pd}(\lambda) - 2C(\lambda)$ (see \Cref{app:structural_proofs}).
Let $\xi^\star(\lambda) \in \argmin_{\xi \in \RR} R_{\sd}(\lambda, \xi)$, and define $R_{\sd}^\star(\lambda) := R_{\sd}(\lambda, \xi^\star)$, referred to as the optimal SD risk.  

\begin{proposition}[Optimal SD risk decomposition]
\label{prop:closed_form_xi_R1}
Fix $\lambda > 0$ and  assume $D(\lambda) > 0$. 
Then,
\begin{align}
  \xi^\star(\lambda)
  =\frac{R_{\te}(\lambda)- C(\lambda)}{D(\lambda)},
  \quad
  R_{\sd}^\star(\lambda)
  =
  R_{\te}(\lambda) -
  \frac{(R_{\te}(\lambda)-C(\lambda))^2}{D(\lambda)}.
  \label{eq:R1_star_short}
\end{align}
In particular, \(R_{\sd}^\star(\lambda) \le R_{\te}(\lambda)\) for all \(\lambda\), and \(\xi^\star(\lambda)\) may be negative.
\end{proposition}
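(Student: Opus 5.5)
The plan is to reduce everything to a one-dimensional quadratic minimization in $\xi$, using the affine path \eqref{eq:affine_path_pred}. First I will verify that path. The ridge solution of \eqref{eq:sd_predictor} has normal equations $(X^\top X/n + \lambda I)\beta = X^\top\big((1-\xi)y + \xi\hat y_\lambda\big)/n$. Hence the SD coefficient is the ridge map applied to the mixed response $(1-\xi)y+\xi\hat y_\lambda$. Since the ridge map is linear in the response, this gives $f_{\sd,\lambda,\xi}=(1-\xi)f_\lambda+\xi f_{\pd,\lambda}$. The identity holds for every $\xi\in\RR$, because the normal equations stay well posed for $\lambda>0$ even when the mixing weights are negative.

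Next I will write the residual of the student as a combination of the two residuals. Set $r_0 := y_0-f_\lambda(x_0)$ and $r_1 := y_0 - f_{\pd,\lambda}(x_0)$. The residual of the SD predictor is then $(1-\xi)r_0+\xi r_1 = r_0 - \xi(r_0-r_1)$, and $r_0-r_1 = f_{\pd,\lambda}(x_0)-f_\lambda(x_0)$. Expanding the conditional second moment, which is finite by the $L_2$ assumption on $(x_0,y_0)$ and the fact that the predictors are linear in $x_0$ given $\cD$, gives
\begin{equation*}
R_{\sd}(\lambda,\xi) = R(\lambda) - 2\xi\,\EE[r_0(r_0-r_1)\mid\cD] + \xi^2 D(\lambda),
\end{equation*}
where $\EE[r_0(r_0-r_1)\mid\cD] = R(\lambda)-C(\lambda)$ by the definition \eqref{eq:Clambda}. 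In the same way, expanding $\EE[(r_0-r_1)^2\mid\cD]$ recovers the identity $D=R+R_{\pd}-2C$, which I note only as a consistency check.

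Since $D(\lambda)>0$, this is a strictly convex quadratic in $\xi$. Setting its derivative to zero gives the unique minimizer $\xi^\star=(R-C)/D$. Substituting back gives $R_{\sd}^\star = R - (R-C)^2/D$. The inequality $R_{\sd}^\star\le R$ follows in two ways: directly from this formula, and trivially from $\xi=0$ being feasible. When $D(\lambda)=0$ the quadratic is constant in $\xi$, so the inequality still holds with equality, which covers the ``for all $\lambda$'' clause.

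For the remark that $\xi^\star$ may be negative, it suffices to observe that the sign of $\xi^\star$ equals the sign of $R-C$. Nothing forces $C\le R$: by Cauchy--Schwarz, $C$ can exceed $R$ whenever $R_{\pd}>R$ and the residuals are highly correlated, and a trivial two-point example or a pointer to Theorem~\ref{thm:tangent_sign} would show this. There is no real obstacle here. The only care points are confirming that the linearity argument holds for all real $\xi$ and that all conditional moments are finite, so that the expansion is legitimate.
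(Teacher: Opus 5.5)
Your proposal is correct and follows essentially the same route as the paper's proof. Both use the affine path \eqref{eq:affine_path_pred} to write $R_{\mathrm{sd}}(\lambda,\xi)=R-2\xi(R-C)+\xi^2D$, minimize this strictly convex quadratic in $\xi$, and handle $D=0$ separately. In that degenerate case $R-C=0$ as well, by Cauchy--Schwarz, so the risk does not depend on $\xi$. The paper likewise gives no separate argument that $\xi^\star$ may be negative beyond noting that its sign is that of $R-C$, so your remark covers that clause adequately.
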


The magnitude of the risk improvement $R(\lambda) - R_{\sd}^\star(\lambda)$ depends on 
(i) the numerator $R(\lambda)-C(\lambda)$, which also determines the sign of $\xi^\star(\lambda)$, and (ii) the denominator $D(\lambda)$.
From \eqref{eq:Dlambda}, note that $D(\lambda)=0$ if and only if $f_\lambda$ and $f_{\pd,\lambda}$ coincide almost surely on the test distribution, in which case $R_{\sd}(\lambda,\xi)\equiv R(\lambda)$ for all $\xi$, so that each $\xi$ is trivially optimal.
Thus the nondegeneracy assumption $D(\lambda)>0$ in \Cref{prop:closed_form_xi_R1} is innocuous.

\subsection{Strict Pointwise Improvement and Sign of Optimal Mixing Weight}
\label{sec:structural_tangent}

Building on \Cref{prop:closed_form_xi_R1}, we next characterize when optimal SD is \emph{strictly} better than the teacher, and determine the sign of the optimal mixing weight in terms of the derivative of the teacher risk.
(It is worth mentioning that under finite conditional second moment conditions, the teacher risk $R(\lambda)$ is a smooth function of $\lambda$ and hence the derivative is well-defined; see \Cref{lem:app_risk_smoothness}.)

\begin{theorem}[Strict improvement and sign rule]
\label{thm:tangent_sign}
Fix $\lambda>0$ and assume $D(\lambda)>0$.
Then
\begin{equation}
\label{eq:xi_star_slope_main}
\xi^\star(\lambda)
=
-\frac{\lambda}{2}\,\frac{R'(\lambda)}{D(\lambda)},
\quad
R_{\sd}^\star(\lambda)
=
R(\lambda)-\frac{\lambda^2}{4}\,\frac{(R'(\lambda))^2}{D(\lambda)}.
\end{equation}
In particular, if $R'(\lambda)\neq 0$, then $R_{\sd}^\star(\lambda)<R(\lambda)$ and
$\operatorname{sign}(\xi^\star(\lambda))=-\operatorname{sign}(R'(\lambda))$.
\end{theorem}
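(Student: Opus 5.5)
The plan is to reduce \Cref{thm:tangent_sign} to \Cref{prop:closed_form_xi_R1} by proving the single identity
\[
R(\lambda)-C(\lambda)=-\frac{\lambda}{2}\,R'(\lambda).
\]
Once this identity holds, substituting it into \eqref{eq:R1_star_short} gives both formulas in \eqref{eq:xi_star_slope_main}. The strict improvement and the sign rule then follow at once from $D(\lambda)>0$ and $\lambda>0$.

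\textbf{Step 1: the PD coefficient in closed form.} Write $\hat\Sigma = X^\top X/n$ and $M_\lambda=(\hat\Sigma+\lambda I)^{-1}$, so the teacher coefficient is $\hat\beta_\lambda=M_\lambda X^\top y/n$ and $\hat y_\lambda = X\hat\beta_\lambda$. The PD coefficient is ridge applied to the response $\hat y_\lambda$, namely $\hat\beta_{\pd,\lambda}=M_\lambda \hat\Sigma\hat\beta_\lambda$. Since $I-M_\lambda\hat\Sigma=\lambda M_\lambda$, this gives
\[
\hat\beta_\lambda-\hat\beta_{\pd,\lambda}=\lambda M_\lambda\hat\beta_\lambda .
\]

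\textbf{Step 2: the derivative of the ridge path.} Differentiating the resolvent gives $\frac{d}{d\lambda}M_\lambda=-M_\lambda^2$, so $\frac{d}{d\lambda}\hat\beta_\lambda=-M_\lambda\hat\beta_\lambda$. Combining with Step 1,
\[
f_\lambda(x_0)-f_{\pd,\lambda}(x_0)=-\lambda\, x_0^\top \tfrac{d}{d\lambda}\hat\beta_\lambda .
\]
This is the key structural fact: the PD correction is, up to the factor $-\lambda$, exactly the tangent to the ridge path.

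\textbf{Step 3: the derivative of the risk.} Since $R(\lambda)=\EE[(y_0-x_0^\top\hat\beta_\lambda)^2\mid\cD]$ is a quadratic form in $\hat\beta_\lambda$, and $\hat\beta_\lambda$ is smooth in $\lambda$ for fixed $\cD$, differentiating under the conditional expectation gives
\[
R'(\lambda)=2\,\EE\big[(y_0-f_\lambda(x_0))\,x_0^\top M_\lambda\hat\beta_\lambda\mid\cD\big].
\]
The interchange of derivative and expectation is justified by finite conditional second moments, via \Cref{lem:app_risk_smoothness}. Alternatively, one can expand the quadratic in terms of the deterministic matrices $\EE[x_0x_0^\top\mid\cD]$ and $\EE[y_0x_0\mid\cD]$, which avoids any limit interchange.

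\textbf{Step 4: the identity.} By definition of $C(\lambda)$,
\[
R-C=\EE\big[(y_0-f_\lambda)\big((y_0-f_\lambda)-(y_0-f_{\pd,\lambda})\big)\mid\cD\big]=\EE\big[(y_0-f_\lambda)(f_{\pd,\lambda}-f_\lambda)\mid\cD\big].
\]
By Step 1, $f_{\pd,\lambda}-f_\lambda=-\lambda\,x_0^\top M_\lambda\hat\beta_\lambda$, so this equals $-\lambda\,\EE[(y_0-f_\lambda)\,x_0^\top M_\lambda\hat\beta_\lambda\mid\cD]=-\tfrac{\lambda}{2}R'(\lambda)$ by Step 3. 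Substituting into \Cref{prop:closed_form_xi_R1} yields \eqref{eq:xi_star_slope_main}.

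The only potential obstacle is the differentiability and interchange in Step 3, and the quadratic-form argument handles it cleanly. The rest is linear algebra, with the crux being the observation $I-M_\lambda\hat\Sigma=\lambda M_\lambda$.
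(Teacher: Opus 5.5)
Your proposal is correct and matches the paper's proof essentially step for step. The paper likewise derives $\beta_\lambda(y)-\beta_{\pd,\lambda}=\lambda Q_\lambda\beta_\lambda(y)$, the tangent identity $f_\lambda-f_{\pd,\lambda}=-\lambda\,\partial_\lambda f_\lambda$, and the formula for $R'(\lambda)$ (via dominated convergence), then obtains $R(\lambda)-C(\lambda)=-\frac{\lambda}{2}R'(\lambda)$ and substitutes it into \Cref{prop:closed_form_xi_R1}; your alternative justification of differentiability, writing $R(\lambda)$ as an explicit quadratic in $\hat\beta_\lambda$ with coefficients $\EE[x_0x_0^\top\mid\cD]$ and $\EE[y_0x_0\mid\cD]$, is a slightly cleaner way to avoid the interchange but does not change the route.
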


\begin{figure}[!t]
  \centering
\includegraphics[width=0.6\textwidth]{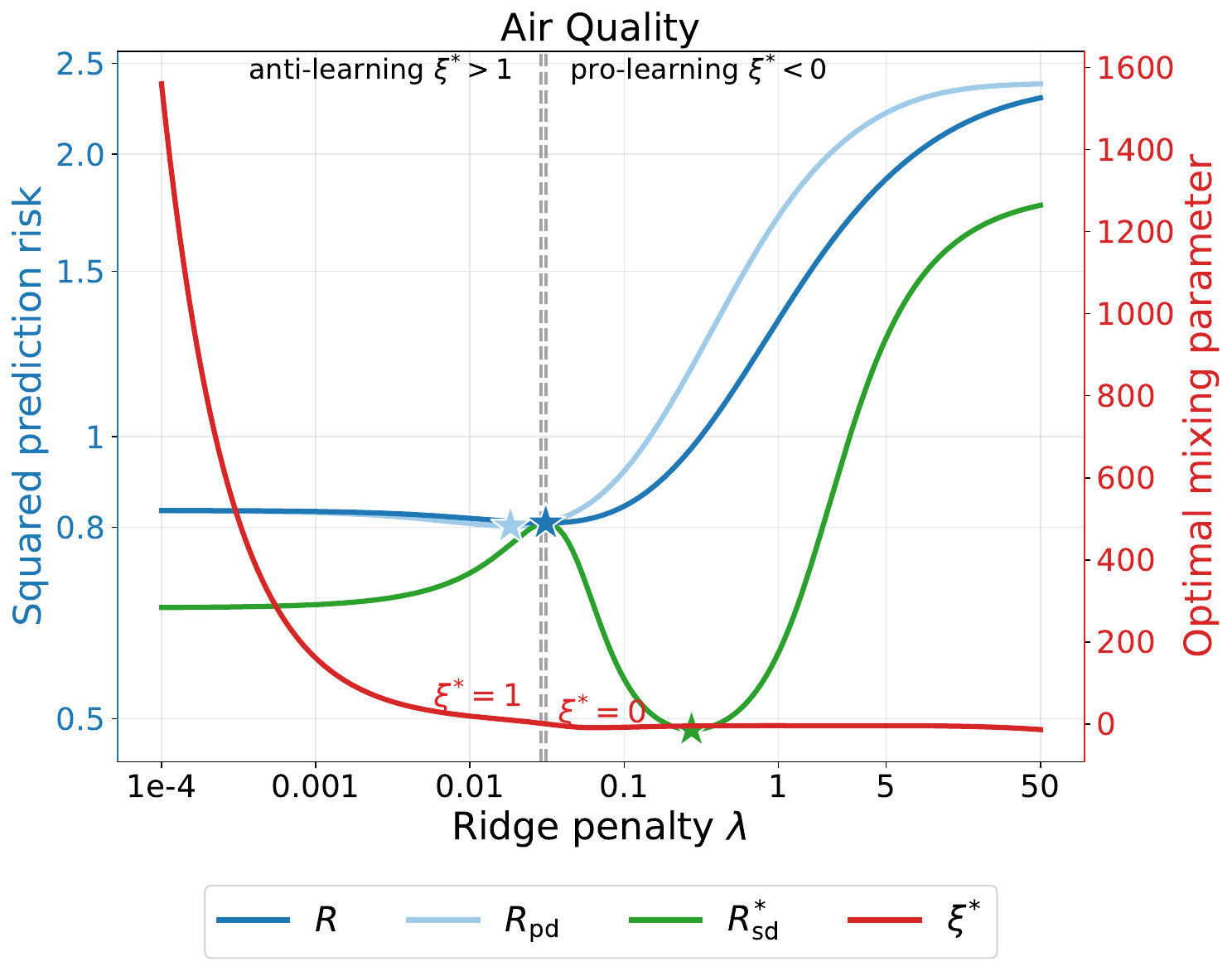}
  \caption{\textbf{Out-of-distribution SD risk improvement} Test prediction risk of ridge and optimal SD ridge on \dataset{Air Quality} dataset (see \Cref{sec:additional_details} for more details). SD yields strict improvements across $\lambda$ and achieves a substantially smaller global minimum.
  }
  \label{fig:air_quality}
\end{figure}

Plainly, optimal self-distillation strictly improves the teacher at every \emph{nonstationary} point of the teacher ridge risk (i.e., whenever $R'(\lambda)\neq 0$).
Moreover, $\xi^\star(\lambda)$ is positive in under-regularized regimes (where $R'(\lambda)<0$) and negative in over-regularized regimes (where $R'(\lambda)>0$), a behavior we refer to as ``pro-learning'' to contrast it with ``anti-learning'' \citep{das2023understanding}.
While prior work has noted improvements of optimal SD, to our knowledge, \Cref{thm:tangent_sign} is the first result establishing pointwise strict improvements for optimal self-distilled ridge, along with the sign rule for optimal mixing, at any nonstationary $\lambda$, for any squared prediction risk and without distributional assumptions.
The strict improvement and sign rule are consistently observed in \Cref{fig:exp_real_main,fig:air_quality}.

\subsection{Can Self-Distillation Beat Optimally Tuned Ridge?}
\label{sec:structural_global_min_short}

\Cref{thm:tangent_sign} guarantees improvement at any suboptimal $\lambda$, but it also implies that at a ridge-optimal penalty $\lambda^\star$, we must have $R_{\sd}^\star(\lambda^\star)=R(\lambda^\star)$.
It is therefore natural to ask whether the \emph{global} minimum of the SD risk curve can be strictly smaller than the teacher's global minimum.
We give a sufficient condition based on the local curvature of $R(\lambda)$.

\begin{proposition}[Curvature test at the ridge-optimal \(\lambda\)]
\label{prop:curvature_test_main}
Let $\lambda^\star\in\arg\min_{\lambda>0}R_{\te}(\lambda)$.
If the following curvature test at $\lambda^\star$ holds, i.e. if
\begin{equation}
  \label{eq:curv_cond_main}
  D(\lambda^\star) 
  <
  \frac{\lambda^{\star\,2}}{2}\,R''(\lambda^\star),
\end{equation}
then \(R_{\sd}^\star\) has negative curvature at \(\lambda^\star\), and, consequently,
\begin{equation}
  \min_{\lambda > 0} R_{\sd}^{\star}(\lambda) < \min_{\lambda > 0} R_{\te}(\lambda).
\end{equation}
\end{proposition}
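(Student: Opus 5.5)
We start from the closed form of \Cref{thm:tangent_sign}, which holds at every $\lambda$ with $D(\lambda)>0$:
\[
R_{\sd}^\star(\lambda)=R(\lambda)-\frac{\lambda^2}{4}\,\frac{(R'(\lambda))^2}{D(\lambda)} .
\]
Write $g(\lambda):=\lambda^2 (R'(\lambda))^2/(4D(\lambda))$, so that $R_{\sd}^\star=R-g$. Condition \eqref{eq:curv_cond_main} forces $D(\lambda^\star)<\infty$; we also need $D(\lambda^\star)>0$. If $D(\lambda^\star)=0$, the test is read through the limit $g(\lambda)\to\infty$-type behavior; we first treat the generic case $D(\lambda^\star)>0$. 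By \Cref{lem:app_risk_smoothness}, $R$ is smooth, and $D$ is a conditional second moment of a smooth path $\lambda\mapsto f_\lambda-f_{\pd,\lambda}$, so $D$ is continuous and differentiable near $\lambda^\star$ and stays positive in a neighborhood. Hence $R_{\sd}^\star$ is $C^2$ near $\lambda^\star$.

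\textbf{Taylor expansion at $\lambda^\star$.} Since $\lambda^\star>0$ is an interior minimizer of the smooth function $R$, we have $R'(\lambda^\star)=0$ and $R''(\lambda^\star)\ge 0$. Set $h(\lambda):=\lambda R'(\lambda)/2$, so $g=h^2/D$. Then $h(\lambda^\star)=0$ and $h'(\lambda^\star)=\lambda^\star R''(\lambda^\star)/2$. Because $g$ is the square of a function vanishing at $\lambda^\star$, divided by a positive smooth function, we get $g(\lambda^\star)=0$, $g'(\lambda^\star)=0$, and
\[
g''(\lambda^\star)=\frac{2h'(\lambda^\star)^2}{D(\lambda^\star)}=\frac{\lambda^{\star 2}R''(\lambda^\star)^2}{2D(\lambda^\star)} .
\]
The terms involving $D'$ drop out because they multiply $h(\lambda^\star)=0$. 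Therefore
\[
(R_{\sd}^\star)''(\lambda^\star)=R''(\lambda^\star)\Bigl(1-\frac{\lambda^{\star2}R''(\lambda^\star)}{2D(\lambda^\star)}\Bigr).
\]
Under \eqref{eq:curv_cond_main}, the right-hand side of the condition is positive, so $R''(\lambda^\star)>0$. The bracket is then negative, which gives $(R_{\sd}^\star)''(\lambda^\star)<0$. This is the claimed negative curvature.

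\textbf{Conclusion.} We have $R_{\sd}^\star(\lambda^\star)=R(\lambda^\star)$ and $(R_{\sd}^\star)'(\lambda^\star)=R'(\lambda^\star)-g'(\lambda^\star)=0$, with strictly negative second derivative. Second-order Taylor expansion then yields some $\lambda$ near $\lambda^\star$ (so $\lambda>0$) with $R_{\sd}^\star(\lambda)<R_{\sd}^\star(\lambda^\star)=R(\lambda^\star)=\min_{\lambda>0}R(\lambda)$. Taking the infimum over $\lambda$ gives the strict inequality, where the left-hand minimum is understood as an infimum if it is not attained.

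The main obstacle is regularity. We must ensure that $D$ is differentiable (or at least that $g$ is twice differentiable at $\lambda^\star$) and that $D>0$ near $\lambda^\star$. This should follow as in \Cref{lem:app_risk_smoothness}: $f_\lambda$ and $f_{\pd,\lambda}$ are rational in $\lambda$ via the resolvent $(X^\top X/n+\lambda I)^{-1}$, and second moments of $x_0$ are finite. Alternatively, we can avoid differentiating $D$ altogether. Continuity of $D$ alone, together with the factorization $g=h^2/D$ and $h(\lambda)=h'(\lambda^\star)(\lambda-\lambda^\star)+o(\lambda-\lambda^\star)$, gives directly
\[
R_{\sd}^\star(\lambda)-R(\lambda^\star)=\Bigl(\tfrac12R''(\lambda^\star)-\tfrac{h'(\lambda^\star)^2}{D(\lambda^\star)}\Bigr)(\lambda-\lambda^\star)^2+o\bigl((\lambda-\lambda^\star)^2\bigr),
\]
and the leading coefficient is negative. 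This is the route I would take, since it needs only continuity of $D$.
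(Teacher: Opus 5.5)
Your proposal is correct and is essentially the paper's proof. The paper also writes $R_{\sd}^\star=R-(R-C)^2/D$ with $R-C=-(\lambda/2)R'$ (your $-h$), observes that every term carrying the factor $(R-C)(\lambda^\star)=0$ drops out, and obtains $R_{\sd}^{\star\prime\prime}(\lambda^\star)=R''(\lambda^\star)-\lambda^{\star 2}R''(\lambda^\star)^2/(2D(\lambda^\star))$. It then concludes that $\lambda^\star$ cannot be a local minimizer even though $R_{\sd}^\star(\lambda^\star)=\min R$.

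Your continuity-only Taylor variant harmlessly tightens a regularity step the paper passes over. In any case $D(\lambda)=\lambda^2\,\EE[(x_0^\top Q_\lambda^2X^\top y/n)^2\mid\cD]$ with $Q_\lambda=(X^\top X/n+\lambda I)^{-1}$ is rational in $\lambda$, hence smooth. Like the paper, you do need $D(\lambda^\star)>0$. However, drop the aside that your $g$ blows up when $D(\lambda^\star)=0$: Cauchy--Schwarz gives $(R-C)^2\le R\,D$, so $0\le g\le R$.
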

\Cref{prop:curvature_test_main} formalizes a phenomenon visible in our experiments:
the curves \( R_{\te}(\lambda)\) and \(R_{\sd}^\star(\lambda)\) necessarily \emph{touch} at ridge-optimal value \(\lambda^{\star}\) of the regularization parameter, but \(R_{\sd}^\star\) may bend downward elsewhere and achieve a strictly smaller global minimum.
The curvature test \eqref{eq:curv_cond_main} can be viewed as a generalization of related curvature-based sufficient conditions by \citet[Theorem 3.8]{das2023understanding}, as it applies to arbitrary squared prediction risk and does not require that the response follows a well-specified linear model.

\begin{table}[!t]
\centering
\caption{Curvature test at the ridge-optimal $\lambda^{\star}$.}
\begin{tabular}{cccc}
\toprule
Dataset & Illustration & \eqref{eq:curv_cond_main} holds? & Global gain? \\
\midrule
BlogFeedback & \Cref{fig:blog_main}  &  \cmark & \cmark \\
Communities and Crime & \Cref{fig:CC_main} & \xmark & \xmark \\
CIFAR10  & \Cref{fig:cifar_main} & \xmark & \xmark \\
Air Quality & \Cref{fig:air_quality} & \cmark  & \cmark \\
\bottomrule
\end{tabular}
\label{tab:curvature_test}
\end{table}

As mentioned above, the results in this section are concerned with the to out-of-distribution prediction risk, where $(x_0, y_0)$ may be drawn from a distribution different from that of the training samples.
In \Cref{fig:air_quality}, we showcase a real-data example on the \dataset{Air Quality} dataset with a distribution shift between the training and test sets due to time-dependent effects.
We also numerically verify the curvature test in \Cref{tab:curvature_test} for all of our real-data experiments and observe exact agreement in all cases.

While the results above guarantee a strict improvement at every nonstationary $\lambda$ and require no distributional assumptions, they do not quantify the \emph{magnitude} of the gain, i.e., how much smaller $R_{\sd}^\star(\lambda)$ can be than $R(\lambda)$.
In the next section, we consider standard distributional assumptions in the proportional-asymptotics literature and derive deterministic equivalents for these risks and their improvements as explicit functions of the problem parameters.

\section{Proportional Asymptotic Results}
\label{sec:asymptotics}

We now refine the structural identities from \Cref{sec:structural} by deriving deterministic limits for the optimal SD risk $R_{\sd}^\star(\lambda)$ and the optimal mixing weight $\xi^\star(\lambda)$ in the proportional asymptotics regime in which $n,p\to\infty$ with $p/n\to\gamma\in(0,\infty)$.
Our goals are to (i) obtain computable asymptotic expressions for $R_{\sd}^\star(\lambda)$ and its gain over the teacher risk $R(\lambda)$, and (ii) quantify how these quantities depend on the aspect ratio $\gamma$, the signal-to-noise ratio $\SNR$, and signal--covariance alignment (to be defined below).

\subsection{Data Assumptions}

We assume $\{(x_i,y_i)\}_{i=1}^n$ are drawn i.i.d.\ from a distribution $P_{x,y}$ satisfying the following:

\begin{assumption}
    [Data distribution]
    \label{def:dist}
    The covariate vector $x \sim P_{x}$ admits the representation $x = \Sigma^{1/2}\bz$, where $\Sigma \in \RR^{p \times p}$ is deterministic and positive definite with eigenvalues uniformly bounded away from $0$ and $\infty$, and $\bz \in \RR^{p}$ has i.i.d.\ entries with mean $0$, variance $1$, and uniformly bounded $(4+\mu)$-th moment for some $\mu > 0$.
    The response $y \sim P_{y}$ has mean $0$ and uniformly bounded $(4+\nu)$-th moment, for some $\nu > 0$.
\end{assumption}

The feature structure imposed in \Cref{def:dist} is standard in random matrix analyses of high-dimensional regression; see, e.g., \cite{baisilverstein,bartlett2021deep, misiakiewicz2024six}.
We do \emph{not} assume a well-specified linear model for the response $y$.
Instead, we parameterize $P_{x,y}$ by $(\Sigma, \beta, \sigma^2)$, where $\beta := \Sigma^{-1} \EE[x y]$ denotes the parameter of the population linear $L_2$ projection of $y$ onto $x$ and $\sigma^2:=\Var(y - x^\top \beta)$ denotes the corresponding residual variance.
Finally, We denote the signal energy by $r^2 := \| \beta \|^2_2$ and $\SNR:=r^2/\sigma^2$.

Throughout this section, we focus on the in-distribution squared prediction risk in which the test point $(x_0,y_0)$ in \eqref{eq:pred-risk} is an independent copy drawn from the same distribution as the training samples.
Extensions to out-of-distribution risks is possible building on results of \citet{tripuraneni2021covariate,patil2024optimal} but is technically involved and left for future work.

\subsection{Asymptotics of Optimal Self-Distillation Risk and Mixing Weight}
\label{sec:asymptotics_main}

\begin{figure}[!t]
    \centering
    \includegraphics[width=0.9\textwidth]{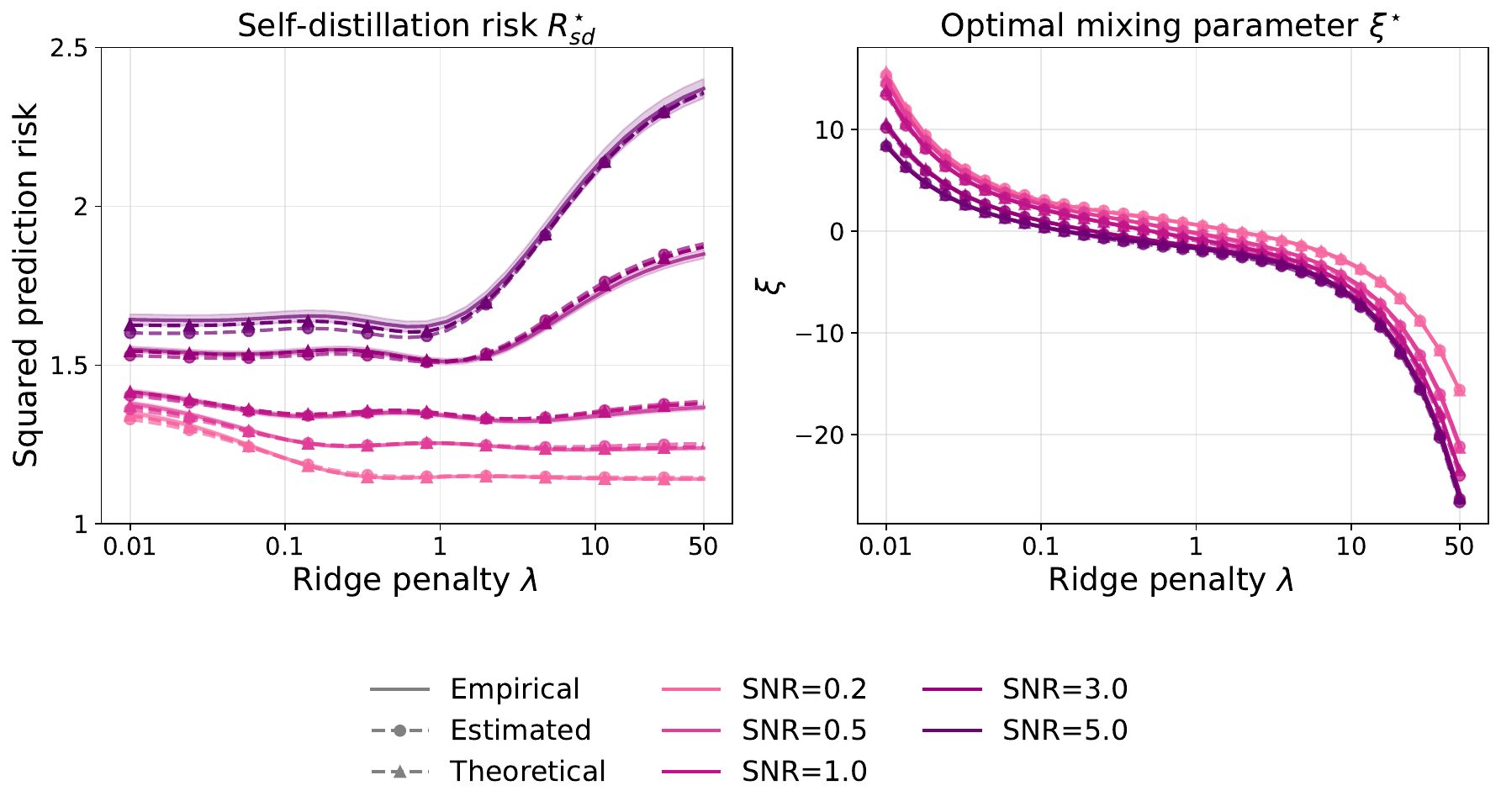}
    \caption{\textbf{Theoretical versus empirical risks.} Asymptotic SD risks and optimal mixing weights versus $\lambda$ across multiple $\SNR$ values. Empirical curves are averaged over $30$ simulations. Estimated curves are obtained using the proposed one-shot tuning method (\Cref{sec:tuning}), and the theoretical curves are obtained from \Cref{thm:risk-asymptotics}. Setting: $n = 400$, $p =200$, $\sigma^2 = 1$, $r^2 = \sigma^2 \SNR$; $\Sigma$ is AR1, $\beta$ is a deterministic signal aligned with the top $10\%$ eigenvectors of $\Sigma$ (alignment factor 0.9). (See \Cref{sec:additional_details} for more details.)}
    \label{fig:asymptotic_ar1_top_aligned}
\end{figure}

Recall from \Cref{sec:structural_closed_form} that, for each fixed $\lambda>0$, the optimal SD risk \eqref{eq:R1_star_short} can be expressed in terms of three random quantities: the teacher risk $R(\lambda)$, the PD risk $R_{\pd}(\lambda)$, and the residual correlation term $C(\lambda)$.
Under proportional asymptotics, these quantities concentrate and converge to deterministic limits, which we denote by $\sR(\lambda)$, $\sR_{\pd}(\lambda)$, and $\sC(\lambda)$, respectively, that only depend on $(\Sigma,\beta,\sigma^2)$, $\gamma$, and $\lambda$.
To characterize these limits, we introduce some scalar parameters.

Write $\otr(A):=\tr(A)/p$ for the normalized trace.
For a fixed $\lambda>0$, let $\kappa=\kappa(\lambda)>0$ be the unique solution to the fixed-point equation
\begin{equation}
\label{eq:kappa_fp_mainpaper}
\kappa=\lambda+\gamma\kappa\,\otr\!\big(\Sigma(\Sigma+\kappa I_p)^{-1}\big).
\end{equation}
Let  $G:=(\Sigma+\kappa I_p)^{-1}$ be the resolvent at $\kappa$ and for any $k\in\{2,3,4\}$ let the trace and signal--covariance alignment functionals be
\begin{equation}
\label{eq:t234_mainpaper}
t_k:=\gamma\,\otr(\Sigma^2G^k),
\quad
q_k:=\beta^\top G^k\Sigma\,\beta,
\end{equation}
respectively.
Next, setting $b:=(1-t_2)^{-1}$, define the variance-trace combinations as
\begin{equation}
u_2:=t_2 b,\quad
u_3:=t_3 b^3,\quad
u_4:=t_4 b^4+2t_3^2 b^5.
\end{equation}
Finally, define the coefficients
\begin{equation}
a_2:=bE^2+b^4\kappa^2\lambda^2 t_4+b^5\kappa^2\lambda^2 t_3^2,
\quad
a_3:=2b^2\kappa\lambda E,
\quad
a_4:=b^3\kappa^2\lambda^2.
\end{equation}
where $E:=\kappa-b\lambda+b^2\kappa\lambda\,t_3$. (The dependence on $\lambda$ of here and throughout is uppressed for readability.)
We are now ready to state the main result of this section.

\begin{theorem}
    [Risk asymptotics]
    \label{thm:risk-asymptotics}
    Under \Cref{def:dist},
    as $n, p \to \infty$ with $p / n \to \gamma \in (0, \infty)$, for each fixed $\lambda > 0$, we have
    \begin{align}
      \xi^\star(\lambda)
      \pto \frac{\sR_{\te}(\lambda) - \sC(\lambda)}{\sR(\lambda) + \sR_{\pd}(\lambda) - 2 \sC(\lambda)},
      \quad
      R_{\sd}^\star(\lambda)
      \pto
      \sR_{\te}(\lambda) -
      \frac{(\sR_{\te}(\lambda)-\sC(\lambda))^2}{\sR(\lambda) + \sR_{\pd}(\lambda) - 2 \sC(\lambda)},
    \end{align}
    where the individual component limits are as follows:
    \begin{align*}
  \sR(\lambda)
  &:=
  \kappa^2 b\,q_2
  +
  \sigma^2\,u_2
  + 
  \sigma^2,
  \\
  \sC(\lambda)
  &:=
  2\kappa^2 b\,q_2-(\kappa bE\,q_2+\kappa^2 b^2\lambda\,q_3)
  +
  \sigma^2\,(u_2-\lambda u_3)
  +
  \sigma^2,
  \\
  \sR_{\pd}(\lambda)
  &:=
  4\kappa^2 b\,q_2-2(2\kappa bE\,q_2+2\kappa^2 b^2\lambda\,q_3) +(a_2q_2+a_3q_3+a_4q_4) + \sigma^2\,(u_2-2\lambda u_3+\lambda^2 u_4)
  +
  \sigma^2.
    \end{align*}
\end{theorem}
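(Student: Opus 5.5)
The first step is a reduction. By \Cref{prop:closed_form_xi_R1}, both $\xi^\star(\lambda)$ and $R_{\sd}^\star(\lambda)$ are continuous functions of the triple $(R(\lambda),C(\lambda),R_{\pd}(\lambda))$ wherever $D=R+R_{\pd}-2C>0$. So by the continuous mapping theorem it is enough to show three convergences:
\[
R(\lambda)\pto\sR(\lambda),\qquad C(\lambda)\pto\sC(\lambda),\qquad R_{\pd}(\lambda)\pto\sR_{\pd}(\lambda),
\]
together with $\sR+\sR_{\pd}-2\sC>0$.

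To get these, I will write everything in terms of resolvents. Set $\hat\Sigma=X^\top X/n$ and $Q=(\hat\Sigma+\lambda I)^{-1}$. The teacher coefficient is $\hat\beta=Q X^\top y/n$. Since the teacher's fitted values are $X\hat\beta$, the PD coefficient is $\hat\beta_{\pd}=Q\hat\Sigma\hat\beta=(I-\lambda Q)\hat\beta$.

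Decompose $y=X\beta+\varepsilon$, where $\varepsilon$ is the $L_2$-projection residual. It is uncorrelated with $x$ but not independent of it, so I will handle it as in the misspecified-ridge literature. This gives
\[
\hat\beta-\beta=-\lambda Q\beta+QX^\top\varepsilon/n,\qquad
\hat\beta_{\pd}-\beta=-(2\lambda Q-\lambda^2Q^2)\beta+(I-\lambda Q)QX^\top\varepsilon/n .
\]
For an independent test point, each of the three risks has the form $\sigma^2+(\text{estimation error})^\top\Sigma(\text{estimation error})$, up to a cross term of order $o_p(1)$.

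The resulting quantities fall into two groups:
\begin{itemize}
\item Bias functionals: $\lambda^2\beta^\top Q\Sigma Q\beta$, $\lambda^3\beta^\top Q\Sigma Q^2\beta$ and $\lambda^4\beta^\top Q^2\Sigma Q^2\beta$.
\item Variance functionals: $\tfrac{\sigma^2}{n}\tr(\hat\Sigma Q^a\Sigma Q^b)$ for $a+b\in\{2,3,4\}$, after replacing $\varepsilon\varepsilon^\top$ by $\sigma^2 I$ through standard quadratic-form concentration.
\end{itemize}
The second-order terms are classical: $\lambda^2 Q\Sigma Q\simeq\kappa^2 bG\Sigma G$, and the variance term gives $\sigma^2 u_2$. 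Together these yield $\sR$.

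The higher-order terms come from differentiating in $\lambda$. Since $Q^2=-\partial_\lambda Q$, I get $Q\Sigma Q^2$ by differentiating the deterministic equivalent of $Q(\lambda)\Sigma Q(\mu)$ in one argument and setting $\mu=\lambda$. Likewise, $Q^2\Sigma Q^2$ is obtained from a mixed second derivative $\partial_\lambda\partial_\mu$ of the two-point equivalent of $Q(\lambda)\Sigma Q(\mu)$. That two-point equivalent I would derive by block linearization, applying the anisotropic deterministic equivalent to the $2p\times 2p$ resolvent of
\[
\begin{pmatrix}\hat\Sigma+\lambda I & -\Sigma\\ 0 & \hat\Sigma+\mu I\end{pmatrix},
\]
whose off-diagonal block equals $Q(\lambda)\Sigma Q(\mu)$. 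Differentiating the fixed point \eqref{eq:kappa_fp_mainpaper} gives $\kappa'=b\,\kappa/\ldots$-type expressions. These produce the factors $b$, $b^3$, $b^5 t_3^2$ and the combination $E$. Justifying the exchange of limits and derivatives relies on analyticity in $\lambda$ plus uniform boundedness of the functionals (Vitali/Montel).

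The cross terms $C$ then follow by expanding the inner product of the two error representations. Nonnegativity of $\sR+\sR_{\pd}-2\sC$ is inherited from $D\ge0$. Strict positivity holds because $\hat\beta-\hat\beta_{\pd}=\lambda Q\hat\beta$ has a nonvanishing limit norm whenever $\lambda>0$ and $r^2+\sigma^2>0$.

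I expect the main obstacle to be the fourth-order terms, $\beta^\top Q^2\Sigma Q^2\beta$ and $\tr(\hat\Sigma Q^2\Sigma Q^2)$. These require deterministic equivalents that remain valid under differentiation in two independent spectral parameters. I would first try to derive the two-parameter equivalent with error bounds uniform on compact subsets of the complex plane away from the negative real axis, and only then take derivatives. The remaining algebra, matching the output to the stated $a_2,a_3,a_4$ and $u_4$, is routine but lengthy.
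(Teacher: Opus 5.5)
Your proposal is correct and follows the paper's proof essentially step for step: the exact error decomposition, the reduction to bias, noise, and leave-one-out--controlled cross terms, a two-point equivalent for $Q(\lambda)\Sigma Q(\mu)$ via block linearization, and $\lambda$-differentiation of that equivalent (with $\kappa'=b$ exactly) to reach the third- and fourth-order bias terms, closed by the continuous mapping theorem. The one simplification you missed is on the variance side: since $\widehat\Sigma$ and $Q$ commute, $\tr(\widehat\Sigma Q^a\Sigma Q^b)=\tr(\Sigma\widehat\Sigma Q^{a+b})$, so with $U_k:=\frac1n\tr(\Sigma\widehat\Sigma Q^k)$ one has $U_3=-\tfrac12U_2'$ and $U_4=\tfrac16U_2''$ from the one-point equivalent alone, and your analyticity/Vitali argument is precisely what justifies the paper's ``differentiate a deterministic equivalent'' step.
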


To our knowledge, this is the first precise characterization of the optimal SD risk in the proportional asymptotics regime that holds under the general setting of anisotropic covariance and deterministic signal.
The spectrum of $\Sigma$ enters through resolvent traces such as $t_2,t_3,t_4$, while the signal enters through the alignment functionals $q_2,q_3,q_4$.
As illustrated in \Cref{fig:asymptotic_ar1_top_aligned}, the theoretical predictions closely match empirical risks even for moderate $n$ and $p$.

In the isotropic-signal specialization when $\beta \sim \mathcal{N}(\bm{0}, (r^2/p)  I_p)$, the quadratic forms $q_k$ simplify to trace functionals; see \Cref{app:thm:risk-asymptotics-isotropic-signal}.
In this setting, the strict-improvement and sign behavior becomes particularly transparent, as illustrated in our next result.

\begin{corollary}
    \label{cor:xi_asymptotic}
    Under \Cref{def:dist} with $\beta\sim\mathcal{N}(0,(r^2/p)I_p)$, we have
    \begin{enumerate}[label=(\alph*),leftmargin=7mm,nosep,itemsep=2pt]
        \item $\sR_{\sd}^{\star}(\lambda) < \sR(\lambda) \text{ for all } \lambda \neq \lambda^{\star} := \gamma \frac{\sigma^2}{r^2}$.
        \item $\xi^{\star}(\lambda) < 0$ iff $\lambda > \lambda^{\star}$, and $\xi^{\star}(\lambda) > 0$ iff $\lambda < \lambda^{\star}$, i.e., $\mathrm{sign}(\xi^{\star}(\lambda)) = \mathrm{sign}(\lambda^{\star} - \lambda)$ for all $\lambda \neq \lambda^{\star}$.
    \end{enumerate}
\end{corollary}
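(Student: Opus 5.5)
The plan is to transfer the finite-sample sign rule of \Cref{thm:tangent_sign} to the limit, using the isotropic-signal specialization of \Cref{thm:risk-asymptotics}. By \Cref{thm:tangent_sign}, for each $n$ we have $\xi^\star(\lambda) = -\tfrac{\lambda}{2}R'(\lambda)/D(\lambda)$ and $R(\lambda)-R_{\sd}^\star(\lambda) = \tfrac{\lambda^2}{4}(R'(\lambda))^2/D(\lambda)$. In the limit, the numerator $\sR-\sC$ of the asymptotic $\xi^\star$ should equal $-\tfrac{\lambda}{2}\,\tfrac{d}{d\lambda}\sR(\lambda)$. I will verify this identity directly from the closed forms rather than by interchanging limit and derivative.

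The first step is to specialize the $q_k$. For $\beta\sim\mathcal N(0,(r^2/p)I_p)$, $q_k \to r^2\,\otr(G^k\Sigma)$. The teacher limit then becomes $\sR(\lambda)=b\bigl(\kappa^2 r^2\otr(\Sigma G^2)+\sigma^2 t_2\bigr)+\sigma^2$, the classical isotropic ridge risk. Differentiating the fixed point \eqref{eq:kappa_fp_mainpaper} gives $\kappa'(\lambda)=b$. Using $\gamma\otr(\Sigma G)=1-\lambda/\kappa$ and $\gamma\otr(\Sigma G)-\gamma\kappa\otr(\Sigma G^2)=t_2$, one can rewrite $\kappa^2\otr(\Sigma G^2)$ in terms of $t_2$ and $\lambda/\kappa$. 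The standard computation then yields
\[
\frac{d}{d\lambda}\sR(\lambda) = c(\lambda)\,\bigl(\lambda r^2-\gamma\sigma^2\bigr)
\]
with an explicit factor $c(\lambda)>0$ built from $b$, $\kappa$, and traces such as $\otr(\Sigma^2 G^3)$. This is the well-known fact that isotropic ridge is optimized at $\lambda^\star=\gamma\sigma^2/r^2$, with derivative sign equal to $\operatorname{sign}(\lambda-\lambda^\star)$.

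The second step, and the main obstacle, is showing that $\sR-\sC = -\tfrac{\lambda}{2}\tfrac{d}{d\lambda}\sR$ algebraically. From the formulas,
\[
\sR-\sC = -\kappa^2 b\,q_2+\kappa bE\,q_2+\kappa^2b^2\lambda q_3+\sigma^2\lambda u_3 .
\]
I will substitute $E=\kappa-b\lambda+b^2\kappa\lambda t_3$, which collapses this to $\lambda\bigl[-\kappa b^2 q_2+\kappa^2 b^3 t_3 q_2+\kappa^2 b^2 q_3+\sigma^2 b^3 t_3\bigr]$. Then, using $\kappa'=b$, $\tfrac{d}{d\lambda}t_2=-2b\,t_3$, $b'=-2b^3 t_3$, and $\tfrac{d}{d\lambda}q_2=-2b\,q_3$, I will check that $\tfrac{d}{d\lambda}\sR$ equals $-2\times$ the bracket. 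Only the first-order derivative identities are needed, so this is bookkeeping, though care is needed with the $q_k$ chain rule. As a fallback, the identity $R-C=-\tfrac{\lambda}{2}R'$ holds at every finite $n$ and is a polynomial identity in resolvent quantities. Uniform convergence of the smooth, analytic-in-$\lambda$ risks, via Vitali-type arguments, would justify passing derivatives to the limit.

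The final step is to deduce (a) and (b). The denominator $\sR+\sR_{\pd}-2\sC$ is the limit of $D\ge0$, and I must show it is strictly positive. I will argue that it is bounded below by its noise term $\sigma^2(\lambda^2u_4)$ combined with the other terms; equivalently, it is $\lambda^2$ times a limit of $\otr$-type quantities of $\Sigma G^2$-form, all positive since $\Sigma\succ0$. Then $\xi^\star\to -\tfrac{\lambda}{2}c(\lambda)(\lambda r^2-\gamma\sigma^2)/\mathscr D$, which has sign $\operatorname{sign}(\lambda^\star-\lambda)$, giving (b). Likewise, the gain equals $(\sR-\sC)^2/\mathscr D>0$ exactly when $\lambda\neq\lambda^\star$, giving (a).
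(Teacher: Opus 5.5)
\textbf{Assessment.} Your proposal is correct, and it follows a different route from the paper.

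\textbf{How the two routes differ.} The paper never uses $\sR'$. It conditions on $X$ and averages over the isotropic $\beta$ and over a noise vector taken independent of $X$. This gives the exact finite-sample factorization $\barR_X(\lambda)-\barC_X(\lambda)=\lambda(\lambda^\star_{n,p}-\lambda)\tfrac{r^2}{p}\tr(\Sigma Q_\lambda^2\hat\Sigma Q_\lambda)$, with $\lambda^\star_{n,p}=\tfrac{p}{n}\tfrac{\sigma^2}{r^2}$, and the paper then passes to the limit. You work only with the closed-form limits, and both of your algebraic claims check out:
\begin{itemize}
\item Substituting $E$ and using $\kappa'=b$, $t_2'=-2bt_3$, $b'=-2b^3t_3$ and $q_2'=-2bq_3$ gives $\sR-\sC=-\tfrac{\lambda}{2}\sR'$. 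This holds for arbitrary $\Sigma$ and arbitrary deterministic $\beta$.
\item With $q_k=r^2\otr(\Sigma G^k)$, the identities $\otr(\Sigma^2G^3)=\otr(\Sigma G^2)-\kappa\otr(\Sigma G^3)$ and $1-t_2=\lambda/\kappa+\gamma\kappa\otr(\Sigma G^2)$ give $\sR'(\lambda)=2b^3\otr(\Sigma^2G^3)(\lambda r^2-\gamma\sigma^2)$.
\item Hence $\sR-\sC=\lambda b^3\otr(\Sigma^2G^3)\,r^2(\lambda^\star-\lambda)$. This is exactly the limit of the paper's factor, since $\tfrac1p\tr(\Sigma Q_\lambda^2\hat\Sigma Q_\lambda)=\tfrac{n}{p}U_3\pto u_3/\gamma=b^3\otr(\Sigma^2G^3)$.
\end{itemize}

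\textbf{What each route buys.} Your route shows that the sign rule of \Cref{thm:tangent_sign} survives in the limit for general signals; isotropy of $\beta$ is used only to locate the unique zero of $\sR'$. It also gives an explicit, strictly positive limiting factor. The paper only argues that the trace is positive at each finite $n$, which by itself does not rule out a vanishing limit. Finally, your route needs no independence between noise and design beyond what \Cref{def:dist} provides. The paper's route is shorter and more elementary: a single trace factorization with no bookkeeping of derivatives of $\kappa$, $b$ or $t_k$, and it shows the mechanism already at finite $n$.

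\textbf{Tightening the denominator step.} Do not rely on term-by-term positivity of the closed form ``since $\Sigma\succ0$''. The bias part of $\sD$, namely $\kappa^2bq_2-2\kappa bE\,q_2-2\kappa^2b^2\lambda q_3+a_2q_2+a_3q_3+a_4q_4$, contains terms of mixed sign. The clean argument has two pieces:
\begin{itemize}
\item The noise part of $\sD$ is exactly $\sigma^2\lambda^2u_4=\sigma^2\lambda^2(t_4b^4+2t_3^2b^5)>0$.
\item The bias part is the limit of $\|\lambda Q_\lambda(I_p-\lambda Q_\lambda)\beta\|_\Sigma^2\ge0$, because $\beta_\lambda-\beta_{\pd,\lambda}=\lambda Q_\lambda\beta_\lambda$ and the bias--noise cross terms are $o_p(1)$ by \Cref{lem:response_concentration}.
\end{itemize}
Together these give $\sD\ge\sigma^2\lambda^2u_4>0$.
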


Thus, in the isotropic-signal setting, optimal SD strictly improves upong ridge at every suboptimal value of $\lambda$, and it matches the (statistically optimal, for the isotropic design) ridge risk only at the ridge-optimal value $\lambda^\star$.
Moreover, the sign of the optimal mixing weight is determined solely by whether the ridge model is under-regularized ($\lambda<\lambda^\star$) or over-regularized ($\lambda>\lambda^\star$).
Simulation studies show in \Cref{fig:asymptotic_gain_ar1_top_aligned} are in close agreement with these findings: (i) $\sR_{\sd}^\star(\lambda)$ lies below $\sR(\lambda)$ for all nonstationary $\lambda$, (ii) $\xi^\star(\lambda)$ flips sign at $\lambda^\star$, and (iii) the gain is largest in strongly under- or over-regularized regimes.
Additional illustrations for other covariance and signal geometries are provided in \Cref{app:additional-illustrations-propasymp}.

\begin{figure}[!t]
    \centering
    \includegraphics[width=0.49\textwidth]{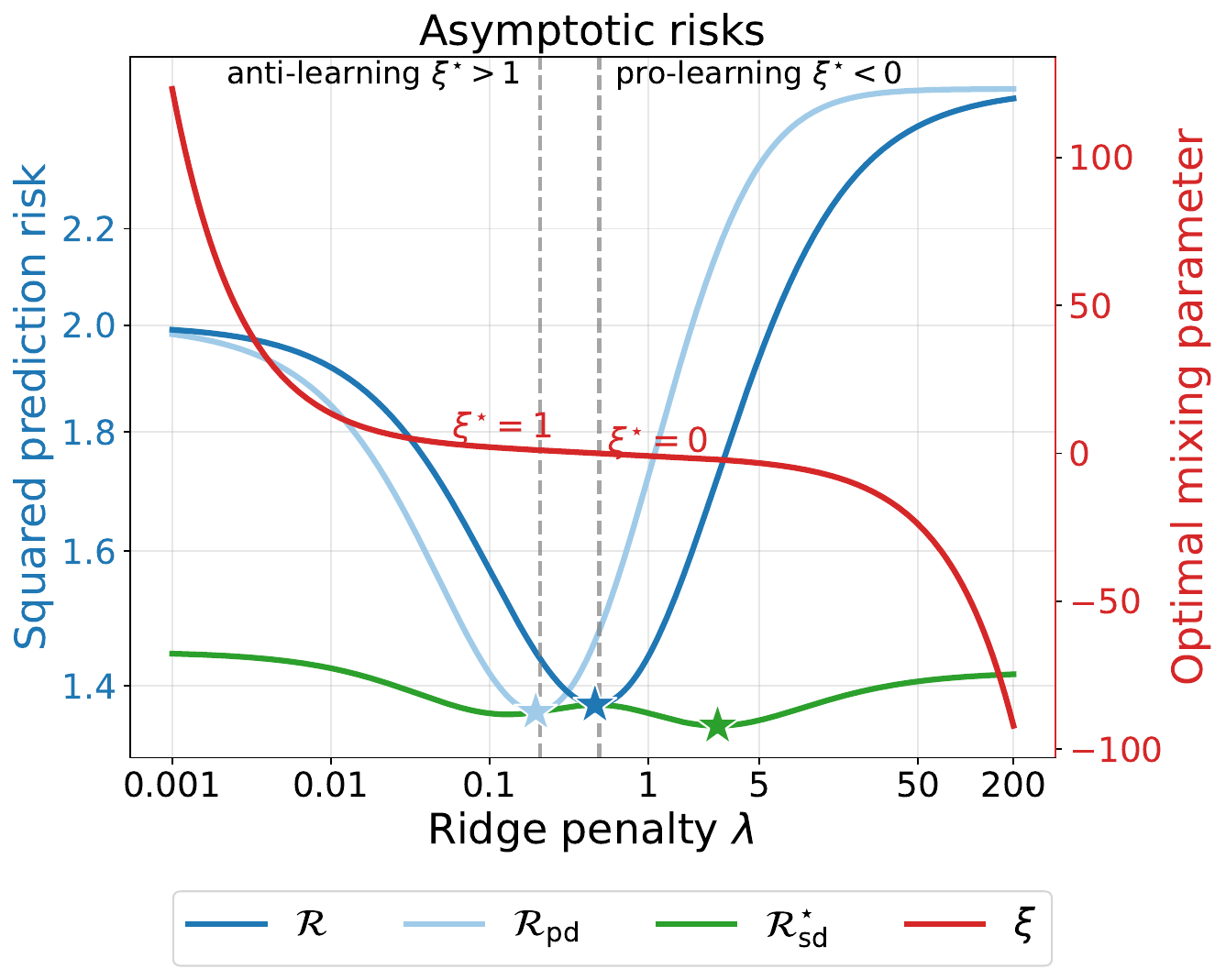}
    \includegraphics[width=0.49\textwidth]{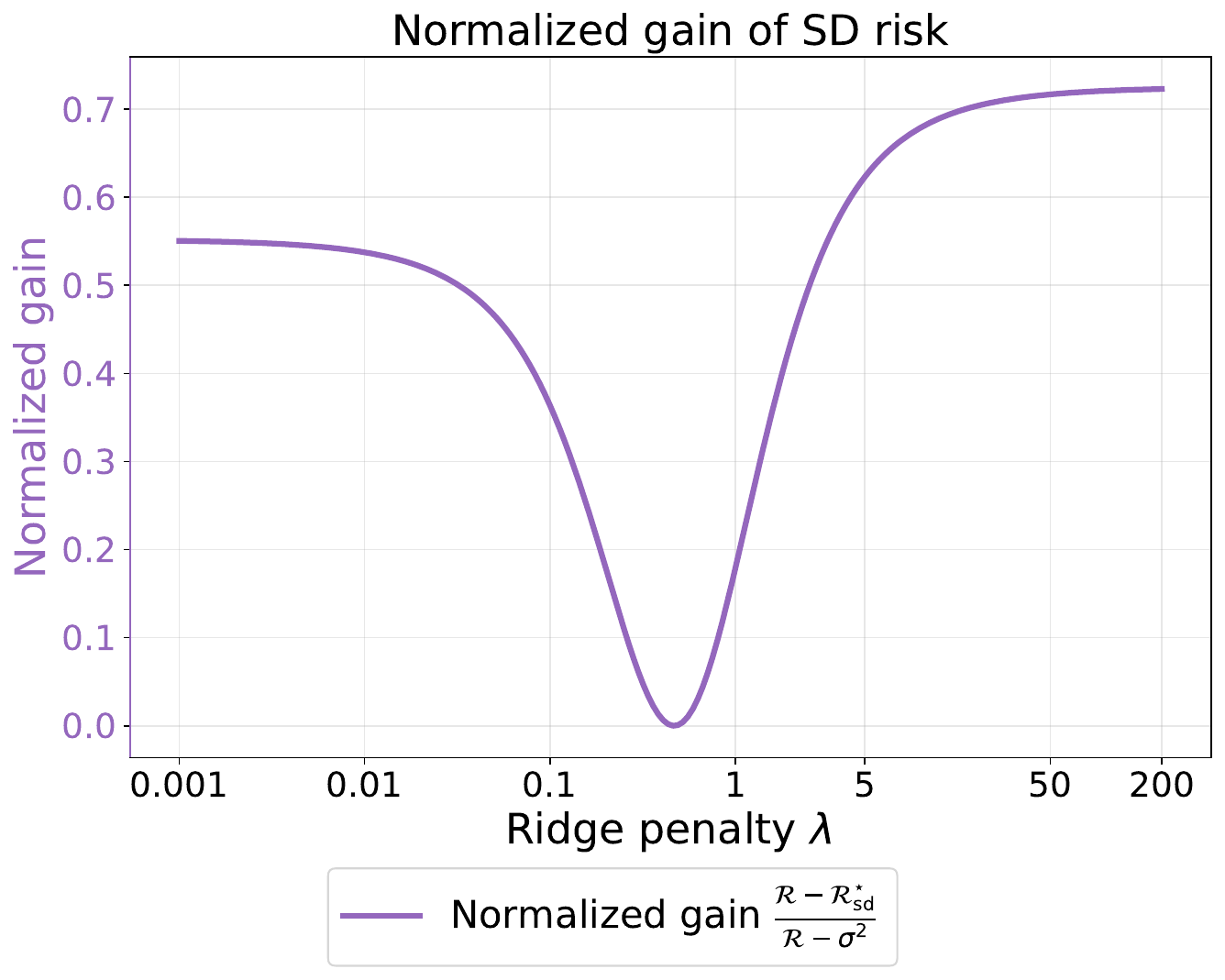}
    \caption{\textbf{Asymptotic gain over the teacher.} Deterministic limits of the risks and the gain $\sR(\lambda)-\sR_{\sd}^\star(\lambda)$. Same setting as \Cref{fig:asymptotic_ar1_top_aligned} with $r^2 = \sigma^2 = 1$.}
    \label{fig:asymptotic_gain_ar1_top_aligned}
\end{figure}

\subsection{Self-Distillation Risks with Extreme Regularization}
\label{sec:asymptotics_gains}

Next, we study how close optimal SD can get to the best possible predictor when the teacher is \emph{extremely} under- or over-regularized.
We focus on the isotropic design and isotropic signal setting ($\Sigma=I_p$ and $\beta\sim\mathcal{N}(0,(r^2/p)I_p)$), where the ridge-optimal predictor is Bayes-optimal and its asymptotic risk $\sR^\star$ is known in closed form \citep{dobriban2018high, hastie2022surprises}.
We compare the limiting SD risk $\sR_{\sd}^\star(\lambda)$ to $\sR^\star$ as $\lambda\to 0$ and $\lambda\to\infty$.

\begin{proposition}[Comparison with the optimal ridge]
\label{prop:compare_extreme_lambda}
        Assume $\Sigma=I_p$ and $\beta\sim\mathcal{N}(0,(r^2/p)I_p)$ and 
        let $S^{\star}(\SNR, \gamma) := \frac{1}{2 \gamma} (\, \SNR (\gamma - 1) - \gamma 
        +\sqrt{4 \SNR \gamma^2 + (\SNR(\gamma - 1) - \gamma)^2} \,)$. 
       Then
        \begin{align}
         \lim_{\lambda \to 0}
         \,
         \frac{\sR_{\sd}^{\star}(\lambda) - \sR^{\star}}{\sR^{\star}}
         &=
     \left\{
    \begin{array}{ll}
    \displaystyle
    \frac{\SNR (1 - \gamma)^2 + \gamma}{
    (\SNR(1 - \gamma)^3 + \gamma(1 - \gamma^2))(S^{\star} + 1)
    } - 1,
    & \gamma \in (0, 1), \\
    \displaystyle
    \frac{\SNR^2 (\gamma - 1)^4 + \SNR \gamma (2\gamma + 1)(\gamma -1)^2 + \gamma^4}{
    (\SNR \gamma (\gamma - 1)^3 + \gamma^2(\gamma^2 - 1))(S^{\star} + 1)
    } - 1,
    & \gamma \in (1, \infty), \\
    \end{array}
    \right.
        \nonumber 
        \\
         \lim_{\lambda \to \infty}
        \,
        \frac{\sR_{\sd}^{\star}(\lambda) - \sR^{\star}}{\sR^{\star}}
        &= 
        \frac{\SNR^2 \gamma + \SNR (2 \gamma + 1) + \gamma}{( \SNR (\gamma + 1) + \gamma ) (S^{\star} + 1)} - 1
        .
        \nonumber 
        \end{align}
\end{proposition}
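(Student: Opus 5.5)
Our approach is to specialize \Cref{thm:risk-asymptotics} to $\Sigma=I_p$ with isotropic signal, so that everything becomes an explicit function of $\kappa$. We then take the two limits $\lambda\to0$ and $\lambda\to\infty$ in the closed form $\sR_{\sd}^\star=\sR-(\sR-\sC)^2/(\sR+\sR_{\pd}-2\sC)$.

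\textbf{Specialization.} With $\Sigma=I_p$ we have $G=(1+\kappa)^{-1}I_p$. Hence $t_k=\gamma(1+\kappa)^{-k}$, and in the isotropic-signal version (\Cref{app:thm:risk-asymptotics-isotropic-signal}) $q_k\to r^2(1+\kappa)^{-k}$. The fixed point \eqref{eq:kappa_fp_mainpaper} becomes $\kappa=\lambda+\gamma\kappa/(1+\kappa)$, a quadratic in $\kappa$. Substituting into $b,u_k,E,a_k$ expresses $\sR,\sC,\sR_{\pd}$ as rational functions of $(\kappa,\lambda,\gamma,r^2,\sigma^2)$. Dividing by $\sigma^2$ leaves dependence only on $\SNR$. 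The benchmark $\sR^\star$ is the known optimal ridge risk at $\lambda^\star=\gamma/\SNR$. We will verify that it equals $\sigma^2(1+S^\star)$, where $S^\star$ is the known Stieltjes-type expression: solving the quadratic for $\kappa$ at $\lambda=\lambda^\star$ gives $\sR(\lambda^\star)=\sigma^2(1+S^\star)$. This explains the factor $(S^\star+1)$ in the denominators.

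\textbf{Limit $\lambda\to0$.} For $\gamma<1$ the fixed point gives $\kappa\to0$, with $\kappa\sim\lambda/(1-\gamma)$. For $\gamma>1$ we have $\kappa\to\kappa_0=\gamma-1>0$. In the case $\gamma<1$ the numerator $\sR-\sC$ and the denominator $D$ both vanish, since the teacher and PD coincide at $\lambda=0$ because the ridgeless map is a projection. We therefore expand $\sR-\sC$ to order $\lambda$ and $D$ to order $\lambda^2$, using $\kappa=\lambda/(1-\gamma)+O(\lambda^2)$. Equivalently, by \Cref{thm:tangent_sign}, the ratio is $(\lambda\sR'/2)^2/D$. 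The $\lambda\to0$ limit of this ratio is finite and gives the gain over the ridgeless risk $\sigma^2(1+\gamma/(1-\gamma))$. In the case $\gamma>1$, $\kappa_0>0$ but $b$, $E$, and the $\lambda$-weighted terms still degenerate. We would again Taylor expand in $\lambda$, now around $\kappa_0$ with $d\kappa/d\lambda|_0$ obtained from the implicit equation, and keep the leading orders. After simplification, dividing by $\sigma^2(1+S^\star)$ and subtracting $1$ should give the stated rational expressions.

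\textbf{Limit $\lambda\to\infty$.} Here $\kappa=\lambda+\gamma+O(1/\lambda)$, $b\to1$, and $t_k\to0$. The teacher risk tends to the null risk $r^2+\sigma^2$. Terms like $\kappa^2q_2\to r^2$ and $\lambda\kappa^2 q_3/(1+\kappa)$ must be tracked to $O(1/\lambda)$. The numerator $\sR-\sC$ is $O(1/\lambda)$ and $D$ is $O(1/\lambda^2)$, so their quotient has a finite limit. We therefore expand each quantity to second order in $1/\lambda$.

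\textbf{Main obstacle.} The main difficulty is the bookkeeping of these cancellations. Both $\sR-\sC$ and $D$ vanish at the endpoints, so first- and second-order expansions of every coefficient ($E$, $a_2$, $a_3$, $a_4$, $u_4$) are needed. The $\gamma>1$, $\lambda\to0$ case is the hardest, because $b=(1-t_2)^{-1}$ with $t_2=\gamma/(1+\kappa_0)^2=1/\gamma$ stays finite, but the cancellations are less transparent. We would carry these expansions out with a symbolic algebra check and then simplify to the displayed forms.
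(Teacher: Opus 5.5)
Your plan is correct and is essentially the paper's second proof: specialize the general deterministic equivalent to $\Sigma=I_p$ (so $t_k=\gamma(1+\kappa)^{-k}$ and $q_k\to r^2(1+\kappa)^{-k}$), identify the benchmark with $\sigma^2(1+S^\star)$, and take the endpoint limits $\kappa\to 0$ ($\gamma<1$), $\kappa\to\gamma-1$ ($\gamma>1$) and $\kappa\to\infty$. The only difference is bookkeeping: you Taylor-expand the vanishing numerator and discrepancy separately, whereas the paper eliminates $\lambda=\kappa(1+\kappa-\gamma)/(1+\kappa)$, cancels the common vanishing factor once to get a single rational function of $\kappa$ (nonzero denominator at the endpoints), and reads off the limits by substitution; the paper also gives an independent first proof via Marchenko--Pastur negative moments.
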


\begin{figure}[!t]
    \centering
    \includegraphics[width=0.99\textwidth]{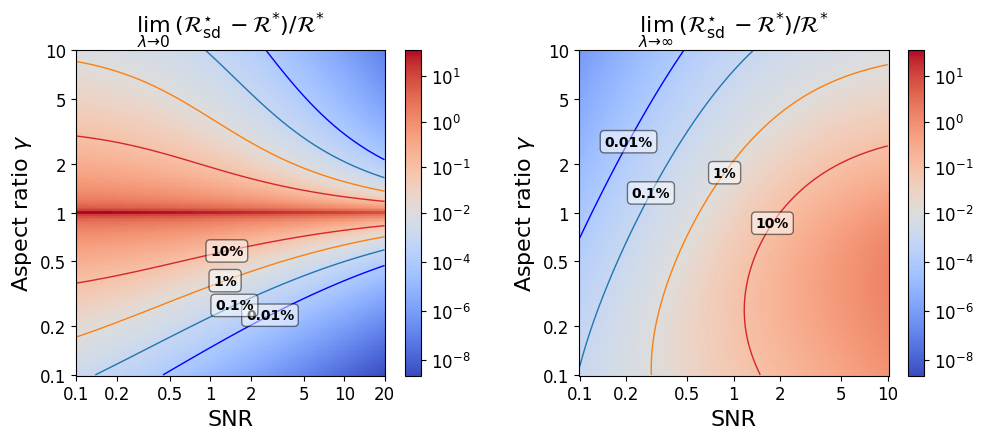}
    \caption{\textbf{Approximating the best predictor.} Percentage difference between $\sR_{\text{sd}}^{\star}(\SNR, \gamma)$ and the Bayes-optimal ridge risk $\sR^{\star}(\SNR, \gamma)$ in the isotropic design and signal setting.}
    \label{fig:heatmap_isotropic}
\end{figure}

 \Cref{prop:compare_extreme_lambda} expresses the relative suboptimality of optimal SD under extreme regularization scenarios explicitly in terms of $(\gamma,\SNR)$ under the isotropic design and signal setting.
The resulting percentage gaps are illustrated in \Cref{fig:heatmap_isotropic}.
Across a wide range of $(\gamma,\SNR)$, optimal SD can be remarkably close to Bayes-optimal performance even when the teacher is extremely under- or over-regularized (e.g., within $0.01\%$ for $(\SNR,\gamma)=(2,0.2)$ as $\lambda\to 0$).
The heatmaps also suggest a qualitative dichotomy: for low $\SNR$, over-regularization is broadly preferable across $\gamma$, while for high $\SNR$, under-regularization is preferable except near the square design regime when $\gamma\approx 1$.
For comparison, we also report the analogous extreme-$\lambda$ gaps between the original ridge risk $\sR(\lambda)$ and the ridge-optimal risk $\sR^{\star}$ in \Cref{app:additional-illustrations-extremereg}, along with experiments with other covariance structures.

\section{One-Shot Tuning and Risk Estimation}
\label{sec:tuning}

The results in \Cref{sec:asymptotics} quantify the benefits of optimal self-distillation.
However, the deterministic equivalents in \Cref{thm:risk-asymptotics} depend on population quantities (e.g., the covariance spectrum and signal--covariance alignment), which are unknown in practice.
For the practical question for tuning the mixing parameter $\xi$, a standard method involves grid search combined with split cross-validation (CV), but this is computationally expensive (requiring repeated retrainings over candidate $\xi$) and statistically inefficient in high dimensions due to sample splitting (see, e.g., \citet{rad2020scalable}).
In this section, we propose a computationally efficient one-shot procedure that estimates $\xi^\star(\lambda)$ from the training data without grid search or hold-out sets.

\subsection{Risk Estimators via Generalized Cross-Validation}
\label{sec:tuning_estimators}

Our starting point is the closed-form identity for optimal SD given in 
\Cref{prop:closed_form_xi_R1}, which expresses $\xi^\star(\lambda)$ and $R_{\sd}^\star(\lambda)$ in terms of the teacher risk $R(\lambda)$, the PD risk $R_{\pd}(\lambda)$, and the residual correlation term $C(\lambda)$.
We construct estimators for these three terms using training data by means of generalized cross-validation (GCV) and its variants, along with plug-in estimation of $\xi^\star(\lambda)$.

Let $\hat y_\lambda := f_\lambda(X)$ and $\hat y_{\pd,\lambda} := f_{\pd,\lambda}(X)$ be the fitted values of the ridge teacher and the pure-distilled predictors, respectively.
Our estimators involve the notion of \emph{effective degrees of freedom} from the theory of statistical optimism \citep{efron_1983,efron_1986}.
The degrees of freedom $\df(f)$ of a (possibly nonlinear) predictor $f$ is measured by the trace of the operator $y \mapsto (\partial/\partial y)f(y)$ \citep{hastie_tibshirani_1990,stein_1981}.
In particular, for linear smoothers, this corresponds to the trace of the smoothing matrix (the so-called ``hat'' matrix). 

Set $\df_\lambda := \df(f_\lambda)$ and $\df_{\pd,\lambda} := \df(f_{\pd,\lambda})$ and define the GCV-corrected residuals
\begin{equation}
\label{eq:gcv_residuals_compact}
  \hat r_\lambda:=\frac{y-\hat y_\lambda}{1-\df_\lambda/n},
  \quad
  \hat r_{\pd,\lambda}:=\frac{y-\hat y_{\pd,\lambda}}{1-\df_{\pd,\lambda}/n}.
\end{equation}
We estimate the teacher and PD prediction risks, as well as their residual correlation term, by
\begin{equation}
\label{eq:risk_estimators_compact}
  \widehat R(\lambda) := \frac{\|\hat r_\lambda\|_2^2}{n},
  \quad
  \widehat R_{\pd}(\lambda) := \frac{\|\hat r_{\pd,\lambda}\|_2^2}{n},
  \quad 
  \widehat C(\lambda) := \frac{\langle \hat r_\lambda,\hat r_{\pd,\lambda}\rangle}{n},
\end{equation}
respectively.
Here $\widehat R(\lambda)$ coincides with the standard ridge GCV estimator, while $\widehat R_{\pd}(\lambda)$ and $\widehat C(\lambda)$ extend the same $\df$ correction principle to the PD and cross-term quantities appearing in \Cref{prop:closed_form_xi_R1}.

\subsection{One-Shot Estimators for Optimal Mixing Weight and Optimal SD Risk}

Plugging the estimators \eqref{eq:risk_estimators_compact} into the exact identities in \eqref{eq:R1_star_short} yields the one-shot estimators
\begin{equation}
\label{eq:oneshot_xi_Rsd}
  \hat\xi^\star(\lambda):=\frac{\widehat R(\lambda)-\widehat C(\lambda)}{\widehat R(\lambda)+\widehat R_{\pd}(\lambda)-2\widehat C(\lambda)},
  \quad
  \widehat R_{\sd}^\star(\lambda)
  :=\widehat R(\lambda)-\frac{(\widehat R(\lambda)-\widehat C(\lambda))^2}{\widehat R(\lambda)+\widehat R_{\pd}(\lambda)-2\widehat C(\lambda)}.
\end{equation}
Note that the estimated denominator equals
\begin{equation}
\widehat D(\lambda):=\widehat R(\lambda)+\widehat R_{\pd}(\lambda)-2\widehat C(\lambda)
=\frac{1}{n}\|\hat r_\lambda-\hat r_{\pd,\lambda}\|_2^2\ge 0,
\end{equation}
mirroring the nonnegativity of its population counterpart in \eqref{eq:Dlambda}.
When $\widehat D(\lambda)$ is extremely small in finite samples, one may stabilize \eqref{eq:oneshot_xi_Rsd} by adding a small ridge term to the denominator; in our experiments this was not necessary.
Our next result shows that the one-shot estimates are consistent in the proportional regime.

\begin{theorem}[Consistency of one-shot SD tuning]
\label{thm:oneshot_consistency}
Under \Cref{def:dist}, as $n,p\to\infty$ with $p/n\to\gamma\in(0,\infty)$, for each fixed $\lambda>0$, we have
\begin{equation}
  \hat\xi^\star(\lambda) -\xi^\star(\lambda) \pto 0,
  \quad
  \widehat R_{\sd}^\star(\lambda) -R_{\sd}^\star(\lambda) \pto 0.
\end{equation}
\end{theorem}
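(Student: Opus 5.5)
The plan is to reduce everything to consistency of the three building blocks and then use the continuous mapping theorem. Concretely, I will show that
\[
\widehat R(\lambda)-R(\lambda)\pto 0,\qquad \widehat R_{\pd}(\lambda)-R_{\pd}(\lambda)\pto 0,\qquad \widehat C(\lambda)-C(\lambda)\pto 0 .
\]
By \Cref{thm:risk-asymptotics}, each of $R,R_{\pd},C$ converges in probability to the deterministic limit $\sR,\sR_{\pd},\sC$. It therefore suffices to show that the GCV-type estimators converge to those same limits. One then checks that the limiting denominator $\sR+\sR_{\pd}-2\sC$ is strictly positive. This holds because it is the limit of $D(\lambda)=\EE[(f_\lambda(x_0)-f_{\pd,\lambda}(x_0))^2\mid\cD]$, and $f_\lambda-f_{\pd,\lambda}=x^\top(I-\hat\Sigma(\hat\Sigma+\lambda I)^{-1})\hat\beta_\lambda=\lambda\,x^\top(\hat\Sigma+\lambda I)^{-1}\hat\beta_\lambda$ has nonvanishing energy for $\lambda>0$ and $\sigma^2>0$. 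Given that, the maps in \eqref{eq:oneshot_xi_Rsd} are continuous at the limit point, so both the estimators and the oracle quantities (via \Cref{prop:closed_form_xi_R1}) converge to the same limits, and their differences tend to zero.

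\textbf{Teacher term.} $\widehat R(\lambda)$ is the standard ridge GCV estimator, so its consistency under \Cref{def:dist} (anisotropic $\Sigma$, arbitrary response with bounded moments) can be quoted from the GCV literature cited in \Cref{sec:related_work}, e.g.\ \citet{patil2021uniform}.

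\textbf{PD and cross terms.} Write the hat matrices as $L=X(X^\top X/n+\lambda I)^{-1}X^\top/n$ for the teacher and $L^2$ for the PD fit, so that $\df_\lambda=\tr L$ and $\df_{\pd,\lambda}=\tr(L^2)$. The quantities
\[
\|(I-L^2)y\|^2/n \quad\text{and}\quad \langle (I-L)y,(I-L^2)y\rangle/n
\]
are quadratic forms in $y$ involving up to fourth powers of the resolvent $(\hat\Sigma+\lambda I)^{-1}$. I will decompose $y=X\beta+\varepsilon$, where $\varepsilon$ is the projection residual, which is uncorrelated with $x$ but not independent of it. For each piece I will derive deterministic equivalents using the same fourth-order block-linearization machinery used for \Cref{thm:risk-asymptotics}, writing them in terms of $\kappa,t_k,q_k$. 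One can also organize this through the $\lambda$-derivative representation $L^2$-type terms $=\partial_\lambda$ of resolvent functionals, as in the proof of \Cref{thm:tangent_sign}.

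I then verify algebraically that dividing the training residual equivalents by $(1-\df_\lambda/n)^2$, $(1-\df_{\pd,\lambda}/n)^2$, or their product reproduces $\sR_{\pd}$ and $\sC$. This uses $1-\df_\lambda/n\to\lambda/\kappa$ and
\[
1-\df_{\pd,\lambda}/n\to(\lambda/\kappa)\bigl(1+\text{correction in }t_2,\lambda\kappa'\bigr),
\]
with $\kappa'$ obtained by differentiating \eqref{eq:kappa_fp_mainpaper}.

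\textbf{Main obstacle.} The hard step is this algebraic matching for $\widehat R_{\pd}$ and $\widehat C$. Unlike the teacher case, the PD smoother is not itself a ridge, so there is no a priori reason a single scalar df-correction is exact. The check amounts to showing that the training-error equivalents factor exactly as
\[
(1-\df_{\pd}/n)^2\,\sR_{\pd}, \qquad (1-\df/n)(1-\df_{\pd}/n)\,\sC .
\]
I would first attempt this on the variance ($\sigma^2$) and bias ($q_k$) coefficients separately, matching the $u_k$ and $a_k$ structure. If exact factorization fails, I would instead argue via the known consistency of GCV for general linear smoothers of the form $g(\hat\Sigma)$ with $g$ a polynomial in the ridge resolvent, which covers both $L$ and $L^2$.
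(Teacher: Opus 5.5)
\textbf{There is a genuine gap, and it sits exactly at the step you flag as the main obstacle.} Neither branch of your plan can close it.

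\emph{The exact factorization fails.} Take $\Sigma=I_p$ and a well-specified model, and consider a spectral smoother with hat function $h$. Expanding the Marchenko--Pastur integrals to first order in $\gamma$, the bias part of the GCV estimate minus the bias part of the true risk is
\[
2\gamma r^2\,(1-h(1))\,\bigl[h(1)(1-h(1))-h'(1)\bigr]+O(\gamma^2).
\]
This vanishes for ridge, $h(s)=s/(s+\lambda)$. For the PD hat function $h(s)=s^2/(s+\lambda)^2$ it equals $-2\gamma r^2\lambda^3(2+\lambda)/(1+\lambda)^6\neq 0$. So $\|(I-H_\lambda^2)y\|_2^2/n$ does not factor as $(1-\df_{\pd,\lambda}/n)^2\,\sR_{\pd}$, and no deterministic-equivalent bookkeeping will make it.

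\emph{The fallback is also false.} Trace-corrected GCV is not consistent for general smoothers $g(\widehat\Sigma)$. It is special to ridge and is known to fail beyond it, e.g.\ for early-stopped gradient descent.

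\textbf{The structural reason is an exact identity.} Let $Q_{-i}:=(\widehat\Sigma-x_ix_i^\top/n+\lambda I_p)^{-1}$ and $H_\lambda=XQ_\lambda X^\top/n$. Sherman--Morrison gives $(H_\lambda)_{ij}=(1-(H_\lambda)_{ii})\,x_i^\top Q_{-i}x_j/n$ for all $j$. Differentiate this in $\lambda$, and use $H_\lambda^2=X(Q_\lambda-\lambda Q_\lambda^2)X^\top/n$ together with $(H_\lambda^2)_{ii}=(H_\lambda)_{ii}+\lambda\,\partial_\lambda(H_\lambda)_{ii}$. This yields
\[
\frac{y_i-(H_\lambda^2y)_i}{1-(H_\lambda^2)_{ii}}
= a_i\,e^{\mathrm{loo}}_{i,\pd,\lambda}+(1-a_i)\,e^{\mathrm{loo}}_{i,\lambda},
\qquad
a_i=\frac{1-(H_\lambda)_{ii}}{1-(H_\lambda^2)_{ii}}.
\]
Here $e^{\mathrm{loo}}$ are the true leave-one-out errors of the teacher and of the two-stage PD procedure, with $n$-normalized refits.

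By diagonal--trace equivalence, $a_i\to a:=\lim\,(1-\df_\lambda/n)/(1-\df_{\pd,\lambda}/n)$. Using $1-\df_{\pd,\lambda}/n\to(\lambda/\kappa)(2-\lambda b/\kappa)$, this gives $a=1/(2-\lambda b/\kappa)\in(1/2,1)$. So $\hat r_{\pd,\lambda}$ estimates the residual of the SD predictor with weight $a$, not of the PD predictor. One checks that
\[
\widehat R_{\pd}(\lambda)-R_{\sd}(\lambda,a)\pto 0,\qquad
\widehat C(\lambda)-\bigl[(1-a)R(\lambda)+aC(\lambda)\bigr]\pto 0,\qquad
\widehat D(\lambda)-a^2D(\lambda)\pto 0.
\]
Hence $\hat\xi^\star(\lambda)-\xi^\star(\lambda)/a\pto 0$. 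The first claim of the statement therefore fails whenever $\xi^\star(\lambda)$ has a nonzero limit, e.g.\ at every $\lambda\neq\lambda^\star$ in the setting of \Cref{cor:xi_asymptotic}. Only $\widehat R^\star_{\sd}(\lambda)$ is consistent: $a$ cancels in $\widehat R-(\widehat R-\widehat C)^2/\widehat D$, because the estimated quadratic in $\xi$ is the true one reparametrized by $\xi\mapsto a\xi$.

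\textbf{The paper's proof does not rescue this.} It goes through leave-one-out residuals rather than deterministic-equivalent matching. Its step (ii), which claims $\frac1n\sum_i(e^{\mathrm{loo}}_{i,\pd,\lambda}-\bar r_{\pd,i})^2\pto 0$, is contradicted by the identity above: that average tends to $(1-a)^2D(\lambda)>0$.

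A correct version of the weight claim would use $a_n\hat\xi^\star(\lambda)$ with $a_n=(1-\df_\lambda/n)/(1-\df_{\pd,\lambda}/n)$. Equivalently, replace $\hat r_{\pd,\lambda}$ by $(\hat r_{\pd,\lambda}-(1-a_n)\hat r_\lambda)/a_n$.
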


Compared with grid-search CV over $\xi$, the one-shot procedure has two key advantages:
(i) CV requires retraining a student model for each candidate $\xi$, whereas \eqref{eq:oneshot_xi_Rsd} selects $\hat\xi^\star(\lambda)$ in closed form from a single set of fitted quantities at the given $\lambda$.
(ii) Split CV reduces the effective training sample size (e.g., to $4/5$) and suffers from nonzero bias in high-dimensional settings where $p$ is comparable to $n$; in contrast, the one-shot estimators use all $n$ samples without hold-out sets.

\subsection{Real Data Experiments}

To illustrate the utility of the one-shot tuning method, we apply \eqref{eq:oneshot_xi_Rsd} across a range of $\lambda$ values on several real datasets.
For regression, we consider UCI \dataset{BlogFeedback} and \dataset{Communities and Crime}.
For classification, we apply ridge regression on pretrained neural network features (ResNet-18/34) for \dataset{CIFAR10} and \dataset{CIFAR100} (details in \Cref{sec:additional_details}).
 
Across these tasks, \Cref{fig:exp_real_main,fig:cifar100} show that $\widehat R_{\sd}^\star(\lambda)$ closely tracks the test risk of the optimally distilled student, particularly in settings with small train and test discrepancy (e.g., CIFAR dataset benchmarks).
Moreover, when the teacher is over-regularized, the one-shot estimate correctly selects negative $\hat\xi^\star(\lambda)$, so that the SD predictor corrects excessive shrinkage.
This regime would be missed by restricting $\xi$ to $[0,1]$ (see \Cref{fig:exp_real_world_restricted} for constrained SD risks as a comparison).
Additional experiments and sample-size variations are provided in \Cref{app:additional-illustrations-real-world}, and for \dataset{CIFAR10}/\dataset{CIFAR100} we also report the corresponding test accuracies in \Cref{fig:cifar_test_accuracy}.

\section{Extensions and Variants}
\label{sec:extensions}

Our results so far focus on optimal \emph{one-round} SD for \emph{ordinary} ridge regression, where the student is refit on the \emph{same} design matrix $X$ as the teacher.
In this section, we briefly outline several extensions that are naturally captured by the same ``structural'' viewpoint from \Cref{sec:structural}.

\subsection{Multiple Rounds of Self-Distillation}
\label{sec:multiple-rounds}

Fix $\lambda>0$ and set $y^{(0)}:=y$.
A natural \emph{recursive} multi-round scheme iteratively self-distills using the previous round's ridge predictions.
For each $k\ge 0$, let $f^{(k)}_\lambda$ denote the teacher ridge regression \eqref{eq:ridge_predictor} trained on $(X,y^{(k)})$ with penalty $\lambda$.
and write $\hat y^{(k)}_{\lambda}:=f^{(k)}_\lambda(X)\in\RR^n$ for its fitted values.
Let $f^{(k)}_{\pd,\lambda}$ denote the corresponding PD refit of round $k$, i.e., ridge trained on
$(X,\hat y^{(k)}_{\lambda})$ with the same penalty $\lambda$.
Given a mixing weight $\xi_{k+1}\in\RR$, define the round-$(k+1)$ SD predictor
$f^{(k+1)}_{\sd,\lambda,\xi_{k+1}}$ as the ridge fit obtained from the same mixed-loss construction as in
\eqref{eq:sd_predictor}, but with base labels $y^{(k)}$ and teacher pseudo-labels $\hat y^{(k)}_\lambda$.
A simplification (see \Cref{sec:recursive-affine-path-proof}) shows this is equivalent to fitting ridge regression on the mixed labels (see \Cref{fig:multiple-rounds-setup} for an illustration):
\begin{equation}
\label{eq:recursive_multiround}
  y^{(k+1)}_{\lambda,\xi_{k+1}}
  :=
  (1-\xi_{k+1})\,y^{(k)}+\xi_{k+1}\,\hat y^{(k)}_{\lambda}.
\end{equation}
Because ridge is linear in the response, the round-$(k+1)$ predictor $f^{(k+1)}_{\sd,\lambda,\xi_{k+1}}$ lies on the affine path:
\begin{equation}
    \label{eq:recursive-affine-path}
  f^{(k+1)}_{\sd,\lambda,\xi_{k+1}}
  =(1-\xi_{k+1})\,f^{(k)}_{\lambda}+\xi_{k+1}\,f^{(k)}_{\pd,\lambda}.
\end{equation}

\begin{figure}[!t]
    \centering
    \includegraphics[width=0.99\textwidth]{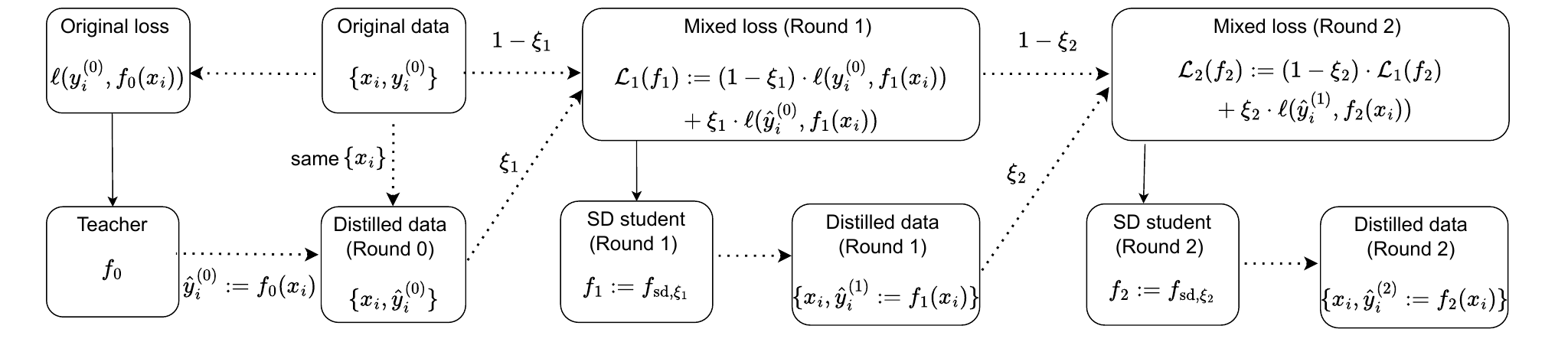}
    \caption{Visual illustration of the recursive multi-round self-distillation process for ridge regression.
    }
    \label{fig:multiple-rounds-setup}
\end{figure}

As in \Cref{sec:structural}, we evaluate any predictor $f$ by the conditional squared prediction risk $R(f):=\EE[(y_0-f(x_0))^2\mid \cD]$ for a (possibly out-of-distribution) test pair $(x_0,y_0)$ with finite conditional second moments.
Define the round-$k$ risk: $R_k(\lambda):=R(f^{(k)}_\lambda)$, round-$k$ correlation: $C_k(\lambda) :=\EE[(y_0-f^{(k)}_\lambda(x_0))(y_0-f^{(k)}_{\pd,\lambda}(x_0))\mid \cD]$, and round-$k$ discrepancy: $D_k(\lambda):=\EE[(f^{(k)}_\lambda(x_0)-f^{(k)}_{\pd,\lambda}(x_0))^2\mid \cD]\ge 0$.
For each round $k\ge 0$, let $\xi_{k+1}^\star(\lambda)\in\arg\min_{\xi_{k+1}\in\RR}R(f^{(k+1)}_{\sd,\lambda,\xi_{k+1}})$ denote the optimal (unconstrained) mixing weight.
Write $y^{(k+1)} = y^{(k+1)}_{\lambda,\xi_{k+1}^\star}$, $f^{(k+1)}_\lambda:=f^{(k+1)}_{\sd,\lambda,\xi_{k+1}^\star}$, and $R_{k+1}(\lambda):=R(f^{(k+1)}_\lambda)$.

All the one-round structural identities from \Cref{sec:structural} apply at each round $k$ by viewing $y^{(k)}$ as the ``base'' labels and $(f^{(k)}_\lambda,f^{(k)}_{\pd,\lambda})$ as the
round-$k$ (teacher, PD) pair.
In particular, whenever $D_k(\lambda)>0$, the round-wise optimizer and improvement satisfy the same formulas as
\Cref{prop:closed_form_xi_R1}:
\[
  \xi_{k+1}^\star(\lambda)=\frac{R_k(\lambda)- C_k(\lambda)}{D_k(\lambda)},
  \quad
  R_{k+1}(\lambda)
  =
  R_k(\lambda)-\frac{\big(R_k(\lambda)- C_k(\lambda)\big)^2}{D_k(\lambda)}.
\]
This leads to the following risk monotonicity result for optimal multi-round SD:

\begin{proposition}[Monotonicity of optimal recursive multi-round self-distillation]
\label{prop:multiround_monotone_main}
Fix $\lambda>0$.
The optimal SD risks are monotone (weakly) decreasing in the number of rounds $k$:
\[
  R_{k+1}(\lambda)\le R_k(\lambda)\quad\text{for all }k\ge 0.
\]
Assuming $D_{k}(\lambda) > 0$, the optimal SD mixing weights and risks admit the closed forms:
\begin{equation}
\label{eq:multiround_tangent_rule}
  \xi_{k+1}^\star(\lambda)
  =
  -\frac{\lambda}{2}\,\frac{R_k'(\lambda)}{D_k(\lambda)},
  \quad
  R_{k+1}(\lambda)
  =
  R_k(\lambda)-\frac{\lambda^2}{4}\,\frac{(R_k'(\lambda))^2}{D_k(\lambda)},
\end{equation}
where $R'_k(\lambda)$ is the derivative along the ridge path for the round-$k$ teacher with $y^{(k)}$ held fixed.
Thus, if $R_k'(\lambda)\neq 0$, then $R_{k+1}(\lambda) < R_k(\lambda)$,
and $\sign(\xi_{k+1}^\star(\lambda))=-\sign(R_k'(\lambda))$.
\end{proposition}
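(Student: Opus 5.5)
The plan is to reduce \Cref{prop:multiround_monotone_main} round by round to the one-round results \Cref{prop:closed_form_xi_R1} and \Cref{thm:tangent_sign}, applied with the base labels $y^{(k)}$ in place of $y$. First, I will verify the affine path \eqref{eq:recursive-affine-path}. Write the ridge fit as a linear map $y\mapsto \hat\beta(y)=(X^\top X/n+\lambda I_p)^{-1}X^\top y/n$. The mixed objective with base labels $y^{(k)}$ and pseudo-labels $\hat y^{(k)}_\lambda$ has the same minimizer as ridge on the mixed labels $(1-\xi)y^{(k)}+\xi\hat y^{(k)}_\lambda$, because both objectives have the same gradient in $\beta$. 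Linearity of $\hat\beta(\cdot)$ then gives \eqref{eq:recursive-affine-path}. With this in hand, $\xi\mapsto R(f^{(k+1)}_{\sd,\lambda,\xi})$ is a convex quadratic:
\[
R_k(\lambda)-2\xi\,(R_k(\lambda)-C_k(\lambda))+\xi^2 D_k(\lambda).
\]
Evaluating it at $\xi=0$ gives $R_k(\lambda)$, so the minimum over $\xi$ satisfies $R_{k+1}(\lambda)\le R_k(\lambda)$. This holds even when $D_k(\lambda)=0$: in that case the quadratic is constant, since $D_k=0$ forces $R_k-C_k=\EE[(y_0-f^{(k)}_\lambda)(f^{(k)}_{\pd,\lambda}-f^{(k)}_\lambda)\mid\cD]=0$ by Cauchy--Schwarz. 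This proves the weak monotonicity.

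Second, assuming $D_k(\lambda)>0$, minimizing the quadratic gives the closed forms of \Cref{prop:closed_form_xi_R1} with index $k$. To obtain \eqref{eq:multiround_tangent_rule}, I will invoke \Cref{thm:tangent_sign} for the teacher built from the fixed data $(X,y^{(k)})$. The key point is that \Cref{thm:tangent_sign} is a deterministic statement conditional on the training data and makes no assumption about how the labels were generated. Freezing $y^{(k)}$ (which depends on earlier $\lambda$ and $\xi$ choices) therefore yields a legitimate ``dataset'' to which the theorem applies. Its conclusion is the identity $R_k(\lambda)-C_k(\lambda)=-\tfrac{\lambda}{2}R_k'(\lambda)$, where the derivative is taken along $\lambda'\mapsto R(f^{(k)}_{\lambda'})$ with $y^{(k)}$ held fixed.

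For completeness I would sketch why this identity holds. Let $S_\lambda=X(X^\top X/n+\lambda I)^{-1}X^\top/n$. Then $\partial_\lambda\hat\beta_\lambda=-(X^\top X/n+\lambda I)^{-1}\hat\beta_\lambda$. The pure-distilled coefficient is $\hat\beta_{\pd}=(X^\top X/n+\lambda I)^{-1}X^\top X\hat\beta_\lambda/n=\hat\beta_\lambda-\lambda(X^\top X/n+\lambda I)^{-1}\hat\beta_\lambda$. Hence $f_{\pd,\lambda}-f_\lambda=\lambda\,\partial_\lambda f_\lambda$. Differentiating $R$ gives
\[
R'=-2\EE[(y_0-f_\lambda(x_0))\,\partial_\lambda f_\lambda(x_0)\mid\cD]=-\tfrac{2}{\lambda}(R-C).
\]
Substituting this into the closed forms gives \eqref{eq:multiround_tangent_rule}. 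The strictness and sign statements then follow at once from $D_k>0$.

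The main subtlety, rather than a real obstacle, is making the meaning of $R_k'$ precise. The actual $y^{(k)}$ depends on $\lambda$ through earlier rounds, so the total derivative of $\lambda\mapsto R_k(\lambda)$ differs from the one in the formula. I will therefore define $R_k'$ explicitly as the partial derivative with the labels frozen. I will also note that the smoothness from \Cref{lem:app_risk_smoothness} applies verbatim, because it only uses finite conditional second moments, and $y^{(k)}$ is a fixed vector given $\cD$.
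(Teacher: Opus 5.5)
Your proposal is correct and follows the paper's proof essentially step for step: you derive the recursive affine path from linearity of ridge in the labels, get weak monotonicity from the feasibility of $\xi=0$, and obtain the closed forms by applying \Cref{prop:closed_form_xi_R1} and \Cref{thm:tangent_sign} to the frozen round-$k$ labels $y^{(k)}$, with $R_k'$ read as the derivative that holds $y^{(k)}$ fixed. Your Cauchy--Schwarz treatment of the degenerate case $D_k(\lambda)=0$ and your sketch of the tangent identity fill in details the paper leaves to its one-round results.
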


As with all results in \Cref{sec:structural}, \Cref{prop:multiround_monotone_main} is purely structural: it holds conditionally on $\cD=(X,y)$ with probability one and for any squared
prediction risk (including out-of-distribution).

\begin{figure}[!t]
\centering
\includegraphics[width=0.48\textwidth]{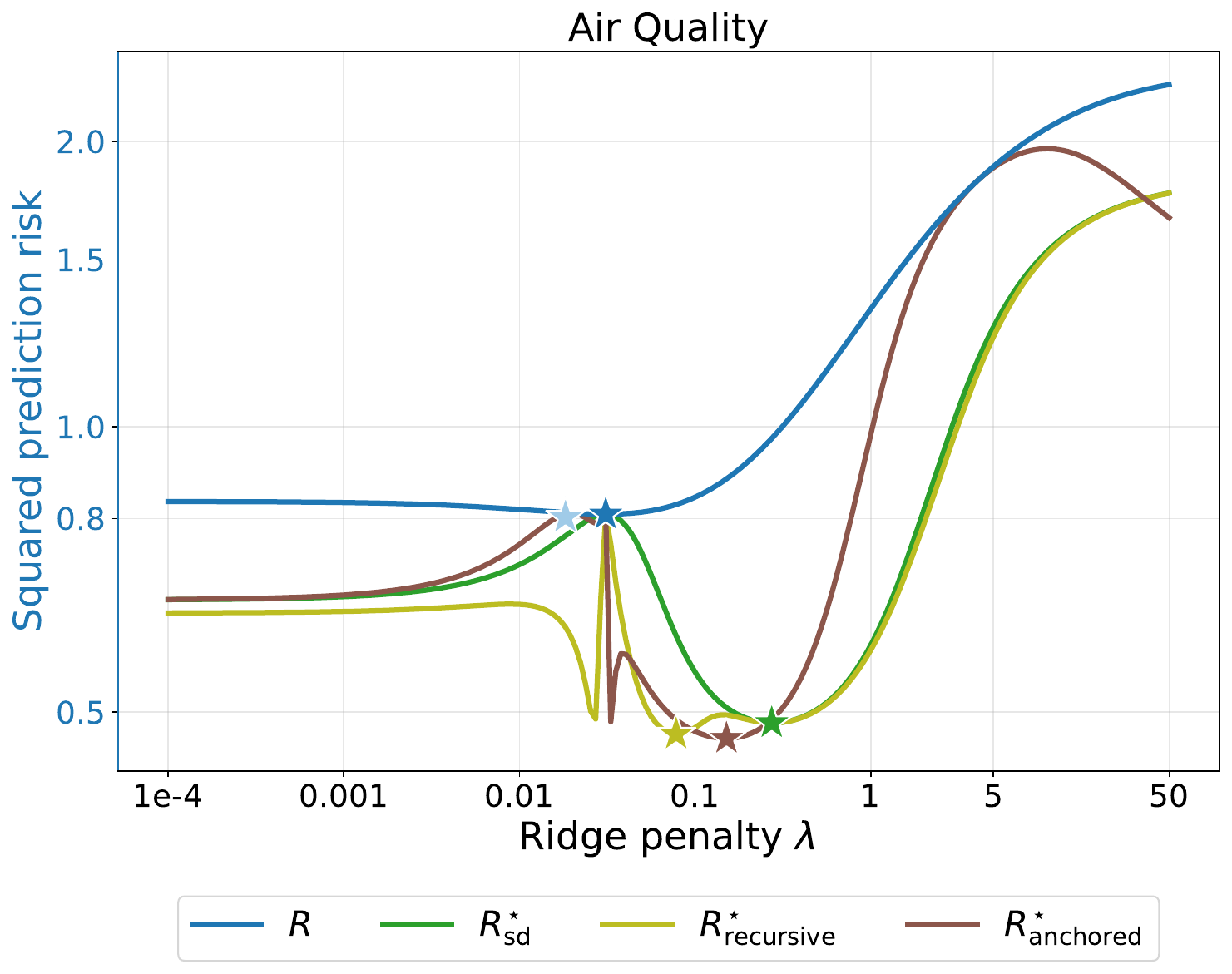}
\includegraphics[width=0.48\textwidth]{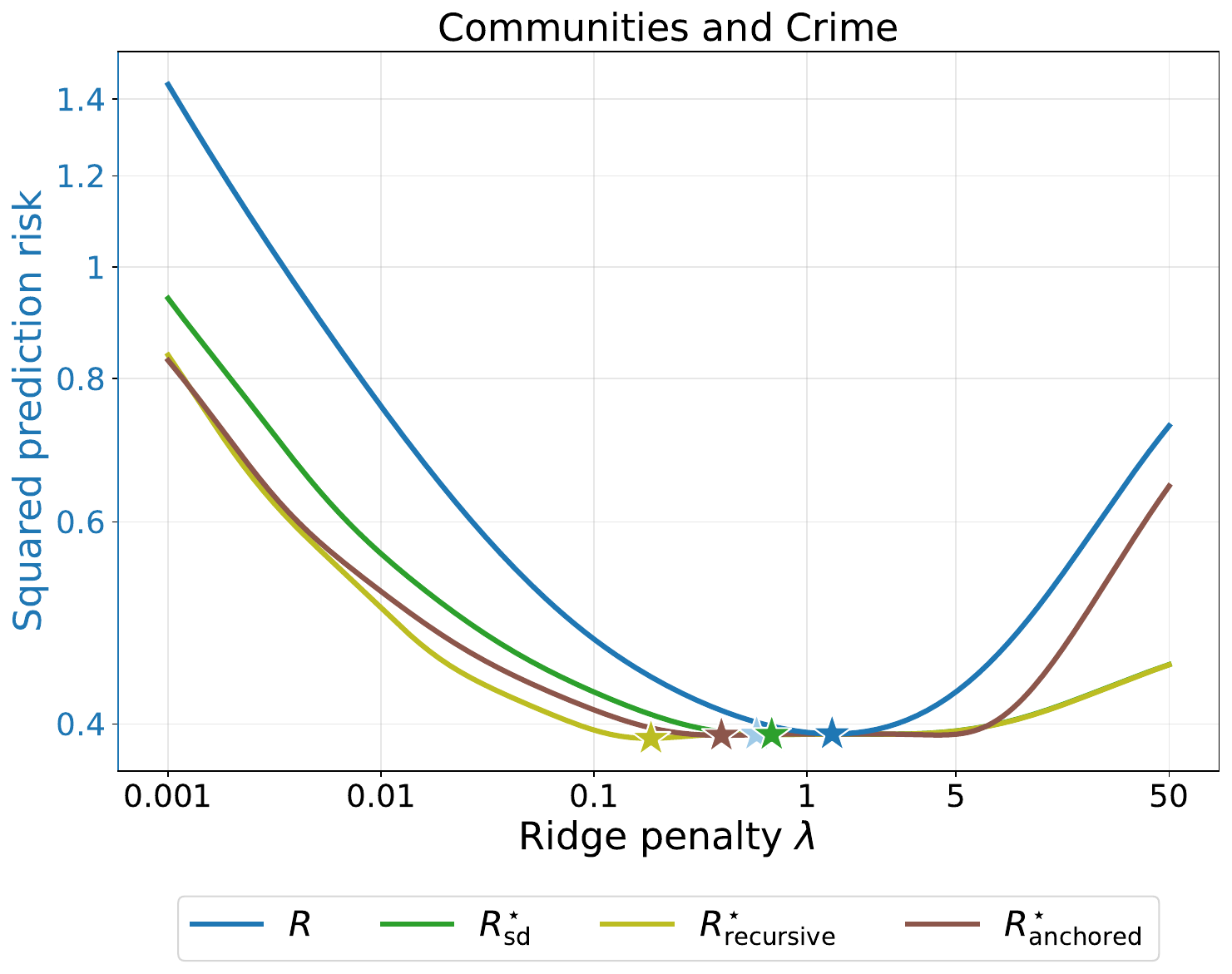}
\caption{\textbf{Recursive versus anchored multi-round self-distillation.} Test risks of the teacher ridge ($R$, in {\color{pyblue} blue}, trained using $y^{(0)}$), one-round ($k=1$) optimal self-distilled ridge ($R^{\star}_{\sd}$, in {\color{pygreen} green}, trained using $y^{(0)}$ and $\hat y^{(0)}$), two-round ($k=2$) recursive ($R^{\star}_{\text{recursive}}$, in {\color{pyolive} olive}, trained using $y^{(1)}$ and $\hat y^{(1)}$) and anchored ($R^{\star}_{\text{anchored}}$, in {\color{pybrown} brown}, trained using $y^{(0)}$ and $\hat y^{(1)}$) self-distillation.
Recursive mixing is monotone when $\xi_k$ is optimized each round (\Cref{prop:multiround_monotone_main}); the two-round risk curve $R^{\star}_{\text{recursive}}(\lambda)$ uniformly dominates the optimal one-round $R^{\star}_{\sd}(\lambda)$. 
Anchored mixing can be nonmonotone because the round-$k$ family need not contain the round-$(k-1)$ predictor; the two-round risk curve $R^{\star}_{\text{anchored}}(\lambda)$ may be larger than optimal one-round $R^{\star}_{\sd}(\lambda)$.}
\label{fig:multiround_monotonicity}
\end{figure}

Several repeated-distillation formulations instead remain \emph{anchored} to the original (ground-truth) labels
\citep{garg2025preventing, alemohammad2023self}.
Specifically, given $\xi_{k+1}\in\RR$, define the anchored
round-$(k+1)$ SD predictor as the ridge fit obtained by mixing the loss (as in \eqref{eq:sd_predictor}) against the original labels $y^{(0)}$
and the current pseudo-labels $\hat y^{(k)}_{\lambda}$.
As before, a simplification shows that this is equivalent to ordinary ridge
regression trained on the anchored mixed labels:
\begin{equation}
\label{eq:anchored_multiround}
  y^{(k+1),\mathrm{anch}}_{\lambda,\xi_{k+1}}
  :=
  (1-\xi_{k+1})\,y^{(0)}+\xi_{k+1}\,\hat y^{(k)}_{\lambda}.
\end{equation}
By linearity of ridge in the response, this anchored family is the affine path:
\[
  f^{(k+1),\mathrm{anch}}_{\sd,\lambda,\xi_{k+1}}
  =(1-\xi_{k+1})\,f^{(0)}_{\lambda}+\xi_{k+1}\,f^{(k)}_{\pd,\lambda},
\]
which in general need \emph{not} contain the previous-round predictor $f^{(k)}_{\lambda}$.
Consequently, the nesting argument behind \Cref{prop:multiround_monotone_main} breaks down and
monotonicity can fail; see \Cref{fig:multiround_monotonicity}.

\subsection{Self-Distillation with Fresh Unlabeled Features}
\label{sec:new-features}

Our structural results in \Cref{sec:structural} rely on the student refit using the \emph{same} design matrix $X$ as the teacher.
In particular, the same-$X$ mixed-loss SD predictor \eqref{eq:sd_predictor} reduces to an affine path
(\Cref{eq:affine_path_pred}) in $\xi$, so the risk $R_{\sd}(\lambda,\xi)$ is a quadratic function of $\xi$ and admits
closed-form characterizations (\Cref{prop:closed_form_xi_R1,thm:tangent_sign}).

A common pseudo-labeling variant instead refits using additional \emph{fresh} unlabeled covariates; see \Cref{fig:illustration-freshX}.
Let $\tilde X\in\RR^{m\times p}$ be independent of $\cD=(X,y)$, and define the teacher pseudo-labels on $\tilde X$ by $\tilde y_\lambda := f_\lambda(\tilde X)\in\RR^m$,
where $f_\lambda$ is the ridge teacher trained on $\cD$ as in \eqref{eq:ridge_predictor}.
In direct analogy with \eqref{eq:sd_predictor}, one may define the fresh-$X$ \emph{mixed-loss} SD predictor $f_{\sd,\lambda,\xi}^{\freshmix}(x)$ as
\begin{equation}
\label{eq:mixed_loss}
  x^\top\argmin_{\beta\in\RR^p}
  \big\{
  (1-\xi)\|y-X\beta\|_2^2/n
  + \xi\|\tilde y_\lambda-\tilde X\beta\|_2^2/m
  + \lambda\|\beta\|_2^2
  \big\}.
\end{equation}
Unlike the same-$X$ case, \eqref{eq:mixed_loss} does not reduce to ridge regression on a single mixed-label vector:
because the two quadratic losses involve different Gram matrices, the map
$\xi\mapsto f_{\sd,\lambda,\xi}^{\freshmix}$ is generally \emph{not} an affine path, and the risk $R_{\sd}^{\freshmix}(\lambda,\xi) := R(f_{\sd,\lambda,\xi}^{\freshmix})$
is no longer quadratic in $\xi$ (see \Cref{lem:fresh_mixedloss_matrix_weighted}).%
\footnote{When $\xi\notin[0,1]$, the objective in \eqref{eq:mixed_loss} need not be convex because it mixes two
different quadratic forms. Throughout, we interpret \eqref{eq:mixed_loss} only for values of $\xi$ such that the
Hessian in $\beta$ is positive definite, i.e., $(1-\xi)\,X^\top X/n + \xi\,\tilde X^\top\tilde X/m + \lambda I_p \succ 0$,
so the displayed $\argmin$ is well-defined.}
Consequently, the two-predictor identities for $\xi^\star(\lambda)$ and $R_{\sd}^\star(\lambda)$ in \Cref{prop:closed_form_xi_R1} and the derivative identities in \Cref{thm:tangent_sign} do not apply directly in this
fresh-$X$ mixed-loss setting.

\begin{figure}[!t]
    \centering
    \includegraphics[width=0.99\textwidth]{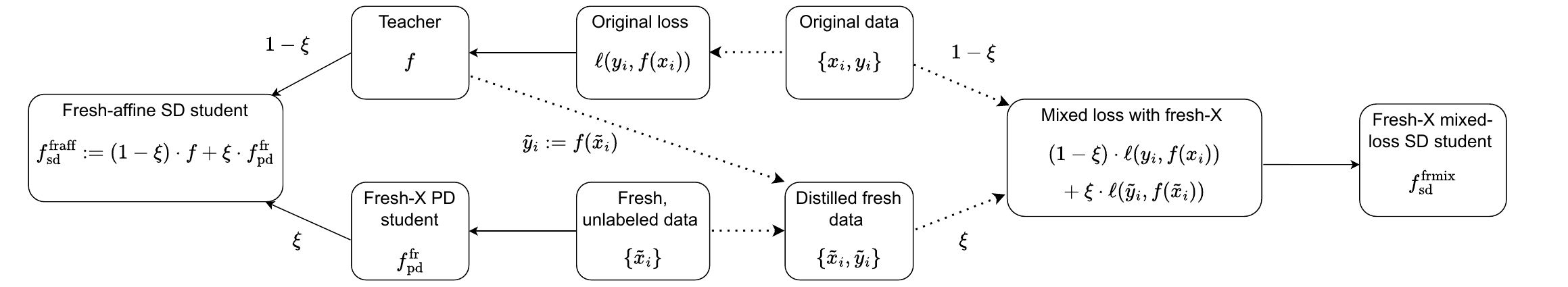}
    \caption{Visual illustration of self-distillation with fresh unlabeled features.}
    \label{fig:illustration-freshX}
\end{figure}

To retain an affine structure, one may instead define a fresh-$X$ PD refit and then mix predictors explicitly.
Let $f_{\pd,\lambda}^{\fresh}$ denote ridge regression trained on the pseudo-labeled dataset $(\tilde X,\tilde y_\lambda)$
with the same penalty $\lambda$.
Define the fresh-$X$ \emph{affine} SD family
\begin{equation}
\label{eq:fresh_affine_def}
  f_{\sd,\lambda,\xi}^{\freshaffine}(x)
  :=(1-\xi)\,f_\lambda(x)+\xi\,f_{\pd,\lambda}^{\fresh}(x).
\end{equation}
Because \eqref{eq:fresh_affine_def} is an explicit two-predictor affine path, the corresponding risk $R_{\sd}^{\freshaffine}(\lambda,\xi):=R(f_{\sd,\lambda,\xi}^{\freshaffine})$ is quadratic in $\xi$.
Thus, the identities in \Cref{prop:closed_form_xi_R1} apply directly (with $f_{\pd,\lambda}$ replaced by $f_{\pd,\lambda}^{\fresh}$) to characterize the optimal mixing $\xi_{\sd}^{\star,\freshaffine}(\lambda)\in\argmin_{\xi\in\RR}R_{\sd}^{\freshaffine}(\lambda,\xi)$ and the optimal risk $R_{\sd}^{\star,\freshaffine}(\lambda):=\min_{\xi\in\RR}R_{\sd}^{\freshaffine}(\lambda,\xi)$.
However, the same-$X$ coupling behind the derivative-based characterization \Cref{thm:tangent_sign} is absent here, so pointwise strict improvements are no longer structurally guaranteed.

Empirically, when the refit sample size matches the teacher sample size (i.e., $m=n$), the same-$X$ SD predictor tends to
dominate both the fresh-$X$ mixed-loss student \eqref{eq:mixed_loss} as well as the fresh-$X$ affine student \eqref{eq:fresh_affine_def} across $\lambda$; see \Cref{fig:same_x_fresh_x}.
Motivated by this behavior, we provide a prototype theoretical comparison in an isotropic in-distribution setting.
Specializing the asymptotic analysis of \Cref{sec:asymptotics} for same-$X$ (\Cref{lem:sameX_isotropic_limits}) and deriving
a corresponding asymptotic analysis for affine fresh-$X$ (\Cref{lem:freshX_isotropic_limits}) under isotropic covariance and
isotropic signal, we show that when $p/n\to\gamma$ and $p/m\to\gamma$, the same-$X$ optimal SD risk uniformly dominates the
fresh-affine optimal SD risk.
A full analysis under general covariance and signal structure, and for the mixed-loss fresh-$X$ student \eqref{eq:mixed_loss},
is left for future work.

\begin{figure}[!t]
    \centering
\includegraphics[width=0.48\textwidth]{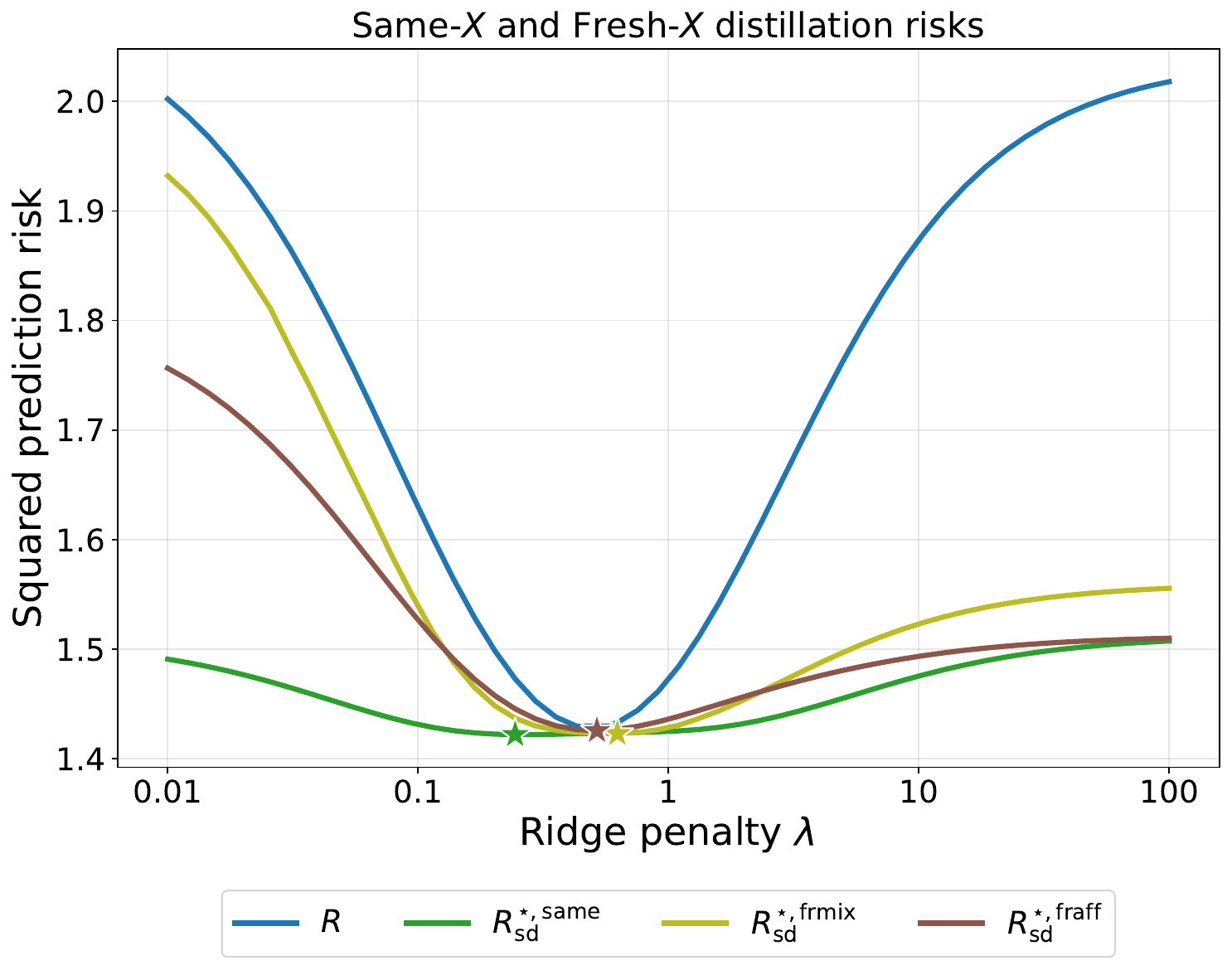}
\includegraphics[width=0.48\textwidth]{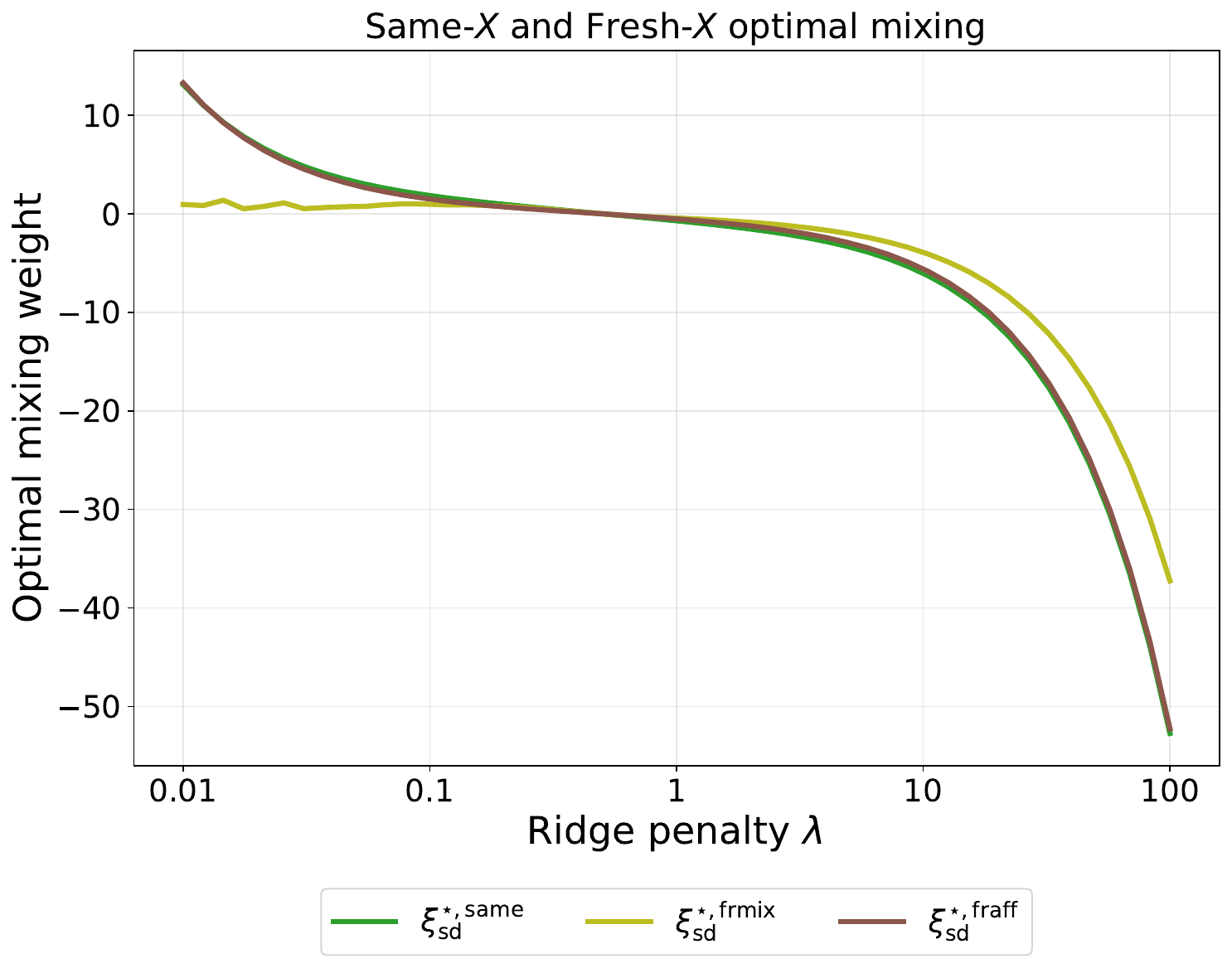}
    \caption{\textbf{Same-$X$ versus fresh-$X$ self-distillation risks.}
    Empirical test risks and optimal mixing weights (averaged over 30 simulations) in an isotropic setting with $n=200$,
    $p=100$, $r^2=1$, and $\sigma^2=1$. For each $\lambda$, we choose the fresh-$X$ mixing weight $\xi$ by grid search over
    $3{,}000$ values in $[-100,100]$, restricting to values for which the mixed-loss objective \eqref{eq:mixed_loss} is strictly
    convex, and picking the one with the lowest empirical test risk.}
    \label{fig:same_x_fresh_x}
\end{figure}

\begin{theorem}[Same-$X$ dominates fresh-$X$]
\label{thm:freshX_dominated_by_sameX_isotropic}
Under \Cref{def:dist} with $\Sigma=I_p$ and $\beta\sim\mathcal{N}(0,(r^2/p)I_p)$, as $m,n,p\to\infty$ with $p/m, p/n\to\gamma\in(0,\infty)$, for every fixed $\lambda>0$, we have
\[
    \sR^{\star,\same}_{\sd}(\lambda)
    \;\le\;
    \sR^{\star,\freshaffine}_{\sd}(\lambda),
\]
where $\sR^{\star,\same}_{\sd}(\lambda)$ and $\sR^{\star,\freshaffine}_{\sd}(\lambda)$ denote the limiting (deterministic) optimal SD risks for the same-$X$ SD predictor \eqref{eq:affine_path_pred} and the affine fresh-$X$ SD predictor \eqref{eq:fresh_affine_def}, respectively
\end{theorem}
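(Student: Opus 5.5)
Both optimal risks come from the two-predictor formula of \Cref{prop:closed_form_xi_R1}. In each case the limiting optimal risk can be written as
\[
\sR(\lambda)-\frac{(\sR(\lambda)-\sC^{\bullet}(\lambda))^2}{\sD^{\bullet}(\lambda)},\qquad \bullet\in\{\same,\freshaffine\},
\]
where $\sC^\bullet$ and $\sD^\bullet$ are the limiting cross term and discrepancy. The teacher $f_\lambda$ is identical in both schemes, so the claim reduces to a scalar inequality between the two improvement terms:
\[
\frac{(\sR-\sC^{\same})^2}{\sD^{\same}} \;\ge\; \frac{(\sR-\sC^{\freshaffine})^2}{\sD^{\freshaffine}} .
\]
I would therefore first obtain explicit closed forms for all quantities in the isotropic design and isotropic signal case, and then compare them.

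For the same-$X$ quantities, I would specialize \Cref{thm:risk-asymptotics} to $\Sigma=I_p$ and $\beta\sim\mathcal N(0,(r^2/p)I_p)$. Then $G=(1+\kappa)^{-1}I$, every $t_k$ equals $\gamma(1+\kappa)^{-k}$, every $q_k$ equals $r^2(1+\kappa)^{-k}$, and $\kappa$ solves a quadratic (the Marchenko--Pastur Stieltjes transform). This is the content of \Cref{lem:sameX_isotropic_limits}.

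For the fresh-$X$ quantities, I would condition on $\cD$. The fresh student is ridge on $(\tilde X,\tilde X\hat\beta_\lambda)$, a noiseless linear model with signal $\hat\beta_\lambda$ and independent design. Hence
\[
f^{\fresh}_{\pd,\lambda}(x)=x^\top \tilde S\hat\beta_\lambda,\qquad \tilde S=(\tilde X^\top\tilde X/m+\lambda I)^{-1}\tilde X^\top\tilde X/m,
\]
with $\tilde S$ independent of $\hat\beta_\lambda$. Second-order deterministic equivalents for $\tilde S$ give
\[
\tilde S\asymp \alpha I,\qquad \alpha=1-\kappa\, m_{MP}(-\kappa)\text{-type factor},
\]
together with the quadratic form $\hat\beta^\top\tilde S^\top\tilde S\hat\beta \asymp (\alpha^2+v)\|\hat\beta\|^2$ for an explicit variance term $v$. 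With these, $\sC^{\freshaffine}$ and $\sD^{\freshaffine}$ are expressed through $\beta^\top\hat\beta_\lambda$ and $\|\hat\beta_\lambda\|^2$, whose limits are standard. Since $p/m$ and $p/n$ share the limit $\gamma$, the same $\kappa$ appears in both analyses. This is \Cref{lem:freshX_isotropic_limits}.

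The comparison itself is the main obstacle. The plan is to rewrite both improvement terms in the common coordinates $(\kappa,\gamma,\SNR,\lambda)$ and show their difference is a ratio with nonnegative numerator. Two observations should make this tractable. First, in the isotropic case both improvements vanish exactly at $\lambda^\star=\gamma\sigma^2/r^2$. For same-$X$ this is \Cref{cor:xi_asymptotic}. For fresh-$X$, the fresh student is approximately a shrunk teacher, $\alpha f_\lambda$ plus independent noise, so it cannot beat Bayes-optimal ridge. The difference of the two improvements should therefore factor as $(\lambda-\lambda^\star)^2$ times a remainder. Second, I would treat the fresh predictor as $\alpha f_\lambda+\text{noise}$, whose noise is orthogonal to both the residual $y_0-f_\lambda$ and to $\beta$. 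This gives
\[
\frac{(\sR-\sC^{\freshaffine})^2}{\sD^{\freshaffine}}=\frac{(1-\alpha)^2\,(\text{teacher score})^2}{(1-\alpha)^2\|\hat\beta\|^2+v\|\hat\beta\|^2},
\]
that is, a single-direction Rayleigh-type improvement deflated by the added variance $v$.

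I would then aim to show that the same-$X$ improvement equals the same-direction improvement without this deflation, or is at least as large. If the clean structural argument fails, I would fall back on brute-force symbolic simplification. In that route I would use the fixed-point relation $\lambda=\kappa-\gamma\kappa/(1+\kappa)$ to eliminate $\lambda$, clear denominators, and verify that the resulting polynomial in $\kappa$, $\gamma$ and $\SNR$ has nonnegative coefficients, or is a sum of squares, on $\kappa>0$.
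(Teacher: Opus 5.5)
\textbf{Where the proposal matches the paper.} Your reduction and overall plan coincide with the paper's proof. Both optimal risks are specialized to closed forms (\Cref{lem:sameX_isotropic_limits,lem:freshX_isotropic_limits}). The gap is then shown to factor as a square vanishing at $\lambda^\star=\gamma\sigma^2/r^2$ times a positive remainder. Indeed, the factor $\Delta(v)$ in \eqref{eq:gap_factorization_v} equals $\SNR\,v(1+v)(\lambda^\star-\lambda)$. Your ``deflated rescaling'' form of the fresh improvement is also correct, with $\alpha=s(\lambda)=1/(1+\kappa)$ and variance term $s_2-s^2$.

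\textbf{The gap: your structural shortcut is false.} The problem is the step meant to control the remainder. You propose to show that the same-$X$ improvement is at least the undeflated rescaling improvement $\langle\hat\beta_\lambda-\beta,\hat\beta_\lambda\rangle^2/\|\hat\beta_\lambda\|^2$. This fails. For $\gamma<1$ and $\lambda\to0^+$, \Cref{lem:R1_asymptotic_extreme} gives a same-$X$ improvement of $\sigma^4\gamma^2/\{(1-\gamma)[r^2(1-\gamma)^2+\sigma^2\gamma(1+\gamma)]\}$. The rescaling improvement tends to $\sigma^4\gamma^2/\{(1-\gamma)[r^2(1-\gamma)+\sigma^2\gamma]\}$, which is strictly larger whenever $r^2(1-\gamma)<\sigma^2\gamma$. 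For example, $\gamma=1/2$, $r^2=1/2$, $\sigma^2=1$ gives $4/7$ versus $2/3$, and by continuity the reversal persists at small fixed $\lambda>0$. The theorem survives there only because your deflation factor $(1-s)^2/((1-s)^2+s_2-s^2)$ tends to $1-\gamma$. This gives a fresh improvement of $\sigma^4\gamma^2/[r^2(1-\gamma)+\sigma^2\gamma]$, which is $1/3$ in the example. So the comparison cannot be split into ``same-$X\ge$ rescaling'' and ``deflation $\le 1$''. The remainder must be analyzed as a single expression, which is exactly what the paper's explicit factorization does; the symbolic route is the proof, not a fallback.

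\textbf{The symbolic route needs the admissible range.} The sign check you describe would not go through as stated, because nonnegativity on all $\kappa>0$ is the wrong target. The image of $\lambda>0$ is $\kappa>(\gamma-1)_+$, on which $(1+\kappa)^2>\gamma$, i.e.\ $1-t_2>0$. Factors that appear when clearing denominators change sign off this set:
\begin{itemize}
\item With $v=1/\kappa$, the paper's $D_0=(1+v)^2-\gamma v^2=v^2((1+\kappa)^2-\gamma)$ is negative for $\kappa<\sqrt\gamma-1$ when $\gamma>1$.
\item $\kappa\,\Pi_1=\SNR\,((1+\gamma)\kappa+1-\gamma)+\gamma(1+\kappa)$ is negative for small $\kappa$ when $\gamma>1$ and $\SNR>\gamma/(\gamma-1)$.
\end{itemize}
The paper obtains these signs from structure rather than from coefficients:
\begin{itemize}
\item $D_0=(1+v)^2(1-t_2)>0$ because $v'(\lambda)=-v^2/(1-t_2)<0$.
\item $\Pi_1$ and $\Pi_2$ are positive multiples of the discrepancy limits $\sD^{\fresh}$ and $\sD^{\same}$.
\item Only the last factor $\Xi(v)$ is positive termwise.
\end{itemize}
You need to build the constraint $\kappa>(\gamma-1)_+$, or these identifications, into the final positivity argument.
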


This result is somewhat counterintuitive as one might expect that additional unlabeled data would help self-distillation. 
It highlights that the gains characterized in the main paper are
intimately tied to self-distillation on the same data. 
Understanding how the comparison changes when the amount
of unlabeled data grows (e.g., $m\gg n$) is an interesting direction for future work.

\subsection{Self-Distillation with Other Ridge Variants}
\label{sec:ridge-variants}

The structural results of \Cref{sec:structural} extend beyond ordinary ridge regression to a class of (ridge) resolvent-based smoothers.
At a high level, two properties drive the extension:
(i) the teacher is a \emph{linear smoother} in the labels, and
(ii) the student refit applies the same \emph{resolvent-based} smoothing family (at the same regularization level~$\lambda$).
Under (i), the SD family is again an affine path in the mixing weight~$\xi$, so the two-predictor identities
(e.g., \Cref{prop:closed_form_xi_R1}) apply under any squared prediction risk.
Under (ii), the teacher--PD gap admits a (surprising) derivative representation, resulting in strict improvement
and sign rule properties analogous to \Cref{thm:tangent_sign}.
We state the generic result first, then highlight two representative examples.

Fix training inputs $X=(x_1^\top,\dots,x_n^\top)^\top$ and labels $y\in\RR^n$.
Let $\{f_\lambda\}_{\lambda>0}$ be a teacher predictor family such that for each $\lambda$ there exists a vector-valued map
$s_\lambda(\cdot)\in\RR^n$ (depending on $X,\lambda$ but not on $y$) satisfying $f_\lambda(x)=s_\lambda(x)^\top y$ and $\hat y_\lambda:=f_\lambda(X)=S_\lambda y$ for some (smoothing) matrix $S_\lambda\in\RR^{n\times n}$ depending on $X,\lambda$ but not on $y$.
Define the (same-$\lambda$) PD student refit by reapplying the same smoother to the teacher fitted values: $f_{\pd,\lambda}(x):=s_\lambda(x)^\top \hat y_\lambda$.
Define the SD predictor by the same mixed-label construction as in \eqref{eq:sd_predictor} (now with teacher pseudo-labels
$\hat y_\lambda$), which is equivalent to applying the smoother to the mixed labels
$y{(\xi)}:=(1-\xi)y+\xi\hat y_\lambda$. 
Hence, the SD predictor can be expressed as (see \Cref{app:ridge_variants_affine_path}):
\begin{equation}
\label{eq:ridge_variant_affine_path}
  f_{\sd,\lambda,\xi}(x)
  =s_\lambda(x)^\top y{(\xi)}
  =(1-\xi)f_\lambda(x)+\xi f_{\pd,\lambda}(x).
\end{equation}

For any squared prediction risk $R(\cdot)$ as in \eqref{eq:pred-risk}, write $R(\lambda):=R(f_\lambda)$, $R_{\pd}(\lambda):=R(f_{\pd,\lambda})$, and define:
\[
  C(\lambda):=\EE\!\big[(y_0-f_\lambda(x_0))(y_0-f_{\pd,\lambda}(x_0))\mid \cD\big],
  \quad
  D(\lambda):=\EE\!\big[(f_\lambda(x_0)-f_{\pd,\lambda}(x_0))^2\mid \cD\big].
\]
(As in \Cref{sec:structural}, it is easy to check that $D(\lambda) = R(\lambda) + R_{\pd}(\lambda) - 2 C(\lambda)$.)
Assume in addition that the ridge-smoother family satisfies the derivative (tangent) identity
\begin{equation}
\label{eq:tangent_identity_ridge_variants_main}
  f_\lambda(x)-f_{\pd,\lambda}(x)
  =-\lambda\,\partial_\lambda f_\lambda(x)
  \qquad\text{for all }x,
\end{equation}
and that $\lambda\mapsto R(\lambda)$ is differentiable.
Then the derivative-based characterization from \Cref{thm:tangent_sign} extends as follows.

\begin{theorem}[Ridge-smoother strict improvement and sign rule]
\label{thm:ridge_variants_tangent_sign}
Fix $\lambda>0$ and assume $D(\lambda)>0$.
Under \eqref{eq:tangent_identity_ridge_variants_main}, the optimal SD mixing weight and risk along the regularization path satisfy
\[
  \xi^\star(\lambda)
  =-\frac{\lambda}{2}\,\frac{R'(\lambda)}{D(\lambda)},
  \qquad
  R_{\sd}^\star(\lambda)
  =
  R(\lambda)-\frac{\lambda^2}{4}\,\frac{(R'(\lambda))^2}{D(\lambda)}.
\]
In particular, if $R'(\lambda)\neq 0$, then $R_{\sd}^\star(\lambda)<R(\lambda)$ and
$\sign(\xi^\star(\lambda))=-\sign(R'(\lambda))$.
\end{theorem}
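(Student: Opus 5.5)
The plan is to reduce everything to the two-predictor identity of \Cref{prop:closed_form_xi_R1} and then rewrite its numerator with the tangent identity \eqref{eq:tangent_identity_ridge_variants_main}.

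First, by the affine path \eqref{eq:ridge_variant_affine_path}, the residual of the SD predictor is
\[
y_0-f_{\sd,\lambda,\xi}(x_0)=(y_0-f_\lambda(x_0))+\xi\,\big(f_\lambda(x_0)-f_{\pd,\lambda}(x_0)\big).
\]
Hence
\[
R_{\sd}(\lambda,\xi)=R(\lambda)+2\xi\,\EE[(y_0-f_\lambda(x_0))(f_\lambda(x_0)-f_{\pd,\lambda}(x_0))\mid\cD]+\xi^2 D(\lambda).
\]
The cross term equals $C(\lambda)-R(\lambda)$, because $f_\lambda-f_{\pd,\lambda}=(y_0-f_{\pd,\lambda})-(y_0-f_\lambda)$. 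When $D(\lambda)>0$, minimizing this quadratic in $\xi$ gives $\xi^\star=(R-C)/D$ and $R_{\sd}^\star=R-(R-C)^2/D$, exactly as in \Cref{prop:closed_form_xi_R1}. The proof there uses only the affine structure, so it carries over verbatim.

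Second, I would identify $R-C$ with $\tfrac{\lambda}{2}R'(\lambda)$. Since $s_\lambda(x)$ does not depend on $y$, we may differentiate under the conditional expectation:
\[
R'(\lambda)=-2\,\EE[(y_0-f_\lambda(x_0))\,\partial_\lambda f_\lambda(x_0)\mid\cD].
\]
Substituting the tangent identity $\partial_\lambda f_\lambda=-(f_\lambda-f_{\pd,\lambda})/\lambda$ yields
\[
R'(\lambda)=\frac{2}{\lambda}\,\EE[(y_0-f_\lambda(x_0))(f_\lambda(x_0)-f_{\pd,\lambda}(x_0))\mid\cD]=\frac{2}{\lambda}\,(C(\lambda)-R(\lambda)).
\]
So $R-C=-\tfrac{\lambda}{2}R'(\lambda)$. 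Plugging this into the first step gives both displayed formulas. If $R'(\lambda)\neq0$ and $D>0$, the subtracted term is strictly positive, which gives the strict improvement. Since $\lambda>0$ and $D>0$, the sign of $\xi^\star$ is $-\sign(R')$.

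The main obstacle is justifying the interchange of $\partial_\lambda$ and the conditional expectation. This needs $\lambda\mapsto s_\lambda(x_0)$ to be differentiable with a derivative that is dominated in $L_2$ uniformly in a neighbourhood of $\lambda$. For resolvent smoothers I would use that $s_\lambda(x)=(K+n\lambda I)^{-1}k(x)$, or the analogous expression for generalized ridge, so that $\partial_\lambda s_\lambda(x)=-n(K+n\lambda I)^{-2}k(x)$. This has norm at most $\|k(x)\|/(n\lambda^2)$ up to constants, uniformly for $\lambda$ in a compact subset of $(0,\infty)$. Together with the finite conditional second moments of $(x_0,y_0)$, dominated convergence then applies, just as in \Cref{lem:app_risk_smoothness}. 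In the general statement, differentiability of $R$ is assumed, so I would state this domination as the mild regularity condition under which the differentiation step holds.
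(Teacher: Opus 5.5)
Your proposal is correct and follows the paper's proof essentially line by line. The affine path gives $R_{\sd}(\lambda,\xi)=R-2\xi(R-C)+\xi^2D$, and differentiating under the conditional expectation and substituting the tangent identity \eqref{eq:tangent_identity_ridge_variants_main} yields $R-C=-\tfrac{\lambda}{2}R'(\lambda)$. The only difference is the interchange of $\partial_\lambda$ and $\EE[\cdot\mid\cD]$: the paper justifies it just by citing differentiability of $R$, whereas your dominated-convergence bound (as in \Cref{lem:app_risk_smoothness}) is the more rigorous justification of that step.
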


The derivative identity \eqref{eq:tangent_identity_ridge_variants_main} surprisingly holds for many resolvent-based ridge estimators,
including generalized or Tikhonov ridge (\Cref{lem:grr_tangent_identity}) and kernel ridge regression (\Cref{lem:krr_tangent_identity}):

\begin{itemize}
\item \emph{Generalized ridge.}
For a fixed penalty operator $\Omega\succ 0$ (independent of $y$), generalized ridge is a linear smoother with
\[
  s_\lambda^\Omega(x)=X\,(X^\top X+n\lambda\,\Omega)^{-1}x,
  \qquad
  S_\lambda^\Omega = X\,(X^\top X+n\lambda\,\Omega)^{-1}X^\top.
\]
This includes feature-wise shrinkage (diagonal $\Omega$), graph or Laplacian regularization, spline-type penalties, and other
quadratic Tikhonov penalties.

\item \emph{Kernel ridge.}
For a PSD kernel with kernel matrix $K\in\RR^{n\times n}$ and $k_x:=(k(x,x_1),\dots,k(x,x_n))^\top$,
kernel ridge is a linear smoother with
\[
  s_\lambda^{\mathrm{kern}}(x)=(K+n\lambda I_n)^{-1}k_x,
  \qquad
  S_\lambda^{\mathrm{kern}}=K\,(K+n\lambda I_n)^{-1}.
\]
This covers standard kernel ridge estimators (e.g., Gaussian and polynomial kernels) and their variants obtained by
changing $k$.
See \Cref{fig:kernel-air-quality} for empirical illustrations with a Gaussian kernel.
\end{itemize}

Both examples can be shown to satisfy \eqref{eq:tangent_identity_ridge_variants_main} (see \Cref{app:ridge_variants_verify_tangent}), hence inherit
\Cref{thm:ridge_variants_tangent_sign} under any squared prediction risk (including out-of-distribution).

\begin{figure}[!t]
  \centering
  \includegraphics[width=0.48\textwidth]{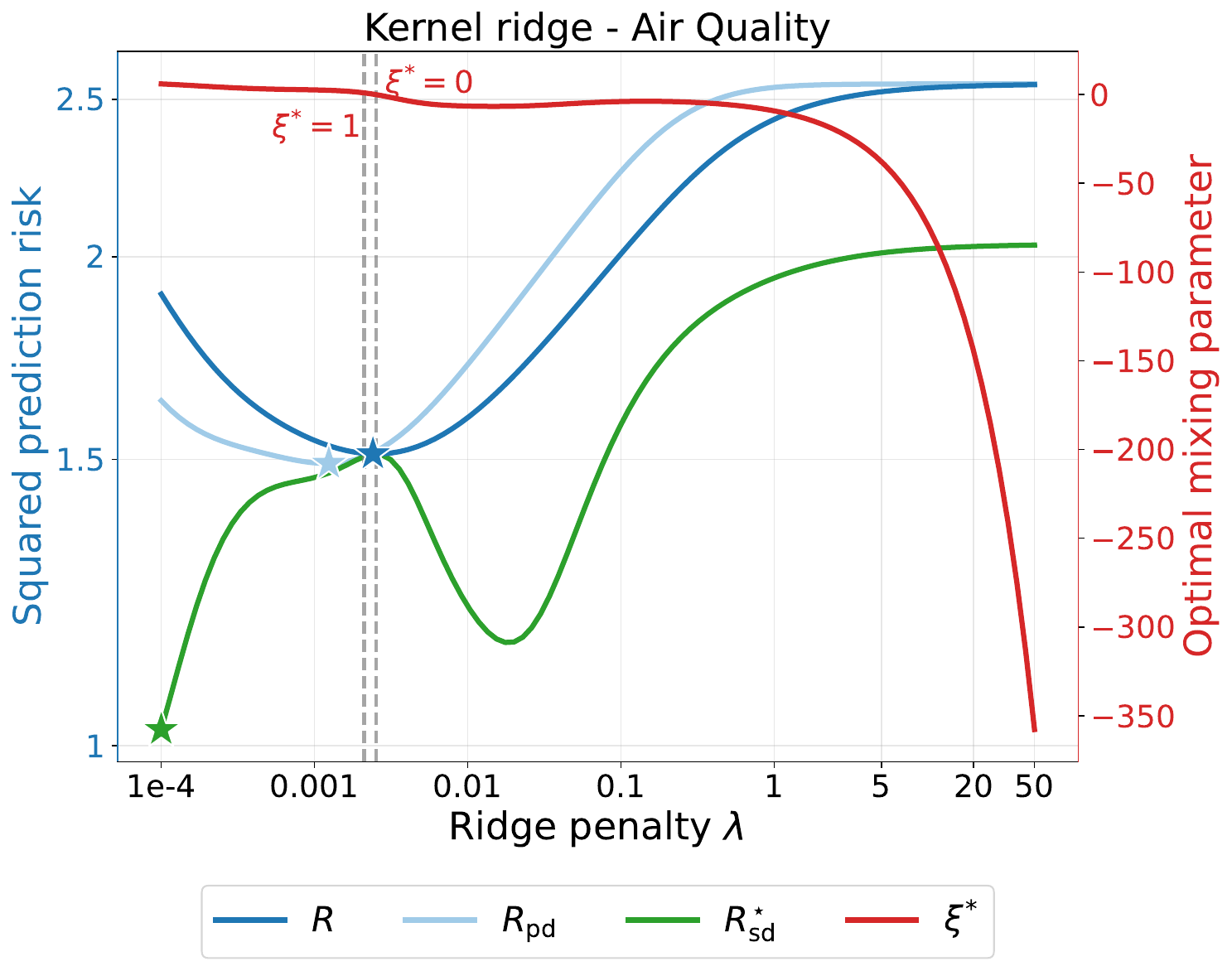}
  \includegraphics[width=0.48\textwidth]{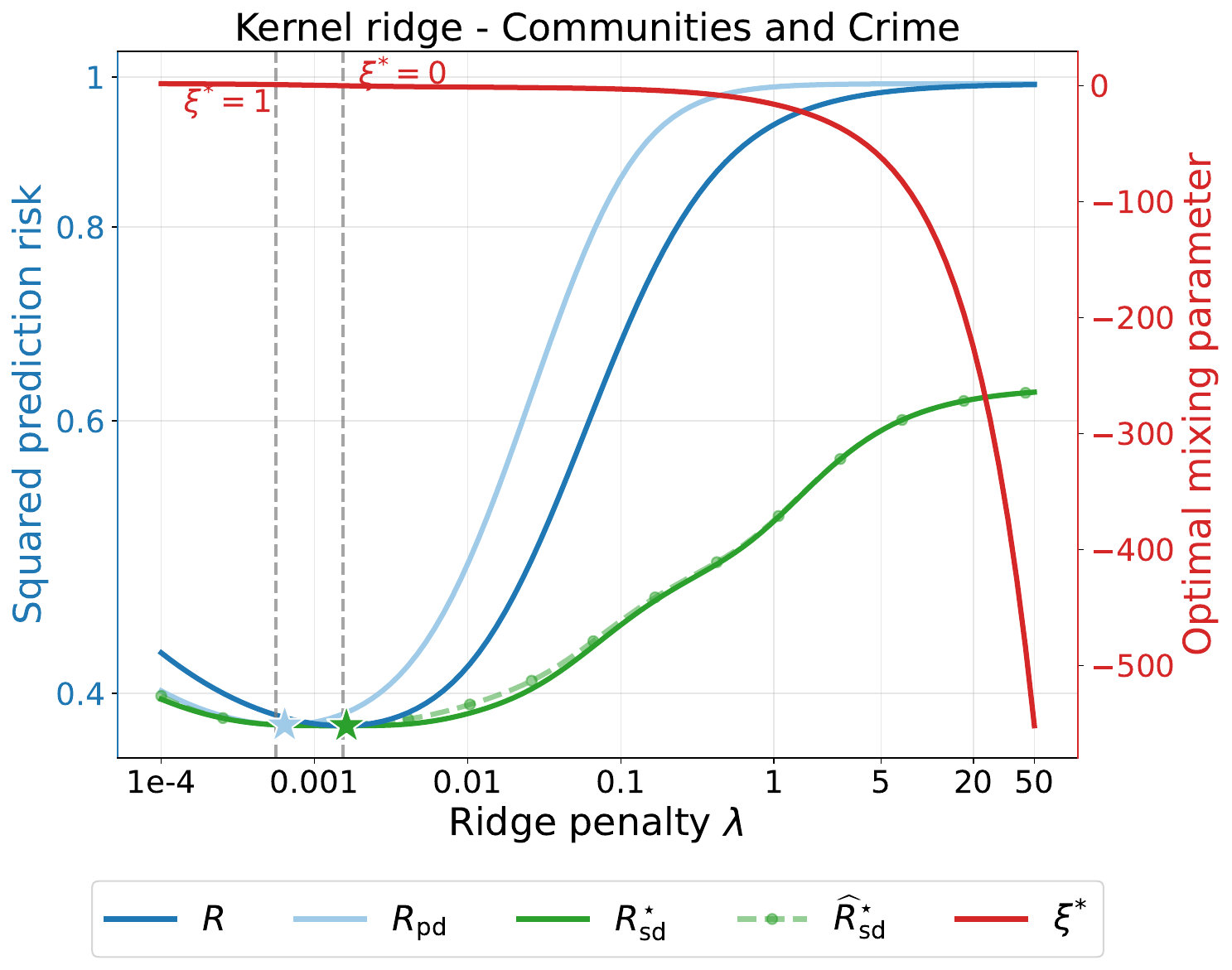}
  \caption{\textbf{Kernel ridge and kernel SD ridge regression.} Test risks of Gaussian kernel ridge and kernel SD ridge on two datasets.}
  \label{fig:kernel-air-quality}
\end{figure}

\section{Conclusion}

In this paper, we asked whether self-distillation can provably deliver strict gains (\Qs{1}), when it can rival an optimally tuned teacher (\Qs{2}), and how to tune it efficiently (\Qs{3}).
For ridge regression, and more broadly, for a class of resolvent-based ridge smoothers, we provide affirmative and sharp answers.

On the theory side, we give nonasymptotic structural identities that hold conditionally on the observed training data and for \emph{any} squared prediction risk, including out-of-distribution risks.
Under a mild nondegeneracy condition, we show that the optimally mixed student \emph{strictly} improves the teacher at every nonstationary $\lambda$ along the ridge path, and the optimal mixing weight obeys a simple sign rule: it has the \emph{opposite} sign of the ridge-risk derivative, so it can be negative in over-regularized regimes.
We then refine these identities in the proportional regime $p/n\to\gamma\in(0,\infty)$ by deriving deterministic equivalents for the optimal SD risk under anisotropic covariance and deterministic signals, which quantify how SD gains depend on $(\gamma,\SNR)$ and signal--covariance alignment.
On the algorithmic side, we propose a consistent one-shot GCV-based tuning method that avoids grid search, repeated retraining, and data splitting, while matching predicted risk curves quite well in several real-data experiments.

Taken together, our results suggest a simple conceptual message: at least in ridge-type problems, optimal self-distillation acts as a cheap and tractable correction for a mis-regularized teacher using the teacher’s own fitted values to move \emph{along} the regularization path in a direction determined by the risk slope.
An important direction is to understand how much of this regularization path geometry persists beyond ridge regression, including classification and more general learners, and to develop equally sharp guarantees for fresh-$X$ and other practically motivated distillation variants.

\section*{Acknowledgments}
We are grateful to Sujay Sanghavi for introducing us to various surprises in self-distillation and for several helpful discussions.
We also thank Adel Javanmard for an insightful seminar at the Institute for Foundations of Machine Learning (IFML) at UT Austin and the productive ensuing discussions.
Computing support is in part provided by the Texas Advanced Computing Center (TACC).

\bibliographystyle{plainnat}
\bibliography{references}

\newpage
\appendix

\newgeometry{left=0.5in,top=0.25in,right=0.5in,bottom=0.25in,head=.1in,foot=0.1in}

\begin{center}
\Large
{\bf \framebox{Supplement}}
\end{center}

\bigskip

This supplement serves as a companion to the paper titled ``Optimal Unconstrained Self-Distillation in Ridge Regression: Strict Improvements, Precise Asymptotics, and One-Shot Tuning''.
We provide an outline of the supplement in \Cref{tab:outline-supplement} and a summary of general notation used throughout the paper in \Cref{tab:notation}.

\section*{Organization}
\label{sec:organization_notation_supp}

\begin{table}[!ht]
\centering
\caption{Outline of the supplement.}
\label{tab:outline-supplement}
\begin{tabularx}{\textwidth}{L{1.75cm}L{2cm}L{20cm}}
\toprule
\textbf{Section} & \textbf{Subsection} & \textbf{Purpose} \\
\midrule
\multicolumn{2}{c}{Proofs in \Cref{sec:structural}} \\ \arrayrulecolor{black!20} \cmidrule(lr){1-2}
\addlinespace[0.5ex]
\multirow{4}{*}{\Cref{app:structural_proofs}} 
& \Cref{app:ridge_prelims} & Preliminaries \\
& \Cref{app:affine_identity} & Proof of \Cref{eq:affine_path_pred} \\
& \Cref{app:proof_prop_closed_form} & Proof of \Cref{prop:closed_form_xi_R1} \\
& \Cref{app:proof_tangent_sign} & Proof of \Cref{thm:tangent_sign} \\
& \Cref{app:proof_curvature_test} & Proof of \Cref{prop:curvature_test_main} \\
\arrayrulecolor{black!50}
\midrule 
\multicolumn{2}{c}{Proofs in \Cref{sec:asymptotics}} \\ \arrayrulecolor{black!20} \cmidrule(lr){1-2}
\addlinespace[0.5ex]
\multirow{1}{*}{\Cref{app:asymptotics_proofs}}
& \Cref{app:asymptotics_proofs-preliminaries} & Preliminaries \\
& \Cref{app:asymptotics_proofs-concentration-outline} & Helper results (concentration components) for the proof of \Cref{thm:risk-asymptotics} \\
& \Cref{app:asymptotics_proofs-equivalents-outline} & Helper results (deterministic equivalents) for the proof of \Cref{thm:risk-asymptotics} \\
& \Cref{proof:thm:risk-asymptotics} & Proof of \Cref{thm:risk-asymptotics} \\
& \Cref{proof:lem:response_concentration} & Proofs of helper results (concentration components) for the proof of \Cref{thm:risk-asymptotics} \\
& \Cref{app:DE_Q_U} & Proofs of helper results (deterministic equivalents) for the proof of \Cref{thm:risk-asymptotics} \\
& \Cref{app:thm:risk-asymptotics-isotropic-signal} & Proof of \Cref{cor:xi_asymptotic} \\
& \Cref{proof:prop:compare_extreme_lambda} & Proof of \Cref{prop:compare_extreme_lambda} \\
\arrayrulecolor{black!50}
\midrule 
\multicolumn{2}{c}{Proofs in \Cref{sec:tuning}} \\ \arrayrulecolor{black!20} \cmidrule(lr){1-2}
\addlinespace[0.5ex]
\multirow{4}{*}{\Cref{app:tuning_proofs}}
& \Cref{app:tuning_setup} & Preliminaries \\
& \Cref{app:tuning_pd_risk_corr} & Helper results (risk and correlation components consistency) for the proof of \Cref{thm:oneshot_consistency} \\
& \Cref{app:tuning_thm_proof} & Proof of \Cref{thm:oneshot_consistency} \\
& \Cref{app:tuning_pd_tech} & Technical lemmas \\
\arrayrulecolor{black!50}
\midrule 
\multicolumn{2}{c}{Proofs in \Cref{sec:extensions}} \\ \arrayrulecolor{black!20} \cmidrule(lr){1-2}
\addlinespace[0.5ex]
\multirow{3}{*}{\Cref{app:extensions_proofs}} 
& \Cref{app:multiround_proofs} & Proof of \Cref{prop:multiround_monotone_main} and other details in \Cref{sec:multiple-rounds} \\
& \Cref{sec:supp_freshX_isotropic} & Proof of \Cref{thm:freshX_dominated_by_sameX_isotropic} and other details in \Cref{sec:new-features} \\
& \Cref{sec:supp_extensions_ridge_variants} & Proof of \Cref{thm:ridge_variants_tangent_sign} and other details in \Cref{sec:ridge-variants} \\
\arrayrulecolor{black!50}
\midrule 
\multicolumn{2}{c}{Additional experiments} \\ \arrayrulecolor{black!20} \cmidrule(lr){1-2}
\addlinespace[0.5ex]
\multirow{4}{*}{\Cref{sec:additional_exps}}
& \Cref{app:additional-experiments-cifar} & Additional experiments on CIFAR datasets \\
& \Cref{app:vs_multi_round} & Illustrations for related works comparison in \Cref{sec:related_work} \\
& \Cref{app:additional-illustrations-real-world} & Additional illustrations in \Cref{sec:structural} \\
& \Cref{app:additional-illustrations-propasymp} & Additional illustrations in \Cref{sec:asymptotics_main} \\
& \Cref{app:additional-illustrations-extremereg} & Additional illustrations in \Cref{sec:asymptotics_gains} \\
\arrayrulecolor{black!50}
\midrule 
\multicolumn{2}{c}{Experimental details} \\ 
\arrayrulecolor{black!20} \cmidrule(lr){1-2}
\addlinespace[0.5ex]
\multirow{3}{*}{\Cref{sec:additional_details}}
& \Cref{sec:real-world-regression-tasks} & Real-world regression tasks \\
& \Cref{sec:resnet-cifar-details} & Pretrained ResNet features on CIFAR datasets \\
& \Cref{sec:synthetic_details} & Synthetic asymptotic experiments 
\\
\arrayrulecolor{black}
\bottomrule
\end{tabularx}
\end{table}

\clearpage
\section*{Notation}
\label{sec:notation}

\begin{table}[!ht]
\centering
\caption{Summary of general notation used throughout the paper.}
\label{tab:notation}
\begin{tabularx}{\textwidth}{L{5cm}L{15cm}} %
\toprule
\textbf{Notation} & \textbf{Description} \\
\midrule

\multicolumn{1}{c}{Typography} & \\ \arrayrulecolor{black!20}
\cmidrule(lr){1-1} \addlinespace[0.25ex]
\arrayrulecolor{black}
Lowercase (e.g., $x$) & Scalars or vectors \\
Uppercase (e.g., $X$) & Matrices or linear operators \\
Calligraphic (e.g., $\cD$, $\cR$) & Sets, $\sigma$-fields, events or certain limiting functions \\
Blackboard bold (e.g., $\RR$, $\NN$) & Standard number systems \\
\arrayrulecolor{black!50}\midrule

\multicolumn{1}{c}{Analysis} & \\ \arrayrulecolor{black!20} 
\cmidrule(lr){1-1} \addlinespace[0.25ex]
$\ZZ$, $\NN$, $\RR$, $\RR_{\ge 0}$, $\overline{\RR}$ & Integers, positive integers, reals, nonnegative reals, extended reals \\
$(a,b,c)$, $\{a,b,c\}$ & Ordered tuple and (unordered) set \\
$[n]$ & Set $\{1, \dots, n\}$ for a positive integer $n$ \\
$x \wedge y$, $x \vee y$ & $\min\{x, y\}$ and $\max\{x, y\}$ for real numbers $x, y$ \\
$\ind_A$ & Indicator random variable associated with event or set $A$ \\
$\sign(x)$ & Sign of a real number $x$ \\
$C^\infty$ & Function class of infinitely differentiable functions \\
\arrayrulecolor{black!50}\midrule

\multicolumn{1}{c}{Linear algebra} & \\ \arrayrulecolor{black!20}
\cmidrule(lr){1-1} \addlinespace[0.25ex]
\arrayrulecolor{black}
$\tr[\bA]$, $\otr[\bA]$, $\det(\bA)$ & Trace, normalized trace ($\tr[\bA] / p$), and determinant of a square matrix $\bA \in \RR^{p \times p}$ \\
$\bB^{-1}$ & Inverse of an invertible square matrix $\bB$ \\
$\bC^{\dagger}$ & Moore-Penrose inverse of a general rectangular matrix $C$ \\
$\diag(d_1,\dots,d_p)$ & Diagonal matrix with diagonal entries $d_1,\dots,d_p$ \\
$U^{1/2}$ & Principal square root of a positive semidefinite matrix $U\succeq 0$ \\
$f(W)$ & Functional calculus for positive semidefinite matrix $W$ (apply $f$ to eigenvalues of $W$) \\
$\bI$, $\bm{1}$, $\bm{0}$ & The identity matrix, the all-ones vector, the all-zeros vector \\
\arrayrulecolor{black!50}\midrule

\multicolumn{1}{c}{Inner products and norms} & \\ \arrayrulecolor{black!20}
\cmidrule(lr){1-1} \addlinespace[0.25ex]
\arrayrulecolor{black}
$\langle u,v\rangle$ & Euclidean inner product (or another inner product when specified) \\
$\|x\|_2$ (or simply $\|x\|$) & Euclidean norm of a vector $x$ \\
$\|x\|_q$ & $\ell_q$ norm of a vector ($q\ge 1$) \\
$\|x\|_A:=\sqrt{x^\top A x}$ & $A$-seminorm for $A\succeq 0$ \\
$\|A\|_{\oper}$ & Operator/spectral norm of a matrix $A$ \\
$\|A\|_{F}$ & Frobenius norm of a matrix $A$ \\
$\|A\|_{\tr}$ & Trace/nuclear norm (sum of singular values) \\
$\|f\|_{L_q}$ & $L_q$ norm of a function $f$ under the relevant measure ($q\ge 1$) \\
\arrayrulecolor{black!50}\midrule

\multicolumn{1}{c}{Probability} & \\ \arrayrulecolor{black!20}
\cmidrule(lr){1-1} \addlinespace[0.25ex]
\arrayrulecolor{black}
$\PP(\cdot)$, $\EE[\cdot]$ & Probability and expectation \\
$\EE[\cdot\mid \cG]$ & Conditional expectation given $\sigma$-field $\cG$ (or given data, depending on context) \\
$\Var(\cdot)$, $\Cov(\cdot,\cdot)$ & Variance and covariance \\
$X\sim \cN(\mu,\Sigma)$ & Gaussian random vector with mean $\mu$ and covariance $\Sigma$ \\
$\stackrel{\mathrm{d}}{=}$ & Equality in distribution \\
\arrayrulecolor{black!50}\midrule

\multicolumn{1}{c}{Orders and asymptotics} & \\ \arrayrulecolor{black!20}
\cmidrule(lr){1-1} \addlinespace[0.25ex]
\arrayrulecolor{black}
$X=\cO_\alpha(Y)$, $X\lesssim_\alpha Y$ & Deterministic upper bounds with constant possibly depending on parameter $\alpha$ \\
$o(\cdot)$, $\cO(\cdot)$ & Deterministic little-$o$ and big-$O$ \\
$\op(\cdot)$, $\Op(\cdot)$ & Probabilistic little-$o$ and big-$O$ \\
$\to$, $\pto$, $\asto$, $\dto$ & Convergence, in probability, almost surely, in distribution \\
$\asymp$ & Asymptotic equivalence (see \Cref{def:strong_DE} for more details) \\
$C, C', c, c'$ & Generic positive constants (may change from line to line) \\

\arrayrulecolor{black}\bottomrule
\end{tabularx}
\end{table}

\restoregeometry

\clearpage
\section{Proofs in Section~\ref{sec:structural}}
\label{app:structural_proofs}

\subsection{Preliminaries}
\label{app:ridge_prelims}

Define the empirical covariance matrix and empirical resolvent as
\begin{equation}
\label{eq:app_emp_cov_resolvent}
\widehat\Sigma:=\frac1n X^\top X\in\RR^{p\times p},
\quad \text{and} \quad 
Q_\lambda:=(\widehat\Sigma+\lambda I_p)^{-1}\in\RR^{p\times p},
\end{equation}
respectively.
For any label vector $u\in\RR^n$, define the ridge coefficient map and predictor as
\begin{equation}
\label{eq:app_ridge_map_general_u}
\beta_\lambda(u):=Q_\lambda\,\frac{X^\top u}{n},
\quad \text{and} \quad 
f_{\lambda,u}(x):=x^\top\beta_\lambda(u),
\end{equation}
respectively.
In particular, the teacher predictor is $f_\lambda:=f_{\lambda,y}$ with fitted values $\hat y_\lambda:=f_\lambda(X)=X\beta_\lambda(y)\in\RR^n$.
Note that $f_{\lambda,u}$ is linear in $u$.

The PD predictor $f_{\pd,\lambda}$ is the ridge predictor trained on $(X,\hat y_\lambda)$ with the same penalty $\lambda$, i.e., $f_{\pd,\lambda}:=f_{\lambda,\hat y_\lambda}$.
Its coefficient vector admits the closed form
\begin{equation}
\label{eq:app_beta_pd_closed_form}
\beta_{\pd,\lambda}
:=\beta_\lambda(\hat y_\lambda)
=Q_\lambda\,\frac{X^\top(X\beta_\lambda(y))}{n}
=Q_\lambda\,\widehat\Sigma\,\beta_\lambda(y).
\end{equation}
Since $\widehat\Sigma$ and $Q_\lambda$ commute (both are polynomials in $\widehat\Sigma$), we have
$\widehat\Sigma Q_\lambda=I_p-\lambda Q_\lambda$, and therefore
\begin{equation}
\label{eq:app_beta_diff_identity}
\beta_\lambda(y)-\beta_{\pd,\lambda}
=\big(I_p-Q_\lambda\widehat\Sigma\big)\beta_\lambda(y)
=\lambda Q_\lambda\,\beta_\lambda(y).
\end{equation}

\subsection{Proof of \Cref{eq:affine_path_pred}}
\label{app:affine_identity}

\begin{lemma}[SD ridge representation]
\label{lem:sd_is_mixed_ridge_app}
Fix $\lambda>0$ and $\xi\in\RR$.
Let $\hat y_\lambda=f_\lambda(X)$ and define the mixed label vector
\begin{equation}
\label{eq:app_mixed_labels}
y^{(1)}(\xi):=(1-\xi)y+\xi\hat y_\lambda\in\RR^n.
\end{equation}
Then the SD predictor \eqref{eq:sd_predictor} coincides with ridge regression trained on $(X,y^{(1)}(\xi))$ with penalty $\lambda$, i.e.,
$f_{\sd,\lambda,\xi}=f_{\lambda,\,y^{(1)}(\xi)}$.
\end{lemma}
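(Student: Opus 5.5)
The plan is to compare the two quadratic objectives in $\beta$ directly and show they differ only by a term that does not depend on $\beta$. Expand the mixed loss in \eqref{eq:sd_predictor}:
\[
(1-\xi)\|y-X\beta\|_2^2+\xi\|\hat y_\lambda-X\beta\|_2^2
=\|X\beta\|_2^2-2\big\langle (1-\xi)y+\xi\hat y_\lambda,\,X\beta\big\rangle+(1-\xi)\|y\|_2^2+\xi\|\hat y_\lambda\|_2^2 .
\]
The coefficients $(1-\xi)$ and $\xi$ sum to one for every real $\xi$, so the quadratic term $\|X\beta\|_2^2$ carries coefficient exactly one. Hence the left-hand side equals $\|y^{(1)}(\xi)-X\beta\|_2^2$ plus a constant independent of $\beta$, namely $(1-\xi)\|y\|_2^2+\xi\|\hat y_\lambda\|_2^2-\|y^{(1)}(\xi)\|_2^2$.

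Dividing by $n$ and adding $\lambda\|\beta\|_2^2$, the SD objective coincides, up to that additive constant, with the ridge objective \eqref{eq:ridge_predictor} for labels $y^{(1)}(\xi)$. This objective has Hessian $2(\widehat\Sigma+\lambda I_p)\succ 0$ for $\lambda>0$, regardless of the sign of $\xi$, so it is strictly convex with a unique minimizer. That minimizer is $\beta_\lambda(y^{(1)}(\xi))=Q_\lambda X^\top y^{(1)}(\xi)/n$ by \eqref{eq:app_ridge_map_general_u}, and therefore $f_{\sd,\lambda,\xi}=f_{\lambda,y^{(1)}(\xi)}$.

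There is no real obstacle here. The only point that needs care is the one just made: the argmin stays well defined and unique even when $\xi\notin[0,1]$. The weights may then be negative, but the Hessian is unaffected because they always sum to one.

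As a consequence, linearity of $u\mapsto f_{\lambda,u}$ gives $f_{\lambda,y^{(1)}(\xi)}=(1-\xi)f_{\lambda,y}+\xi f_{\lambda,\hat y_\lambda}=(1-\xi)f_\lambda+\xi f_{\pd,\lambda}$, which is exactly \eqref{eq:affine_path_pred}.
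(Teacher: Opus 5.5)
Your proposal is correct and follows the paper's own argument. You expand the mixed loss, note that the $\|X\beta\|_2^2$ term has coefficient $(1-\xi)+\xi=1$, and identify the result, up to a $\beta$-independent constant, with the ridge objective for labels $y^{(1)}(\xi)$. Your explicit remark that the Hessian $2(\widehat\Sigma+\lambda I_p)$ is positive definite for every $\xi\in\RR$, so the argmin is unique even when $\xi\notin[0,1]$, is a small but useful point that the paper leaves implicit.
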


\begin{proof}
Expanding the SD objective (and dropping terms independent of $\beta$), we obtain that
\begin{align*}
&(1-\xi)\frac1n\|y-X\beta\|_2^2+\xi\frac1n\|\hat y_\lambda-X\beta\|_2^2+\lambda\|\beta\|_2^2 \\
&=
\frac1n\|X\beta\|_2^2
-\frac{2}{n}\beta^\top X^\top\big((1-\xi)y+\xi\hat y_\lambda\big)
+\lambda\|\beta\|_2^2+\text{const},
\end{align*}
where $\text{const}$ is a term that does not depend on $\beta$. 
This is exactly the ridge objective with response $y^{(1)}(\xi)$ as defined in \eqref{eq:app_mixed_labels}.
\end{proof}

\begin{proof}[Proof of \Cref{eq:affine_path_pred}]
By Lemma~\ref{lem:sd_is_mixed_ridge_app}, $f_{\sd,\lambda,\xi}=f_{\lambda,\,(1-\xi)y+\xi\hat y_\lambda}$.
Using linearity of $f_{\lambda,u}$ in $u$ (cf.\ \eqref{eq:app_ridge_map_general_u}),
\[
f_{\sd,\lambda,\xi}
=(1-\xi)f_{\lambda,y}+\xi f_{\lambda,\hat y_\lambda}
=(1-\xi)f_\lambda+\xi f_{\pd,\lambda},
\]
which is \eqref{eq:affine_path_pred}.
\end{proof}

\subsection{Proof of Proposition~\ref{prop:closed_form_xi_R1}}
\label{app:proof_prop_closed_form}

Throughout, $\lambda > 0$ is fixed and we abbreviate $f:=f_\lambda$ and $g:=f_{\pd,\lambda}$.
Write
\[
  R:=R(f),\qquad R_{\pd}:=R(g),\qquad C:=\EE[(y_0-f(x_0))(y_0-g(x_0))\mid\cD],
\]
and define the nonnegative quantity
y
\[
  D:=R+R_{\pd}-2C=\EE[(f(x_0)-g(x_0))^2\mid\cD]\ge 0.
\]

\begin{proof}[Proof of Proposition~\ref{prop:closed_form_xi_R1}]
By \eqref{eq:affine_path_pred}, the SD predictor with mixing weight $\xi$ is $f_{\sd}(\xi)=(1-\xi)f+\xi g$.
Let the residuals be
$r_f:=y_0-f(x_0)$ and $r_g:=y_0-g(x_0)$.
Then
\[
  y_0-f_{\sd}(\xi)(x_0)=(1-\xi)r_f+\xi r_g,
\]
and thus
\begin{align*}
R_{\sd}(\lambda,\xi)
&=\EE[((1-\xi)r_f+\xi r_g)^2\mid\cD] \\
&=(1-\xi)^2\EE[r_f^2\mid\cD]+\xi^2\EE[r_g^2\mid\cD]+2\xi(1-\xi)\EE[r_f r_g\mid\cD] \\
&=(1-\xi)^2R+\xi^2R_{\pd}+2\xi(1-\xi)C \\
&=R-2\xi(R-C)+\xi^2D.
\end{align*}
If $D>0$, this is a strictly convex quadratic in $\xi$ with a unique minimizer given by
\[
0=\partial_\xi R_{\sd}(\lambda,\xi)=-2(R-C)+2\xi D
\quad\Longrightarrow\quad
\xi^\star=\frac{R-C}{D}.
\]
Substituting back yields
\[
R_{\sd}^\star
=R-\frac{(R-C)^2}{D}.
\]
If $D=0$, then $f(x_0)=g(x_0)$ a.s.\ under the test distribution, hence
$R_{\sd}(\lambda,\xi)\equiv R$ for all $\xi$ and every $\xi$ is optimal with $R_{\sd}^\star=R$.
\end{proof}

\subsection{Proof of Theorem~\ref{thm:tangent_sign}}
\label{app:proof_tangent_sign}

We first establish a derivative identity relating the PD predictor to the derivative of the teacher along the ridge path.

\begin{lemma}[Resolvent derivative rule]
\label{lem:resolvent_derivative_app}
For a fixed $\lambda>0$ and symmetric $A\succeq 0$, let $Q_\lambda:=(A+\lambda I)^{-1}$. Then,
\[
  \partial_\lambda Q_\lambda = -Q_\lambda^2.
\]
\end{lemma}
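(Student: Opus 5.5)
The plan is to differentiate the identity $(A+\lambda I)Q_\lambda = I$ with respect to $\lambda$, treating $A$ as fixed (independent of $\lambda$). The first step is to justify differentiability. Since $A\succeq 0$ and $\lambda>0$, the matrix $A+\lambda I$ has all eigenvalues at least $\lambda>0$. It is therefore invertible on a neighborhood of any fixed $\lambda>0$. The entries of $Q_\lambda$ are given by Cramer's rule as ratios of polynomials in $\lambda$, with a denominator $\det(A+\lambda I)$ that does not vanish. Hence $\lambda\mapsto Q_\lambda$ is $C^\infty$ on $(0,\infty)$.

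Next, I differentiate the product rule applied to $(A+\lambda I)Q_\lambda=I$. Since $\partial_\lambda(A+\lambda I)=I$, this gives
\[
Q_\lambda + (A+\lambda I)\,\partial_\lambda Q_\lambda = 0 .
\]
Left-multiplying by $Q_\lambda=(A+\lambda I)^{-1}$ then yields $\partial_\lambda Q_\lambda=-Q_\lambda^2$, as claimed.

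As a cross-check, and as an alternative route that avoids the product rule entirely, I would diagonalize $A=V\Lambda V^\top$ with $V$ orthogonal. Then $Q_\lambda=V\operatorname{diag}\big((a_i+\lambda)^{-1}\big)V^\top$. Differentiating each eigenvalue function gives $-(a_i+\lambda)^{-2}$, which is exactly $-Q_\lambda^2$. Symmetry of $A$ is only used in this alternative route; the first argument needs only invertibility.

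No step is a genuine obstacle. The only point requiring care is the smoothness justification, so that the product rule is legitimate, and the determinant argument above handles it.
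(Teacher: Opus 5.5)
Your proposal is correct and is essentially the paper's proof: differentiate $(A+\lambda I)Q_\lambda=I$ and left-multiply by $Q_\lambda$. Your Cramer's-rule smoothness justification and the spectral cross-check are valid additions that the paper leaves implicit.
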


\begin{proof}
Differentiate $(A+\lambda I)Q_\lambda=I$ and left-multiply by $Q_\lambda$.
\end{proof}

\begin{lemma}[Derivative identity for ridge versus pure-distillation]
\label{lem:tangent_identity_app}
For any $\lambda>0$ and $x\in\RR^p$,
\begin{equation}
\label{eq:tangent_identity_main_app}
  f_\lambda(x)-f_{\pd,\lambda}(x)=-\lambda\,\partial_\lambda f_\lambda(x).
\end{equation}
\end{lemma}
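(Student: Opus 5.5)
The identity is linear in $x$, so it suffices to work at the level of coefficient vectors. Since $f_\lambda(x)=x^\top\beta_\lambda(y)$ and $f_{\pd,\lambda}(x)=x^\top\beta_{\pd,\lambda}$, the plan is to show
\[
\beta_\lambda(y)-\beta_{\pd,\lambda}=-\lambda\,\partial_\lambda\beta_\lambda(y),
\]
and then take the inner product with $x$. Differentiation commutes with this inner product because $x$ does not depend on $\lambda$.

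The left-hand side has already been computed in \eqref{eq:app_beta_diff_identity}: it equals $\lambda Q_\lambda\beta_\lambda(y)=\lambda Q_\lambda^2 X^\top y/n$.

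For the right-hand side, write $\beta_\lambda(y)=Q_\lambda X^\top y/n$ with $Q_\lambda=(\widehat\Sigma+\lambda I_p)^{-1}$. Here $\widehat\Sigma\succeq 0$ is symmetric and independent of $\lambda$, and $X^\top y/n$ is also independent of $\lambda$. Applying \Cref{lem:resolvent_derivative_app} with $A=\widehat\Sigma$ gives
\[
\partial_\lambda\beta_\lambda(y)=-Q_\lambda^2X^\top y/n.
\]
Multiplying by $-\lambda$ yields $\lambda Q_\lambda^2X^\top y/n$, which matches the left-hand side.

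The only point needing care is differentiability of $\lambda\mapsto Q_\lambda$ on $(0,\infty)$. This holds because $\widehat\Sigma+\lambda I_p\succeq\lambda I_p\succ0$, so the map $\lambda\mapsto\widehat\Sigma+\lambda I_p$ stays in the open set of invertible matrices, on which inversion is smooth. With that settled there is no real obstacle; the whole argument rests on the commutation $\widehat\Sigma Q_\lambda=I_p-\lambda Q_\lambda$, which was already used in \eqref{eq:app_beta_diff_identity}.
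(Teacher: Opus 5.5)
Your proposal is correct and follows essentially the same argument as the paper. Both use the resolvent derivative rule to get $\partial_\lambda\beta_\lambda(y)=-Q_\lambda\beta_\lambda(y)$, match $-\lambda\partial_\lambda\beta_\lambda(y)$ with $\beta_\lambda(y)-\beta_{\pd,\lambda}=\lambda Q_\lambda\beta_\lambda(y)$ from \eqref{eq:app_beta_diff_identity}, and then take the inner product with $x$; your remark on differentiability of $\lambda\mapsto Q_\lambda$ is a harmless extra justification.
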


\begin{proof}
From the definitions, $\beta_\lambda(y)=Q_\lambda X^\top y/n$ and $\beta_{\pd,\lambda}=Q_\lambda\widehat\Sigma\,\beta_\lambda(y)$; see \ \eqref{eq:app_ridge_map_general_u} and \eqref{eq:app_beta_pd_closed_form}.
By Lemma~\ref{lem:resolvent_derivative_app},
\[
  \partial_\lambda \beta_\lambda(y)
  =(\partial_\lambda Q_\lambda)\frac{X^\top y}{n}
  =-Q_\lambda^2\frac{X^\top y}{n}
  =-Q_\lambda\,\beta_\lambda(y).
\]
Thus $-\lambda\,\partial_\lambda \beta_\lambda(y)=\lambda Q_\lambda\beta_\lambda(y)$.
Using \eqref{eq:app_beta_diff_identity}, $\lambda Q_\lambda\beta_\lambda(y)=\beta_\lambda(y)-\beta_{\pd,\lambda}$.
Multiplying by $x^\top$ yields \eqref{eq:tangent_identity_main_app}.
\end{proof}

Next we relate $R(\lambda)-C(\lambda)$ to $R'(\lambda)$.
Before we proceed, we justify that $R'(\lambda)$ is indeed well-defined under our setup.

\begin{lemma}[Smoothness of ridge predictions and risk]
\label{lem:app_risk_smoothness}
Fix the training data $\cD=(X,y)$ and assume $\EE[y_0^2\mid\cD]<\infty$ and $\EE[\|x_0\|_2^2\mid\cD]<\infty$.
Then for every $\lambda>0$, the map $\lambda\mapsto f_\lambda(x_0)$ is $C^\infty$ and the risk
$R(\lambda)=\EE[(y_0-f_\lambda(x_0))^2\mid\cD]$ is $C^\infty$ on $(0,\infty)$.
Moreover,
\begin{equation}
\label{eq:app_Rprime_formula}
R'(\lambda)
=
-2\,\EE\!\big[(y_0-f_\lambda(x_0))\,\partial_\lambda f_\lambda(x_0)\mid\cD\big],
\end{equation}
where $\partial_\lambda f_\lambda(x)=-x^\top Q_\lambda \beta_\lambda(y)$ with $Q_\lambda=(\widehat\Sigma+\lambda I)^{-1}$.
\end{lemma}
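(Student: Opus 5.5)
The statement to prove is \Cref{lem:app_risk_smoothness}. The plan is to treat the data $\cD=(X,y)$ as fixed, so that all randomness comes from $(x_0,y_0)$. We then write the prediction as a deterministic smooth vector-valued function of $\lambda$ paired with $x_0$, and differentiate under the conditional expectation.

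\textbf{Step 1: smoothness of the prediction map.} Set $\beta_\lambda:=\beta_\lambda(y)=Q_\lambda X^\top y/n$. Since $\widehat\Sigma\succeq 0$, the resolvent $Q_\lambda=(\widehat\Sigma+\lambda I)^{-1}$ exists for every $\lambda>0$. Diagonalize $\widehat\Sigma=V\,\diag(s_1,\dots,s_p)V^\top$, so that $Q_\lambda=V\,\diag((s_j+\lambda)^{-1})V^\top$. Each entry of $Q_\lambda$ is then a finite linear combination of $(s_j+\lambda)^{-1}$, and these are $C^\infty$ on $(0,\infty)$. Hence $\lambda\mapsto\beta_\lambda$ is $C^\infty$.

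By \Cref{lem:resolvent_derivative_app}, $\partial_\lambda\beta_\lambda=-Q_\lambda^2X^\top y/n=-Q_\lambda\beta_\lambda$. Since $f_\lambda(x_0)=x_0^\top\beta_\lambda$ is linear in $\beta_\lambda$ with $\lambda$-free coefficients, it is $C^\infty$ in $\lambda$ for each realization of $x_0$, and $\partial_\lambda f_\lambda(x_0)=-x_0^\top Q_\lambda\beta_\lambda$. The higher derivatives are $\partial_\lambda^k\beta_\lambda=(-1)^k k!\,Q_\lambda^{k}\beta_\lambda$.

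\textbf{Step 2: smoothness of the risk.} Expand the risk as a quadratic in $\beta_\lambda$:
\[
R(\lambda)=\EE[y_0^2\mid\cD]-2\,m^\top\beta_\lambda+\beta_\lambda^\top M\beta_\lambda,
\qquad m:=\EE[x_0y_0\mid\cD],\quad M:=\EE[x_0x_0^\top\mid\cD].
\]
The moment assumptions ensure these quantities are finite. We have $\|M\|\le\EE[\|x_0\|^2\mid\cD]<\infty$. By Cauchy--Schwarz, $\|m\|\le(\EE[\|x_0\|^2\mid\cD]\,\EE[y_0^2\mid\cD])^{1/2}<\infty$.

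With these finite deterministic coefficients, $R$ is a quadratic polynomial in the $C^\infty$ vector $\beta_\lambda$, so $R$ is $C^\infty$ on $(0,\infty)$. This route avoids any dominated-convergence argument.

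\textbf{Step 3: the derivative formula.} Differentiating the quadratic gives
\[
R'(\lambda)=-2(m-M\beta_\lambda)^\top\partial_\lambda\beta_\lambda.
\]
To identify this with \eqref{eq:app_Rprime_formula}, note that $m-M\beta_\lambda=\EE[x_0(y_0-x_0^\top\beta_\lambda)\mid\cD]$. The vector $\partial_\lambda\beta_\lambda$ is $\cD$-measurable, so it can be moved inside the conditional expectation, which yields
\[
R'(\lambda)=-2\,\EE[(y_0-f_\lambda(x_0))\,x_0^\top\partial_\lambda\beta_\lambda\mid\cD],
\]
with $x_0^\top\partial_\lambda\beta_\lambda=-x_0^\top Q_\lambda\beta_\lambda=\partial_\lambda f_\lambda(x_0)$.

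The only delicate point is ensuring finiteness and integrability when exchanging the expectation. The reduction to the finite-dimensional moments $m$ and $M$ handles this: it needs only the two stated second-moment conditions together with Cauchy--Schwarz. No independence between $(x_0,y_0)$ and $\cD$ is required.
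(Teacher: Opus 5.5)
Your proof is correct, but it gets there by a different mechanism than the paper.

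\textbf{The paper's route.} It localizes to $\lambda\in[\lambda_0/2,3\lambda_0/2]$ and uses $\|Q_\lambda\|_{\mathrm{op}}\le 2/\lambda_0$ to get the uniform bounds $|f_\lambda(x_0)|\le c_1\|x_0\|_2$ and $|\partial_\lambda f_\lambda(x_0)|\le c_2\|x_0\|_2$. It then invokes dominated convergence, with dominating function $c_3(|y_0|\|x_0\|_2+\|x_0\|_2^2)$ (integrable by Cauchy--Schwarz), to differentiate under $\EE[\cdot\mid\cD]$. Higher-order smoothness is only asserted to ``follow similarly'' from the rationality of $f_\lambda(x_0)$ in $\lambda$.

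\textbf{Your route.} You push the conditional expectation through \emph{before} differentiating. This collapses $R(\lambda)$ to the deterministic quadratic $\EE[y_0^2\mid\cD]-2m^\top\beta_\lambda+\beta_\lambda^\top M\beta_\lambda$ in the $C^\infty$ vector $\beta_\lambda$. All orders of smoothness and the derivative formula then follow from ordinary calculus. The expectation is reinserted only at the end, via $\cD$-measurability of $\partial_\lambda\beta_\lambda$.

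\textbf{What each buys.} Your argument needs no localization or dominating function. It settles the $C^\infty$ claim for every order at once rather than by appeal to ``similarly''. It also shows directly that $R$ is a rational, hence real-analytic, function of $\lambda$ on $(0,\infty)$, since each entry of $\beta_\lambda$ is a combination of the $(s_j+\lambda)^{-1}$. The paper's dominated-convergence argument, by contrast, does not rely on the loss being exactly quadratic. The same bounds would let one differentiate $\EE[t(y_0-f_\lambda(x_0))\mid\cD]$ for a smooth $t$ with $|t'(u)|\lesssim 1+|u|$. Your reduction to the two moments $m$ and $M$ is specific to squared loss, which is all this lemma requires.
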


\begin{proof}
Fix $\lambda_0>0$ and consider $\lambda\in[\lambda_0/2,3\lambda_0/2]$.
Then $\|Q_\lambda\|_{\text{op}}\le 2/\lambda_0$, and with $b:=X^\top y/n$ we have
$\beta_\lambda=Q_\lambda b$ and $\partial_\lambda \beta_\lambda=-Q_\lambda\beta_\lambda$.
Hence there exist constants (depending on $\cD$ and $\lambda_0$) such that uniformly over this interval,
\[
|f_\lambda(x_0)| \le c_1\|x_0\|_2,
\qquad
|\partial_\lambda f_\lambda(x_0)| \le c_2\|x_0\|_2.
\]
Therefore,
\[
\big|\partial_\lambda (y_0-f_\lambda(x_0))^2\big|
=2|y_0-f_\lambda(x_0)|\,|\partial_\lambda f_\lambda(x_0)|
\le c_3\big(|y_0|\|x_0\|_2+\|x_0\|_2^2\big),
\]
whose conditional expectation is finite by Cauchy--Schwarz and the assumed second-moment bounds.
Dominated convergence justifies differentiating under $\EE[\cdot\mid\cD]$, yielding \eqref{eq:app_Rprime_formula}.
Higher derivatives follow similarly since $f_\lambda(x_0)$ is a rational (hence analytic) function of $\lambda$ for $\lambda>0$.
\end{proof}

\begin{lemma}[Derivative identity for $R(\lambda)-C(\lambda)$]
\label{lem:derivative_identity_app_new}
For any $\lambda>0$, 
\begin{equation}
\label{eq:derivative_identity_main_app}
  R(\lambda)-C(\lambda)=-\frac{\lambda}{2}\,R'(\lambda),
\end{equation}
where $R(\lambda)=R(f_\lambda)$ and $C(\lambda)=\EE[(y_0-f_\lambda(x_0))(y_0-f_{\pd,\lambda}(x_0))\mid\cD]$.
\end{lemma}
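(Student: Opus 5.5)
The identity follows by combining \Cref{lem:tangent_identity_app} with the derivative formula \eqref{eq:app_Rprime_formula} from \Cref{lem:app_risk_smoothness}, after writing $R(\lambda)-C(\lambda)$ as a single conditional expectation. With $r_f:=y_0-f_\lambda(x_0)$ and $r_g:=y_0-f_{\pd,\lambda}(x_0)$, linearity of conditional expectation gives
\[
R(\lambda)-C(\lambda)=\EE\big[r_f(r_f-r_g)\mid\cD\big]=\EE\big[(y_0-f_\lambda(x_0))\,(f_{\pd,\lambda}(x_0)-f_\lambda(x_0))\mid\cD\big].
\]
Both expectations are finite under the conditional second-moment assumptions. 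This follows from Cauchy--Schwarz, because $|f_\lambda(x_0)|$ and $|f_{\pd,\lambda}(x_0)|$ are bounded by a data-dependent constant times $\|x_0\|_2$.

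Next I would substitute the pointwise identity \eqref{eq:tangent_identity_main_app}, which says $f_{\pd,\lambda}(x)-f_\lambda(x)=\lambda\,\partial_\lambda f_\lambda(x)$ for every $x$. Applying it at $x=x_0$ inside the expectation gives
\[
R(\lambda)-C(\lambda)=\lambda\,\EE\big[(y_0-f_\lambda(x_0))\,\partial_\lambda f_\lambda(x_0)\mid\cD\big].
\]
By \eqref{eq:app_Rprime_formula}, the conditional expectation on the right equals $-R'(\lambda)/2$. Hence $R(\lambda)-C(\lambda)=-\tfrac{\lambda}{2}R'(\lambda)$, as claimed.

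There is essentially no obstacle here. The one technical point, differentiating under the conditional expectation, has already been handled by dominated convergence in \Cref{lem:app_risk_smoothness}. I would only check that the sign conventions match: $\partial_\lambda f_\lambda(x)=-x^\top Q_\lambda\beta_\lambda(y)$ and $f_\lambda-f_{\pd,\lambda}=\lambda x^\top Q_\lambda\beta_\lambda(y)$ by \eqref{eq:app_beta_diff_identity}, and these two expressions are consistent with each other.

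With the identity established, \Cref{thm:tangent_sign} follows by substituting it into \Cref{prop:closed_form_xi_R1}.
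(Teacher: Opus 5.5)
Your proposal is correct and follows the paper's proof essentially verbatim: you rewrite $R(\lambda)-C(\lambda)$ as $\EE[(y_0-f_\lambda(x_0))(f_{\pd,\lambda}(x_0)-f_\lambda(x_0))\mid\cD]$, substitute the tangent identity of \Cref{lem:tangent_identity_app}, and match the result against the derivative formula \eqref{eq:app_Rprime_formula} from \Cref{lem:app_risk_smoothness}. Your remarks on integrability and on the sign consistency of $\partial_\lambda f_\lambda$ and $f_\lambda-f_{\pd,\lambda}$ are sound and simply spell out details the paper leaves implicit.
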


\begin{proof}
Write $r_\lambda:=y_0-f_\lambda(x_0)$.
First note that
\[
R(\lambda)-C(\lambda)
=\EE[r_\lambda^2-r_\lambda(y_0-f_{\pd,\lambda}(x_0))\mid\cD]
=\EE[r_\lambda\,(f_{\pd,\lambda}(x_0)-f_\lambda(x_0))\mid\cD].
\]
Using Lemma~\ref{lem:tangent_identity_app},
$f_{\pd,\lambda}(x_0)-f_\lambda(x_0)=\lambda\,\partial_\lambda f_\lambda(x_0)$, hence
\begin{equation}
\label{eq:app_RminusC_as_inner}
R(\lambda)-C(\lambda)=\lambda\,\EE\!\big[r_\lambda\,\partial_\lambda f_\lambda(x_0)\mid\cD\big].
\end{equation}
On the other hand, from \Cref{lem:app_risk_smoothness}, we have
\[
R'(\lambda)
=-2\,\EE[r_\lambda\,\partial_\lambda f_\lambda(x_0)\mid\cD].
\]
Combining with \eqref{eq:app_RminusC_as_inner} yields \eqref{eq:derivative_identity_main_app}.
\end{proof}

\begin{proof}[Proof of \Cref{thm:tangent_sign}]
Let $D(\lambda):=R(\lambda)+R_{\pd}(\lambda)-2C(\lambda)$.
When $D(\lambda)>0$, \Cref{prop:closed_form_xi_R1} gives
\[
\xi^\star(\lambda)=\frac{R(\lambda)-C(\lambda)}{D(\lambda)},
\qquad
R_{\sd}^\star(\lambda)=R(\lambda)-\frac{(R(\lambda)-C(\lambda))^2}{D(\lambda)}.
\]
Apply \Cref{lem:derivative_identity_app_new} to substitute $R(\lambda)-C(\lambda)=-(\lambda/2)R'(\lambda)$, obtaining
\[
\xi^\star(\lambda)
=-\frac{\lambda}{2}\frac{R'(\lambda)}{D(\lambda)},
\qquad
R_{\sd}^\star(\lambda)
=R(\lambda)-\frac{\lambda^2}{4}\frac{(R'(\lambda))^2}{D(\lambda)}.
\]
If $R'(\lambda)\neq 0$, then the subtracted term is strictly positive (since $D(\lambda)>0$), hence
$R_{\sd}^\star(\lambda)<R(\lambda)$.
Finally, $\sign(\xi^\star(\lambda))=-\sign(R'(\lambda))$ because $\lambda>0$ and $D(\lambda)>0$.
\end{proof}

\subsection{Proof of \Cref{prop:curvature_test_main}}
\label{app:proof_curvature_test}

\begin{proof}[Proof of \Cref{prop:curvature_test_main}]
Define
\[
g(\lambda):=R(\lambda)-C(\lambda),
\quad \text{and} \quad
D(\lambda):=R(\lambda)+R_{\pd}(\lambda)-2C(\lambda).
\]
For $D(\lambda)>0$, \Cref{prop:closed_form_xi_R1} gives
\begin{equation}
\label{eq:app_Rsdstar_gD}
R_{\sd}^\star(\lambda)=R(\lambda)-\frac{g(\lambda)^2}{D(\lambda)}.
\end{equation}
By \Cref{lem:derivative_identity_app_new}, $g(\lambda)=-(\lambda/2)R'(\lambda)$, hence at $\lambda^\star$,
\begin{equation}
\label{eq:app_g_at_opt}
g(\lambda^\star)=0,
\qquad
g'(\lambda^\star)=-\frac{\lambda^\star}{2}R''(\lambda^\star).
\end{equation}
Differentiate \eqref{eq:app_Rsdstar_gD} twice and evaluate at $\lambda^\star$.
All terms involving $g(\lambda^\star)$ vanish, yielding
\begin{equation}
\label{eq:app_Rsd_second_at_opt}
R_{\sd}^{\star\prime\prime}(\lambda^\star)
=R''(\lambda^\star)-\frac{2(g'(\lambda^\star))^2}{D(\lambda^\star)}.
\end{equation}
Substituting \eqref{eq:app_g_at_opt} into \eqref{eq:app_Rsd_second_at_opt} gives
\[
R_{\sd}^{\star\prime\prime}(\lambda^\star)
=
R''(\lambda^\star)
-\frac{\lambda^{\star\,2}}{2}\frac{(R''(\lambda^\star))^2}{D(\lambda^\star)}.
\]
Therefore, if $D(\lambda^\star)<\frac{\lambda^{\star\,2}}{2}R''(\lambda^\star)$, then
$R_{\sd}^{\star\prime\prime}(\lambda^\star)<0$, so $\lambda^\star$ cannot be a local minimizer of $R_{\sd}^\star$.
Since $g(\lambda^\star)=0$, \eqref{eq:app_Rsdstar_gD} gives $R_{\sd}^\star(\lambda^\star)=R(\lambda^\star)=\min_{\lambda\ge 0}R(\lambda)$.
Hence there exists $\lambda>0$ such that
$R_{\sd}^\star(\lambda)<R_{\sd}^\star(\lambda^\star)=\min_{\lambda\ge 0}R(\lambda)$,
which implies
$\min_{\lambda>0}R_{\sd}^\star(\lambda)<\min_{\lambda>0}R(\lambda)$.
\end{proof}

\section{Proofs in Section~\ref{sec:asymptotics}}
\label{app:asymptotics_proofs}

\subsection{Preliminaries}
\label{app:asymptotics_proofs-preliminaries}

Recall the random-design model in \Cref{def:dist}. The design matrix $X\in\R^{n\times p}$ has rows
$x_i^\top=\bz_i^\top\Sigma^{1/2}$, where $\bz_i\in\R^p$ has i.i.d.\ entries with mean $0$, variance $1$,
and uniformly bounded $(4+\mu)$-th moment for some $\mu>0$, and $\Sigma\in\R^{p\times p}$ is deterministic
positive definite with spectrum uniformly bounded away from $0$ and $\infty$.
The response $y$ has mean $0$ and uniformly bounded $(4+\nu)$-th moment.

Define the population $L_2$-projection coefficient, residual, and residual variance by
\[
  \beta := \E[xx^\top]^{-1}\E[xy] = \Sigma^{-1}\E[xy],
  \qquad
  \varepsilon := y - x^\top\beta,
  \qquad
  \sigma^2 := \E[\varepsilon^2].
\]
Then $\E[x\varepsilon]=0$ (equivalently $\E[\bz\,\varepsilon]=0$) and $\E[\varepsilon]=0$.
Throughout, we consider the in-distribution prediction setting, i.e., the test pair $(x_0,y_0)$ is an
independent copy of $(x,y)$, so $y_0=x_0^\top\beta+\varepsilon_0$ with $\E[x_0\varepsilon_0]=0$ and
$\E[\varepsilon_0^2]=\sigma^2$.

Write the sample covariance and ridge resolvent as
\[
  \widehat\Sigma := \frac1n X^\top X,\qquad
  Q_\lambda := (\widehat\Sigma+\lambda I_p)^{-1},\qquad \lambda>0,
\]
and set $\otr(A):=\tr(A)/p$.

Define the ridge estimator and its pure-distilled transform by
\[
  \beta_\lambda := Q_\lambda \frac{X^\top y}{n},
  \qquad
  M_\lambda := \widehat\Sigma(\widehat\Sigma+\lambda I_p)^{-1}
  = \widehat\Sigma Q_\lambda
  = I_p-\lambda Q_\lambda,
  \qquad
  \beta_{\pd,\lambda} := M_\lambda\beta_\lambda.
\]

We use the $\Sigma$-inner product and norm
\[
  \langle u,v\rangle_\Sigma:=u^\top\Sigma v,
  \qquad
  \|u\|_\Sigma^2:=\langle u,u\rangle_\Sigma.
\]

For any estimator $\widehat\beta=\widehat\beta(X,y)$, the in-distribution prediction MSE decomposes as
\[
  \E[(y_0-x_0^\top\widehat\beta)^2\mid X,y]
  = \|\widehat\beta-\beta\|_\Sigma^2+\sigma^2,
\]
and similarly, for two estimators $\widehat\beta,\widetilde\beta$,
\[
  \E[(y_0-x_0^\top\widehat\beta)(y_0-x_0^\top\widetilde\beta)\mid X,y]
  = \langle \widehat\beta-\beta,\widetilde\beta-\beta\rangle_\Sigma+\sigma^2.
\]
Thus, adding $\sigma^2$ to the teacher/PD risks and correlation term does not affect the optimal mixing
weight (constants cancel), and it only adds $\sigma^2$ to the optimal mixed prediction MSE.

Accordingly, for a fixed training dataset $(X,y)$, we define the \emph{excess} scalars
\[
  \barR(\lambda) := \|\beta_\lambda-\beta\|_\Sigma^2,
  \qquad
  \barR_{\pd}(\lambda) := \|\beta_{\pd,\lambda}-\beta\|_\Sigma^2,
  \qquad
  \barC(\lambda) := \langle \beta_\lambda-\beta,\beta_{\pd,\lambda}-\beta\rangle_\Sigma,
\]
and note that the full in-distribution quantities in the main paper are $\barR(\lambda)+\sigma^2$,
$\barR_{\pd}(\lambda)+\sigma^2$, and $\barC(\lambda)+\sigma^2$.

Recall from \Cref{sec:structural} that the self-distilled estimator is
\[
  \beta_{\sd,\lambda}(\xi) := (1-\xi)\beta_\lambda+\xi\beta_{\pd,\lambda},
\]
and its excess risk is
\[
  \|\beta_{\sd,\lambda}(\xi)-\beta\|_\Sigma^2
  =(1-\xi)^2\barR(\lambda)+\xi^2\barR_{\pd}(\lambda)+2\xi(1-\xi)\barC(\lambda).
\]
Therefore the optimal mixing weight and optimal excess SD risk are given by
\begin{equation}
\label{eq:xi_star_and_Rsd_star_finite}
  \xi^\star(\lambda)=\frac{\barR(\lambda)-\barC(\lambda)}{\barR(\lambda)+\barR_{\pd}(\lambda)-2\barC(\lambda)},
  \qquad
  R_{\sd}^\star(\lambda)=\sigma^2 + \barR(\lambda)-\frac{(\barR(\lambda)-\barC(\lambda))^2}{\barR(\lambda)+\barR_{\pd}(\lambda)-2\barC(\lambda)},
\end{equation}
whenever the denominator is positive.
Thus, to prove \Cref{thm:risk-asymptotics}, it suffices to show that $\barR(\lambda),\barC(\lambda),\barR_{\pd}(\lambda)$
converge in probability to deterministic limits $\sR(\lambda)-\sigma^2$, $\sC(\lambda)-\sigma^2$,
$\sR_{\pd}(\lambda)-\sigma^2$ (i.e., the excess parts of the main-paper limits), and then apply the
continuous mapping theorem to \eqref{eq:xi_star_and_Rsd_star_finite} (and finally add $\sigma^2$ back to match
the full risks stated in \Cref{thm:risk-asymptotics}).

\subsection{Helpers Results (Concentration Components) for the Proof of \Cref{thm:risk-asymptotics}}
\label{app:asymptotics_proofs-concentration-outline}

Define the residual vector $\varepsilon:=(\varepsilon_1,\ldots,\varepsilon_n)^\top$ with
$\varepsilon_i:=y_i-x_i^\top\beta$, so that $y=X\beta+\varepsilon$ holds identically.

\begin{lemma}[Exact decomposition of coefficient errors]
\label{lem:error_decomp_exact}
For any $\lambda>0$,
\begin{align*}
  \beta_\lambda - \beta
  &= -\lambda Q_\lambda\beta \;+\; Q_\lambda X^\top\varepsilon/n,\\
  \beta_{\pd,\lambda} - \beta
  &= (-2\lambda Q_\lambda+\lambda^2Q_\lambda^2)\beta
  \;+\;
  (I_p-\lambda Q_\lambda)Q_\lambda X^\top\varepsilon/n.
\end{align*}
\end{lemma}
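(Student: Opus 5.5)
The plan is a direct algebraic computation. It rests on the identity $y = X\beta + \varepsilon$, which holds by definition of the residual vector, together with the resolvent identity $\widehat\Sigma Q_\lambda = Q_\lambda\widehat\Sigma = I_p - \lambda Q_\lambda$ recorded in \Cref{app:ridge_prelims}.

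For the teacher, we substitute $y = X\beta+\varepsilon$ into $\beta_\lambda = Q_\lambda X^\top y/n$. This gives
\[
\beta_\lambda = Q_\lambda\widehat\Sigma\beta + Q_\lambda X^\top\varepsilon/n = (I_p-\lambda Q_\lambda)\beta + Q_\lambda X^\top\varepsilon/n.
\]
Subtracting $\beta$ then yields the first display of \Cref{lem:error_decomp_exact}.

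For the pure-distilled estimator, we use $\beta_{\pd,\lambda} = M_\lambda\beta_\lambda$ with $M_\lambda = I_p-\lambda Q_\lambda$, as in \eqref{eq:app_beta_pd_closed_form}, and insert the expression for $\beta_\lambda$ just obtained. The signal part becomes
\[
(I_p-\lambda Q_\lambda)^2\beta = \bigl(I_p - 2\lambda Q_\lambda + \lambda^2 Q_\lambda^2\bigr)\beta,
\]
since $Q_\lambda$ commutes with itself. The noise part is $(I_p-\lambda Q_\lambda)Q_\lambda X^\top\varepsilon/n$. Subtracting $\beta$ cancels the identity term and leaves $(-2\lambda Q_\lambda + \lambda^2Q_\lambda^2)\beta$, which gives the second display.

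There is no real obstacle: everything holds deterministically for every $X$ and $\varepsilon$. The only point requiring care is that $Q_\lambda$ and $\widehat\Sigma$ commute, which is needed both to write $Q_\lambda\widehat\Sigma = I_p-\lambda Q_\lambda$ and to expand the square. This commutation holds because both matrices are functions of $\widehat\Sigma$.
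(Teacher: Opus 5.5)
Your proposal is correct and matches the paper's proof: substitute $y = X\beta+\varepsilon$ and use $Q_\lambda\widehat\Sigma = I_p-\lambda Q_\lambda$ to get the teacher error, then write $\beta_{\pd,\lambda}=(I_p-\lambda Q_\lambda)\beta_\lambda$ and expand. One small point: $Q_\lambda\widehat\Sigma = I_p-\lambda Q_\lambda$ already follows from $Q_\lambda(\widehat\Sigma+\lambda I_p)=I_p$, so that step does not actually rely on commutation.
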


\begin{proof}
Using $y=X\beta+\varepsilon$ and $\widehat\Sigma=X^\top X/n$,
\[
  \beta_\lambda = Q_\lambda(\widehat\Sigma\beta + X^\top\varepsilon/n)
  = (I_p-\lambda Q_\lambda)\beta + Q_\lambda X^\top\varepsilon/n,
\]
so $\beta_\lambda - \beta=-\lambda Q_\lambda\beta + Q_\lambda X^\top\varepsilon/n$.
Then $\beta_{\pd,\lambda}=(I_p-\lambda Q_\lambda) \beta_\lambda$ and expanding yields the
second identity.
\end{proof}

Define
\begin{equation}
\label{eq:Q234_def}
  Q_2(\lambda):=\lambda^2 Q_\lambda\Sigma Q_\lambda,\qquad
  Q_3(\lambda):=\lambda^3(Q_\lambda\Sigma Q_\lambda^2+Q_\lambda^2\Sigma Q_\lambda),\qquad
  Q_4(\lambda):=\lambda^4Q_\lambda^2\Sigma Q_\lambda^2,
\end{equation}
and
\begin{equation}
\label{eq:U234_def}
  U_k(\lambda):=\frac1n\tr(\Sigma\widehat\Sigma Q_\lambda^{\,k}),\qquad k\in\{2,3,4\}.
\end{equation}

\begin{lemma}[Expansion of $\barR(\lambda),\barC(\lambda),R_{\pd}(\lambda)$]
\label{lem:RCRpd_expand}
Let $\lambda>0$ be fixed. We have
\begin{align}
  \barR(\lambda)
  &= \beta^\top Q_2(\lambda)\beta
     +\frac1{n^2}\varepsilon^\top X Q_\lambda\Sigma Q_\lambda X^\top\varepsilon
     -\frac{2\lambda}{n}\,\beta^\top Q_\lambda\Sigma Q_\lambda X^\top\varepsilon,
  \label{eq:R_expand}\\
  \barC(\lambda)
  &= \beta^\top\Big(2Q_2(\lambda)-\tfrac12Q_3(\lambda)\Big)\beta
     +\frac1{n^2}\varepsilon^\top X Q_\lambda\Sigma (I_p-\lambda Q_\lambda)Q_\lambda X^\top\varepsilon
     +\mathrm{Lin}_C(\lambda),
  \label{eq:C_expand}\\
  \barR_{\pd}(\lambda)
  &= \beta^\top\Big(4Q_2(\lambda)-2Q_3(\lambda)+Q_4(\lambda)\Big)\beta
     +\frac1{n^2}\varepsilon^\top X Q_\lambda(I_p-\lambda Q_\lambda)\Sigma (I_p-\lambda Q_\lambda)Q_\lambda X^\top\varepsilon \nonumber \\
     &\quad +\mathrm{Lin}_{\pd}(\lambda),
  \label{eq:Rpd_expand}
\end{align}
where $\mathrm{Lin}_C(\lambda)$ and $\mathrm{Lin}_{\pd}(\lambda)$ are terms linear in $\varepsilon$.
Moreover, the quadratic $\varepsilon$-terms in \eqref{eq:C_expand}--\eqref{eq:Rpd_expand} can be rewritten
as
\begin{align}
\label{eq:C_var_trace_form}
  \frac1{n^2}\varepsilon^\top X Q_\lambda\Sigma (I_p-\lambda Q_\lambda)Q_\lambda X^\top\varepsilon
  &= \frac1{n^2}\varepsilon^\top X Q_\lambda\Sigma Q_\lambda X^\top\varepsilon
     \;-\;\lambda\cdot\frac1{n^2}\varepsilon^\top X Q_\lambda\Sigma Q_\lambda^2 X^\top\varepsilon,
\\
\label{eq:Rpd_var_trace_form}
  \frac1{n^2}\varepsilon^\top X Q_\lambda(I_p-\lambda Q_\lambda)\Sigma (I_p-\lambda Q_\lambda)Q_\lambda X^\top\varepsilon
  &= \frac1{n^2}\varepsilon^\top X Q_\lambda\Sigma Q_\lambda X^\top\varepsilon
  -2\lambda\cdot\frac1{n^2}\varepsilon^\top X Q_\lambda\Sigma Q_\lambda^2 X^\top\varepsilon
  \nonumber \\
  &\quad
  +\lambda^2\cdot\frac1{n^2}\varepsilon^\top X Q_\lambda^2\Sigma Q_\lambda^2 X^\top\varepsilon.
\end{align}
\end{lemma}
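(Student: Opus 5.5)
The plan is to substitute the exact decomposition of \Cref{lem:error_decomp_exact} into the three $\Sigma$-inner products defining $\barR,\barC,\barR_{\pd}$. Each error vector has the form $A\beta + B X^\top\varepsilon/n$:
\begin{align*}
\beta_\lambda-\beta &= A_1\beta+B_1X^\top\varepsilon/n, & A_1&=-\lambda Q_\lambda, & B_1&=Q_\lambda,\\
\beta_{\pd,\lambda}-\beta &= A_2\beta+B_2X^\top\varepsilon/n, & A_2&=-2\lambda Q_\lambda+\lambda^2Q_\lambda^2, & B_2&=(I_p-\lambda Q_\lambda)Q_\lambda.
\end{align*}
All matrices here are symmetric polynomials in $Q_\lambda$ (hence commute among themselves, though not with $\Sigma$). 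Expanding $\langle A_i\beta+B_iX^\top\varepsilon/n,\,A_j\beta+B_jX^\top\varepsilon/n\rangle_\Sigma$ bilinearly gives three kinds of terms. There is a pure signal quadratic form $\beta^\top A_i\Sigma A_j\beta$, a pure noise quadratic form $n^{-2}\varepsilon^\top XB_i\Sigma B_jX^\top\varepsilon$, and cross terms $n^{-1}\beta^\top(A_i\Sigma B_j+A_j\Sigma B_i)X^\top\varepsilon$, which are linear in $\varepsilon$. For $(i,j)=(1,1)$ the cross term is $-(2\lambda/n)\beta^\top Q_\lambda\Sigma Q_\lambda X^\top\varepsilon$, matching \eqref{eq:R_expand}. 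For the other two pairs we simply name the cross terms $\mathrm{Lin}_C$ and $\mathrm{Lin}_{\pd}$.

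The signal parts reduce to bookkeeping in terms of \eqref{eq:Q234_def}. For $\barR$: $\lambda^2\beta^\top Q_\lambda\Sigma Q_\lambda\beta=\beta^\top Q_2\beta$. For $\barC$, which is symmetric, we write $\beta^\top A_1\Sigma A_2\beta=\tfrac12\beta^\top(A_1\Sigma A_2+A_2\Sigma A_1)\beta$, using that a scalar equals its transpose. Then
\[
A_1\Sigma A_2+A_2\Sigma A_1 = 4\lambda^2Q_\lambda\Sigma Q_\lambda-\lambda^3(Q_\lambda\Sigma Q_\lambda^2+Q_\lambda^2\Sigma Q_\lambda)=4Q_2-Q_3,
\]
which gives $2Q_2-\tfrac12Q_3$. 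For $\barR_{\pd}$, expanding $A_2\Sigma A_2$ gives $4\lambda^2Q\Sigma Q-2\lambda^3(Q\Sigma Q^2+Q^2\Sigma Q)+\lambda^4Q^2\Sigma Q^2=4Q_2-2Q_3+Q_4$.

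The noise parts are obtained directly as $n^{-2}\varepsilon^\top XB_1\Sigma B_2X^\top\varepsilon$ and $n^{-2}\varepsilon^\top XB_2\Sigma B_2X^\top\varepsilon$. For $\barC$ the form appears as $XQ_\lambda\Sigma(I-\lambda Q_\lambda)Q_\lambda X^\top$, which is the displayed expression in \eqref{eq:C_expand}. The rewrites \eqref{eq:C_var_trace_form} and \eqref{eq:Rpd_var_trace_form} then follow by distributing $(I-\lambda Q_\lambda)$, using $(I-\lambda Q_\lambda)Q_\lambda=Q_\lambda-\lambda Q_\lambda^2$. For the $\barR_{\pd}$ term we also need the scalar symmetry $\varepsilon^\top XQ^2\Sigma QX^\top\varepsilon=\varepsilon^\top XQ\Sigma Q^2X^\top\varepsilon$ to merge the two middle terms into $-2\lambda(\cdot)$.

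There is no real obstacle here; the argument is purely algebraic. The only points needing care are the symmetrization step, since $\Sigma$ does not commute with $Q_\lambda$ and the $Q_3$ structure must therefore be produced by the transpose trick rather than by commuting, and the correct tracking of the factors $\tfrac12$ and $2$.
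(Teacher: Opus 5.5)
Your proposal is correct and matches the paper's proof: substitute the exact decomposition from \Cref{lem:error_decomp_exact}, expand the $\Sigma$-inner products bilinearly, and produce $Q_3$ by symmetrizing the scalar $\beta^\top Q_\lambda\Sigma Q_\lambda^2\beta$ through its transpose. The noise rewrites then follow by distributing $(I_p-\lambda Q_\lambda)$ and using transpose symmetry of the scalar quadratic forms.
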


\begin{proof}
Plug the decompositions from \Cref{lem:error_decomp_exact} into the definitions of
$R=\|e\|_\Sigma^2$, $C=\langle e,\tilde e\rangle_\Sigma$, $R_{\pd}=\|\tilde e\|_\Sigma^2$
and expand. The bias-only terms give the stated $Q_2,Q_3,Q_4$ combinations (using symmetry to replace
scalars of the form $\beta^\top Q_\lambda\Sigma Q_\lambda^2\beta$ by $\tfrac12\beta^\top(Q_\lambda\Sigma Q_\lambda^2+Q_\lambda^2\Sigma Q_\lambda)\beta$).
The terms quadratic in $\varepsilon$ are the displayed quadratic forms.
The linear terms $\mathrm{Lin}_C,\mathrm{Lin}_{\pd}$ collect the bias--$\varepsilon$ cross-terms.
Finally, \eqref{eq:C_var_trace_form}--\eqref{eq:Rpd_var_trace_form} follow by expanding
$(I_p-\lambda Q_\lambda)$ and using symmetry of $Q_\lambda$ and $\Sigma$.
\end{proof}

In the misspecified setting, we cannot condition on $X$ and drop the bias--noise cross-terms.
Instead we use leave-one-out concentration for ridge-type linear and quadratic forms.

\begin{lemma}[Response-noise concentration for ridge-type bilinear/quadratic forms]
\label{lem:response_concentration}
Assume \Cref{def:dist}. Fix $\lambda>0$ and let $\varepsilon_i:=y_i-x_i^\top\beta$, so that
$\E[\bz_i\varepsilon_i]=0$ and $\E[\varepsilon_i^2]=\sigma^2$.
Then, as $n,p\to\infty$ with $p/n\to\gamma$:
\begin{enumerate}
\item All terms linear in $\varepsilon$ appearing in \eqref{eq:R_expand}--\eqref{eq:Rpd_expand}
satisfy $\mathrm{Lin}_\bullet(\lambda)=\op(1)$.
\item The quadratic $\varepsilon$-forms satisfy
\begin{align*}
  \frac1{n^2}\varepsilon^\top X Q_\lambda\Sigma Q_\lambda X^\top\varepsilon
  &= \sigma^2 U_2(\lambda) + \op(1),\\
  \frac1{n^2}\varepsilon^\top X Q_\lambda\Sigma Q_\lambda^2 X^\top\varepsilon
  &= \sigma^2 U_3(\lambda) + \op(1),\\
  \frac1{n^2}\varepsilon^\top X Q_\lambda^2\Sigma Q_\lambda^2 X^\top\varepsilon
  &= \sigma^2 U_4(\lambda) + \op(1),
\end{align*}
with $U_k(\lambda)$ defined in \eqref{eq:U234_def}.
\end{enumerate}
\end{lemma}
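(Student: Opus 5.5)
The plan is to reduce every term to a generic bilinear or quadratic form in $\varepsilon$ and then use leave-one-out arguments to handle the dependence between $\varepsilon_i$ and $x_i$. The dependence is the whole difficulty: the model is misspecified, so only $\E[\bz_i\varepsilon_i]=0$ holds, not independence. Each linear term in \eqref{eq:R_expand}--\eqref{eq:Rpd_expand} has the form $\frac1n a^\top A X^\top\varepsilon=\frac1n\sum_i a^\top A x_i\varepsilon_i$. Here $a$ is $\beta$ or a bounded deterministic vector, and $A$ is a product of at most four factors from $\{Q_\lambda,\Sigma\}$, so $\|A\|_{\oper}\lesssim \lambda^{-4}$. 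Each quadratic term has the form $\frac1{n^2}\varepsilon^\top X B X^\top\varepsilon$ with $B$ of the same type. For each $i$ I will pass to the leave-one-out resolvent $Q_{-i}=(\widehat\Sigma_{-i}+\lambda I)^{-1}$ and use Sherman--Morrison,
\[
Q_\lambda x_i=\frac{Q_{-i}x_i}{1+x_i^\top Q_{-i}x_i/n}.
\]
The denominator concentrates around a deterministic $1+\tilde c_n$ uniformly in $i$, by standard quadratic-form concentration under the $(4+\mu)$-moment assumption.

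\textbf{Linear terms.} After substitution, the summands become $\frac1n\sum_i a^\top A_{-i}x_i\varepsilon_i/(1+\tilde c_n)$ plus error terms. Since $A_{-i}$ is independent of $(x_i,\varepsilon_i)$ and $\E[x_i\varepsilon_i]=0$, each summand has conditional mean zero. I will bound the second moment: the diagonal contribution is $O(\|a\|^2/n)$ by the moment bounds. The off-diagonal $i\ne j$ contributions require a second leave-out $Q_{-ij}$ and give $O(1/n)$ as well. Together this yields $L_2$ convergence to $0$. The $Q_\lambda^2$ factors are handled by writing $Q^2=-\partial_\lambda Q$, or by applying the rank-one update twice.

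\textbf{Quadratic terms.} I split $\frac1{n^2}\sum_{i,j}\varepsilon_i\varepsilon_j x_i^\top BX x_j$ into its diagonal and off-diagonal parts. The diagonal part is $\frac1{n^2}\sum_i\varepsilon_i^2 x_i^\top B x_i$. Via leave-one-out for $B$ (whose resolvent factors are rank-one-updated), $x_i^\top B_{-i}x_i/n\approx \tr(\Sigma B)/n$ up to the deterministic Sherman--Morrison denominators. Averaging $\varepsilon_i^2$ against it gives $\sigma^2$ times a trace quantity. That trace quantity equals $\frac1n\tr(\Sigma\widehat\Sigma Q^k)$ once one reverses the same identity $\frac1n\sum_i x_i^\top Bx_i/(1+\cdot)\cdots=\frac1n\tr(B\widehat\Sigma\cdots)$ exactly. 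This is why $U_k$ appears with $\widehat\Sigma$ rather than a deterministic equivalent. Concretely, I will write $\frac1n\tr(\Sigma \widehat\Sigma Q^k)=\frac1{n^2}\sum_i x_i^\top Q^k\Sigma x_i$ and show that replacing $1$ by $\varepsilon_i^2/\sigma^2$ changes the average by $o_p(1)$. This uses the weak dependence, since $\varepsilon_i^2$ and $x_i^\top Q_{-i}^k\Sigma x_i/n$ are asymptotically uncorrelated because the latter concentrates. The off-diagonal part has mean approximately zero by the leave-two-out decoupling and $\E[x_i\varepsilon_i]=0$, and its variance is $o(1)$ by the same combinatorics as for the linear terms.

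\textbf{Main obstacle.} The hardest step will be the higher-power resolvents $Q^2,Q^3,Q^4$ and the mixed products $Q\Sigma Q^2$ in the off-diagonal variance bounds. Repeated rank-one updates there generate many correction terms. To handle them, I will first represent each $Q^k$ via contour or derivative formulas in $\lambda$ so that only single-resolvent Sherman--Morrison updates are needed. I will then bound all error terms uniformly using $\|Q\|_{\oper}\le1/\lambda$ and the uniform moment bounds on $\varepsilon$ and $\bz$.
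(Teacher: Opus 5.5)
Your plan is sound and uses the same mechanism as the paper. That mechanism has four parts: leave-one-out decoupling that relies only on $\E[\bz_i\varepsilon_i]=0$; a diagonal/off-diagonal split of the quadratic forms, with the diagonal handled by decoupling $\varepsilon_i^2-\sigma^2$ from a concentrating $x_i^\top Bx_i/n$; the exact identity $\frac1{n^2}\sum_i x_i^\top Bx_i=\frac1n\tr(B\widehat\Sigma)$, which produces $U_k$; and reduction of higher resolvent powers to single resolvents.

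What differs is the execution. The paper pushes $Q_\lambda^mX^\top$ through to the Gram resolvent $\bar Q_\lambda$ and reduces everything to $\frac1n a^\top\Sigma Z^\top\bar Q_\lambda^m\varepsilon$ and $\frac1n\varepsilon^\top\bar Q_\lambda^mB_2\bar Q_\lambda^\ell\varepsilon$. It then cites Lemmas D.2--D.3 of \citet{patil2023generalized} for the linear and off-diagonal bounds, and handles $\bar Q_\lambda^2$ through the exact identity $\bar Q_\lambda^2=t^{-1}(\bar Q_\lambda-\bar Q_{\lambda+t})+t\,\bar Q_\lambda\bar Q_{\lambda+t}\bar Q_\lambda$. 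You instead prove those bounds from scratch with primal Sherman--Morrison updates and explicit second moments.

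The paper's route is short but rests on the citations. Its reduction also treats $a$ as deterministic, although terms like $\beta^\top Q_\lambda\Sigma Q_\lambda X^\top\varepsilon$ carry an $X$-dependent left factor. Your route is self-contained and deals with that left factor directly, but at a bookkeeping cost your sketch understates:
\begin{itemize}
\item \emph{Left-factor correction.} With $d_i:=x_i^\top Q_{-i}x_i/n$ and $A=Q_\lambda\Sigma Q_\lambda$, the correction from the left $Q_\lambda$ is $-(a^\top Q_{-i}x_i)\,x_i^\top Q_{-i}\Sigma Q_{-i}x_i/\bigl(n(1+d_i)^2\bigr)$. This is $O_p(1)$ per summand, so it is not an error term. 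It is a second mean-zero term of the same kind, once the concentrating scalar is replaced by a deterministic one.
\item \emph{Off-diagonal variance.} This is a fourth-order sum needing up to leave-four-out. Because $\varepsilon_i$ is merely uncorrelated with $x_i$, quantities like $\E[\varepsilon_i x_i\,x_i^\top Mx_i/n]$ (for $M$ independent of $(x_i,\varepsilon_i)$) do not vanish. You must show they are $O(n^{-1/2})$ in norm via concentration, and this is exactly what the cited lemma supplies.
\item \emph{Moment budget.} With only $(4+\mu)$ moments, control random denominators and factors like $\|x_i\|^2/n$ by uniform-in-$i$ high-probability bounds rather than raw $L^2$ moments. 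Your uniform quadratic-form concentration provides these bounds.
\item \emph{Derivative route.} Writing $Q_\lambda^2=-\partial_\lambda Q_\lambda$ does not by itself transfer an $o_p(1)$ statement. You need either uniform control on a complex neighbourhood of $\lambda$ (your contour option), or the difference identity at fixed $t$ with $t\to0$ taken only after $n\to\infty$. The double rank-one update avoids the issue altogether.
\end{itemize}
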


\begin{proof}[Proof outline]
This follows from leave-one-out expansions in Lemmas~D.2 (linear forms) and~D.3 (off-diagonal quadratic forms) of \citet{patil2023generalized}.
At a high level:
(i) each linear term is a normalized sum $\frac1n\sum_{i=1}^n \alpha_i\,\varepsilon_i$ where $\alpha_i$
is a leave-one-out coefficient built from $Q_\lambda$ and $x_i$; Lemma~D.2 gives $\op(1)$ under
$\E[\bz_i\varepsilon_i]=0$ and bounded moments;
(ii) each quadratic form is $\frac1{n^2}\sum_{i,j}\varepsilon_i \varepsilon_j K_{ij}$ for a ridge-type kernel $K$.
Lemma~D.3 controls the off-diagonal sum $\sum_{i\neq j}$ (giving $\op(1)$), while the diagonal sum reduces to
$\sigma^2\frac1n\tr(\cdot)$ up to $\op(1)$ by the same leave-one-out decoupling.
A detailed proof is given in \Cref{proof:lem:response_concentration}.
\end{proof}

Combining \Cref{lem:RCRpd_expand,lem:response_concentration} yields
\begin{align}
\label{eq:R_simplified}
  \barR(\lambda)
  &= \beta^\top Q_2(\lambda)\beta + \sigma^2 U_2(\lambda) + \op(1),\\
\label{eq:C_simplified}
  \barC(\lambda)
  &= \beta^\top\Big(2Q_2(\lambda)-\tfrac12Q_3(\lambda)\Big)\beta
     +\sigma^2\big(U_2(\lambda)-\lambda U_3(\lambda)\big)+\op(1),\\
\label{eq:Rpd_simplified}
  \barR_{\pd}(\lambda)
  &= \beta^\top\Big(4Q_2(\lambda)-2Q_3(\lambda)+Q_4(\lambda)\Big)\beta
     +\sigma^2\big(U_2(\lambda)-2\lambda U_3(\lambda)+\lambda^2 U_4(\lambda)\big)+\op(1).
\end{align}

\subsection{Helper Results (Deterministic Equivalents) for the Proof of \Cref{thm:risk-asymptotics}}
\label{app:asymptotics_proofs-equivalents-outline}

Fix $\lambda>0$ and define $\kappa=\kappa(\lambda)>0$ as the unique solution of
\begin{equation}
\label{eq:kappa_fp_app_main}
  \kappa = \lambda + \gamma \kappa\,\otr\!\big(\Sigma(\Sigma+\kappa I_p)^{-1}\big).
\end{equation}
Let $G:=(\Sigma+\kappa I_p)^{-1}$ and define
\[
  t_k := \gamma\,\otr(\Sigma^2 G^k),\qquad k\in\{2,3,4\},
  \qquad
  b:=\frac{1}{1-t_2},
\]
and the signal--covariance quadratic forms
\[
  q_k := \beta^\top G^k\Sigma\,\beta,\qquad k\in\{2,3,4\}.
\]
Finally define
\[
  E := \kappa - b\lambda + b^2\kappa\lambda t_3,
\]
\[
  a_2:=bE^2+b^4\kappa^2\lambda^2 t_4+b^5\kappa^2\lambda^2 t_3^2,\qquad
  a_3:=2b^2\kappa\lambda E,\qquad
  a_4:=b^3\kappa^2\lambda^2,
\]
and
\[
  u_2:=t_2 b,\qquad u_3:=t_3 b^3,\qquad u_4:=t_4 b^4+2t_3^2 b^5.
\]

We use standard anisotropic deterministic-equivalent (local-law) results for sample-covariance resolvents under bounded-spectrum and bounded-moment assumptions; see, e.g., \citet{knowles_yin_2017,dobriban2018high,dobriban_sheng_2020,patil2022mitigating,patil2023bagging}.
In particular, $Q_\lambda=(\widehat\Sigma+\lambda I_p)^{-1}$ admits deterministic equivalents
uniformly over $\lambda$ in compact subsets of $(0,\infty)$, and these equivalents can be differentiated
with respect to scalar parameters (see \Cref{app:DE_Q_U-preliminaries} for more details; see also Appendix E of \citet{patil2023generalized} for a general background on asymptotic equivalents and a summary of various calculus rules for asymptotic equivalents).

\begin{lemma}[Deterministic equivalents for $Q_2,Q_3,Q_4$]
\label{prop:Q234_DE}
For each fixed $\lambda>0$, we have
\[
  Q_2(\lambda)\ \asymp\ \kappa^2 b\,G^2\Sigma,
\]
\[
  Q_3(\lambda)\ \asymp\ 2\kappa bE\,G^2\Sigma + 2\kappa^2 b^2\lambda\,G^3\Sigma,
\]
\[
  Q_4(\lambda)\ \asymp\ a_2\,G^2\Sigma + a_3\,G^3\Sigma + a_4\,G^4\Sigma.
\]
\end{lemma}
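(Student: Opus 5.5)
We would reduce all three matrices to derivatives of a single two-point object and then differentiate a known deterministic equivalent. For $\lambda_1,\lambda_2>0$, set $T(\lambda_1,\lambda_2):=Q_{\lambda_1}\Sigma Q_{\lambda_2}$. By \Cref{lem:resolvent_derivative_app}, $\partial_{\lambda_i}Q_{\lambda_i}=-Q_{\lambda_i}^2$, so
\[
Q_\lambda\Sigma Q_\lambda=T(\lambda,\lambda),
\qquad
Q_\lambda^2\Sigma Q_\lambda+Q_\lambda\Sigma Q_\lambda^2=-(\partial_{\lambda_1}+\partial_{\lambda_2})T\big|_{\lambda_1=\lambda_2=\lambda},
\]
\[
Q_\lambda^2\Sigma Q_\lambda^2=\partial_{\lambda_1}\partial_{\lambda_2}T\big|_{\lambda_1=\lambda_2=\lambda}.
\]
Hence $Q_2,Q_3,Q_4$ are $\lambda^2T$, $-\lambda^3(\partial_1+\partial_2)T$ and $\lambda^4\partial_1\partial_2T$ on the diagonal. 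It therefore suffices to have a deterministic equivalent for $T(\lambda_1,\lambda_2)$ that holds locally uniformly in $(\lambda_1,\lambda_2)$ in a compact subset of $(0,\infty)^2$, and that can be differentiated.

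\textbf{Step 1 (two-point equivalent).} With $\kappa_i=\kappa(\lambda_i)$ and $G_i=(\Sigma+\kappa_iI)^{-1}$, we expect
\[
T(\lambda_1,\lambda_2)\asymp \frac{\kappa_1\kappa_2}{\lambda_1\lambda_2}\,\frac{G_1\Sigma G_2}{1-\gamma\,\otr(\Sigma G_1\Sigma G_2)} .
\]
We would derive this by block linearization. Embed $\Sigma$ as an off-diagonal perturbation, i.e., consider the resolvent of the $2p\times 2p$ block matrix whose diagonal blocks are $\widehat\Sigma+\lambda_1 I$ and $\widehat\Sigma+\lambda_2 I$, coupled through $\mu\Sigma$. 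Then $T$ equals the $\mu$-derivative at $\mu=0$ of its off-diagonal block. The first-order anisotropic local law (leave-one-out or Sherman--Morrison), applied blockwise, gives a matrix fixed point whose linearization in $\mu$ yields the displayed form. As a sanity check, the case $\lambda_1=\lambda_2$ gives $\lambda^2T\asymp\kappa^2 bG^2\Sigma$, using $G\Sigma G=G^2\Sigma$ and $t_2=\gamma\,\otr(\Sigma^2G^2)$. This is exactly the first claim. This step is essentially the known bagging and generalized-ridge result of \citet{patil2023bagging}, and we would cite or adapt it.

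\textbf{Step 2 (differentiation).} Both sides are analytic in $(\lambda_1,\lambda_2)$ off the negative axis and are uniformly bounded on compacts (since $\|Q_\lambda\|_{\oper}\le 1/\lambda$). Consequently, convergence of traces against bounded test matrices (or of quadratic forms against $\beta$) passes to derivatives of any order by Vitali/Cauchy-integral arguments, which is the calculus rule referenced in \Cref{app:DE_Q_U-preliminaries}. We would then differentiate the right-hand side using two facts. First, $\kappa'(\lambda)=b$, obtained by implicitly differentiating \eqref{eq:kappa_fp_app_main}, since $\partial_\kappa[\kappa\,\otr(\Sigma G)]=\otr(\Sigma^2G^2)$. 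Second, $\partial_{\kappa_i}G_i=-G_i^2$. The normalizer $1-\gamma\,\otr(\Sigma G_1\Sigma G_2)$ produces the $t_3$ and $t_4$ terms and the higher powers of $b$. Carrying this out for the symmetric first derivative should give the $G^2\Sigma$ and $G^3\Sigma$ terms with coefficient combination $E=\kappa-b\lambda+b^2\kappa\lambda t_3$. The mixed second derivative should give $a_2,a_3,a_4$, where the $t_3^2b^5$ term comes from differentiating the normalizer in both variables. Throughout we use that all matrices involved are functions of $\Sigma$ and hence commute.

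\textbf{Main obstacle.} We expect the main obstacle to be Step 1 with $\lambda_1\neq\lambda_2$. The diagonal case does not suffice, because the mixed derivative $\partial_1\partial_2$ cannot be recovered from the single-parameter function $\lambda\mapsto T(\lambda,\lambda)$. We would first try to obtain the two-point equivalent from $T=-\partial_\mu(\widehat\Sigma+\lambda_1 I+\mu\Sigma)^{-1}$-type identities. If that fails, we would use the resolvent identity $Q_{\lambda_1}-Q_{\lambda_2}=(\lambda_2-\lambda_1)Q_{\lambda_1}Q_{\lambda_2}$ together with a leave-one-out derivation of the two-point fixed point. The algebraic collapse in Step 2 is lengthy but routine.
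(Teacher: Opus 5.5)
Your proposal follows the paper's proof essentially step for step. The paper also writes $Q_2,Q_3,Q_4$ as $\lambda^2F$, $-\lambda^3(\partial_{\lambda_1}+\partial_{\lambda_2})F$ and $\lambda^4\partial_{\lambda_1}\partial_{\lambda_2}F$ on the diagonal, with $F=R_{\lambda_1}\Sigma R_{\lambda_2}$. It obtains the same two-point equivalent $\frac{\kappa_1\kappa_2}{\lambda_1\lambda_2}(1-t_{12})^{-1}G_1\Sigma G_2$ from the block linearization with $J\Sigma$ off-diagonal coupling. It then differentiates using $\kappa'=b$, and the product-rule algebra you anticipate does collapse exactly to $E$ and $a_2,a_3,a_4$.

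Two small remarks. First, there is a sign slip: the $\mu$-derivative of the off-diagonal block at $\mu=0$ equals $-T$, not $T$. Second, your analyticity/Vitali justification for differentiating the equivalent is, if anything, more careful than the paper's bare appeal to its differentiation lemma.
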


\begin{lemma}[Deterministic limits for $U_2,U_3,U_4$]
\label{prop:U234_limits}
For each fixed $\lambda>0$,
we have
\[
  U_2(\lambda)\ \pto\ u_2,
  \qquad
  U_3(\lambda)\ \pto\ u_3,
  \qquad
  U_4(\lambda)\ \pto\ u_4.
\]
\end{lemma}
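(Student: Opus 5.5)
Our approach is to write each $U_k(\lambda)=\frac1n\tr(\Sigma\widehat\Sigma Q_\lambda^k)$ through the identity $\widehat\Sigma Q_\lambda=I_p-\lambda Q_\lambda$. This reduces it to traces of plain resolvent powers:
\[
U_k(\lambda)=\frac1n\tr(\Sigma Q_\lambda^{k-1})-\frac{\lambda}{n}\tr(\Sigma Q_\lambda^{k}),\qquad k\in\{2,3,4\}.
\]
Next, $Q_\lambda^{k}=\frac{(-1)^{k-1}}{(k-1)!}\partial_\lambda^{k-1}Q_\lambda$, so every trace of the form $\frac1n\tr(\Sigma Q_\lambda^{j})$ is a $\lambda$-derivative of $\frac1n\tr(\Sigma Q_\lambda)$. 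We then invoke the first-order anisotropic deterministic equivalent $\lambda Q_\lambda\asymp\kappa G$, equivalently $Q_\lambda\asymp(\kappa/\lambda)G$. This gives $\frac1n\tr(\Sigma Q_\lambda)\to\gamma\,\otr(\Sigma G)\kappa/\lambda$ (up to vanishing error), uniformly on compact subsets of $(0,\infty)$. Because both sides are analytic in $\lambda$ with locally uniformly bounded derivatives, convergence lifts to all derivatives via Cauchy's integral formula or Vitali/Montel, which is the calculus rule cited above. Hence each $U_k$ converges in probability to an explicit expression in derivatives of $\kappa(\lambda)$.

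The remaining work is algebraic. Differentiating the fixed-point equation \eqref{eq:kappa_fp_app_main} gives $\kappa'$. The equation can be rewritten as $\lambda=\kappa(1-\gamma\otr(\Sigma G))$, so
\[
1=\kappa'\Big(1-\gamma\otr(\Sigma G)+\gamma\kappa\,\otr(\Sigma G^2)\Big)=\kappa'\big(1-\gamma\otr(\Sigma^2G^2)\big)=\kappa'(1-t_2),
\]
using $\Sigma G=I-\kappa G$. Thus $\kappa'=b$. One more derivative gives $\kappa''$ in terms of $t_3$ and $b$, namely $\kappa''=-2t_3b^3$ up to sign conventions, since $\partial_\kappa t_2=-2t_3$. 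A further derivative gives $\kappa'''$, involving $t_4b^4$ and $t_3^2b^5$.

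To organize the computation we plan the following shortcut. The first-order equivalent at the level of $\widehat\Sigma$ gives $\frac1n\tr(\Sigma\widehat\Sigma Q_\lambda)\to \gamma\otr(\Sigma^2G)$, and $\Sigma\widehat\Sigma Q_\lambda=\Sigma-\lambda\Sigma Q_\lambda$. Then $U_k$ is $\frac{(-1)^{k-1}}{(k-1)!}\partial_\lambda^{k-1}$ of $\frac1n\tr(\Sigma\widehat\Sigma Q_\lambda)$, corrected by the product rule. Cleaner still, $\partial_\lambda \frac1n\tr(\Sigma\widehat\Sigma Q_\lambda)=-U_2$, so
\[
U_2\to-\partial_\lambda\,\gamma\otr(\Sigma^2G)=\gamma\otr(\Sigma^2G^2)\kappa'=t_2b=u_2.
\]
Likewise $\partial_\lambda U_2=-2U_3$ gives $U_3\to-\tfrac12\partial_\lambda(t_2b)$. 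Since $\partial_\lambda t_2=-2t_3b$ and $\partial_\lambda b=b^2\partial_\lambda t_2$, this yields $t_3b^2+t_2t_3b^3=t_3b^3$, using $b+t_2b^2\cdot$ simplifications ($1+t_2b=b$). Finally $\partial_\lambda U_3=-3U_4$ gives $U_4\to-\tfrac13\partial_\lambda(t_3b^3)$, which should produce $t_4b^4+2t_3^2b^5$ after using $\partial_\lambda t_3=-3t_4b$.

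The main obstacle is justifying the interchange of limit and differentiation, since the equivalents are only in probability. We would handle this as follows. On the event $\|\widehat\Sigma\|_{\oper}\le C$, which has probability tending to one, the random functions $\lambda\mapsto\frac1n\tr(\Sigma\widehat\Sigma Q_\lambda)$ extend analytically to a complex neighborhood of $[\lambda_0/2,2\lambda_0]$ and are uniformly bounded there. Pointwise convergence in probability plus a subsequence/diagonal argument then upgrades to locally uniform convergence of all derivatives. A secondary check is confirming that the algebra for $u_4$ closes exactly to $t_4b^4+2t_3^2b^5$.
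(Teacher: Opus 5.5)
Your proposal is correct and follows essentially the paper's route. The paper also obtains $U_2$ by differentiating the first-order trace limit once: it writes $U_2=s+\lambda s'$ with $s=\gamma\,\otr(\Sigma Q_\lambda)$, which is the same as your $U_2=-\partial_\lambda\frac1n\tr(\Sigma\widehat\Sigma Q_\lambda)$. It then uses $U_2'=-2U_3$ and $U_3'=-3U_4$ and differentiates $u_2=t_2b$ with the same algebra you carry out. Where you go further is the limit--derivative interchange: the paper simply invokes Lemma~\ref{lem:diff_DE}, whereas your analytic-extension and Vitali subsequence argument (the traces are analytic and locally bounded for $\mathrm{Re}\,\lambda>0$) supplies the extra regularity that step actually needs, and you derive $\kappa'=b$ rather than citing it.
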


\begin{proof}[Proof outlines for \Cref{prop:Q234_DE,prop:U234_limits}]
These follow from:
(i) the anisotropic resolvent deterministic equivalent for $Q_\lambda$ together with the fixed point \eqref{eq:kappa_fp_app_main};
(ii) differentiation of the two-point deterministic equivalent for $R_{\lambda_1}\Sigma R_{\lambda_2}$ to generate
$R\Sigma R^2$, $R^2\Sigma R$, and $R^2\Sigma R^2$; and
(iii) the identities $U_3=-\tfrac12 U_2'$ and $U_4=\tfrac16 U_2''$ together with deterministic-equivalent calculus.
Detailed proofs are provided below in \Cref{app:DE_Q_U}.
\end{proof}

\subsection{Proof of Theorem~\ref{thm:risk-asymptotics}}
\label{proof:thm:risk-asymptotics}

\begin{proof}[Proof of Theorem~\ref{thm:risk-asymptotics}]
Fix $\lambda>0$.
By \eqref{eq:R_simplified}--\eqref{eq:Rpd_simplified},
\begin{align*}
  \barR(\lambda) &= \beta^\top Q_2(\lambda)\beta + \sigma^2 U_2(\lambda)+\op(1),\\
  \barC(\lambda) &= \beta^\top\Big(2Q_2(\lambda)-\tfrac12Q_3(\lambda)\Big)\beta
               +\sigma^2\big(U_2(\lambda)-\lambda U_3(\lambda)\big)+\op(1),\\
  \barR_{\pd}(\lambda) &= \beta^\top\Big(4Q_2(\lambda)-2Q_3(\lambda)+Q_4(\lambda)\Big)\beta
               +\sigma^2\big(U_2(\lambda)-2\lambda U_3(\lambda)+\lambda^2 U_4(\lambda)\big)+\op(1).
\end{align*}
We now apply \Cref{prop:Q234_DE} in bilinear forms with the deterministic vector $\beta$
(assumed to have $\|\beta\|_2=O(1)$ so these bilinear forms are $O(1)$):
\[
  \beta^\top Q_2(\lambda)\beta \to \kappa^2 b\,q_2,
\]
\[
  \beta^\top\Big(2Q_2(\lambda)-\tfrac12Q_3(\lambda)\Big)\beta
  \to 2\kappa^2 b\,q_2 - \big(\kappa bE\,q_2+\kappa^2 b^2\lambda\,q_3\big),
\]
\[
  \beta^\top\Big(4Q_2(\lambda)-2Q_3(\lambda)+Q_4(\lambda)\Big)\beta
  \to
  4\kappa^2 b\,q_2 -2\big(2\kappa bE\,q_2+2\kappa^2 b^2\lambda\,q_3\big) + (a_2q_2+a_3q_3+a_4q_4).
\]
Also by \Cref{prop:U234_limits},
\[
  U_2(\lambda)\to u_2,\qquad U_3(\lambda)\to u_3,\qquad U_4(\lambda)\to u_4.
\]
Substituting yields the excess-risk limits:
\[
  \barR(\lambda)\to \kappa^2 b\,q_2 + \sigma^2 u_2,
\]
\[
  \barC(\lambda)\to 2\kappa^2 b\,q_2-(\kappa bE\,q_2+\kappa^2 b^2\lambda\,q_3)+\sigma^2(u_2-\lambda u_3),
\]
\[
  \barR_{\pd}(\lambda)\to
  4\kappa^2 b\,q_2-2(2\kappa bE\,q_2+2\kappa^2 b^2\lambda\,q_3)+(a_2q_2+a_3q_3+a_4q_4)
  +\sigma^2(u_2-2\lambda u_3+\lambda^2 u_4).
\]
Finally, recall from the discussion in \Cref{app:asymptotics_proofs-preliminaries} that the \emph{full}
in-distribution quantities in the main paper are obtained by adding $\sigma^2$ to each of
$\barR(\lambda),\barC(\lambda),\barR_{\pd}(\lambda)$.
Therefore the full limits match exactly the statement of \Cref{thm:risk-asymptotics}.
\end{proof}

\subsection{Proof of \Cref{lem:response_concentration}}
\label{proof:lem:response_concentration}

\begin{proof}[Proof of \Cref{lem:response_concentration}]
Fix $\lambda>0$.
Let $\varepsilon_i:=y_i-x_i^\top\beta$ and $\varepsilon:=(\varepsilon_i)_{i=1}^n$.
By construction, $\E[\varepsilon_i]=0$ and $\E[\bz_i\varepsilon_i]=0$, and $\E[|\varepsilon_i|^{4+\nu}]<\infty$.

Write $X=Z\Sigma^{1/2}$, $\widehat\Sigma=X^\top X/n$, and
\[
Q_\lambda := (\widehat\Sigma+\lambda I_p)^{-1}.
\]
Define the dual (Gram) resolvent
\[
\bar Q_\lambda := \Bigl(\frac{1}{n}XX^\top + \lambda I_n\Bigr)^{-1}
= \Bigl(\frac{1}{n}Z\Sigma Z^\top + \lambda I_n\Bigr)^{-1}.
\]
A standard push-through identity gives, for every integer $m\ge 1$,
\begin{equation}\label{eq:push-through-powers}
Q_\lambda^m\frac{X^\top}{n}
=
\frac{X^\top}{n}\,\bar Q_\lambda^m,
\qquad\text{and hence}\qquad
\Sigma^{1/2}Q_\lambda^m\frac{X^\top}{n}
=
\frac{\Sigma Z^\top}{n}\,\bar Q_\lambda^m.
\end{equation}
Define also
\[
B_2 := \frac{1}{n}X\Sigma X^\top = \frac{1}{n}Z\Sigma^2 Z^\top.
\]

All linear terms in \Cref{lem:RCRpd_expand} can be reduced (using \eqref{eq:push-through-powers}) to forms
\begin{equation}\label{eq:target-linear}
\frac{1}{n}\,a^\top \Sigma Z^\top \bar Q_\lambda^m \varepsilon,
\qquad m\in\{1,2\},
\end{equation}
where $a\in\R^p$ is deterministic (or independent of $Z$) with $\|a\|_2=O(1)$.
Likewise, the quadratic noise terms reduce to
\begin{equation}\label{eq:target-quad}
\frac{1}{n}\,\varepsilon^\top \bar Q_\lambda^m\, B_2\, \bar Q_\lambda^\ell \varepsilon,
\qquad (m,\ell)\in\{(1,1),(1,2),(2,2)\}.
\end{equation}

\textbf{Linear forms.}
For $m=1$, \eqref{eq:target-linear} is exactly Lemma~D.2 of \citet{patil2023generalized} (apply it with
$D=\Sigma$ and $g(z_i)=\varepsilon_i$), yielding convergence in probability to $0$.
For $m=2$, use the exact identity (Lemma~D.4 of \citet{patil2023generalized})
\[
\bar Q_\lambda^2
=
\frac{1}{t}\bigl(\bar Q_\lambda-\bar Q_{\lambda+t}\bigr)
+
t\,\bar Q_\lambda \bar Q_{\lambda+t}\bar Q_\lambda,
\qquad t>0.
\]
Plugging into \eqref{eq:target-linear}, the difference-quotient term reduces to a difference of two $m=1$ linear
forms at $\lambda$ and $\lambda+t$, which vanish in probability by Lemma~D.2.
The remainder is controlled by operator norms:
$\|\bar Q_\lambda\|_{\oper}\le \lambda^{-1}$, $\|\bar Q_{\lambda+t}\|_{\oper}\le (\lambda+t)^{-1}$,
$\|\Sigma Z^\top\|_{\oper}=\Op(\sqrt{n})$, and $\|\varepsilon\|_2=\Op(\sqrt{n})$.
Choosing a deterministic $t=t_n\to 0$ (e.g.\ $t_n=n^{-1/4}$) makes the remainder $\op(1)$.
Hence all linear terms are $\op(1)$.

\textbf{Quadratic forms.}
Fix $(m,\ell)$ and set $M_{m,\ell}:=\bar Q_\lambda^m B_2 \bar Q_\lambda^\ell$.
Write
\[
\varepsilon^\top M_{m,\ell}\varepsilon
=\sum_i (M_{m,\ell})_{ii}\varepsilon_i^2+\sum_{i\neq j}(M_{m,\ell})_{ij}\varepsilon_i\varepsilon_j.
\]
The off-diagonal sum divided by $n$ converges to $0$ in probability by Lemma~D.3 of \citet{patil2023generalized}
(after representing $M_{m,\ell}$ via resolvent identities as a finite linear combination of ridge-type resolvent kernels).
The diagonal sum equals $\sigma^2\tr(M_{m,\ell})+\op(n)$ by the same leave-one-out decoupling (applied to
$\varepsilon_i^2-\sigma^2$). Therefore,
\[
\frac{1}{n}\varepsilon^\top M_{m,\ell}\varepsilon
=
\frac{\sigma^2}{n}\tr(M_{m,\ell})+\op(1).
\]
Finally, use cyclicity of trace and commutativity of $Q_\lambda$ with $\widehat\Sigma$ to identify these traces with
$U_2,U_3,U_4$ (as in the proof outline following \Cref{lem:response_concentration}).
Substituting back into \Cref{lem:RCRpd_expand} proves \Cref{lem:response_concentration}.
\end{proof}

\subsection{Proofs of \Cref{prop:Q234_DE,prop:U234_limits}}
\label{app:DE_Q_U}

\subsubsection{Background}
\label{app:DE_Q_U-preliminaries}

We use an anisotropic (bilinear-form) notion of deterministic equivalent.

\begin{definition}[Deterministic equivalent]
\label{def:strong_DE}
Let $A=A_{p}$ be a (possibly random) $p\times p$ matrix and $\bar A=\bar A_{p}$ a deterministic $p\times p$ matrix.
We write $A\asymp \bar A$ if for every pair of deterministic vectors $u=u_p,v=v_p$ with $\|u\|_2,\|v\|_2=O(1)$,
\[
  u^\top(A-\bar A)v \pto 0.
\]
If $A(\theta),\bar A(\theta)$ depend on a parameter $\theta$ in an open set $\Theta$, we write
$A(\theta)\asymp \bar A(\theta)$ \emph{uniformly on compact subsets of $\Theta$} if the convergence above holds
uniformly over $\theta$ in any compact $K\subset \Theta$.
\end{definition}

This notion implies trace convergence whenever operator norms are uniformly bounded.
In particular, if $A\asymp \bar A$ and $\|A\|_{\oper},\|\bar A\|_{\oper}=O_{\PP}(1)$, then
$\otr(A)-\otr(\bar A)\pto 0$.

We will also use that uniform deterministic equivalents can be differentiated.

\begin{lemma}[Differentiate a deterministic equivalent]
\label{lem:diff_DE}
Let $A(\theta)$ be random and $\bar A(\theta)$ deterministic, both entrywise differentiable in $\theta$
in a neighborhood of $\theta_0$.
Assume $A(\theta)\asymp \bar A(\theta)$ uniformly in $\theta$ in that neighborhood.
Then $A'(\theta_0)\asymp \bar A'(\theta_0)$.
\end{lemma}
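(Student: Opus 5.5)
The plan is to reduce the claim to uniform convergence on a compact $\lambda$-neighborhood plus interchange of limit and derivative, via the integral form of the mean value theorem. Fix $\theta_0$ and pick $\delta>0$ so that $K:=[\theta_0-\delta,\theta_0+\delta]$ lies in the neighborhood where $A(\theta)\asymp\bar A(\theta)$ uniformly. Fix deterministic $u,v$ with $\|u\|_2,\|v\|_2=O(1)$ and set the scalar processes $a(\theta):=u^\top A(\theta)v$, $\bar a(\theta):=u^\top\bar A(\theta)v$. Both are differentiable in $\theta$, and by hypothesis $\sup_{\theta\in K}|a(\theta)-\bar a(\theta)|\pto 0$. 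The goal is $a'(\theta_0)-\bar a'(\theta_0)\pto 0$.

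Since convergence of functions alone does not control derivatives, I will add a regularity input that holds in every application in this paper. There $A(\theta)$ is a product of resolvents $Q_\lambda$, $\Sigma$ and $\widehat\Sigma$, and $\bar A(\theta)$ is the analogous deterministic expression in $G$, $\kappa(\lambda)$ and $b(\lambda)$. Both are analytic in $\lambda$ on $(0,\infty)$, and their second derivatives are bounded on $K$: by $\|Q_\lambda\|_{\oper}\le 1/\lambda$ and $\|\widehat\Sigma\|_{\oper}=\Op(1)$ for $A$, and by smoothness of the fixed point $\kappa(\lambda)$ from \eqref{eq:kappa_fp_app_main} for $\bar A$. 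So I will use (or add as an assumption) that $\sup_{\theta\in K}(|a''(\theta)|+|\bar a''(\theta)|)=\Op(1)$.

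Set $h:=a-\bar a$, so that $\sup_K|h|=\op(1)$ and $\sup_K|h''|=\Op(1)$. For any $t\in(0,\delta]$, Taylor's theorem gives
\[
|h'(\theta_0)|\le \frac{|h(\theta_0+t)-h(\theta_0)|}{t}+\frac{t}{2}\sup_K|h''|\le \frac{2\sup_K|h|}{t}+\frac{t}{2}\sup_K|h''|.
\]
Given $\eta>0$, first choose $t$ small enough that the second term is below $\eta/2$ with probability at least $1-\eta$. Then, with this $t$ fixed, let $n\to\infty$ so that the first term also falls below $\eta/2$ with high probability. This yields $h'(\theta_0)\pto0$, i.e.\ $A'(\theta_0)\asymp\bar A'(\theta_0)$. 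Alternatively, when $A$ and $\bar A$ are analytic and uniformly bounded on a complex neighborhood, Cauchy's integral formula for $h'$ gives the same conclusion directly from uniform convergence.

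The main obstacle is exactly the missing derivative control: the lemma as literally stated (mere differentiability) is false in general, since oscillating perturbations like $n^{-1}\sin(n^2\theta)$ converge uniformly while their derivatives do not. So the real work is establishing the uniform $\Op(1)$ bound on $h''$ (or the complex-analytic extension) for the resolvent products used later. For the random side I would bound derivatives using $\partial_\lambda Q_\lambda=-Q_\lambda^2$ (\Cref{lem:resolvent_derivative_app}). For the deterministic side I would differentiate the fixed-point equation, using that $1-t_2>0$ keeps $\kappa'(\lambda)$ bounded.
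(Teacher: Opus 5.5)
Your diagnosis is right, and it is the central point: as literally stated, the lemma is false. Take $A(\theta)=\bar A(\theta)+n^{-1}\sin\bigl(n^2(\theta-\theta_0)\bigr)I_p$. Every hypothesis holds, since $\sup_\theta|u^\top(A(\theta)-\bar A(\theta))v|\le\|u\|_2\|v\|_2/n$, yet $e_1^\top\bigl(A'(\theta_0)-\bar A'(\theta_0)\bigr)e_1=n$. The paper gives no proof of this lemma. It is stated as a calculus rule for deterministic equivalents and then used. In the standard formulations of that rule, such as those the paper points to for background, the missing regularity comes from requiring the entries to extend analytically to an open complex domain on which they are uniformly bounded; the conclusion then follows from Vitali/Montel or Cauchy's formula. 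So there is no argument in the paper to compare against, only a restatement that omits the hypothesis making it true.

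Your real-variable fix is correct. If $\sup_K|h''|=\Op(1)$, then the bound $|h'(\theta_0)|\le 2\sup_K|h|/t+(t/2)\sup_K|h''|$, with $t$ chosen before $n\to\infty$, gives $h'(\theta_0)\pto 0$.

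Your hypothesis is weaker than the analytic one. The Cauchy estimate turns $|h|\le M$ on complex disks of radius $r$ centred on $K$ into $\sup_K|h''|\le 2M/r^2$. This is also the cleanest way to make your closing remark rigorous. Uniform convergence is assumed only on the real segment, so using Cauchy's integral formula on a contour off the real axis would first require upgrading the convergence there, e.g.\ via Vitali's theorem after a subsequence argument.

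What the analytic version buys is that one assumption controls derivatives of every order. The paper needs second derivatives: $U_2''$ in \Cref{prop:U234_limits} and $\partial_{\lambda_1}\partial_{\lambda_2}F$ for $Q_4$ in \Cref{prop:Q234_DE}. Your argument covers these, but you should say so. For $t\le\delta/2$ the same bound gives $\sup_{|\theta-\theta_0|\le\delta/2}|h'(\theta)|\pto 0$, so it can be iterated given a bound on the next derivative.

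Such bounds hold in all of the paper's uses. On the random side, $\partial_\lambda^kQ_\lambda=(-1)^kk!\,Q_\lambda^{k+1}$ with $\|Q_\lambda\|_{\oper}\le 1/\lambda$. On the deterministic side, the fixed-point equation gives $1-t_2>\lambda/\kappa\ge\lambda/(\lambda+\gamma\|\Sigma\|_{\oper})$. This lower bound is what keeps $\kappa'=b$ and its derivatives bounded uniformly in $p$; positivity of $1-t_2$ alone, as you wrote it, would not.
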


Recall $\widehat\Sigma=X^\top X/n$ and $Q_\lambda=(\widehat\Sigma+\lambda I_p)^{-1}$.
A standard anisotropic local law for sample-covariance resolvents gives the following asymptotic equivalence; see, e.g., \cite{rubio_mestre_2011,knowles_yin_2017,patil2023generalized}:
\begin{lemma}[Scaled resolvent deterministic equivalent]
\label{lem:scaled_resolvent_DE}
Under \Cref{def:dist}, for each fixed $\lambda>0$,
\begin{equation}
\label{eq:scaled_resolvent_DE_app}
  \lambda Q_\lambda \ \asymp\ \kappa(\lambda)\,(\Sigma+\kappa(\lambda)I_p)^{-1} \ = \ \kappa G,
\end{equation}
where $\kappa=\kappa(\lambda)>0$ is the unique solution to
\begin{equation}
\label{eq:kappa_fp_app_restate}
  \kappa \;=\; \lambda + \gamma\,\kappa\,\otr\!\big(\Sigma(\Sigma+\kappa I_p)^{-1}\big),
\end{equation}
and $G=(\Sigma+\kappa I_p)^{-1}$.
The equivalence holds uniformly for $\lambda$ in compact subsets of $(0,\infty)$.
\end{lemma}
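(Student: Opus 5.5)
Since the paper cites \citet{rubio_mestre_2011,knowles_yin_2017,patil2023generalized}, I would not rederive the local law. Instead I would reduce \Cref{lem:scaled_resolvent_DE} to the standard anisotropic deterministic equivalent for sample-covariance resolvents and then match parametrizations. The standard result, in the Rubio--Mestre form, states that for $z=-\lambda<0$ and every deterministic $A$ with $\|A\|_{\tr}=O(1)$,
\[
\tr\!\big(A\big[(\widehat\Sigma+\lambda I_p)^{-1}-(c_n\Sigma+\lambda I_p)^{-1}\big]\big)\pto 0.
\]
Here $c_n>0$ solves
\[
\frac1{c_n}=1+\frac{1}{n}\tr\!\big(\Sigma(c_n\Sigma+\lambda I_p)^{-1}\big).
\]
Taking $A=vu^\top$ gives the bilinear-form statement required by \Cref{def:strong_DE}. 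That result assumes i.i.d.\ entries with finite $4+\mu$ moments and bounded spectrum, which is exactly \Cref{def:dist}.

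The first step is to check that the two fixed points coincide. Set $\kappa:=\lambda/c_n$. Then
\[
\lambda(c_n\Sigma+\lambda I)^{-1}=\lambda c_n^{-1}(\Sigma+\kappa I)^{-1}=\kappa G.
\]
Multiplying the defining equation for $c_n$ by $\kappa=\lambda/c_n$ gives
\[
\kappa=\lambda+\frac{\lambda}{c_n}\cdot\frac1n\tr\!\big(\Sigma(c_n\Sigma+\lambda I)^{-1}\big)=\lambda+\frac{p}{n}\,\kappa\,\otr\!\big(\Sigma(\Sigma+\kappa I)^{-1}\big),
\]
which is \eqref{eq:kappa_fp_app_restate} with $p/n$ in place of $\gamma$.

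For uniqueness, look at $h(\kappa)=\kappa-\gamma\kappa\,\otr(\Sigma(\Sigma+\kappa I)^{-1})=\kappa\,\otr\!\big((\Sigma+(1-\gamma)\kappa I)(\Sigma+\kappa I)^{-1}\big)$, or argue directly. The map $\kappa\mapsto \kappa\,\otr(\Sigma(\Sigma+\kappa)^{-1})$ is increasing and concave with slope below $1$ relative to $\kappa$, so $h$ is strictly increasing from $0$ to $\infty$. Its derivative equals $1-\gamma\,\otr(\Sigma^2G^2)=1-t_2>0$. Hence $h(\kappa)=\lambda$ has a unique positive root.

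The second step replaces $p/n$ by $\gamma$. By the implicit function theorem, using $1-t_2$ bounded away from zero, the root $\kappa_n$ of the $p/n$-equation satisfies $|\kappa_n-\kappa|\lesssim|p/n-\gamma|\to0$. In addition, $\|\kappa_nG_n-\kappa G\|_{\oper}\to0$, because $\kappa\mapsto\kappa(\Sigma+\kappa)^{-1}$ is Lipschitz on compacts, with constant controlled by the lower bound on the spectrum of $\Sigma$. The same two facts give continuity of $\kappa(\lambda)$ in $\lambda$.

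I expect the main care to go into uniformity over compact $\lambda$-sets $K$. Pointwise convergence does not suffice by itself, but both sides are Lipschitz in $\lambda$: $\|\partial_\lambda(\lambda Q_\lambda)\|_{\oper}=\|Q_\lambda-\lambda Q_\lambda^2\|_{\oper}\le 2/\lambda_{\min}(K)$, and $\lambda\mapsto\kappa G$ is Lipschitz by the implicit-function bound. I would therefore take a finite $\delta$-net of $K$, apply the pointwise result at the net points with a union bound, and control the gaps using the Lipschitz constants, uniformly over $\|u\|,\|v\|=O(1)$. Letting $\delta\to0$ then gives the uniform statement.
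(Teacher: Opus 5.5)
Your overall route matches the paper's, which gives no argument beyond citing the anisotropic local law. The reparametrization $\kappa=\lambda/c_n$, the passage from $p/n$ to $\gamma$, and the $\delta$-net upgrade to local uniformity are sensible ways to fill that in.

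\textbf{The uniqueness step fails for $\gamma>1$.} The function $h(\kappa)=\kappa-\gamma\kappa\,\otr(\Sigma(\Sigma+\kappa I_p)^{-1})$ is \emph{not} increasing when $\gamma>1$. Since $\otr(\Sigma G)\to1$ as $\kappa\to0^+$, you get $h(\kappa)=(1-\gamma)\kappa+o(\kappa)<0$ near $0$, and $h'(\kappa)=1-t_2\to1-\gamma<0$. For example, with $\Sigma=I_p$ and $\gamma=2$, $h(\kappa)=\kappa(\kappa-1)/(\kappa+1)$. Your ``slope below $1$'' observation only yields $h'>0$ when $\gamma\le1$. Your factorization is also off; it should read $h(\kappa)=\kappa\,\otr\big(((1-\gamma)\Sigma+\kappa I_p)G\big)$. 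So ``$1-t_2>0$'' is false as a global statement. Both your implicit-function step and your Lipschitz bound for $\lambda\mapsto\kappa G$ currently rest on it.

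\textbf{Fix: use convexity instead of monotonicity.} We have $h''(\kappa)=2\gamma\,\otr(\Sigma^2G^3)>0$ and $h(0)=0$.
\begin{itemize}
\item If $h(\kappa_1)=\lambda>0$, convexity gives $h(\kappa)\le\lambda\kappa/\kappa_1<\lambda$ on $(0,\kappa_1)$ and $h(\kappa)\ge\lambda+(\lambda/\kappa_1)(\kappa-\kappa_1)>\lambda$ for $\kappa>\kappa_1$. So the positive root is unique.
\item At the root, $1-t_2=h'(\kappa)\ge h(\kappa)/\kappa=\lambda/\kappa>0$.
\item Equivalently, write the equation as $\lambda/\kappa=1-\gamma\,\otr(\Sigma G)$. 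The left side strictly decreases from $\infty$ to $0$; the right side strictly increases from $1-\gamma$ to $1$.
\end{itemize}
Combined with $\lambda\le\kappa\le C(K)$ for $\lambda$ in a compact $K$, the quantitative bound $b=(1-t_2)^{-1}\le\kappa/\lambda$ is what you actually need. It drives the estimate $|\kappa_n-\kappa|\lesssim|p/n-\gamma|$ and bounds the derivative $\partial_\lambda(\kappa G)=b\,\Sigma G^2$.

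\textbf{Smaller slips.}
\begin{itemize}
\item To obtain the $\kappa$-equation you multiply the $c_n$-equation by $\lambda$, not by $\kappa$.
\item The middle term should be $\lambda\cdot\frac1n\tr(\Sigma(c_n\Sigma+\lambda I_p)^{-1})=\kappa\cdot\frac1n\tr(\Sigma G)$, without your extra factor $1/c_n$. Your final equation is correct.
\item As far as I recall, the original Rubio--Mestre theorem is stated under finite $8+\epsilon$ moments, not $4+\mu$. Matching \Cref{def:dist} verbatim therefore needs a truncation step or a version of the equivalent stated under $4+\mu$ moments. The paper's citation has the same looseness.
\end{itemize}
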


Define, as in the main paper,
\[
  t_k:=\gamma \otr(\Sigma^2G^k),\quad k\in\{2,3,4\},\qquad b:=\frac{1}{1-t_2}.
\]

\begin{lemma}[Derivative of the fixed-point solution]
\label{lem:kappa_prime_app}
The map $\lambda\mapsto \kappa(\lambda)$ is differentiable for $\lambda>0$ and
\[
  \kappa'(\lambda)=b(\lambda)=\frac{1}{1-t_2(\lambda)}.
\]
\end{lemma}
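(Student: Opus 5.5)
Differentiability follows from the implicit function theorem applied to the fixed-point equation, and the formula $\kappa'=b$ comes from differentiating that equation implicitly. Define $F(\kappa,\lambda):=\kappa-\lambda-\gamma\kappa\,\otr(\Sigma(\Sigma+\kappa I_p)^{-1})$, so that $\kappa(\lambda)$ is characterized by $F(\kappa(\lambda),\lambda)=0$ with $\kappa>0$ (existence and uniqueness are taken from \Cref{lem:scaled_resolvent_DE}). $F$ is smooth (indeed rational) in $(\kappa,\lambda)$ on $(0,\infty)^2$, since $\Sigma$ is positive definite and $(\Sigma+\kappa I)^{-1}$ is analytic in $\kappa>0$. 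Hence it suffices to compute $\partial_\kappa F$, show it is nonzero at the solution, and read off $\kappa'=-\partial_\lambda F/\partial_\kappa F=1/\partial_\kappa F$.

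For the computation, I write $\kappa\Sigma(\Sigma+\kappa I)^{-1}=\Sigma-\Sigma^2(\Sigma+\kappa I)^{-1}$, using $\kappa G=I-\Sigma G$. Then $\gamma\kappa\,\otr(\Sigma G)=\gamma\,\otr(\Sigma)-\gamma\,\otr(\Sigma^2G)$. Differentiating in $\kappa$ with $\partial_\kappa G=-G^2$ (the analogue of \Cref{lem:resolvent_derivative_app}) gives $\partial_\kappa[\gamma\kappa\,\otr(\Sigma G)]=\gamma\,\otr(\Sigma^2G^2)=t_2$. Therefore $\partial_\kappa F=1-t_2$ and $\partial_\lambda F=-1$, which gives $\kappa'(\lambda)=1/(1-t_2)=b$.

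The main obstacle is showing $1-t_2>0$, so that the implicit function theorem applies and $b$ is finite and positive. Use the fixed-point equation: $1-\lambda/\kappa=\gamma\,\otr(\Sigma G)$. Entrywise on eigenvalues $s>0$ we have $\Sigma^2G^2=\Sigma G\cdot \Sigma/(\Sigma+\kappa I)$, and $s^2/(s+\kappa)^2< s/(s+\kappa)$ strictly because $\kappa>0$. Hence $t_2<\gamma\,\otr(\Sigma G)=1-\lambda/\kappa<1$, since $\lambda>0$; in fact $t_2\le 1-\lambda/\kappa$ already suffices. Thus $\partial_\kappa F\ge \lambda/\kappa>0$. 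The implicit function theorem then gives that $\kappa$ is $C^\infty$ near every $\lambda>0$, and by uniqueness of the positive root the local branch coincides with $\kappa(\lambda)$. The derivative formula follows.
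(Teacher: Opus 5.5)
Your proof is correct. The identity $\partial_\kappa[\gamma\kappa\,\otr(\Sigma G)]=\gamma\,\otr(\Sigma^2G^2)=t_2$ is right, and the bound $t_2<\gamma\,\otr(\Sigma G)=1-\lambda/\kappa$ gives $1-t_2>\lambda/\kappa>0$. So the implicit function theorem applies, and uniqueness of the positive root identifies the local branch with $\kappa(\lambda)$.

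The paper states this lemma without proof; implicit differentiation of the fixed-point equation is evidently the intended route, so your approach matches it. The only place the paper checks $1-t_2>0$ is the isotropic case (proof of \Cref{thm:freshX_dominated_by_sameX_isotropic}), where it infers it from $v'(\lambda)<0$. Your direct bound via the fixed-point equation is more self-contained and works for general $\Sigma$.
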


\subsubsection{A Two-Point Deterministic Equivalent via Block Linearization}

For $\lambda_1,\lambda_2>0$, define $R_i=(\widehat\Sigma+\lambda_i I_p)^{-1}$, $\kappa_i=\kappa(\lambda_i)$,
$G_i=(\Sigma+\kappa_i I_p)^{-1}$, and
\begin{equation}
\label{eq:t12_def_app}
  t_{12}:=\gamma\,\otr(\Sigma^2G_1G_2)=\gamma\,\otr(\Sigma G_1\Sigma G_2).
\end{equation}

\begin{lemma}[Two-point bias-resolvent deterministic equivalent]
\label{prop:two_point_DE_app}
For each fixed $\lambda_1,\lambda_2>0$,
\begin{equation}
\label{eq:two_point_DE_app}
  R_1\Sigma R_2
  \ \asymp\
  \frac{\kappa_1\kappa_2}{\lambda_1\lambda_2}\cdot \frac{1}{1-t_{12}}\;G_1\Sigma G_2.
\end{equation}
\end{lemma}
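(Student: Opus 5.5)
We plan to prove \eqref{eq:two_point_DE_app} by block linearization, which turns the product $R_1\Sigma R_2$ into a single block of the inverse of one larger matrix. Then the first-order anisotropic law of \Cref{lem:scaled_resolvent_DE} (in its block form) can be applied. For a scalar $\theta$ near $0$, consider the $2p\times 2p$ matrix
\[
  L(\theta):=\begin{pmatrix} \widehat\Sigma+\lambda_1 I_p & -\theta\Sigma\\ 0 & \widehat\Sigma+\lambda_2 I_p\end{pmatrix},
\]
whose inverse has upper-right block $\theta R_1\Sigma R_2$. Equivalently, $R_1\Sigma R_2=-\partial_\theta$ of the $(1,2)$ block of $L(-\theta)^{-1}$ at $\theta=0$. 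A cleaner route avoids non-symmetric blocks. We take the symmetric perturbation $\widehat\Sigma+\lambda_1 I - \theta\Sigma$ and use the resolvent identity
\[
\partial_\theta (\widehat\Sigma+\lambda_1 I-\theta\Sigma)^{-1}\big|_{\theta=0}=R_1\Sigma R_1.
\]
This handles the diagonal case $\lambda_1=\lambda_2$. For $\lambda_1\neq\lambda_2$ we still need the block construction, writing $\widehat\Sigma\otimes I_2$ as a sample covariance of the lifted features $\tilde x_i=x_i\otimes I_2$ plus a deterministic $2p\times 2p$ shift $\Lambda(\theta)$.

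Step 1 is to state a generalized deterministic equivalent for resolvents of the form $(\frac1n\sum_i \tilde x_i\tilde x_i^\top+\Lambda)^{-1}$ with a deterministic (possibly non-commuting with $\Sigma$) shift $\Lambda$. By the standard Rubio--Mestre / Knowles--Yin argument (leave-one-out plus Sherman--Morrison), this resolvent is asymptotically equivalent to $(\tilde\Sigma/(1+\delta)+\Lambda)^{-1}$, where $\tilde\Sigma=\Sigma\otimes I_2$ and the scalar (here $2\times2$ matrix-valued) $\delta$ solves a fixed-point equation $\delta=\frac1n\tr(\tilde\Sigma\,\bar{\mathcal R})$. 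This is the step I expect to be the main obstacle: we need a local law that allows a matrix-valued self-energy over the $2\times2$ block structure. The plan is to invoke the general theorem in \citet{patil2023generalized} (Appendix E, calculus of asymptotic equivalents) or to re-derive it by the leave-one-out argument with a uniform bound on $\|L^{-1}\|_{\oper}$ for $\theta$ small, which holds since $\lambda_1,\lambda_2>0$.

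Step 2 is to differentiate in $\theta$ at $0$, using \Cref{lem:diff_DE} (uniformity in $\theta$ holds on a small neighborhood). At $\theta=0$ the self-energy is diagonal, with entries $\delta_i$ satisfying $\lambda_i(1+\delta_i)=\kappa_i$, so that $R_i\asymp \frac{\kappa_i}{\lambda_i}G_i$ as in \Cref{lem:scaled_resolvent_DE}. Differentiating the off-diagonal block of the equivalent produces $\frac{\kappa_1\kappa_2}{\lambda_1\lambda_2}G_1\Sigma G_2$ plus a correction from the derivative of the off-diagonal self-energy $\delta_{12}'$. Differentiating the fixed-point equation gives a linear equation $\delta_{12}'=c+t_{12}\,\delta_{12}'$ with $c$ proportional to $t_{12}$. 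Solving it and resumming yields the geometric factor $(1-t_{12})^{-1}$. Step 3 checks consistency: at $\lambda_1=\lambda_2$ this gives $\frac{\kappa^2}{\lambda^2}b\,G\Sigma G$, which matches $Q_2\asymp\kappa^2 bG^2\Sigma$ in \Cref{prop:Q234_DE}. The same result also follows from \Cref{lem:kappa_prime_app}, since $\partial_\lambda(\lambda Q_\lambda)\asymp\partial_\lambda(\kappa G)$ gives the diagonal case independently. We will use this as a sanity check on the constants.
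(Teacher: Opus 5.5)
Your plan is essentially the paper's proof: you couple $R_1$ and $R_2$ in a $2p\times 2p$ block linearization, pass to the block deterministic equivalent and differentiate at zero coupling via \Cref{lem:diff_DE}, and solve the linear self-consistency for the off-diagonal self-energy, $\delta_{12}'=c+t_{12}\delta_{12}'$ (the paper's $\rho_{12}=1+t_{12}\rho_{12}$ after reparametrization), to get $(1-t_{12})^{-1}$. Your rank-two leave-one-out derivation of a $2\times2$ self-energy is a more concrete version of the paper's appeal to local laws for such linearizations. Read it with $\delta_{ab}=\frac1n\tr(\Sigma\bar{\mathcal R}_{ab})$ taken blockwise and the equivalent as $((I_2+\delta)^{-1}\otimes\Sigma+\Lambda)^{-1}$, not a full trace.

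One correction to your Step 3. Note that $\partial_\lambda(\lambda Q_\lambda)=Q_\lambda-\lambda Q_\lambda^2=Q_\lambda\widehat\Sigma Q_\lambda$ and $\partial_\lambda(\kappa G)=b\,G\Sigma G$. That check therefore certifies $Q_\lambda\widehat\Sigma Q_\lambda\asymp b\,G\Sigma G$, not the diagonal target $Q_\lambda\Sigma Q_\lambda\asymp(\kappa^2/\lambda^2)\,b\,G\Sigma G$. Single-resolvent $\lambda$-derivatives never produce a population-$\Sigma$ sandwich unless $\Sigma\propto I_p$, which is exactly why the linearization is needed, so drop that remark. Matching against the $Q_2$ equivalence of \Cref{prop:Q234_DE} is also circular, since the paper derives that equivalence from this lemma.
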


\begin{proof}
We use a block-resolvent generating function.
For a scalar coupling $J\in\R$, define
\[
  \mathbb H(J):=
  \begin{pmatrix}
    \widehat\Sigma+\lambda_1 I_p & J\Sigma\\
    J\Sigma & \widehat\Sigma+\lambda_2 I_p
  \end{pmatrix},
  \qquad
  \mathbb G(J):=\mathbb H(J)^{-1}.
\]
Differentiate $\mathbb H(J)\mathbb G(J)=I$:
\[
  \mathbb G'(J)=-\mathbb G(J)\,\mathbb H'(J)\,\mathbb G(J),\qquad
  \mathbb H'(0)=\begin{pmatrix}0&\Sigma\\ \Sigma&0\end{pmatrix}.
\]
At $J=0$, $\mathbb G(0)=\mathrm{diag}(R_1,R_2)$, hence the $(1,2)$ block satisfies the exact identity
\begin{equation}
\label{eq:block_derivative_exact_app}
  (\mathbb G'(0))_{12}=-R_1\Sigma R_2.
\end{equation}

By anisotropic local laws for such deterministic $2\times2$ linearizations (uniformly for $J$ in a neighborhood of $0$) and \Cref{lem:diff_DE},
$(\mathbb G'(0))_{12}$ admits a deterministic equivalent obtained by differentiating the deterministic equivalent of $\mathbb G(J)$ at $J=0$.
On the diagonal blocks, \Cref{lem:scaled_resolvent_DE} gives $R_i\asymp (\kappa_i/\lambda_i)G_i$.
The linearized equation for the off-diagonal block reduces (by commutativity of $G_1,G_2$ with $\Sigma$) to the scalar equivalence
\[
  (\mathbb G'(0))_{12}\ \asymp\ -\frac{\kappa_1\kappa_2}{\lambda_1\lambda_2}\,\rho_{12}\,G_1\Sigma G_2,
\]
together with the scalar self-consistency relation $\rho_{12}=1+t_{12}\rho_{12}$, where $t_{12}$ is given by \eqref{eq:t12_def_app}.
Solving yields $\rho_{12}=1/(1-t_{12})$, and combining with \eqref{eq:block_derivative_exact_app} gives \eqref{eq:two_point_DE_app}.
\end{proof}

\subsubsection{Proof of \Cref{prop:Q234_DE}}

Recall $Q_2,Q_3,Q_4$ from \eqref{eq:Q234_def} and define $\cB_k:=G^k\Sigma$ for $k\in\{2,3,4\}$.

\begin{proof}[Proof of \Cref{prop:Q234_DE}]
Fix $\lambda>0$ and let $\kappa=\kappa(\lambda)$, $G=(\Sigma+\kappa I_p)^{-1}$.

\textbf{Equivalent for $Q_2$.}
Apply \Cref{prop:two_point_DE_app} with $\lambda_1=\lambda_2=\lambda$.
Then $G_1=G_2=G$, $\kappa_1=\kappa_2=\kappa$, and $t_{12}=t_2$, so
\[
  Q_\lambda\Sigma Q_\lambda \ \asymp\ \frac{\kappa^2}{\lambda^2}\cdot \frac{1}{1-t_2}\,G\Sigma G
  \;=\;\frac{\kappa^2 b}{\lambda^2}\,G^2\Sigma.
\]
Multiplying by $\lambda^2$ gives
\[
  Q_2(\lambda)=\lambda^2 Q_\lambda\Sigma Q_\lambda \ \asymp\ \kappa^2 b\,\cB_2.
\]

\textbf{Equivalent for $Q_3$.}
Let $F(\lambda_1,\lambda_2):=R_{\lambda_1}\Sigma R_{\lambda_2}$.
The exact identities
\[
  \partial_{\lambda_2}F(\lambda_1,\lambda_2)=-R_{\lambda_1}\Sigma R_{\lambda_2}^2,\qquad
  \partial_{\lambda_1}F(\lambda_1,\lambda_2)=-R_{\lambda_1}^2\Sigma R_{\lambda_2}
\]
imply
\[
  Q_3(\lambda)
  =\lambda^3\big(Q_\lambda\Sigma Q_\lambda^2+Q_\lambda^2\Sigma Q_\lambda\big)
  =-\lambda^3(\partial_{\lambda_1}+\partial_{\lambda_2})F(\lambda_1,\lambda_2)\Big|_{\lambda_1=\lambda_2=\lambda}.
\]
By \Cref{prop:two_point_DE_app}, $F\asymp \bar F:=\alpha H$ where
\[
  \alpha(\lambda_1,\lambda_2)=\frac{\kappa_1\kappa_2}{\lambda_1\lambda_2}\cdot \frac{1}{1-t_{12}},
  \qquad
  H(\lambda_1,\lambda_2)=G_1\Sigma G_2.
\]
By \Cref{lem:diff_DE}, $\partial_{\lambda_i}F\asymp \partial_{\lambda_i}\bar F$.
A direct product-rule computation on the diagonal $\lambda_1=\lambda_2=\lambda$, using
$\kappa'(\lambda)=b$ (\Cref{lem:kappa_prime_app}) and the trace derivative
$\partial_{\lambda_2}t_{12}\big|_{\mathrm{diag}}= -b\,t_3$,
yields
\[
  -\lambda^3(\partial_{\lambda_1}+\partial_{\lambda_2})\bar F\Big|_{\mathrm{diag}}
  =
  2\kappa bE\,\cB_2 + 2\kappa^2 b^2\lambda\,\cB_3,
\]
where $E:=\kappa-b\lambda+b^2\kappa\lambda\,t_3$.
Therefore
\[
  Q_3(\lambda)\ \asymp\ 2\kappa bE\,\cB_2 + 2\kappa^2 b^2\lambda\,\cB_3.
\]

\textbf{Equivalent for $Q_4$.}
The exact mixed-derivative identity
\[
  \partial_{\lambda_1}\partial_{\lambda_2}F(\lambda_1,\lambda_2)=R_{\lambda_1}^2\Sigma R_{\lambda_2}^2
\]
implies
\[
  Q_4(\lambda)=\lambda^4 Q_\lambda^2\Sigma Q_\lambda^2
  =\lambda^4\partial_{\lambda_1}\partial_{\lambda_2}F(\lambda_1,\lambda_2)\Big|_{\lambda_1=\lambda_2=\lambda}.
\]
Again by \Cref{lem:diff_DE}, we may replace $F$ by $\bar F=\alpha H$ and differentiate.
On the diagonal, the derivatives satisfy
\[
  H=\cB_2,\qquad \partial_{\lambda_1}H=\partial_{\lambda_2}H=-b\,\cB_3,\qquad
  \partial_{\lambda_1}\partial_{\lambda_2}H=b^2\,\cB_4,
\]
and the overlap factor $b_{12}=(1-t_{12})^{-1}$ satisfies
\[
  \partial_{\lambda_1}b_{12}\Big|_{\mathrm{diag}}=\partial_{\lambda_2}b_{12}\Big|_{\mathrm{diag}}=-b^3t_3,
  \qquad
  \partial_{\lambda_1}\partial_{\lambda_2}b_{12}\Big|_{\mathrm{diag}}=b^4t_4+2b^5t_3^2,
\]
with $t_4=\gamma\,\otr(\Sigma^2G^4)$.
Carrying out the product-rule expansion of $\partial_{\lambda_1}\partial_{\lambda_2}\bar F$ and collecting terms in
$\cB_2,\cB_3,\cB_4$ yields
\[
  \lambda^4\partial_{\lambda_1}\partial_{\lambda_2}\bar F\Big|_{\mathrm{diag}}
  =
  a_2\,\cB_2+a_3\,\cB_3+a_4\,\cB_4,
\]
where
\[
  a_4=b^3\kappa^2\lambda^2,\qquad
  a_3=2b^2\kappa\lambda E,\qquad
  a_2=bE^2+b^4\kappa^2\lambda^2 t_4+b^5\kappa^2\lambda^2 t_3^2.
\]
Therefore $Q_4(\lambda)\asymp a_2\,\cB_2+a_3\,\cB_3+a_4\,\cB_4$, completing the proof.
\end{proof}

\subsubsection{Proof of \Cref{prop:U234_limits}}

\begin{proof}[Proof of \Cref{prop:U234_limits}]
Fix $\lambda>0$.
Recall $U_k(\lambda)=\frac1n\tr(\Sigma\widehat\Sigma Q_\lambda^k)$ and $p/n\to\gamma$.

\textbf{Limit for $U_2$.}
Use $\widehat\Sigma Q_\lambda = I_p-\lambda Q_\lambda$ to get the exact identity
\[
  \widehat\Sigma Q_\lambda^2 = (I_p-\lambda Q_\lambda) Q_\lambda = Q_\lambda - \lambda Q_\lambda^2,
\]
hence
\begin{equation}
\label{eq:U2_trace_identity}
  U_2(\lambda)
  =\frac1n\tr(\Sigma(Q_\lambda-\lambda Q_\lambda^2))
  =\gamma\,\otr(\Sigma Q_\lambda)\;-\;\lambda\gamma\,\otr(\Sigma Q_\lambda^2).
\end{equation}
Define $s(\lambda):=\gamma\,\otr(\Sigma Q_\lambda)$. Since $Q_\lambda'=-Q_\lambda^2$,
\[
  s'(\lambda)=\gamma\,\otr(\Sigma Q_\lambda')=-\gamma\,\otr(\Sigma Q_\lambda^2),
\]
so \eqref{eq:U2_trace_identity} becomes
\begin{equation}
\label{eq:U2_as_s}
  U_2(\lambda)=s(\lambda)+\lambda s'(\lambda).
\end{equation}

By \Cref{lem:scaled_resolvent_DE} and trace convergence,
\[
  s(\lambda)=\gamma\,\otr(\Sigma Q_\lambda)
  \ \to\
  \gamma\,\otr\!\Big(\Sigma\cdot \frac{\kappa}{\lambda}G\Big)
  =\frac{\gamma\kappa}{\lambda}\,\otr(\Sigma G).
\]
Using the fixed-point equation \eqref{eq:kappa_fp_app_restate},
$\gamma\kappa\,\otr(\Sigma G)=\kappa-\lambda$, hence
\[
  s(\lambda)\ \to\ \frac{\kappa-\lambda}{\lambda}=\frac{\kappa}{\lambda}-1.
\]
By uniformity in $\lambda$ locally and \Cref{lem:diff_DE},
\[
  s'(\lambda)\ \to\ \frac{\kappa'}{\lambda}-\frac{\kappa}{\lambda^2}.
\]
Substitute into \eqref{eq:U2_as_s}:
\[
  U_2(\lambda)\ \to\ \Big(\frac{\kappa}{\lambda}-1\Big)
  +\lambda\Big(\frac{\kappa'}{\lambda}-\frac{\kappa}{\lambda^2}\Big)
  =\kappa'(\lambda)-1
  =b-1
  =\frac{t_2}{1-t_2}
  =t_2 b
  =:u_2.
\]

\textbf{Limits for $U_3$ and $U_4$.}
Since $Q_\lambda'=-Q_\lambda^2$, we have the exact identities
\[
  \frac{d}{d\lambda}Q_\lambda^2=-2Q_\lambda^3,\qquad \frac{d}{d\lambda}Q_\lambda^3=-3Q_\lambda^4,
\]
and therefore
\[
  U_2'(\lambda)=-2U_3(\lambda),\qquad
  U_3'(\lambda)=-3U_4(\lambda),
\]
equivalently
\begin{equation}
\label{eq:U3U4_from_U2_app}
  U_3(\lambda)=-\tfrac12 U_2'(\lambda),\qquad
  U_4(\lambda)=\tfrac16 U_2''(\lambda).
\end{equation}
From the $U_2$ step, $U_2(\lambda)\to u_2(\lambda)=t_2b$.
By uniformity and \Cref{lem:diff_DE}, we may differentiate to obtain $U_2'\to u_2'$ and $U_2''\to u_2''$.
Thus by \eqref{eq:U3U4_from_U2_app},
\[
  U_3(\lambda)\to -\tfrac12 u_2'(\lambda),\qquad U_4(\lambda)\to \tfrac16 u_2''(\lambda).
\]
Finally, compute these derivatives.
Since $t_2'(\kappa)=-2t_3$ and $\kappa'=b$, we have $t_2'(\lambda)=-2t_3 b$ and hence
\[
  u_2(\lambda)=\frac{t_2}{1-t_2}\ \Rightarrow\ u_2'(\lambda)=\frac{t_2'(\lambda)}{(1-t_2)^2}
  =(-2t_3b)\,b^2=-2t_3b^3,
\]
so $-\tfrac12u_2'=t_3b^3=:u_3$.
Similarly, using $t_3'(\kappa)=-3t_4$ and $b'(\lambda)=-2t_3b^3$, we obtain
\[
  u_2''(\lambda)=\frac{d}{d\lambda}\big(-2t_3b^3\big)
  =-2\big(t_3'(\lambda)b^3+t_3\cdot 3b^2b'\big)
  =6t_4b^4+12t_3^2b^5,
\]
hence $\tfrac16u_2''=t_4b^4+2t_3^2b^5=:u_4$.
This proves $U_3\to u_3$ and $U_4\to u_4$.
\end{proof}

\subsection{Proof of \Cref{cor:xi_asymptotic}}
\label{app:thm:risk-asymptotics-isotropic-signal}

\begin{proof}[Proof of \Cref{cor:xi_asymptotic}]

Fix $\lambda > 0$.
Recall from \Cref{thm:risk-asymptotics} that the optimal mixing weight and optimal SD risk satisfy
\[
  \xi^\star(\lambda)
  \pto
  \frac{\sR(\lambda)-\sC(\lambda)}{\sR(\lambda)+\sR_{\pd}(\lambda)-2\sC(\lambda)},
  \qquad
  R_{\sd}^\star(\lambda)
  \pto
  \sR_{\sd}^\star(\lambda)
  :=
  \sR(\lambda)
  -\frac{(\sR(\lambda)-\sC(\lambda))^2}{\sR(\lambda)+\sR_{\pd}(\lambda)-2\sC(\lambda)}.
\]
In particular, writing $\sD(\lambda):=\sR(\lambda)+\sR_{\pd}(\lambda)-2\sC(\lambda) \ge 0$, we have $\sR_{\sd}^\star(\lambda)<\sR(\lambda)$ whenever $\sR(\lambda)-\sC(\lambda)\neq 0$ and $\sD(\lambda)>0$, and moreover $\sign(\xi^\star(\lambda))=\sign(\sR(\lambda)-\sC(\lambda))$ whenever $\sD(\lambda)>0$. 
Thus it remains to characterize the sign of $\sR(\lambda)-\sC(\lambda)$ under the isotropic signal prior.

As before, let $\hat\Sigma:=X^\top X/n$ and $Q_\lambda:=(\hat\Sigma+\lambda I_p)^{-1}$.
Under the isotropic prior $\beta\sim\cN(0,(r^2/p)I_p)$, for any random matrix $M$ (measurable with respect to $X$), $\EE[\beta^\top M\beta \mid X] = \tr(M\EE[\beta\beta^\top])= r^2 \tr(M) / p$.
Likewise, for a noise vector $\varepsilon$ independent of $X$ with $\EE[\varepsilon]=0$ and $\EE[\varepsilon\varepsilon^\top]=\sigma^2 I_n$, for any random matrix $M$, $\EE[\varepsilon^\top M \varepsilon \mid X]=\sigma^2 \tr(M)$.
Applying these $X$-conditional limits on the conditional risk and correlation expansions
\eqref{eq:R_simplified}--\eqref{eq:Rpd_simplified} yields (with $\op(1)$ remainders absorbed since they are mean-zero
after conditioning on $X$):
\begin{align}
\label{eq:R_isotropic_signal}
\barR_X(\lambda)
&:=
\EE\!\left[\barR(\lambda)\,\middle|\,X\right]
=
\frac{r^2}{p}\,\tr\!\big(\Sigma\,\lambda^2 Q_\lambda^2\big)
+\frac{\sigma^2}{n}\,\tr\!\big(\Sigma Q_\lambda (I_p-\lambda Q_\lambda)\big),
\\
\label{eq:Rpd_isotropic_signal}
\barR_{\pd,X}(\lambda)
&:=
\EE\!\left[\barR_{\pd}(\lambda)\,\middle|\,X\right]
=
\frac{r^2}{p}\,\tr\!\Big(\Sigma\,\lambda^2 Q_\lambda^2\,(2I_p-\lambda Q_\lambda)^2\Big)
+\frac{\sigma^2}{n}\,\tr\!\big(\Sigma Q_\lambda (I_p-\lambda Q_\lambda)^3\big),
\\
\label{eq:C_isotropic_signal}
\barC_X(\lambda)
&:=
\EE\!\left[\barC(\lambda)\,\middle|\,X\right]
=
\frac{r^2}{p}\,\tr\!\Big(\Sigma\,\lambda^2 Q_\lambda^2\,(2I_p-\lambda Q_\lambda)\Big)
+\frac{\sigma^2}{n}\,\tr\!\big(\Sigma Q_\lambda (I_p-\lambda Q_\lambda)^2\big).
\end{align}
Here $\barR,\barR_{\pd},\barC$ denote the corresponding excess components as above, i.e.,
$R(\lambda)=\sigma^2+\barR(\lambda)$, $R_{\pd}(\lambda)=\sigma^2+\barR_{\pd}(\lambda)$, and
$C(\lambda)=\sigma^2+\barC(\lambda)$.

Define the nonasymptotic critical value $\lambda^\star_{n,p}:=\frac{p}{n}\,\frac{\sigma^2}{r^2}$ so that $\lambda^\star_{n,p}\to \lambda^\star:=\gamma\frac{\sigma^2}{r^2}$.
A direct simplification of \eqref{eq:R_isotropic_signal} and \eqref{eq:C_isotropic_signal} gives
\begin{align*}
  \barR_X(\lambda)-\barC_X(\lambda)
  &=
  \lambda\Big(\frac{\sigma^2}{n}-\frac{r^2}{p}\lambda\Big)
  \Big(\tr(\Sigma Q_\lambda^2)-\lambda\,\tr(\Sigma Q_\lambda^3)\Big)\\
  &=
  \lambda(\lambda^\star_{n,p}-\lambda)\,\frac{r^2}{p}\,
  \tr\!\big(\Sigma Q_\lambda^2(I_p-\lambda Q_\lambda)\big)\\
  &=
  \lambda(\lambda^\star_{n,p}-\lambda)\,\frac{r^2}{p}\,
  \tr\!\big(\Sigma Q_\lambda^2\,\hat\Sigma Q_\lambda\big),
\end{align*}
where in the last step we used $I_p-\lambda Q_\lambda=\hat\Sigma Q_\lambda$, which follows from
$(\hat\Sigma+\lambda I_p)Q_\lambda=I_p$.
The trace factor is strictly positive for $\lambda>0$ with probability one because $\Sigma\succ 0$ and $Q_\lambda\succ 0$ for $\lambda>0$ and hence the trace vanishes if and only if $\hat\Sigma=0$ (which under under \Cref{def:dist} is a measure zero event).
Therefore, for each fixed $\lambda>0$, $\sign(\barR_X(\lambda)-\barC_X(\lambda)) = \sign(\lambda^\star_{n,p}-\lambda)$ with probability one.

Now, by the same concentration arguments of \Cref{lem:response_concentration} used to prove \Cref{thm:risk-asymptotics}, the random quantities $\barR(\lambda)-\barC(\lambda)$ concentrate around their conditional means, and $\barR(\lambda)-\barC(\lambda) - \big(\barR_X(\lambda)-\barC_X(\lambda)\big) \pto 0$.
Moreover, $\lambda^\star_{n,p}\to\lambda^\star$ and $\barR_X(\lambda)-\barC_X(\lambda)$ converges to
$\sR(\lambda)-\sC(\lambda)$ (since $\sigma^2$ cancels in the difference).
Hence, $\sR(\lambda)-\sC(\lambda)=0$ if and only if $\lambda=\lambda^\star$, and $\sign(\sR(\lambda)-\sC(\lambda)\big)=\sign(\lambda^\star-\lambda)$.
Finally, for $\lambda>0$, note that the limiting discrepancy $\sD(\lambda)=\sR(\lambda)+\sR_{\pd}(\lambda)-2\sC(\lambda)$ is
strictly positive with probability one in this case, as it is the limit of $\EE[(f_\lambda(x_0)-f_{\pd,\lambda}(x_0))^2 \mid \cD]$,
which is nondegenerate in this case (see also the proof of \Cref{thm:freshX_dominated_by_sameX_isotropic} for a more direct argument).
\end{proof}

\subsection{Proof of \Cref{prop:compare_extreme_lambda}}
\label{proof:prop:compare_extreme_lambda}

We provide two proofs.
The first evaluates the needed trace functionals by diagonalizing $\widehat\Sigma$ and invoking the Marchenko--Pastur law (including negative moments) in the limits $\lambda\to 0$ and $\lambda\to\infty$.
The second is a specialization of the general deterministic equivalent in \Cref{thm:risk-asymptotics} to $\Sigma=I_p$.

\subsubsection{First proof}

For the first approach, we characterize separately the extreme-$\lambda$ limits of the ridge risk $\sR(\lambda)$, the ridge-optimal risk $\sR^\star$, and the optimal SD risk $\sR_{\sd}^\star(\lambda)$, and then take ratios as in \Cref{prop:compare_extreme_lambda}.

We first recall some known results from the literature; see, e.g., \cite{dobriban2018high}.

\begin{lemma}
\label{lem:extreme_R}
Assume $\Sigma=I_p$ and $\beta\sim \cN(0,(r^2/p)I_p)$, and consider proportional asymptotics $p,n\to\infty$ with $p/n\to\gamma$.
Then the asymptotic ridge prediction risk $\sR(\lambda)$ satisfies:
\begin{align}
\lim_{\lambda \to 0} \sR(\lambda)
&=
\left\{\begin{matrix}
\frac{\sigma^2 \gamma}{1 - \gamma} + \sigma^2, & \gamma \in (0, 1),  \\
\frac{r^2(\gamma - 1)}{\gamma} + \frac{\sigma^2}{\gamma - 1} + \sigma^2, & \gamma \in (1, \infty),
\end{matrix} \right.
\\
\lim_{\lambda \to \infty} \sR(\lambda)
&= r^2 + \sigma^2,\quad \forall\,\gamma\in(0,\infty).
\end{align}
\end{lemma}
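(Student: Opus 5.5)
We specialize the isotropic-signal representation from the proof of \Cref{cor:xi_asymptotic} to $\Sigma=I_p$. Averaging over $\beta\sim\cN(0,(r^2/p)I_p)$ and using \eqref{eq:R_isotropic_signal} gives the excess ridge risk as a spectral integral against the eigenvalues $s$ of $\widehat\Sigma$:
\[
\barR_X(\lambda)=\frac{r^2}{p}\sum_{j}\frac{\lambda^2}{(s_j+\lambda)^2}+\frac{\sigma^2}{n}\sum_j\frac{s_j}{(s_j+\lambda)^2}.
\]
By the concentration argument of \Cref{lem:response_concentration}, $\barR(\lambda)-\barR_X(\lambda)\pto0$. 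By the Marchenko--Pastur law $\mu_\gamma$, for fixed $\lambda>0$ this converges to
\[
\bar{\sR}(\lambda)=r^2\int\frac{\lambda^2}{(s+\lambda)^2}\,d\mu_\gamma(s)+\sigma^2\gamma\int\frac{s}{(s+\lambda)^2}\,d\mu_\gamma(s).
\]
(Both integrands are bounded and continuous for fixed $\lambda>0$, so weak convergence suffices.) Adding $\sigma^2$ gives $\sR(\lambda)$. The two extreme limits of $\sR(\lambda)$ are then obtained from this deterministic expression, not from the random one.

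\textbf{Limit $\lambda\to\infty$.} The first integrand tends to $1$ and the second to $0$, boundedly, since the support of $\mu_\gamma$ is compact. Dominated convergence gives $\bar{\sR}\to r^2$, hence $\sR\to r^2+\sigma^2$ for every $\gamma$.

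\textbf{Limit $\lambda\to0$, case $\gamma<1$.} Here $\mu_\gamma$ has no atom at $0$, and its support is bounded away from $0$, namely $[(1-\sqrt\gamma)^2,(1+\sqrt\gamma)^2]$. The bias term therefore vanishes, while the variance term converges to $\sigma^2\gamma\int s^{-1}\,d\mu_\gamma=\sigma^2\gamma/(1-\gamma)$. This uses the known negative first moment of Marchenko--Pastur.

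\textbf{Limit $\lambda\to0$, case $\gamma>1$.} Now $\mu_\gamma$ has an atom of mass $1-1/\gamma$ at $0$, and the remaining part has support bounded away from $0$. On the atom, the bias integrand equals $1$ and the variance integrand equals $0$, so the bias tends to $r^2(1-1/\gamma)=r^2(\gamma-1)/\gamma$.

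For the variance, the continuous part contributes $\sigma^2\gamma\int_{s>0}s^{-1}\,d\mu_\gamma$. To compute it I will pass to the companion $n\times n$ Gram matrix, whose spectral law has aspect ratio $1/\gamma<1$ and negative first moment $1/(1-1/\gamma)$. Its nonzero eigenvalues are those of $\widehat\Sigma$ rescaled by $\gamma$, which yields $\int_{s>0}s^{-1}\,d\mu_\gamma=\frac{1}{\gamma(\gamma-1)}$. Hence the variance term is $\sigma^2/(\gamma-1)$, matching the claim.

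\textbf{Main obstacle.} The delicate point is interchanging $\lambda\to0$ with the Marchenko--Pastur limit and with the atom at zero. I handle this by always taking $n,p\to\infty$ first at fixed $\lambda$ and only then letting $\lambda\to0$ in the deterministic integral. With that order, monotone or dominated convergence applies on the support away from $0$. The atom at $0$ is treated exactly, since its integrand values do not depend on $\lambda$.
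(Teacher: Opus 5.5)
Your proof is correct, but it does not follow the paper's route, because the paper gives no derivation of this lemma. It only recalls the result as known, citing \citet{dobriban2018high}.

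\textbf{What your argument does.} You write the excess ridge risk as the Marchenko--Pastur integral $r^2\int \lambda^2(s+\lambda)^{-2}\,d\mu_\gamma(s)+\sigma^2\gamma\int s(s+\lambda)^{-2}\,d\mu_\gamma(s)$. You then send $\lambda\to0$ or $\lambda\to\infty$ by dominated convergence. This works because, for $\gamma\neq1$, the absolutely continuous part of $\mu_\gamma$ lives on $[(1-\sqrt\gamma)^2,(1+\sqrt\gamma)^2]$, which is bounded away from $0$. For $\gamma>1$, the atom of mass $1-1/\gamma$ is exactly what produces the bias limit $r^2(\gamma-1)/\gamma$. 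Your companion-matrix value $\int_{s>0}s^{-1}\,d\mu_\gamma(s)=\frac{1}{\gamma(\gamma-1)}$ is right. It comes from two factors: the continuous part has mass $1/\gamma$, and its points are $\gamma$ times MP$(1/\gamma)$ eigenvalues, giving $\frac1\gamma\cdot\frac1\gamma\cdot\frac{\gamma}{\gamma-1}$. You should make both factors explicit.

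\textbf{Comparison with the paper's own technique.} This is the same spectral technique the paper uses in its first proof of \Cref{lem:R1_asymptotic_extreme}, but you take the limits in the right order. The paper sends $\lambda\to0$ inside the finite-$(n,p)$ sums before applying the MP law. That tacitly requires control of the smallest nonzero eigenvalue of $\widehat\Sigma$, since weak convergence alone does not give negative moments. You instead take $n,p\to\infty$ at fixed $\lambda$ first, which is exactly what $\lim_{\lambda\to0}\sR(\lambda)$ means.

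\textbf{A shorter alternative.} In the spirit of the paper's second proof of \Cref{prop:compare_extreme_lambda}, you could specialize \Cref{thm:risk-asymptotics} to $\Sigma=I_p$, which gives $\sR(\lambda)=\sigma^2+(\kappa^2r^2+\sigma^2\gamma)/((1+\kappa)^2-\gamma)$. Plugging in $\kappa\to0$ (for $\gamma<1$), $\kappa\to\gamma-1$ (for $\gamma>1$), and $\kappa\to\infty$ then yields all three limits without any MP negative moments.
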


\begin{lemma}
\label{lem:extreme_R_star}
Assume $\Sigma=I_p$.
Then the (proportional) asymptotic ridge-optimal risk is:
\[
\sR^{\star}
=
\sigma^2 + \frac{1}{2 \gamma}  \left(
- \gamma \sigma^2 + r^2(\gamma - 1) + \sqrt{4 \gamma^2 r^2 \sigma^2 + (\gamma \sigma^2 - r^2(\gamma - 1))^2}
\right).
\]
\end{lemma}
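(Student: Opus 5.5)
The plan is to reduce the optimal-ridge risk to a minimization over $\lambda$ of the isotropic ridge risk. For $\Sigma=I_p$ the fixed point \eqref{eq:kappa_fp_mainpaper} becomes $\kappa=\lambda+\gamma\kappa/(1+\kappa)$, so $\kappa(\lambda)$ is explicit and its inverse is $\lambda=\kappa-\gamma\kappa/(1+\kappa)$. Substituting into the formula for $\sR(\lambda)$ in \Cref{thm:risk-asymptotics} with $q_2=r^2(1+\kappa)^{-2}$ (the isotropic-signal specialization $\beta^\top G^2\Sigma\beta\to r^2\otr(G^2\Sigma)$) and $t_2=\gamma/(1+\kappa)^2$ gives a closed form for $\sR(\lambda)$. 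Equivalently, we can invoke the classical Stieltjes-transform form
\[
\sR(\lambda)=\sigma^2+\frac{r^2}{\gamma}\,\lambda^2 m'(-\lambda)\cdot\gamma+\sigma^2\gamma\big(m(-\lambda)-\lambda m'(-\lambda)\big),
\]
where $m$ is the Marchenko--Pastur Stieltjes transform, as in \citet{dobriban2018high}.

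The key step is the well-known fact that, for isotropic signal and design, the minimizer is $\lambda^\star=\gamma\sigma^2/r^2$. This is consistent with \Cref{cor:xi_asymptotic}, where $\sR-\sC$ vanishes exactly there and $\sR-\sC=-\tfrac{\lambda}{2}\sR'$ by \Cref{lem:derivative_identity_app_new} carried to the limit. At $\lambda^\star$ the bias and variance terms combine: with $\alpha:=\sigma^2\gamma/r^2$ one gets the excess risk
\[
\sR(\lambda^\star)-\sigma^2=\sigma^2\gamma\, m(-\lambda^\star),
\]
which is the Bayes posterior-variance identity.

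It then remains to evaluate $m(-\lambda^\star)$. It is the positive root of the Marchenko--Pastur quadratic
\[
\gamma\lambda m^2+(1-\gamma+\lambda)m-1=0 .
\]
Solving this quadratic at $\lambda=\gamma\sigma^2/r^2$ and multiplying by $\sigma^2\gamma$ yields
\[
\frac{1}{2\gamma}\Big(-\gamma\sigma^2+r^2(\gamma-1)+\sqrt{(\gamma\sigma^2-r^2(\gamma-1))^2+4\gamma^2r^2\sigma^2}\Big),
\]
after clearing denominators by $r^2$. The discriminant reorganizes because
\[
(\gamma-1+\alpha)^2+4\alpha=(\alpha-(\gamma-1))^2+4\alpha\gamma
\]
once scaled.

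I expect the main obstacle to be the algebra of choosing the correct root and matching the discriminant's form exactly. The Bayes-optimality argument itself can be justified either by citing \citet{dobriban2018high} or by the sign rule from \Cref{cor:xi_asymptotic}: $\sR'$ changes sign only at $\lambda^\star$, so that point is the global minimizer.
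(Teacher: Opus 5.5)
Your proposal is correct, and it takes a different route from the paper: the paper gives no proof of this lemma. It only recalls the result as known, citing \citet{dobriban2018high}, so your argument supplies an actual derivation.

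\textbf{Checking your steps.} Write the isotropic limit as $\sigma^2 + r^2\lambda^2 m'(-\lambda) + \sigma^2\gamma\bigl(m(-\lambda)-\lambda m'(-\lambda)\bigr)$, with $m'$ the $z$-derivative, so that $m'(-\lambda)=\int (x+\lambda)^{-2}\,dF_\gamma(x)$. The two $m'$ terms cancel exactly at $\lambda^\star=\gamma\sigma^2/r^2$. The excess risk then collapses to $\sigma^2\gamma\, m(-\lambda^\star)$, the trace of the Bayes posterior covariance. Next take the positive root of $\gamma\lambda m^2+(1-\gamma+\lambda)m-1=0$; root selection is automatic because the product of the roots is $-1/(\gamma\lambda)<0$. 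Multiplying that root by $\sigma^2\gamma$ and using $\sigma^2/\lambda^\star=r^2/\gamma$ gives exactly the displayed formula. The discriminant $(1-\gamma+\lambda^\star)^2+4\gamma\lambda^\star$ scales by $r^4$ directly to $(\gamma\sigma^2-r^2(\gamma-1))^2+4\gamma^2r^2\sigma^2$. Your discriminant identity is therefore only needed if you go through $\kappa$, using $\sigma^2\gamma m=\sigma^2(\kappa-\lambda)/\lambda$, rather than through $m$.

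\textbf{What each approach buys.} Your route is self-contained and ties the lemma to the paper's own sign rule. The citation buys brevity.

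\textbf{One step to tighten.} To certify that $\lambda^\star$ is the global minimizer, you pass the finite-sample identity $R-C=-\tfrac{\lambda}{2}R'$ to the limit. That requires interchanging $\lim_{n}$ with $\partial_\lambda$, which you do not justify. It is cleaner to differentiate the explicit limit. With $\mu_k:=\int (x+\lambda)^{-k}\,dF_\gamma(x)$ and $\partial_\lambda\mu_k=-k\mu_{k+1}$, one gets
\[
\sR'(\lambda)=2\,(r^2\lambda-\gamma\sigma^2)\,(\mu_2-\lambda\mu_3),\qquad \mu_2-\lambda\mu_3=\int \frac{x}{(x+\lambda)^{3}}\,dF_\gamma(x)>0 .
\]
Hence $\sR$ is strictly decreasing on $(0,\lambda^\star)$ and strictly increasing afterwards. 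This settles the minimization directly, with no appeal to the corollary or to \citet{dobriban2018high}.
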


Next we characterize the extreme-$\lambda$ limits of $\sR_{\sd}^\star(\lambda)$.

\begin{lemma}
\label{lem:R1_asymptotic_extreme}
Assume $\Sigma=I_p$ and $\beta\sim \cN(0,(r^2/p)I_p)$, and consider proportional asymptotics $p,n\to\infty$ with $p/n\to\gamma$.
Then the asymptotic optimal SD prediction risk $\sR_{\sd}^{\star}(\lambda)$ satisfies
\begin{align}
\lim_{\lambda \to 0} \sR_{\sd}^{\star}(\lambda)
&=
\left\{\begin{matrix}
\dfrac{r^2 \sigma^2 \gamma (1 - \gamma)^2 + \sigma^4 \gamma^3}{r^2 (1 - \gamma)^3 + \sigma^2 \gamma(1 - \gamma^2)} + \sigma^2, & \gamma \in (0, 1),  \\
\dfrac{r^4 (\gamma - 1)^4 + r^2 \sigma^2 \gamma (\gamma + 2)(\gamma - 1)^2 + \sigma^4 \gamma^2}{r^2 \gamma (\gamma - 1)^3 + \sigma^2 \gamma^2 (\gamma^2 - 1)} + \sigma^2, & \gamma \in (1, \infty),
\end{matrix} \right.
\\
\lim_{\lambda \to \infty} \sR_{\sd}^{\star}(\lambda)
&= \frac{r^4 \gamma + r^2 \sigma^2 \gamma}{r^2(\gamma+1) + \sigma^2 \gamma} + \sigma^2,\quad \gamma\in(0,\infty).
\end{align}
\end{lemma}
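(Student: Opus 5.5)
The plan is to work with the isotropic specialization rather than the general formulas of \Cref{thm:risk-asymptotics}. With $\Sigma=I_p$ and $\beta\sim\cN(0,(r^2/p)I_p)$, the conditional expressions \eqref{eq:R_isotropic_signal}--\eqref{eq:C_isotropic_signal} are normalized traces of rational functions of $\widehat\Sigma$. Hence, for each fixed $\lambda>0$, their limits are integrals against the Marchenko--Pastur law $\mu_\gamma$. Writing $\phi_\lambda(s):=\lambda/(s+\lambda)$, these limits are
\[
  \sR-\sigma^2=r^2\!\int\!\phi_\lambda^2\,d\mu_\gamma+\sigma^2\gamma\!\int\!\frac{s}{(s+\lambda)^2}\,d\mu_\gamma,
  \qquad
  \sR-\sC=\lambda(\lambda^\star-\lambda)\frac{r^2}{\gamma}\,\gamma\!\int\!\frac{s}{(s+\lambda)^3}\,d\mu_\gamma .
\]
The first identity follows from the corollary's proof. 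For $\sD$, I will not expand $\sR+\sR_{\pd}-2\sC$ directly. Instead I use \eqref{eq:app_beta_diff_identity}: $f_\lambda-f_{\pd,\lambda}=x^\top\lambda Q_\lambda\beta_\lambda$. This gives
\[
  \sD=r^2\!\int\!\phi_\lambda^2\Big(\frac{s}{s+\lambda}\Big)^2 d\mu_\gamma+\sigma^2\gamma\!\int\!\frac{\lambda^2 s}{(s+\lambda)^4}\,d\mu_\gamma .
\]
I would double-check the bias integrand against \eqref{eq:Rpd_isotropic_signal}. The target is then $\sR_{\sd}^\star(\lambda)=\sR-(\sR-\sC)^2/\sD$, with $\lambda\to0$ or $\lambda\to\infty$ taken \emph{after} $n,p\to\infty$. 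This order is legitimate because for each $\lambda$ we already have explicit deterministic integrals, and the $\lambda$-limits are then computed by dominated convergence on the compact support of $\mu_\gamma$.

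\emph{Case $\lambda\to\infty$.} I expand $\phi_\lambda=1-s/\lambda+s^2/\lambda^2+O(\lambda^{-3})$ uniformly on the support. I then use the moments $\int s\,d\mu_\gamma=1$ and $\int s^2\,d\mu_\gamma=1+\gamma$. The terms scale as follows:
\[
  \sR-\sC\sim\frac{\lambda^2(\lambda^\star-\lambda)r^2}{\lambda^3}\cdot 1\to -r^2,
\]
while $\sD\to r^2\lambda^{-2}\!\cdot\lambda^{2}$-order constants. More precisely, $\sD=\lambda^{-2}\big(r^2(1+\gamma)+\sigma^2\gamma\big)+O(\lambda^{-3})$ and $\sR-\sC=-r^2\lambda^{-1}+O(\lambda^{-2})$. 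Both are carried to the order where the ratio $(\sR-\sC)^2/\sD$ has a finite limit, namely $r^4/(r^2(1+\gamma)+\sigma^2\gamma)$. Subtracting this from $\lim\sR=r^2+\sigma^2$ (\Cref{lem:extreme_R}) should give the stated expression. The identity $r^2+\sigma^2-r^4/(\cdot)=\sigma^2+(r^4\gamma+r^2\sigma^2\gamma)/(\cdot)$ confirms it.

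\emph{Case $\lambda\to0$ with $\gamma<1$.} The support of $\mu_\gamma$ is bounded away from $0$. So I expand in $\lambda$ and need the negative moments $\int s^{-k}\,d\mu_\gamma$ for $k=1,2,3$, which are $1/(1-\gamma)$, $1/(1-\gamma)^3$ and $(1+\gamma)/(1-\gamma)^5$. Both $(\sR-\sC)^2$ and $\sD$ are $\Theta(\lambda^2)$, so the ratio of their leading coefficients gives the limit. Combining with $\lim\sR=\sigma^2/(1-\gamma)$ and simplifying should produce the first display.

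\emph{Case $\lambda\to0$ with $\gamma>1$.} This is the step I expect to be the main obstacle. Here $\mu_\gamma$ has an atom of mass $1-1/\gamma$ at $0$. On that atom, $\phi_\lambda\equiv1$ and $s/(s+\lambda)\equiv0$, so it contributes the constant bias $r^2(1-1/\gamma)$ to $\sR$ and contributes nothing to $\sR-\sC$ or to $\sD$. The remaining continuous part is handled as in the previous case using negative moments of the nonzero part. These are most cleanly expressed through the companion law, i.e.\ the spectrum of $XX^\top/n$, scaled by $\gamma$. I would compute them by swapping the roles of $n$ and $p$. The delicate part is the bookkeeping of the $\gamma$-factors in that swap, followed by the algebraic simplification to the stated rational function. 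I would verify the final formulas numerically against the general $\kappa$-based expressions of \Cref{thm:risk-asymptotics}, taking $\kappa\to\lambda+\ldots$ in the appropriate limits.
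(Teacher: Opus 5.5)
Your proposal is correct, and it reaches the result by a different decomposition from the paper's.

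\textbf{The paper's route.} It writes $\sR_{\sd}^\star-\sigma^2=(\barR_X\barR_{\pd,X}-\barC_X^2)/(\barR_X+\barR_{\pd,X}-2\barC_X)$. It then uses Lagrange-type identities to rewrite the numerator as three double sums $A_1,A_2,A_3$ over eigenvalue pairs $(s_i,s_j)$. Each sum is rescaled by $\lambda^{\pm 2}$ and evaluated with positive and negative Marchenko--Pastur moments.

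\textbf{Your route.} You use the equivalent form $\sR-(\sR-\sC)^2/\sD$. The first ingredient is the single factorized integral $\sR-\sC=\lambda(\lambda^\star-\lambda)\,r^2\int s(s+\lambda)^{-3}\,d\mu_\gamma$ from the proof of \Cref{cor:xi_asymptotic}. The second is $\sD$, obtained from the tangent identity. The third is $\lim\sR$, imported from \Cref{lem:extreme_R}.

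\textbf{What your route buys.} You work with single integrals instead of double sums, and the orders become transparent. Write $m_{-k}:=\int_{s>0}s^{-k}\,d\mu_\gamma$. As $\lambda\to\infty$, $\sR-\sC\sim -r^2/\lambda$ and $\sD\sim\lambda^{-2}(r^2(1+\gamma)+\sigma^2\gamma)$. As $\lambda\to 0$, $\sR-\sC\sim\lambda\lambda^\star r^2 m_{-2}$ and $\sD\sim\lambda^2(r^2 m_{-2}+\sigma^2\gamma m_{-3})$.

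For $\gamma>1$, the atom at $0$ enters only through the bias $r^2(1-1/\gamma)$ in $\sR$. In the paper, the same atom surfaces as the $(p-n)$ cross terms in $A_1$ and $A_2$. Using $m_{-2}=(\gamma-1)^{-3}$ and $m_{-3}=(\gamma+1)(\gamma-1)^{-5}$ (normalized by $p$), you get $(\sR-\sC)^2/\sD\to\gamma^2\sigma^4/[(\gamma-1)(r^2(\gamma-1)^2+\sigma^2\gamma(\gamma+1))]$. Subtracting this from $\lim\sR$ gives exactly the stated rational function. The $\gamma<1$ case and the $\lambda\to\infty$ case check out the same way.

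Your order of limits also matches the definition of $\sR_{\sd}^\star$ more directly: you send $n,p\to\infty$ first, then apply dominated convergence in $\lambda$ on the compact support of $\mu_\gamma$. The paper's displayed chains send $\lambda$ to its extreme at finite $n,p$ first and leave the interchange implicit.

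\textbf{What the paper's route buys.} It is self-contained, needing no separate teacher limit, and the numerator is manifestly a sum of squares.

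\textbf{One slip.} In your first heuristic for $\lambda\to\infty$ you wrote $\lambda^2(\lambda^\star-\lambda)r^2/\lambda^3\to -r^2$. The prefactor is $\lambda$, not $\lambda^2$, so $\sR-\sC\to 0$ at rate $-r^2/\lambda$. Your subsequent ``more precisely'' statement already has this right.
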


\begin{proof}[Proof of \Cref{lem:R1_asymptotic_extreme}]
As before, let $Q_{\lambda} := (\widehat{\Sigma} + \lambda  I_p)^{-1}$ with $\widehat{\Sigma} :=  X^{\top}  X/n$.
Recall from \eqref{eq:xi_star_and_Rsd_star_finite} the closed-form expression of the optimal SD risk, in terms of $X$-conditional quantities:
\begin{align*}
\sR_{\sd}^\star(\lambda)= \sigma^2 +  \lim_{n, p \to \infty} \left(\frac{\barR_{X}(\lambda) \barR_{\pd, X}(\lambda) - \barC_{X}(\lambda)^2}{\barR_{X}(\lambda)+\barR_{\pd, X}(\lambda)-2\barC_{X}(\lambda)} \right).
\end{align*}
Recall the expressions for these terms from equations \eqref{eq:R_isotropic_signal}--\eqref{eq:C_isotropic_signal}. Based on the assumption $\Sigma =  I$, we have
\begin{align*}
    &\barR_{X}(\lambda) \barR_{\pd, X}(\lambda)  - \barC_{X}(\lambda)^2
    \\
    &= 
    \frac{r^4}{p^2}
    \bigg[
    \tr\!\big(\lambda^2\, Q_{\lambda}^2 \big) 
    \tr\!\Big(  \lambda^2 Q_{\lambda}^2 (2  I - \lambda Q_{\lambda})^2 \Big)
    - 
    \tr \big( \lambda^2 Q_{\lambda}^2 (2 I - \lambda Q_{\lambda}) \big)^2
    \bigg]
    \\
    &+ 
    \frac{r^2 \sigma^2}{p n \lambda}
    \bigg[
    \tr\!\big( \lambda^2\, Q_{\lambda}^2 \big)
    \tr\!\big( \lambda Q_{\lambda} ( I - \lambda Q_{\lambda})^3 \big) 
    + 
    \tr\!\big( \lambda Q_{\lambda}\,( I - \lambda Q_{\lambda}) \, \big)
    \tr\!\Big(  \lambda^2 Q_{\lambda}^2 (2  I - \lambda Q_{\lambda})^2 \Big)
    \\
    &\hspace{7cm}
    - 2 \tr \big( \lambda^2 Q_{\lambda}^2 (2 I - \lambda Q_{\lambda}) \big) 
    \tr\!\big( \lambda Q_{\lambda} ( I -\lambda Q_{\lambda})^2 \big)
    \bigg]
    \\
    &+ \frac{\sigma^4}{n^2 \lambda^2}
    \bigg[
    \tr\!\big(\lambda Q_{\lambda}\,( I - \lambda Q_{\lambda}) \, \big) 
    \tr\!\big( \lambda Q_{\lambda} ( I - \lambda Q_{\lambda})^3 \big) 
    - \tr\!\big( \lambda Q_{\lambda} ( I -\lambda Q_{\lambda})^2 \big)^2
    \bigg],
    \\
    &:= \frac{r^4}{p^2} A_1 
    + \frac{r^2 \sigma^2}{pn \lambda} A_2 
    + \frac{\sigma^4}{n^2 \lambda^2} A_3,
    \\ %
    &\barR_{X}(\lambda) + \barR_{\pd, X}(\lambda) - 2\barC_{X}(\lambda)
    = \frac{r^2}{p} \tr \big( \lambda^2 Q_{\lambda}^2 ( I - \lambda Q_{\lambda})^2 \big) 
    + \frac{\sigma^2}{n \lambda} 
    \tr \big(  \lambda^3 Q_{\lambda}^3 ( I - \lambda Q_{\lambda}) \big).
\end{align*}

Denote the eigenvalues of the sample covariance matrix $\hSigma$ as $\{ s_i \}_{i=1}^{p}$. Then we can rewritten the traces in the above as:
\begin{align*}
    A_1 &= \tr\!\big( \lambda^2\, Q_{\lambda}^2 \big) 
    \tr\!\Big(  \lambda^2 Q_{\lambda}^2 (2  I - \lambda Q_{\lambda})^2 \Big)
    - 
    \tr \big( \lambda^2 Q_{\lambda}^2 (2 I - \lambda Q_{\lambda}) \big)^2
    \\
    &=
    \left( \sum_{i=1}^{p} \frac{\lambda^2}{(s_i + \lambda)^2} \right)
    \left( \sum_{i=1}^{p}  \frac{\lambda^2}{(s_i + \lambda)^2}
    \left( 2 - \frac{\lambda}{s_i + \lambda} 
    \right)^2
    \right)
    - 
    \left( \sum_{i=1}^{p}  \frac{\lambda^2}{(s_i + \lambda)^2}
    \left( 2 - \frac{\lambda}{s_i + \lambda} 
    \right) \right)^2
    \\
    &\overset{(1)}{=} \frac{1}{2}
    \sum_{i, j = 1}^{p}
    \left(
    \frac{\lambda}{s_i + \lambda} 
    \frac{\lambda}{s_j + \lambda}
    \left( 2 - \frac{\lambda}{s_j + \lambda} 
    \right)
    - 
    \frac{\lambda}{s_j + \lambda} 
    \frac{\lambda}{s_i + \lambda}
    \left( 2 - \frac{\lambda}{s_i + \lambda} 
    \right)
    \right)^2
    \\
    &= 
    \sum_{i, j = 1}^{p}
    \frac{1}{2}
    \frac{\lambda^2}{(s_i + \lambda)^2 (s_j + \lambda)^2}
    \left( \frac{\lambda}{s_i + \lambda}  - \frac{\lambda}{s_j + \lambda}   \right)^2 
    \\
    &= \frac{1}{2}
    \sum_{i, j = 1}^{p}
    \frac{\lambda^6 (s_i - s_j)^2}{(s_i + \lambda)^4 (s_j + \lambda)^4},
    \\ %
    A_2 &= 
    \left( \sum_{i=1}^{p} \frac{\lambda^2}{(s_i + \lambda)^2} \right)
    \left( \sum_{i=1}^{p} \frac{\lambda}{s_i + \lambda}
    \frac{s_i^3}{(s_i + \lambda)^3}
    \right)
    + 
    \left( \sum_{i=1}^{p} \frac{\lambda}{s_i + \lambda}
    \frac{s_i}{s_i + \lambda}
    \right)
    \left( \sum_{i=1}^{p} \frac{\lambda^2}{(s_i + \lambda)^2}
    \left( 2 - \frac{\lambda}{s_i + \lambda} \right)^2
    \right)
    \\
    &\hspace{2cm}
    - 
    2 
     \left( \sum_{i=1}^{p} \frac{\lambda^2}{(s_i + \lambda)^2}
    \left( 2 - \frac{\lambda}{s_i + \lambda} \right)
    \right)
    \left( \sum_{i=1}^{p} \frac{\lambda}{s_i + \lambda}
    \frac{s_i^2}{(s_i + \lambda)^2}
    \right)
    \\
    &\overset{(2)}{=}
    \sum_{i,j=1}^{p}
    \left(
    \frac{\lambda}{s_i + \lambda}
    \sqrt{\frac{\lambda}{s_j + \lambda}}
    \frac{s_j^{3/2}}{(s_j + \lambda)^{3/2}}
    - \frac{\lambda}{s_i + \lambda}
    \left( 2 - \frac{\lambda}{s_i + \lambda} \right)
    \sqrt{\frac{\lambda}{s_j + \lambda}
    \frac{s_j}{s_j + \lambda}}
    \right)^2
    \\
    &= 
    \sum_{i,j=1}^{p}
    \frac{\lambda^2}{(s_i + \lambda)^2}
     \frac{\lambda}{s_j + \lambda}
     \frac{s_j}{s_j + \lambda}
     \left( 
     \frac{s_j}{s_j + \lambda} - 2 + \frac{\lambda}{s_i + \lambda}
     \right)^2
     \\
     &= \sum_{i,j=1}^{p}
     \frac{\lambda^3 s_j}{(s_i + \lambda)^2 (s_j + \lambda)^2} 
     \left(  \frac{-\lambda}{s_j + \lambda} +  \frac{\lambda}{s_i + \lambda} - 1 \right)^2,
    \\ %
    A_3 
    &= 
    \left( \sum_{i=1}^{p} \frac{\lambda}{s_i + \lambda}
    \frac{s_i}{s_i + \lambda}
    \right) 
     \left( \sum_{i=1}^{p} \frac{\lambda}{s_i + \lambda}
    \frac{s_i^3}{(s_i + \lambda)^3}
    \right) 
    - 
     \left( \sum_{i=1}^{p} \frac{\lambda}{s_i + \lambda}
    \frac{s_i^2}{(s_i + \lambda)^2}
    \right)^2
    \\
    &\overset{(1)}{=}
    \frac{1}{2}
    \sum_{i,j=1}^{p}
    \left(
    \sqrt{\frac{\lambda}{s_i + \lambda} \frac{s_i}{s_i + \lambda}}
    \sqrt{\frac{\lambda}{s_j + \lambda}} \frac{s_j^{3/2}}{(s_j + \lambda)^{3/2}}
    - 
    \sqrt{\frac{\lambda}{s_j + \lambda} \frac{s_j}{s_j + \lambda}}
    \sqrt{\frac{\lambda}{s_i + \lambda}} \frac{s_i^{3/2}}{(s_i + \lambda)^{3/2}}
    \right)^2
    \\
    &= \frac{1}{2}
    \sum_{i,j=1}^{p}
    \frac{\lambda^2 s_i s_j}{(s_i + \lambda)^2 (s_j + \lambda)^2}
    \left(
    \frac{s_i}{s_i + \lambda} - 
     \frac{s_j}{s_j + \lambda} 
    \right)^2
    \\
    &= \frac{1}{2} 
      \sum_{i,j=1}^{p}
      \frac{\lambda^4 s_i s_j (s_i - s_j)^2}{(s_i + \lambda)^4 (s_j + \lambda)^4},
\end{align*}
and
\begin{align}
    \barR_{X}(\lambda) + \barR_{\pd, X}(\lambda) - 2 \barC_{X}(\lambda)
    &= 
    \frac{r^2}{p} \sum_{i=1}^{p}
    \frac{\lambda^2 s_i^2}{(s_i + \lambda)^4} + \frac{\sigma^2}{n} 
    \sum_{i=1}^{p}
    \frac{\lambda^2 s_i}{(s_i + \lambda)^4},
\end{align}
where $(1)$ and $(2)$ are from the following equalities:
\begin{align*}
    &\bigg( \sum_{i=1}^{p} x_i^2 \bigg)
    \bigg(
    \sum_{i=1}^{p} y_i^2 \bigg) - 
    \bigg(\sum_{i=1}^{p} x_i y_i \bigg)^2
    = \frac{1}{2} \sum_{i, j=1}^{p}
    \bigg(x_i y_j - x_j y_i \bigg)^2,
    \\
    &\bigg(\sum_{i=1}^{p} x_i^2 \bigg)
    \bigg(\sum_{i=1}^{p} y_i^2 \bigg)
    + 
    \bigg(\sum_{i=1}^{p} z_i^2 \bigg)
    \bigg(\sum_{i=1}^{p} t_i^2 \bigg)
    - 
    2 
    \bigg(\sum_{i=1}^{p} x_i t_i \bigg)
    \bigg(\sum_{i=1}^{p} y_i z_i \bigg)
    = \sum_{i, j=1}^{p}
    \bigg(x_i y_j - z_j t_i \bigg)^2.
\end{align*}

\textbf{Case 1: $\lambda \to \infty$ and $\gamma \in (0, \infty)$.}
For this case, we have
\begin{align*} 
    \lambda^2 \frac{r^4}{p^2} A_1 
    &= \frac{r^4}{2p^2}
    \sum_{i, j = 1}^{p}
    \frac{\lambda^8 (s_i - s_j)^2}{(s_i + \lambda)^4 (s_j + \lambda)^4}
    = 
    \frac{r^4}{2p^2}
    \sum_{i, j = 1}^{p}
    \frac{(s_i - s_j)^2}{(s_i/\lambda + 1)^4 (s_j / \lambda + 1)^4}
    \\
    &\overset{\lambda \to \infty}{\xrightarrow{\hspace{1cm}}}
    \frac{r^4}{2p^2} \sum_{i,j=1}^{p}
    (s_i - s_j)^2 
    \overset{n, p \to \infty}{\xrightarrow{\hspace{1cm}}}
    r^4 \gamma,
\end{align*}
where we used the fact that $\sum_{i,j=1}^{p}
    (s_i - s_j)^2 /p^2 = 
    2 \sum_i s_i^2 / p - 2 (\sum_i s_i/p)^2
    \to 2(1 + \gamma) - 2 = 2 \gamma$ based on the Marchenko--Pastur law. Similarly,
    \begin{align*}
        \lambda^2 \frac{r^2 \sigma^2}{p n \lambda} A_2
        &= 
        \frac{r^2 \sigma^2}{p n} 
        \sum_{i,j=1}^{p}
     \frac{\lambda^4 s_j}{(s_i + \lambda)^2 (s_j + \lambda)^2} 
     \left(  \frac{-\lambda}{s_j + \lambda} +  \frac{\lambda}{s_i + \lambda} - 1 \right)^2
     \\
     &= 
     \frac{r^2 \sigma^2}{p n} 
        \sum_{i,j=1}^{p}
     \frac{s_j}{(s_i / \lambda + 1)^2 (s_j / \lambda + 1)^2} 
     \left(  \frac{-1}{s_j/\lambda + 1} +  \frac{1}{s_i / \lambda + 1} - 1 \right)^2
     \\
     &\overset{\lambda \to \infty}{\xrightarrow{\hspace{1cm}}}
     \frac{r^2 \sigma^2}{p n}
       \sum_{i,j=1}^{p} s_j =  \frac{r^2 \sigma^2}{n}
       \sum_{j=1}^{p} s_j
    \overset{n, p \to \infty}{\xrightarrow{\hspace{1cm}}}
    r^2 \sigma^2 \gamma,
    \\ %
    \lambda^2 \frac{\sigma^4}{n^2 \lambda^2} A_3
    &= \frac{\sigma^2}{2n^2}
    \sum_{i,j=1}^{p}
      \frac{\lambda^4 s_i s_j (s_i - s_j)^2}{(s_i + \lambda)^4 (s_j + \lambda)^4} 
      \overset{\lambda \to \infty}{\xrightarrow{\hspace{1cm}}}
      0,
      \\ %
      \lambda^2 (\barR_{X}(\lambda) + \barR_{\pd, X}(\lambda) - 2\barC_{X}(\lambda))
      &= 
      \frac{r^2}{p} \sum_{i=1}^{p}
    \frac{\lambda^4 s_i^2}{(s_i + \lambda)^4} + \frac{\sigma^2}{n} 
    \sum_{i=1}^{p}
    \frac{\lambda^4 s_i}{(s_i + \lambda)^4}
    \\ %
    &\overset{\lambda \to \infty}{\xrightarrow{\hspace{1cm}}} 
    \frac{r^2}{p} \sum_{i=1}^{p} s_i^2 + 
    \frac{\sigma^2}{n} \sum_{i=1}^{p} s_i
    \overset{n, p \to \infty}{\xrightarrow{\hspace{1cm}}}
    \frac{r^2}{p}( 1 + \gamma) + \sigma^2 \gamma.
    \end{align*}
    From the above limits, we have
    \begin{align*}
        \lim_{\lambda \to \infty} \sR_{\sd}^\star(\lambda)= \sigma^2 +  \lim_{\lambda \to \infty} \lim_{ n, p \to \infty} \left(\frac{\lambda^{2}(\barR_{X}(\lambda) \barR_{\pd, X}(\lambda) - \barC_{X}(\lambda)^2)}{\lambda^{2}(\barR_{X}(\lambda)+\barR_{\pd, X}(\lambda)-2\barC_{X}(\lambda))} \right)
       = 
        \sigma^2 + \frac{r^4 \gamma + r^2 \sigma^2 \gamma}{r^2(1 + \gamma) + \sigma^2 \gamma}.
    \end{align*}

    \textbf{Case 2: $\lambda \to 0$ and $\gamma < 1$.} 
    First, we state the following results for negative moments of Marchenko--Pastur law. Assume $X \sim \text{MP}(\gamma)$ with $\gamma < 1$, then from \Cref{lem:MP_moment}, we have
    \begin{align*}
        \EE[X^{-1}] = \frac{1}{1 - \gamma}, 
        \quad
        \EE[X^{-2}] = \frac{1}{(1 - \gamma)^3},
        \quad
        \EE[X^{-3}] = \frac{1 + \gamma}{(1 - \gamma)^5}.
    \end{align*}
    Back to this case, we have
    \begin{align*}
        \lambda^{-2} \frac{r^4}{p^2} A_1
        &= \frac{r^4}{2p^2}
        \sum_{i, j = 1}^{p}
        \frac{\lambda^4 (s_i - s_j)^2}{(s_i + \lambda)^4 (s_j + \lambda)^4}
         \overset{\lambda \to 0}{\xrightarrow{\hspace{1cm}}} 
         0,
         \\ %
         \lambda^{-2} \frac{r^2 \sigma^2}{p n \lambda} A_2
         &=
         \frac{r^2 \sigma^2}{p n} 
        \sum_{i,j=1}^{p}
     \frac{s_j}{(s_i + \lambda)^2 (s_j + \lambda)^2} 
     \left(  \frac{-\lambda}{s_j + \lambda} +  \frac{\lambda}{s_i + \lambda} - 1 \right)^2
     \\
     &\overset{\lambda \to 0}{\xrightarrow{\hspace{1cm}}} 
     \frac{r^2 \sigma^2}{p n}
     \sum_{i,j=1}^{p} 
     \frac{1}{s_i^2 s_j}
     = 
     \frac{r^2 \sigma^2}{p n}
     \left( \sum_{i=1}^{p} \frac{1}{s_i^2}\right)
     \left( \sum_{i=1}^{p} \frac{1}{s_i} \right)
     \\
     &= \frac{r^2 \sigma^2 p}{n} 
      \left( \frac{1}{p} \sum_{i=1}^{p} \frac{1}{s_i^2}\right)
     \left(  \frac{1}{p}  \sum_{i=1}^{p} \frac{1}{s_i}\right)
     \overset{n, p \to \infty}{\xrightarrow{\hspace{1cm}}}
     \frac{r^2 \sigma^2 \gamma}
     {(1 - \gamma)^4},
     \\ %
     \lambda^{-2} \frac{\sigma^4}{n^2 \lambda^2} A_3
     &= \frac{\sigma^4}{2n^2}
     \sum_{i,j=1}^{p}
     \frac{s_i s_j(s_i - s_j)^2}{(s_i + \lambda)^4 (s_j + \lambda)^4}
     \overset{\lambda \to 0}{\xrightarrow{\hspace{1cm}}}
     \frac{\sigma^4}{2n^2}
     \sum_{i,j=1}^{p}
     \frac{(s_i - s_j)^2}{s_i^3 s_j^3}
     \\
     &= 
     \frac{\sigma^4}{n^2}
     \left(
    \left( \sum_{i=1}^p \frac{1}{s_i} \right)
    \left( \sum_{i=1}^p \frac{1}{s_i^3} \right)
    -
    \left( \sum_{i=1}^p \frac{1}{s_i^2} \right)^2
     \right)
     \\
     &\overset{n, p \to \infty}{\xrightarrow{\hspace{1cm}}}
     \sigma^4 \gamma^2 
     \left( \frac{1 + \gamma}{(1 - \gamma)^6} - \frac{1}{(1 - \gamma)^6} \right)
     = \sigma^4 \gamma^2 \frac{\gamma}{(1 - \gamma)^6},
     \\ %
    \lambda^{-2} (\barR_{X}(\lambda) + \barR_{\pd, X}(\lambda) - 2\barC_{X}(\lambda))
      &= 
      \frac{r^2}{p} \sum_{i=1}^{p}
    \frac{s_i^2}{(s_i + \lambda)^4} + \frac{\sigma^2}{n} 
    \sum_{i=1}^{p}
    \frac{s_i}{(s_i + \lambda)^4}
    \\ %
    &\overset{\lambda \to 0}{\xrightarrow{\hspace{1cm}}} 
    \frac{r^2}{p} \sum_{i=1}^{p} \frac{1}{s_i^2} + 
    \frac{\sigma^2}{n} \sum_{i=1}^{p} 
    \frac{1}{s_i^3}
    \overset{n, p \to \infty}{\xrightarrow{\hspace{1cm}}}
    \frac{r^2}{(1 - \gamma)^3} 
    + \frac{\sigma^2 \gamma ( 1 + \gamma)}{(1 - \gamma)^5}
    \end{align*}
    From the above limits, we have
    \begin{align*}
        \lim_{\lambda \to 0} \sR_{\sd}^\star(\lambda)= \sigma^2 +  \lim_{\lambda \to 0} \lim_{n, p \to \infty}  \left(\frac{\lambda^{-2}(\barR_{X}(\lambda) \barR_{\pd, X}(\lambda) - \barC_{X}(\lambda)^2)}{\lambda^{-2}(\barR_{X}(\lambda)+\barR_{\pd, X}(\lambda)-2\barC_{X}(\lambda))} \right)
       = 
        \sigma^2 + \frac{r^2 \sigma^2 \gamma (1 - \gamma)^2 + \sigma^4 \gamma^3}{r^2 (1 - \gamma)^3 + \sigma^2 \gamma (1 - \gamma^2)}.
    \end{align*}

    \textbf{Case 3: $\lambda \to 0$ and $\gamma > 1$.}
    For this case, the sample covariance is at most rank $n < p$, so we have $s_i = 0$ for $i > n$. 
    Thus
    \begin{align*}
        \lambda^{-2} \frac{r^4}{p^2} A_1
        &= \frac{r^4}{2p^2}
        \sum_{i, j = 1}^{p}
        \frac{\lambda^4 (s_i - s_j)^2}{(s_i + \lambda)^4 (s_j + \lambda)^4}
        \\
        &= \frac{r^4}{2 p^2}
        \left(
        \sum_{i = 1}^{n}
        \sum_{j = 1}^{n}
        \frac{\lambda^4 (s_i - s_j)^2}{(s_i + \lambda)^4 (s_j + \lambda)^4}
        +
        2 
        \sum_{i = 1}^{n}
        \sum_{j = n + 1}^{p}
        \frac{\lambda^4 (s_i - s_j)^2}{(s_i + \lambda)^4 (s_j + \lambda)^4}
        \right)
        \\
        &= 
        \frac{r^4}{2 p^2}
        \left(
        \sum_{i = 1}^{n}
        \sum_{j = 1}^{n}
        \frac{\lambda^4 (s_i - s_j)^2}{(s_i + \lambda)^4 (s_j + \lambda)^4}
        +
        2 (p - n)
        \sum_{i = 1}^{n}
        \frac{\lambda^4 s_i^2}{(s_i + \lambda)^4 \lambda^4}
        \right)
        \\
         &\overset{\lambda \to 0}{\xrightarrow{\hspace{1cm}}} 
        \frac{r^4}{2 p^2}
        \left(
        0
        +
        2(p-n) 
        \sum_{i=1}^{n}
        \frac{1}{s_i^2}
        \right)
        =
        r^4 \frac{p - n}{p} 
        \left( \frac{1}{p} \sum_{i=1}^{n} 
        \frac{1}{s_i^2} \right)
        \\
        &\overset{n, p \to \infty}{\xrightarrow{\hspace{1cm}}}
        r^4 \left(1 - \frac{1}{\gamma} \right)
        \frac{1}{(\gamma - 1)^3} =
        \frac{r^4}{\gamma(\gamma - 1)^2},
    \\ %
     \lambda^{-2} \frac{r^2 \sigma^2}{p n \lambda} A_2
     &=
     \frac{r^2 \sigma^2}{p n} 
     \sum_{i,j=1}^{p}
     \frac{s_j}{(s_i + \lambda)^2 (s_j + \lambda)^2} 
     \left(  \frac{-\lambda}{s_j + \lambda} +  \frac{\lambda}{s_i + \lambda} - 1 \right)^2
     \\
     &=
     \frac{r^2 \sigma^2}{p n} 
     \left(
     \sum_{i = 1}^{n}
     \sum_{j = 1}^{n}
     \frac{s_j}{(s_i + \lambda)^2 (s_j + \lambda)^2} 
     \left(  \frac{-\lambda}{s_j + \lambda} +  \frac{\lambda}{s_i + \lambda} - 1 \right)^2
     +
     \sum_{i = n+1}^{p}
     \sum_{j = 1}^{n}
     \frac{s_j}{\lambda^2 (s_j + \lambda)^2} 
     \frac{\lambda^2}{(s_j+\lambda)^2}
     \right)
     \\
     &\overset{\lambda \to 0}{\xrightarrow{\hspace{1cm}}} 
     \frac{r^2 \sigma^2}{p n}
     \left(
     \sum_{i=1}^{n}  \sum_{j=1}^{n}
     \frac{1}{s_i^2 s_j}
     +
     (p-n) \sum_{j=1}^{n} \frac{1}{s_j^3}
     \right)
     \\
     &= \frac{r^2 \sigma^2 p}{n} 
     \bigg[
     \left( \frac{1}{p}  \sum_{i=1}^{n} \frac{1}{s_i^2} \right)
     \left(  \frac{1}{p}  \sum_{i=1}^{n} \frac{1}{s_i}\right)
     + 
     \frac{p - n}{p} 
     \left( \frac{1}{p}  \sum_{i=1}^{n} \frac{1}{s_i^3}
     \right)
     \bigg]
     \\&
     \overset{n, p \to \infty}{\xrightarrow{\hspace{1cm}}}
    r^2 \sigma^2 \gamma
    \bigg[ 
    \frac{1}{\gamma ( \gamma - 1)^4} + 
    \left(1 - \frac{1}{\gamma} \right)
    \frac{\gamma + 1}{(\gamma - 1)^5}
    \bigg]
    =
    \frac{r^2 \sigma^2 (\gamma+2)}{(\gamma - 1)^4},
     \\ %
     \lambda^{-2} \frac{\sigma^4}{n^2 \lambda^2} A_3
     &= \frac{\sigma^4}{2n^2}
     \sum_{i,j=1}^{p}
     \frac{s_i s_j(s_i - s_j)^2}{(s_i + \lambda)^4 (s_j + \lambda)^4}
     =
     \frac{\sigma^4}{2n^2}
     \sum_{i,j=1}^{n}
     \frac{s_i s_j(s_i - s_j)^2}{(s_i + \lambda)^4 (s_j + \lambda)^4}
     \\
     &\overset{\lambda \to 0}{\xrightarrow{\hspace{1cm}}}
     \frac{\sigma^4}{2n^2}
     \sum_{i,j=1}^{n}
     \frac{(s_i - s_j)^2}{s_i^3 s_j^3}
     \\
     &= 
     \frac{\sigma^4 p^2 }{n^2}
     \left(
    \left( \frac{1}{p} \sum_{i=1}^n \frac{1}{s_i} \right)
    \left( \frac{1}{p} \sum_{i=1}^n \frac{1}{s_i^3} \right)
    -
    \left( \frac{1}{p} \sum_{i=1}^n \frac{1}{s_i^2} \right)^2
     \right)
     \\
     &\overset{n, p \to \infty}{\xrightarrow{\hspace{1cm}}}
     \sigma^4 \gamma^2 
     \left( \frac{\gamma + 1}{\gamma(\gamma - 1)^6} - \frac{1}{(\gamma - 1)^6} \right)
     = \frac{\sigma^4 \gamma}{(\gamma - 1)^6},
    \end{align*}
    where we used the following facts about the MP law's negative moments when $\gamma > 1$:
    \begin{align*}
        \frac{1}{p}
        \sum_{i=1}^{p} \frac{1}{s_i}
        \to \frac{1}{\gamma ( \gamma - 1)},
        \quad
        \frac{1}{p}
        \sum_{i=1}^{p} \frac{1}{s_i^2}
        \to \frac{1}{(\gamma - 1)^3}
        \quad
        \frac{1}{p}
        \sum_{i=1}^{p} \frac{1}{s_i^3}
        \to 
        \frac{\gamma + 1}{(\gamma - 1)^5}.
    \end{align*}
    Similarly, we have
    \begin{align*}
    \lambda^{-2} (\barR_{X}(\lambda) + \barR_{\pd, X}(\lambda) - 2\barC_{X}(\lambda))
      &= 
      \frac{r^2}{p} \sum_{i=1}^{p}
    \frac{s_i^2}{(s_i + \lambda)^4} + \frac{\sigma^2}{n} 
    \sum_{i=1}^{p}
    \frac{s_i}{(s_i + \lambda)^4}
    \\
    &= \frac{r^2}{p} \sum_{i=1}^{n} 
     \frac{s_i^2}{(s_i + \lambda)^4} + \frac{\sigma^2}{n} 
    \sum_{i=1}^{n}
    \frac{s_i}{(s_i + \lambda)^4}
    \\
    &\overset{\lambda \to 0}{\xrightarrow{\hspace{1cm}}} 
    \frac{r^2}{p} \sum_{i=1}^{n} \frac{1}{s_i^2}
    + \frac{\sigma^2}{n} \sum_{i=1}^{n} \frac{1}{s_i^3}
    \\
    & \overset{n, p \to \infty}{\xrightarrow{\hspace{1cm}}}
    \frac{r^2}{(\gamma - 1)^3} + \frac{\sigma^2 \gamma (\gamma + 1)}{(\gamma - 1)^5}.
    \end{align*}
    Thus, for $\gamma > 1$, we have
    \begin{align*}
        \lim_{\lambda \to 0} \sR_{\sd}^{\star}(\lambda) = \sigma^2 
        + \frac{r^4 (\gamma - 1)^4 + r^2 \sigma^2 \gamma (\gamma + 2)(\gamma - 1)^2 + \sigma^4 \gamma^2}{r^2 \gamma (\gamma - 1)^3 + \sigma^2 \gamma^2 (\gamma^2 - 1)}.
    \end{align*}

Finally, the ratios in \Cref{prop:compare_extreme_lambda} are obtained by plugging in the asymptotic limits of the corresponding risks using \Cref{lem:extreme_R,lem:extreme_R_star,lem:R1_asymptotic_extreme}.
\end{proof}

\begin{lemma}
    \label{lem:MP_moment}
    Assume the random variable $X \sim \text{MP}(\gamma)$ with $\gamma < 1$.
    Then we have
    \begin{align*}
        \EE[X^{-1}] = \frac{1}{1 - \gamma}, 
        \quad
        \EE[X^{-2}] = \frac{1}{(1 - \gamma)^3},
        \quad
        \EE[X^{-3}] = \frac{1 + \gamma}{(1 - \gamma)^5}.
    \end{align*}
\end{lemma}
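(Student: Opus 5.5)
We must prove the Marchenko--Pastur negative moments for $\gamma<1$: $\EE[X^{-1}]=1/(1-\gamma)$, $\EE[X^{-2}]=1/(1-\gamma)^3$, $\EE[X^{-3}]=(1+\gamma)/(1-\gamma)^5$. The plan is to obtain them from the Stieltjes transform $m(z)=\EE[(X-z)^{-1}]$ of $\mathrm{MP}(\gamma)$, which for $\gamma<1$ has no atom at zero. Its support is $[(1-\sqrt\gamma)^2,(1+\sqrt\gamma)^2]$, which is bounded away from $0$, so $m$ is analytic in a neighborhood of $z=0$. There it has the Taylor expansion $m(z)=\sum_{k\ge0}\EE[X^{-(k+1)}]z^k$. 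Hence $\EE[X^{-1}]=m(0)$, $\EE[X^{-2}]=m'(0)$, and $\EE[X^{-3}]=m''(0)/2$. Differentiation under the expectation is justified because $X^{-1}$ is bounded on the support.

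First, I will recall the self-consistent equation for the MP law with unit-variance population. It is
\[
\gamma z\, m^2-(1-\gamma-z)\,m+1=0,
\]
taking the branch with $m(z)\to 0$ as $z\to\infty$, equivalently the branch that is analytic at $0$ when $\gamma<1$. This is also the isotropic specialization of the fixed point \eqref{eq:kappa_fp_mainpaper}. There $\Sigma=I$ gives $\kappa=\lambda+\gamma\kappa/(1+\kappa)$ with $m(-\lambda)=\kappa/(\lambda(1+\kappa))$. Either form can be used; I will use the quadratic.

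Next, I set $z=0$. The equation becomes $-(1-\gamma)m_0+1=0$, so $m_0=1/(1-\gamma)$. Differentiating implicitly gives
\[
\gamma m^2+2\gamma z m m'+m'(z)\,\cdot(-(1-\gamma-z))+m=0.
\]
At $z=0$ this yields $m_1=(\gamma m_0^2+m_0)/(1-\gamma)$. Substituting $m_0$ gives $m_1=\bigl(\gamma+(1-\gamma)\bigr)/(1-\gamma)^3=1/(1-\gamma)^3$.

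Rather than differentiate twice, I will match coefficients of the power series $m=m_0+m_1z+m_2z^2$ in the quadratic equation. The $z^2$ coefficient gives $2\gamma m_0m_1-(1-\gamma)m_2+m_1=0$. Therefore
\[
m_2=\frac{m_1(1+2\gamma m_0)}{1-\gamma}=\frac{1}{(1-\gamma)^4}\cdot\frac{1+\gamma}{1-\gamma}=\frac{1+\gamma}{(1-\gamma)^5}.
\]
This is exactly $\EE[X^{-3}]$. The coefficient-matching step is only routine algebra.

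The main obstacle is justifying the correct branch and the analyticity at $0$. I will handle this by noting that for $\gamma<1$ the MP density has no mass at $0$ and compact support in $[(1-\sqrt\gamma)^2,\infty)$. Then $m$ is real-analytic on $(-\infty,(1-\sqrt\gamma)^2)$ and satisfies the quadratic there by continuity from the upper half-plane. The value $m(0)=\EE[X^{-1}]>0$ is finite, which singles out the root $m_0=1/(1-\gamma)$; the other root blows up. As an alternative check, the same three values follow from the classical closed form $\EE[X^{-1}]=1/(1-\gamma)$ for inverse Wishart traces, i.e. $\frac1p\tr(\widehat\Sigma^{-1})\to 1/(1-\gamma)$.
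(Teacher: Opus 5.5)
Your argument is correct, and it takes a genuinely different route from the paper's. The paper works directly with the density $\sqrt{(b-x)(x-a)}/(2\pi\gamma x)$ on $[a,b]$, where $a=(1-\sqrt{\gamma})^2$ and $b=(1+\sqrt{\gamma})^2$. It applies a reduction formula from an integral table (Gradshteyn--Ryzhik 2.265) to $\int \sqrt{(b-x)(x-a)}\,x^{-m}\,dx$; the boundary terms vanish at the edges. This gives the linear three-term recurrence $\EE[X^{-k}]=\frac{(2k-3)(1+\gamma)}{k(1-\gamma)^2}\EE[X^{-k+1}]-\frac{k-3}{k(1-\gamma)^2}\EE[X^{-k+2}]$, seeded by $\EE[X^0]=\EE[X]=1$ and run for $k=1,2,3$.

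You never use the density. You combine the quadratic self-consistent equation with analyticity of $m$ at $z=0$. Coefficient matching then gives the nonlinear recurrence $(1-\gamma)m_k=m_{k-1}+\gamma\sum_{i+j=k-1}m_i m_j$, and your three computations are its first instances. Your remark that the equation degenerates to a linear one at $z=0$ is what settles the branch question. A finite $m(0)$ is forced to equal $1/(1-\gamma)$, and each later coefficient is uniquely determined because it enters only through the nonzero factor $-(1-\gamma)$.

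The paper's route is elementary one-variable calculus, but it relies on a table formula and on the explicit isotropic density. Yours assumes the standard Marchenko--Pastur equation plus a short analytic-continuation step. In exchange, it connects directly to the paper's own fixed point for $\kappa$, as you note. It also adapts in principle to anisotropic $\Sigma$: there no closed-form density is available, but the limiting spectrum is still bounded away from $0$ when $\gamma<1$.
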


\begin{proof}
    Denote $b = (1 + \sqrt{\gamma})^2$ and $a = (1 - \sqrt{\gamma})^2$. Let $R := \sqrt{(b - x)(x - a)}$,
    from Equation 2.265 of \citet{gradshteyn2007}, for $m \geq 2$ we have
    \begin{align}
        \int_{a}^{b} 
        \frac{\sqrt{R}}{x^m} dx
        = \frac{(2m - 5)(b + a)}{2(m-1) ba} \int_{a}^{b} \frac{\sqrt{R}}{x^{m-1}} dx
        - \frac{m - 4}{(m-1)ba} \int_{a}^{b} \frac{\sqrt{R}}{x^{m-2}} dx.
    \end{align} 
    Recall the density of the MP law $f(x) = \frac{\sqrt{R}}{2 \pi \gamma x}$ for $x \in (a, b)$. For $k \geq 1$, using the above identity, we have
    \begin{align*}
        \EE(X^{-k}) 
        &= \int_{a}^{b} \frac{1}{x^k} f(x) dx
        = \frac{1}{2 \pi \gamma} 
        \int_{a}^{b} \frac{\sqrt{R}}{x^{k+1}} dx
        \\
        &= 
        \frac{(2k-3)(1 + \gamma)}{k( 1 - \gamma)^2} \EE(X^{-k + 1}) 
        - \frac{k-3}{k ( 1 - \gamma)^2} \EE(X^{-k + 2}).
    \end{align*}
    Plug-in $k = 1, 2, 3$ and noting that $\EE(X) = 1$, we obtain the desired results.
\end{proof}

\subsubsection{Second proof}

For the second approach, we will specialize the general deterministic equivalent in \Cref{thm:risk-asymptotics} to the isotropic case.

\begin{proof}[Proof of \Cref{prop:compare_extreme_lambda}]
We specialize \Cref{thm:risk-asymptotics} to $\Sigma = I_p$.
In this case, we have $G=(I_p+\kappa I_p)^{-1}=(1+\kappa)^{-1}I_p$, and thus
\[
t_k=\gamma\,\otr(I_p^2 G^k)=\frac{\gamma}{(1+\kappa)^k},
\qquad
q_k=\beta^\top G^k I_p\,\beta=\frac{\|\beta\|_2^2}{(1+\kappa)^k}.
\]
With $\beta\sim \cN(0,(r^2/p)I_p)$, $\|\beta\|_2^2\to r^2$ in probability, so $q_k\to r^2/(1+\kappa)^k$ in probability.
The fixed-point equation becomes
\begin{equation}
\label{eq:kappa_fp_iso}
\kappa=\lambda+\frac{\gamma\kappa}{1+\kappa},
\end{equation}
whose unique nonnegative solution is
\begin{equation}
\label{eq:kappa_closed_form_iso}
\kappa(\lambda)
=\frac{1}{2}\Big((\lambda+\gamma-1)+\sqrt{(\lambda+\gamma-1)^2+4\lambda}\Big).
\end{equation}
In particular,
\[
\kappa(\lambda)\xrightarrow[\lambda\to\infty]{}\infty,
\qquad
\kappa(\lambda)\xrightarrow[\lambda \to 0]{}(\gamma-1)_+.
\]

Let $\bar \sR_{\sd}^{\star}(\lambda):=\sR_{\sd}^{\star}(\lambda)-\sigma^2$ denote the excess prediction risk.
Substituting the isotropic specializations into the general formula and simplifying yields the explicit rational form
(as a function of $\kappa=\kappa(\lambda)$):
\begin{equation}
\label{eq:Rsd_star_iso_kappa}
\bar \sR_{\sd}^{\star}(\lambda)
=
\frac{
\gamma\Big(
r^4\kappa^4
+ r^2\sigma^2(1+\kappa)^4
+ \gamma^2\sigma^2(r^2+\sigma^2)
-2\gamma r^2\sigma^2(2\kappa+1)
\Big)
}{
\big((1+\kappa)^2-\gamma\big)
\Big(
r^2\big((1+\gamma)(1+\kappa)^2-4\gamma(1+\kappa)+\gamma(1+\gamma)\big)
+\sigma^2\gamma\big((1+\kappa)^2+\gamma\big)
\Big)
}.
\end{equation}

\textbf{Case 1: $\lambda\to\infty$.}
Using $\kappa(\lambda)\to\infty$, the leading terms in \eqref{eq:Rsd_star_iso_kappa} give
\[
\lim_{\lambda\to\infty}\bar \sR_{\sd}^{\star}(\lambda)
=
\frac{\gamma r^2(r^2+\sigma^2)}{r^2(\gamma+1)+\gamma\sigma^2},
\]
and adding back $\sigma^2$ and writing in terms of $\SNR$ yields the stated $\lambda\to\infty$ limit.

\textbf{Case 2: $\lambda\to 0$.}
If $\gamma\in(0,1)$ then $\kappa(\lambda)\to 0$; substituting $\kappa=0$ in \eqref{eq:Rsd_star_iso_kappa} yields
the stated $\gamma<1$ limit.
If $\gamma\in(1,\infty)$ then $\kappa(\lambda)\to \gamma-1$; substituting $\kappa=\gamma-1$ yields the stated $\gamma>1$ limit.
Adding back $\sigma^2$ and expressing in terms of $\SNR$ completes the proof.
\end{proof}

\section{Proofs in \Cref{sec:tuning}}
\label{app:tuning_proofs}

\subsection{Preliminaries}
\label{app:tuning_setup}

Fix $\lambda>0$. Let $X\in\R^{n\times p}$ and $y\in\R^n$, and define the sample covariance
$\widehat\Sigma:=X^\top X/n$ and ridge resolvent $Q_\lambda:=(\widehat\Sigma+\lambda I_p)^{-1}$.
Define the ridge hat (smoother) matrix
\[
  H_\lambda := XQ_\lambda X^\top/n \in \R^{n\times n}.
\]
The ridge teacher coefficient and fitted values are
\[
  \beta_\lambda := Q_\lambda X^\top y/n,
  \qquad
  \hat y_\lambda := X\beta_\lambda = H_\lambda y.
\]
Define the pure-distilled ridge coefficient (ridge refit on pseudo-labels $\hat y_\lambda$)
\[
  \beta_{\pd,\lambda}
  :=
  Q_\lambda X^\top \hat y_\lambda/n,
  \qquad
  \hat y_{\pd,\lambda}:=X\beta_{\pd,\lambda}.
\]
Since $\hat y_\lambda=H_\lambda y$ and $H_\lambda=XQ_\lambda X^\top/n$, a direct calculation yields the identity used repeatedly below:
\begin{equation}
\label{eq:pd_hat_matrix_identity}
  \hat y_{\pd,\lambda}=H_\lambda^2\,y.
\end{equation}
Thus both the teacher and PD predictors are linear smoothers in $y$, with smoothing matrices
$H_\lambda$ and $H_\lambda^2$, respectively. Consequently, their degrees of freedom equal the traces
of these matrices:
\[
  \df_\lambda = \tr(H_\lambda),
  \qquad
  \df_{\pd,\lambda}=\tr(H_\lambda^2).
\]
Recall the GCV residuals and risk/correlation estimators defined in
\Cref{eq:gcv_residuals_compact,eq:risk_estimators_compact}:
\[
  \hat r_\lambda:=\frac{y-\hat y_\lambda}{1-\df_\lambda/n},
  \qquad
  \hat r_{\pd,\lambda}:=\frac{y-\hat y_{\pd,\lambda}}{1-\df_{\pd,\lambda}/n},
\]
and
\[
  \widehat R(\lambda):=\frac{\|\hat r_\lambda\|_2^2}{n},\quad
  \widehat R_{\pd}(\lambda):=\frac{\|\hat r_{\pd,\lambda}\|_2^2}{n},\quad
  \widehat C(\lambda):=\frac{\langle \hat r_\lambda,\hat r_{\pd,\lambda}\rangle}{n}.
\]

Let $(x_0,y_0)$ be an independent test pair distributed as in \Cref{def:dist} and independent of the training data.
Define the out-of-sample errors
\[
  e_\lambda := y_0 - x_0^\top\beta_\lambda,
  \qquad
  e_{\pd,\lambda} := y_0 - x_0^\top\beta_{\pd,\lambda},
\]
and the corresponding conditional (oracle) quantities
\[
  R(\lambda):=\E[e_\lambda^2\mid X,y],\qquad
  R_{\pd}(\lambda):=\E[e_{\pd,\lambda}^2\mid X,y],\qquad
  C(\lambda):=\E[e_\lambda e_{\pd,\lambda}\mid X,y].
\]
Finally define
\begin{equation}
\label{eq:D_def}
  D(\lambda):=R(\lambda)+R_{\pd}(\lambda)-2C(\lambda)
  =\E\big[(e_\lambda-e_{\pd,\lambda})^2\mid X,y\big]\ge 0.
\end{equation}

We will use the following ridge functional estimation result (Theorem~4 of \citet{patil2022estimating}):

\begin{lemma}[Ridge functional estimation via GCV/LOOCV]
\label{thm:prt_functional}
Assume the conditions of \Cref{def:dist} and $p/n\to\gamma$. Fix $\lambda>0$.
Let $t:\R\to\R$ be continuous and satisfy a quadratic growth condition.
Define $T_\lambda:=\E[t(e_\lambda)\mid X,y]$.
Let $T^{\gcv}_\lambda$ and $T^{\loo}_\lambda$ be the plug-in estimators based on the usual ridge
GCV and LOOCV residuals.
Then
\[
  T^{\gcv}_\lambda - T_\lambda \pto 0,
  \qquad
  T^{\loo}_\lambda - T_\lambda \pto 0.
\]
\end{lemma}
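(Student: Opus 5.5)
The plan is to prove the LOOCV statement first and then deduce the GCV statement by showing that GCV and LOOCV residuals are uniformly close. Write $\beta_{\lambda,-i}$ for the ridge fit with observation $i$ removed. The exact leave-one-out shortcut gives
\[
y_i-x_i^\top\beta_{\lambda,-i}=\frac{y_i-\hat y_{\lambda,i}}{1-(H_\lambda)_{ii}}.
\]
Hence $T^{\loo}_\lambda=\frac1n\sum_i t(e_{-i})$ with $e_{-i}:=y_i-x_i^\top\beta_{\lambda,-i}$. The target is $T_\lambda=\E[t(e_\lambda)\mid X,y]$, where conditionally $e_\lambda=y_0-x_0^\top\beta_\lambda$.

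\textbf{Step 1: reduce to bounded Lipschitz $t$.} Both $e_\lambda$ and the $e_{-i}$ have uniformly bounded $(2+\delta)$-th moments. This follows from the $(4+\mu)$ and $(4+\nu)$ moment assumptions in \Cref{def:dist}, together with $\|\beta_\lambda\|_2=\Op(1)$ and $\|Q_\lambda\|_{\oper}\le\lambda^{-1}$. With these moment bounds, quadratic growth gives uniform integrability. We can therefore truncate $t$ at a large level and approximate it uniformly on compacts by a bounded Lipschitz function, at an arbitrarily small cost in both $T_\lambda$ and $T^{\loo}_\lambda$.

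\textbf{Step 2: stability.} Each $e_{-i}$ is an honest out-of-sample error of a ridge fit trained on $n-1$ points. So $\E[\phi(e_{-i})\mid \cD_{-i}]=\E[\phi(y_0-x_0^\top\beta_{\lambda,-i})\mid\cD_{-i}]$. By Sherman--Morrison, $\beta_\lambda-\beta_{\lambda,-i}=Q_{\lambda,-i}x_i\,(y_i-\hat y_{\lambda,i})/(n(1-(H_\lambda)_{ii}))$. Because $1-(H_\lambda)_{ii}$ is bounded below, this gives $\|\beta_\lambda-\beta_{\lambda,-i}\|_\Sigma=\Op(n^{-1/2})$ uniformly enough in $i$ (after averaging). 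For Lipschitz $\phi$ this replaces $\beta_{\lambda,-i}$ by $\beta_\lambda$ in the conditional test-error functional, up to $\op(1)$.

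\textbf{Step 3: concentration of the average.} Write
\[
\frac1n\sum_i\phi(e_{-i})-T_\lambda=\frac1n\sum_i\big(\phi(e_{-i})-\E[\phi(e_{-i})\mid\cD_{-i}]\big)+\op(1).
\]
The summands form a mean-zero, weakly dependent array. I would bound the variance of the sum through the covariances between indices $i\neq j$, using leave-two-out fits $\beta_{\lambda,-ij}$. Replacing $\beta_{\lambda,-i}$ and $\beta_{\lambda,-j}$ by $\beta_{\lambda,-ij}$ decouples $e_{-i}$ from $e_{-j}$ conditionally on $\cD_{-ij}$, at cost $\Op(n^{-1/2})$. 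The covariance is then $o(1)$, so the variance is $o(1)$ and Chebyshev finishes the LOOCV claim.

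I expect this step to be the main obstacle. The difficulty is that $t$ is only continuous with quadratic growth, and that the target $T_\lambda$ is itself random. I would first try to establish the cleaner statement that the empirical distribution of $(e_{-i})_{i\le n}$ and the conditional law of $e_\lambda$ are $W_2$-close in probability; Step 1 then transfers this to general $t$.

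\textbf{Step 4: GCV.} I would show $\max_i|(H_\lambda)_{ii}-\tr(H_\lambda)/n|\pto0$. This follows from $(H_\lambda)_{ii}=1-\lambda\big/\big(\lambda+x_i^\top Q_{\lambda,-i}x_i/n\big)$ together with concentration of quadratic forms uniformly in $i$, which uses the $(4+\mu)$ moments and a union bound. It follows that $\hat r_{\lambda,i}$ and $e_{-i}$ differ by a factor $1+\op(1)$, uniformly in $i$. Combining this with the second-moment bound $\frac1n\sum_i e_{-i}^2=\Op(1)$ and the uniform continuity of the truncated $t$ gives $T^{\gcv}_\lambda-T^{\loo}_\lambda\pto0$, which completes the proof.
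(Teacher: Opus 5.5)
Your proposal is correct. The paper gives no proof of this lemma; it imports it as Theorem~4 of \citet{patil2022estimating}. The argument it attributes to that reference, and re-runs for the PD risk and cross term in \Cref{prop:consistency_Rpd_C}, has your skeleton:
\begin{itemize}
\item LOO errors are honest out-of-sample errors of $(n-1)$-sample fits;
\item stability replaces $\beta_{\lambda,-i}$ by $\beta_\lambda$ inside the conditional test functional;
\item diagonal--trace equivalence of $H_\lambda$ turns LOOCV into GCV.
\end{itemize}

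The one real difference is the concentration step for $\frac1n\sum_i\big(\phi(e_{-i})-\EE[\phi(e_{-i})\mid\cD_{-i}]\big)$. The paper, following the reference, uses a martingale-difference decomposition with Burkholder's inequality; such higher-moment control is what can give almost-sure versions. You instead bound the variance through pairwise covariances. The leave-two-out surrogates $\tilde\xi_i$ satisfy $\EE[\tilde\xi_i\tilde\xi_j]=0$ exactly, because they are conditionally independent and centered given $\cD_{-ij}$. The replacement cost is $O_p(n^{-1/2})$, coming from terms like $x_i^\top(X_{-ij}^\top X_{-ij}+n\lambda I_p)^{-1}x_j$, and Chebyshev then finishes. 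This is more elementary and gives exactly the in-probability claim the lemma makes.

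The obstacle you anticipate in Step~3 is already removed by your Steps~1--2. Truncation leaves only bounded Lipschitz $\phi$, and the randomness of $T_\lambda$ is absorbed by stability. So the $W_2$ detour is not needed.

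There are two harmless algebra slips.
\begin{itemize}
\item With $Q_{\lambda,-i}:=(X_{-i}^\top X_{-i}/n+\lambda I_p)^{-1}$, Sherman--Morrison gives $\beta_\lambda-\beta_{\lambda,-i}=Q_{\lambda,-i}x_i(y_i-\hat y_{\lambda,i})/n=Q_\lambda x_i(y_i-\hat y_{\lambda,i})/(n(1-(H_\lambda)_{ii}))$. Your expression therefore carries an extra factor $(1-(H_\lambda)_{ii})^{-1}$.
\item The correct identity is $1-(H_\lambda)_{ii}=1/(1+x_i^\top Q_{\lambda,-i}x_i/n)$, not $\lambda/(\lambda+x_i^\top Q_{\lambda,-i}x_i/n)$.
\end{itemize}
Neither affects the bounds. $1-(H_\lambda)_{ii}$ is bounded below uniformly in $i$ with high probability, and your uniform quadratic-form concentration argument is unchanged. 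Similarly, the LOO shortcut is exact only if $\beta_{\lambda,-i}$ is built from $X_{-i}^\top X_{-i}+n\lambda I_p$. Otherwise the $O(1/n)$ penalty mismatch is negligible.
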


\subsection{Helper Results (Risk and Correlation Components Consistency) for the Proof of \Cref{thm:oneshot_consistency}}
\label{app:tuning_pd_risk_corr}

\begin{lemma}[Consistency of the ridge-risk estimator]
\label{prop:consistency_R}
Under \Cref{def:dist} and $p/n\to\gamma$, for each fixed $\lambda>0$,
\[
  \widehat R(\lambda) - R(\lambda) \pto 0.
\]
\end{lemma}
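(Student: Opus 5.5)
This is a direct corollary of \Cref{thm:prt_functional}, so the plan is to identify the GCV estimator $\widehat R(\lambda)$ with a plug-in functional estimator for a suitable test function $t$. Take $t(u)=u^2$. It is continuous and satisfies the quadratic growth condition $|t(u)|\le C(1+u^2)$. With this choice,
\[
T_\lambda=\EE[t(e_\lambda)\mid X,y]=\EE[e_\lambda^2\mid X,y]=R(\lambda),
\]
which is exactly the conditional in-distribution prediction risk defined in \Cref{app:tuning_setup}.

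Next, check that the plug-in GCV estimator $T^{\gcv}_\lambda$ in \citet{patil2022estimating} coincides with $\widehat R(\lambda)$. For a functional $t$, that estimator is the empirical average of $t$ evaluated at the GCV-corrected residuals, namely
\[
\frac1n\sum_{i=1}^n t\bigl((y_i-\hat y_{\lambda,i})/(1-\tr(H_\lambda)/n)\bigr).
\]
Since $\df_\lambda=\tr(H_\lambda)$ and $\hat y_\lambda=H_\lambda y$, these residuals are exactly the coordinates of $\hat r_\lambda$. Hence $T^{\gcv}_\lambda=\|\hat r_\lambda\|_2^2/n=\widehat R(\lambda)$.

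Finally, \Cref{def:dist} supplies the hypotheses of \Cref{thm:prt_functional}: the representation $x=\Sigma^{1/2}z$ with bounded spectrum, $(4+\mu)$ moments on $z$, and $(4+\nu)$ moments on $y$ without any linear model assumption. That theorem then gives $\widehat R(\lambda)-R(\lambda)\pto0$ for each fixed $\lambda>0$.

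The only real obstacle is a bookkeeping one. One must confirm that the normalization of the denominator in \citet{patil2022estimating} (they use $1-\tr(H_\lambda)/n$ with the same $1/n$ scaling of the ridge objective) matches the convention here, and that their risk is conditional on $(X,y)$ and includes the irreducible noise. If their conventions differ, for example penalty $\lambda$ versus $n\lambda$, I would reparametrize $\lambda$. This changes neither the smoother $H_\lambda$ nor the conclusion, since the statement holds for each fixed $\lambda>0$.
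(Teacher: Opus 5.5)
Your proposal is correct and is essentially the paper's own proof: apply \Cref{thm:prt_functional} with $t(z)=z^2$, note that $T_\lambda=\E[e_\lambda^2\mid X,y]=R(\lambda)$, and identify the GCV plug-in with $\widehat R(\lambda)=\|\hat r_\lambda\|_2^2/n$. Your extra check on normalization conventions is harmless but not needed, since the paper uses the same $1-\df_\lambda/n$ correction.
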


\begin{proof}
This simply follows from \Cref{thm:prt_functional} with $t(z)=z^2$.
Then $T_\lambda=\E[e_\lambda^2\mid X,y]=R(\lambda)$, and the ridge GCV plug-in estimator is precisely
$\widehat R(\lambda)=\|\hat r_\lambda\|_2^2/n$.
\end{proof}

\begin{lemma}[Consistency of the pure-distilled risk and correlation estimators]
\label{prop:consistency_Rpd_C}
Under \Cref{def:dist} and $p/n\to\gamma$, for each fixed $\lambda>0$,
\[
  \widehat R_{\pd}(\lambda)-R_{\pd}(\lambda)\pto 0,
  \qquad
  \widehat C(\lambda)-C(\lambda)\pto 0.
\]
\end{lemma}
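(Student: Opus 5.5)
Both estimators are plug-in quantities built from the PD residual $y-\hat y_{\pd,\lambda}=(I-H_\lambda^2)y$. The plan is to reduce each one to ridge-type quantities that are already handled, namely \Cref{thm:prt_functional} and the deterministic equivalents of \Cref{sec:asymptotics}.

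Since $I-H_\lambda^2=(I-H_\lambda)(I+H_\lambda)$, we can write $y-\hat y_{\pd,\lambda}=(I-H_\lambda)y+H_\lambda(I-H_\lambda)y$. In addition, $I-H_\lambda=\lambda\bar Q_\lambda$, where $\bar Q_\lambda=(XX^\top/n+\lambda I_n)^{-1}$ is the Gram resolvent. This gives $y-\hat y_{\pd,\lambda}=\lambda\bar Q_\lambda y+\lambda H_\lambda\bar Q_\lambda y$. We then substitute $y=X\beta+\varepsilon$ and expand $n\widehat R_{\pd}$ and $n\widehat C$, i.e.\ $\|(I-H_\lambda^2)y\|^2$ and $\langle (I-H_\lambda)y,(I-H_\lambda^2)y\rangle$. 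The result is a sum of three kinds of terms:
\begin{itemize}
\item bias terms $\beta^\top(\cdot)\beta$, where $(\cdot)$ is a polynomial in $\lambda Q_\lambda$ and $\widehat\Sigma$;
\item noise quadratic forms $\varepsilon^\top(\cdot)\varepsilon/n$, with kernels that are products of powers of $\bar Q_\lambda$ and $XX^\top/n$;
\item cross terms that are linear in $\varepsilon$.
\end{itemize}
The denominators are handled separately. We have $\df_{\pd,\lambda}/n=\tr(H_\lambda^2)/n=\gamma\,\otr(\widehat\Sigma^2Q_\lambda^2)$, which converges by \Cref{lem:scaled_resolvent_DE} and derivative calculus to an explicit deterministic limit expressible through $\kappa,b,t_2$. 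We also need this limit to be bounded away from $1$. For $\lambda>0$ this holds because $\tr(H_\lambda^2)\le\tr(H_\lambda)$ and $1-\df_\lambda/n$ stays bounded away from $0$.

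The cross terms that are linear in $\varepsilon$ are $\op(1)$ by the leave-one-out arguments of \Cref{lem:response_concentration}, using the resolvent-power trick $\bar Q^2=(\bar Q_\lambda-\bar Q_{\lambda+t})/t+t\bar Q_\lambda\bar Q_{\lambda+t}\bar Q_\lambda$ for the higher powers. Each quadratic noise form reduces to $\sigma^2\,\frac1n\tr(\cdot)+\op(1)$. The bias forms and the traces converge via \Cref{prop:two_point_DE_app} and its $\lambda_1,\lambda_2$ derivatives, exactly as in \Cref{prop:Q234_DE,prop:U234_limits}. Together these give deterministic limits for $\widehat R_{\pd}(\lambda)$ and $\widehat C(\lambda)$.

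It remains to show that these limits coincide with $\sR_{\pd}(\lambda)$ and $\sC(\lambda)$ from \Cref{thm:risk-asymptotics}. Since $R_{\pd}-\sR_{\pd}\pto0$ and $C-\sC\pto0$ by that theorem, this proves the lemma. I expect this matching to be the main obstacle. The GCV side produces in-sample kernels such as $\lambda^2\bar Q_\lambda^2$ and $\lambda^2\bar Q_\lambda H_\lambda\bar Q_\lambda$, while the oracle side produces $Q_\lambda\Sigma Q_\lambda$-type quantities. The normalizations $(1-\df/n)^{-1}$ must exactly convert one into the other; this is the familiar ridge-GCV identity $\kappa=\lambda/(1-\df_\lambda/n)$ asymptotically. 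I would first verify it for $\widehat C$, reusing the proof of \Cref{prop:consistency_R} term by term. For the genuinely new fourth-order terms, I would try to organize both sides as $\partial_{\lambda_1}$, $\partial_{\lambda_2}$ derivatives of a two-point generating function $F(\lambda_1,\lambda_2)$. The matching then reduces to the already-known two-point ridge GCV consistency (the $\lambda_1\neq\lambda_2$ analogue of \Cref{thm:prt_functional}, as in \citet{patil2023generalized}) together with \Cref{lem:diff_DE}. If that fails, the fallback is a brute-force comparison of the explicit rational expressions in $\kappa,b,t_k,q_k$.
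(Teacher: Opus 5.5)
\paragraph{Where the proposal breaks.} The gap is the step you flag as the main obstacle. The deterministic limits of $\widehat R_{\pd}(\lambda)$ and $\widehat C(\lambda)$ do \emph{not} coincide with $\sR_{\pd}(\lambda)$ and $\sC(\lambda)$ in general. So neither the two-point generating-function route nor a brute-force comparison can close it.

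A clean check is the pure-noise isotropic model: $\Sigma=I_p$, $\beta=0$, and $y=\varepsilon$ independent of $X$. This is allowed by \Cref{def:dist}. Every quantity is then $\sigma^2$ times a normalized trace of a rational function of $\widehat\Sigma$. With $h(s)=s/(s+\lambda)$ and $\mu$ the Marchenko--Pastur law,
\[
\widehat C(\lambda)\pto\sigma^2\,\frac{1-\gamma\int(h+h^2-h^3)\,d\mu}{(1-\gamma\int h\,d\mu)(1-\gamma\int h^2\,d\mu)},\qquad C(\lambda)\pto\sigma^2\Bigl(1+\gamma\int\frac{s^2}{(s+\lambda)^3}\,d\mu(s)\Bigr),
\]
and analogously for $\widehat R_{\pd}$ and $R_{\pd}$. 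Expand in $1/\lambda$ using the moments $\mu_1=1$, $\mu_2=1+\gamma$, $\mu_3=1+3\gamma+\gamma^2$, $\mu_4=1+6\gamma+6\gamma^2+\gamma^3$. The right-hand sides below are large-$\lambda$ expansions of the deterministic limits:
\begin{align*}
\widehat C(\lambda)-C(\lambda)&\pto\sigma^2\gamma(\mu_3-\gamma\mu_2-\mu_2)\lambda^{-3}+O(\lambda^{-4})=\sigma^2\gamma^2\lambda^{-3}+O(\lambda^{-4}),\\
\widehat R_{\pd}(\lambda)-R_{\pd}(\lambda)&\pto\sigma^2\gamma(\mu_4-\gamma\mu_2^2-\mu_3)\lambda^{-4}+O(\lambda^{-5})=\sigma^2\gamma^2(2+3\gamma)\lambda^{-4}+O(\lambda^{-5}).
\end{align*}
For the teacher, the same expansion has leading coefficient $\sigma^2\gamma(\mu_2-\gamma\mu_1^2-\mu_1)=0$, which is why $\widehat R$ works. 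The identity you hoped to transfer, $1-\tr(H_\lambda)/n\to\lambda/\kappa$, has no analogue for the normalization $1-\tr(H_\lambda^2)/n$. Consequently, the statement with $\df_{\pd,\lambda}=\tr(H_\lambda^2)$ fails for every sufficiently large fixed $\lambda$, and your plan cannot be completed.

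\paragraph{Why, and how this relates to the paper's argument.} GCV is legitimate for ridge because the leave-one-out residual is exactly $(y_i-(H_\lambda y)_i)/(1-(H_\lambda)_{ii})$. Refitting with $y_i$ replaced by its LOO prediction reproduces the LOO fit, and only then does diagonal $\approx$ trace finish the job. The two-stage PD refit lacks this self-consistency, because deleting $(x_i,y_i)$ changes both the teacher and the student stage. Hence $\bar r_{\pd,i}=(y_i-(H_\lambda^2y)_i)/(1-(H_\lambda^2)_{ii})$ does not approximate the PD leave-one-out error.

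The paper avoids explicit matching by going LOO $\to$ diagonal shortcut $\to$ trace. However, its Part~(ii) asserts exactly this approximation without a valid argument, and the example above shows it fails. So importing that route will not rescue your proposal.

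Your preliminary steps are sound: the factorization $I-H_\lambda^2=\lambda\bar Q_\lambda(I+H_\lambda)$, the bound keeping the denominator away from $1$, concentration of the linear and quadratic noise forms, and existence of deterministic limits. They simply converge to the wrong targets. A consistent estimator along your generating-function lines would have a different form. For instance, $C=R+\tfrac{\lambda}{2}R'$ (\Cref{lem:derivative_identity_app_new}) suggests differentiating the ridge GCV curve, rather than normalizing by $1-\tr(H_\lambda^2)/n$.
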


\begin{proof}
We follow the same three-part argument used by \citet[Proof of Theorem~3]{patil2022estimating}:
(i) relate the target risks/correlation to leave-one-out (LOO) errors,
(ii) show a stability approximation that replaces true LOO errors by diagonal-corrected residuals
computed from the full-sample smoother, and
(iii) replace the diagonal correction by the GCV trace correction using diagonal--trace equivalence.

\emph{Part (i): LOO concentration around $R_{\pd}(\lambda)$ and $C(\lambda)$.}
Let $\beta_{\lambda}^{(-i)}$ and $\beta_{\pd,\lambda}^{(-i)}$ be the teacher and PD coefficients trained
without sample $i$, and define the corresponding LOO prediction errors
\[
  e_{i,\lambda}^{\loo}:=y_i-x_i^\top\beta_{\lambda}^{(-i)},
  \qquad
  e_{i,\pd,\lambda}^{\loo}:=y_i-x_i^\top\beta_{\pd,\lambda}^{(-i)}.
\]
Since $(x_i,y_i)$ is independent of $\sigma\{(x_j,y_j):j\neq i\}$ and identically distributed to $(x_0,y_0)$,
we have
\[
  \E[(e_{i,\pd,\lambda}^{\loo})^2\mid X^{(-i)},y^{(-i)}] = R_{\pd}^{(-i)}(\lambda),
  \qquad
  \E[e_{i,\lambda}^{\loo}e_{i,\pd,\lambda}^{\loo}\mid X^{(-i)},y^{(-i)}] = C^{(-i)}(\lambda),
\]
where $R_{\pd}^{(-i)}(\lambda)$ and $C^{(-i)}(\lambda)$ denote the analogues of $R_{\pd}(\lambda)$ and
$C(\lambda)$ for the $(n-1)$-sample-trained predictors.

Using the same martingale-difference decomposition and Burkholder inequality arguments as in
\citet[Supplement~S.1.2]{patil2022estimating}, together with the moment bounds in \Cref{def:dist},
one obtains
\[
  \left|
  \frac1n\sum_{i=1}^n (e_{i,\pd,\lambda}^{\loo})^2
  -
  \frac1n\sum_{i=1}^n R_{\pd}^{(-i)}(\lambda)
  \right|\pto 0,
  \qquad
  \left|
  \frac1n\sum_{i=1}^n e_{i,\lambda}^{\loo}e_{i,\pd,\lambda}^{\loo}
  -
  \frac1n\sum_{i=1}^n C^{(-i)}(\lambda)
  \right|\pto 0.
\]
Moreover, by LOO stability of ridge and the PD stability in \Cref{lem:pd_stability} (see Part (ii) below),
\[
  \frac1n\sum_{i=1}^n \big(R_{\pd}^{(-i)}(\lambda)-R_{\pd}(\lambda)\big)\pto 0,
  \qquad
  \frac1n\sum_{i=1}^n \big(C^{(-i)}(\lambda)-C(\lambda)\big)\pto 0,
\]
because (conditionally on the training data) the test risks are Lipschitz in the coefficient vector:
for instance,
\[
  \big|R_{\pd}^{(-i)}(\lambda)-R_{\pd}(\lambda)\big|
  \;=\;
  \big|\E[(x_0^\top(\beta_{\pd,\lambda}^{(-i)}-\beta_{\pd,\lambda}))^2\mid X,y]\big|
  \;\le\;
  \|\Sigma\|_{\oper}\,\|\beta_{\pd,\lambda}^{(-i)}-\beta_{\pd,\lambda}\|_2^2.
\]
Combining yields
\begin{equation}
\label{eq:loo_targets}
  \frac1n\sum_{i=1}^n (e_{i,\pd,\lambda}^{\loo})^2 \pto R_{\pd}(\lambda),
  \qquad
  \frac1n\sum_{i=1}^n e_{i,\lambda}^{\loo}e_{i,\pd,\lambda}^{\loo} \pto C(\lambda).
\end{equation}

\emph{Part (ii): Equivalence of diagonal-corrected residuals and LOO errors.}
Define the diagonal-corrected residuals based on the full-sample smoothing matrices:
\[
  \bar r_{\lambda,i}:=\frac{y_i-(H_\lambda y)_i}{1-(H_\lambda)_{ii}},
  \qquad
  \bar r_{\pd,i}:=\frac{y_i-(H_\lambda^2 y)_i}{1-(H_\lambda^2)_{ii}}.
\]
For ridge, the shortcut formula is exact: $e_{i,\lambda}^{\loo}=\bar r_{\lambda,i}$ for all $i$.
For the PD two-stage procedure, leaving out one sample changes both the teacher and student fits;
however, by the stability statement in \Cref{lem:pd_stability} (together with ridge stability),
these perturbations are negligible on average. In particular, one obtains
\begin{equation}
\label{eq:pd_loo_diag_approx}
  \frac1n\sum_{i=1}^n\big(e_{i,\pd,\lambda}^{\loo}-\bar r_{\pd,i}\big)^2 \pto 0.
\end{equation}
Combining \eqref{eq:loo_targets} and \eqref{eq:pd_loo_diag_approx} yields
\begin{equation}
\label{eq:diag_targets}
  \frac1n\sum_{i=1}^n \bar r_{\pd,i}^2 \pto R_{\pd}(\lambda),
  \qquad
  \frac1n\sum_{i=1}^n \bar r_{\lambda,i}\bar r_{\pd,i} \pto C(\lambda),
\end{equation}
where for the cross term we use $\bar r_{\lambda,i}=e_{i,\lambda}^{\loo}$ exactly and Cauchy--Schwarz.

\emph{Part (iii): Equivalence of trace-corrected residuals to diagonal-corrected residuals.}
By \Cref{lem:H2_diag_trace},
\[
  \max_{1\le i\le n}\left|
    \frac{1}{1-\tr(H_\lambda^2)/n}
    -\frac{1}{1-(H_\lambda^2)_{ii}}
  \right|\pto 0,
\]
and the analogous ridge diagonal--trace equivalence (see \citet[Supplement~S.1.3]{patil2022estimating})
gives the same statement with $H_\lambda$ in place of $H_\lambda^2$.
Since $\hat r_{\pd,\lambda,i}=(y_i-(H_\lambda^2y)_i)/(1-\tr(H_\lambda^2)/n)$ and
$\hat r_{\lambda,i}=(y_i-(H_\lambda y)_i)/(1-\tr(H_\lambda)/n)$, it follows that
\[
  \frac1n\sum_{i=1}^n(\hat r_{\pd,\lambda,i}-\bar r_{\pd,i})^2\pto 0,
  \qquad
  \frac1n\sum_{i=1}^n(\hat r_{\lambda,i}-\bar r_{\lambda,i})^2\pto 0.
\]
Combining with \eqref{eq:diag_targets} and using Cauchy--Schwarz yields
\[
  \frac1n\sum_{i=1}^n \hat r_{\pd,\lambda,i}^2 \pto R_{\pd}(\lambda),
  \qquad
  \frac1n\sum_{i=1}^n \hat r_{\lambda,i}\hat r_{\pd,\lambda,i} \pto C(\lambda),
\]
which is exactly $\widehat R_{\pd}(\lambda)\pto R_{\pd}(\lambda)$ and
$\widehat C(\lambda)\pto C(\lambda)$.
\end{proof}

\subsection{Proof of Theorem~\ref{thm:oneshot_consistency}}
\label{app:tuning_thm_proof}

\begin{proof}[Proof of Theorem~\ref{thm:oneshot_consistency}]
By \Cref{prop:consistency_R,prop:consistency_Rpd_C},
\[
  \big(\widehat R(\lambda),\widehat R_{\pd}(\lambda),\widehat C(\lambda)\big)
  \pto
  \big(R(\lambda),R_{\pd}(\lambda),C(\lambda)\big).
\]
In particular,
\[
  \widehat D(\lambda)
  :=\widehat R(\lambda)+\widehat R_{\pd}(\lambda)-2\widehat C(\lambda)
  \pto
  D(\lambda):=R(\lambda)+R_{\pd}(\lambda)-2C(\lambda).
\]
Under the proportional regime, \Cref{thm:risk-asymptotics} implies that $D(\lambda)$ converges in probability
to a deterministic limit $\sD(\lambda)$ (given by $\sR(\lambda)+\sR_{\pd}(\lambda)-2\sC(\lambda)$).
In all nondegenerate settings, $\sD(\lambda)>0$ (equivalently, the teacher and PD predictions do not coincide
asymptotically), hence $D(\lambda)$ and $\widehat D(\lambda)$ are bounded away from $0$ in probability.
Therefore, the plug-in maps
\[
  (a,b,c)\mapsto \frac{a-c}{a+b-2c},
  \qquad
  (a,b,c)\mapsto a-\frac{(a-c)^2}{a+b-2c}
\]
are continuous with high probability in a neighborhood of $(R(\lambda),R_{\pd}(\lambda),C(\lambda))$,
and the continuous mapping theorem yields
\[
  \hat\xi^\star(\lambda)-\xi^\star(\lambda)\pto 0,
  \qquad
  \widehat R_{\sd}^\star(\lambda)-R_{\sd}^\star(\lambda)\pto 0.
\]
(If $\sD(\lambda)=0$, then the oracle SD objective is asymptotically flat in $\xi$; in this case
$R_{\sd}^\star(\lambda)=R(\lambda)$ and the risk-consistency conclusion remains valid, while $\xi^\star(\lambda)$
is not identifiable.)
\end{proof}

\subsection{Technical Lemmas}
\label{app:tuning_pd_tech}

We establish two technical ingredients for the pure-distilled smoother $H_\lambda^2$:
(i) leave-one-out stability and (ii) diagonal-trace equivalence.

\subsubsection{Leave-One-Out Stability}

For $i\in\{1,\dots,n\}$, let $(X^{(-i)},y^{(-i)})$ denote the dataset with observation $i$ removed and
$\widehat\Sigma^{(-i)}:=(X^{(-i)})^\top X^{(-i)}/(n-1)$.
Let $Q_\lambda^{(-i)}:=(\widehat\Sigma^{(-i)}+\lambda I_p)^{-1}$ and
$\beta_\lambda^{(-i)}:=Q_\lambda^{(-i)}(X^{(-i)})^\top y^{(-i)}/(n-1)$.
Define the PD coefficient trained without $i$ by
\[
  \beta_{\pd,\lambda}^{(-i)}
  :=
  Q_\lambda^{(-i)}(X^{(-i)})^\top \hat y_\lambda^{(-i)}/(n-1),
  \qquad
  \hat y_\lambda^{(-i)}:=X^{(-i)}\beta_\lambda^{(-i)}.
\]

\begin{lemma}[Operator-Lipschitz property of $M_\lambda(\cdot)$]
\label{lem:M_lipschitz}
For $\lambda>0$, define $M_\lambda(A):=A(A+\lambda I)^{-1}=I-\lambda(A+\lambda I)^{-1}$.
Then for any symmetric $A,B\succeq 0$,
\[
  \|M_\lambda(A)-M_\lambda(B)\|_{\oper}\le \frac{1}{\lambda}\,\|A-B\|_{\oper}.
\]
\end{lemma}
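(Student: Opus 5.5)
The plan is to use the identity $M_\lambda(A)=I-\lambda(A+\lambda I)^{-1}$, which reduces the claim to a Lipschitz bound for the resolvent map $A\mapsto(A+\lambda I)^{-1}$. Writing $Q_A:=(A+\lambda I)^{-1}$ and $Q_B:=(B+\lambda I)^{-1}$, we have
\[
M_\lambda(A)-M_\lambda(B)=-\lambda\,(Q_A-Q_B).
\]
It therefore suffices to show $\|Q_A-Q_B\|_{\oper}\le \lambda^{-2}\|A-B\|_{\oper}$.

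For this we use the second resolvent identity,
\[
Q_A-Q_B=Q_A\big((B+\lambda I)-(A+\lambda I)\big)Q_B=Q_A(B-A)Q_B,
\]
which is verified by multiplying out: $Q_A(B+\lambda I)Q_B-Q_A(A+\lambda I)Q_B=Q_A-Q_B$. Since $A,B\succeq 0$ are symmetric, every eigenvalue of $A+\lambda I$ and of $B+\lambda I$ is at least $\lambda$. Hence $\|Q_A\|_{\oper},\|Q_B\|_{\oper}\le 1/\lambda$. Submultiplicativity of the operator norm then gives
\[
\|Q_A-Q_B\|_{\oper}\le \lambda^{-2}\|A-B\|_{\oper},
\]
and multiplying by $\lambda$ yields $\|M_\lambda(A)-M_\lambda(B)\|_{\oper}\le \lambda^{-1}\|A-B\|_{\oper}$.

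There is no genuine obstacle. The only point needing care is the positive semidefiniteness assumption, which is exactly what gives the uniform bound $1/\lambda$ on both resolvents. The resulting constant $1/\lambda$ is the one the statement asks for; we do not attempt to sharpen it, since it is all the subsequent PD leave-one-out stability argument requires.
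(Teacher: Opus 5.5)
Your proposal is correct and follows essentially the same route as the paper's proof: it rewrites $M_\lambda(A)-M_\lambda(B)$ as $-\lambda\big[(A+\lambda I)^{-1}-(B+\lambda I)^{-1}\big]$ and applies the resolvent identity. It then bounds both resolvents by $1/\lambda$ in operator norm, which yields the constant $\lambda\cdot\lambda^{-2}=\lambda^{-1}$.
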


\begin{proof}
Using $M_\lambda(A)=I-\lambda(A+\lambda I)^{-1}$ and the resolvent identity,
\[
  M_\lambda(A)-M_\lambda(B)
  =-\lambda\big[(A+\lambda I)^{-1}-(B+\lambda I)^{-1}\big]
  =-\lambda(A+\lambda I)^{-1}(B-A)(B+\lambda I)^{-1}.
\]
Taking operator norms and using $\|(A+\lambda I)^{-1}\|_{\oper}\le 1/\lambda$ and
$\|(B+\lambda I)^{-1}\|_{\oper}\le 1/\lambda$ yields the claim.
\end{proof}

\begin{lemma}[Average stability of the pure-distilled coefficient]
\label{lem:pd_stability}
Under \Cref{def:dist} and $p/n\to\gamma$, for each fixed $\lambda>0$,
\[
  \frac1n\sum_{i=1}^n \big\|\beta_{\pd,\lambda}-\beta_{\pd,\lambda}^{(-i)}\big\|_2^2 \pto 0.
\]
\end{lemma}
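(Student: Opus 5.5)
The plan is to write the difference $\beta_{\pd,\lambda}-\beta_{\pd,\lambda}^{(-i)}$ through the map $M_\lambda$, and then control the two resulting pieces with \Cref{lem:M_lipschitz} and the usual ridge leave-one-out stability. Recall that $\beta_{\pd,\lambda}=M_\lambda(\widehat\Sigma)\beta_\lambda$. In the same way, $\beta_{\pd,\lambda}^{(-i)}=M_\lambda(\widehat\Sigma^{(-i)})\beta_\lambda^{(-i)}$, because the identity \eqref{eq:app_beta_pd_closed_form} holds verbatim for the leave-one-out dataset. This gives the decomposition
\[
\beta_{\pd,\lambda}-\beta_{\pd,\lambda}^{(-i)}
=\big(M_\lambda(\widehat\Sigma)-M_\lambda(\widehat\Sigma^{(-i)})\big)\beta_\lambda
+M_\lambda(\widehat\Sigma^{(-i)})\big(\beta_\lambda-\beta_\lambda^{(-i)}\big).
\]
Since $\|M_\lambda(\cdot)\|_{\oper}\le 1$, the second term has squared norm at most $\|\beta_\lambda-\beta_\lambda^{(-i)}\|_2^2$. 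For its average over $i$ I will rely on ridge leave-one-out stability (as in \citet{patil2022estimating}). Concretely, the Sherman--Morrison formula gives $\beta_\lambda-\beta_\lambda^{(-i)}\approx Q_\lambda^{(-i)}x_i e_{i,\lambda}^{\loo}/n$, plus a correction coming from the $n$ versus $n-1$ normalization. Hence
\[
\frac1n\sum_i\|\beta_\lambda-\beta_\lambda^{(-i)}\|_2^2\lesssim \lambda^{-2}\Big(\max_i\|x_i\|_2^2/n^2\Big)\frac1n\sum_i (e_{i,\lambda}^{\loo})^2+(\text{normalization term}).
\]
Here $\|x_i\|_2^2=\Op(p)$ uniformly, up to a bounded-moment union bound. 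The factor $\frac1n\sum_i(e^{\loo}_{i,\lambda})^2$ is $\Op(1)$, since the leave-one-out residuals equal the diagonal-corrected residuals, which are bounded in mean square. The first piece is therefore $\Op(p/n^2)=\op(1)$.

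For the first term, \Cref{lem:M_lipschitz} gives the bound $\lambda^{-1}\|\widehat\Sigma-\widehat\Sigma^{(-i)}\|_{\oper}\|\beta_\lambda\|_2$. The matrix difference is explicit:
\[
\widehat\Sigma-\widehat\Sigma^{(-i)}=\frac{x_ix_i^\top}{n}-\frac{1}{n(n-1)}\sum_{j\ne i}x_jx_j^\top.
\]
Its operator norm is at most $\|x_i\|_2^2/n+\|\widehat\Sigma^{(-i)}\|_{\oper}/n=\Op(p/n)$, which is $\Op(1)$ and does not vanish. So the crude Lipschitz bound is \emph{not} enough, and this step is the main obstacle.

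To get around it, I will not use the operator norm on the rank-one part. Instead I will exploit the exact resolvent identity from the proof of \Cref{lem:M_lipschitz}:
\[
M_\lambda(A)-M_\lambda(B)=\lambda(A+\lambda I)^{-1}(A-B)(B+\lambda I)^{-1}.
\]
Applied with $A=\widehat\Sigma$ and $B=\widehat\Sigma^{(-i)}$, the rank-one part of the first term becomes the vector $\lambda Q_\lambda x_i\,(x_i^\top Q_\lambda^{(-i)}\beta_\lambda)/n$. Its squared norm is at most $\|x_i\|_2^2\,(x_i^\top Q_\lambda^{(-i)}\beta_\lambda)^2/(\lambda^2n^2)$. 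The key point is that the scalar $x_i^\top Q^{(-i)}_\lambda\beta_\lambda$ is $\Op(1)$ on average. To see this, replace $\beta_\lambda$ by $\beta_\lambda^{(-i)}$, at the cost of the already-controlled difference. The resulting scalar $x_i^\top Q_\lambda^{(-i)}\beta_\lambda^{(-i)}$ involves $x_i$ independent of $Q_\lambda^{(-i)}\beta_\lambda^{(-i)}$, which has bounded norm because $\|\beta_\lambda\|_2=\Op(1)$ under the moment assumptions. Its second moment is therefore $O(1)$, and the rank-one contribution averages to $\Op(p/n^2)=\op(1)$.

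The remaining rescaling part, $-\widehat\Sigma^{(-i)}/n$ up to the factor $n/(n-1)$, contributes an $\Op(n^{-2})$ squared-norm term by the operator-norm bound. Summing the three pieces with $(a+b+c)^2\le 3(a^2+b^2+c^2)$ gives the claim. The delicate bookkeeping is to uniformly control $\max_i\|x_i\|_2^2/p$ and the leave-one-out residual moments using the $(4+\mu)$ and $(4+\nu)$ moment bounds. I would handle this by bounding expectations rather than maxima wherever possible, applying Markov's inequality to the average over $i$.
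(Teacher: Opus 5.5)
Your argument is correct and is essentially the paper's proof. It uses the same split into a ridge leave-one-out stability piece and an $M_\lambda$-difference piece. The latter is handled through the exact resolvent identity rather than the operator-norm bound of \Cref{lem:M_lipschitz}, which, as you rightly note, does not vanish. Independence of $x_i$ from a leave-one-out vector then gives an $\Op(p/n^2)$ contribution.

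The only difference is the order of the telescoping. The paper writes $M_\lambda(\widehat\Sigma)(\beta_\lambda-\beta_\lambda^{(-i)})+\big(M_\lambda(\widehat\Sigma)-M_\lambda(\widehat\Sigma^{(-i)})\big)\beta_\lambda^{(-i)}$, so $v_i=Q_\lambda^{(-i)}\beta_\lambda^{(-i)}$ is independent of $x_i$ from the start. Your extra swap $\beta_\lambda\to\beta_\lambda^{(-i)}$, which costs $\max_i\|x_i\|_2^4/n^2$ times the averaged ridge stability, is therefore not needed.
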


\begin{proof}
Write $\beta_{\pd,\lambda}=M_\lambda(\widehat\Sigma)\beta_\lambda$ and
$\beta_{\pd,\lambda}^{(-i)}=M_\lambda(\widehat\Sigma^{(-i)})\beta_\lambda^{(-i)}$.
Then
\[
  \beta_{\pd,\lambda}-\beta_{\pd,\lambda}^{(-i)}
  =
  M_\lambda(\widehat\Sigma)\big(\beta_\lambda-\beta_\lambda^{(-i)}\big)
  +
  \big(M_\lambda(\widehat\Sigma)-M_\lambda(\widehat\Sigma^{(-i)})\big)\beta_\lambda^{(-i)}.
\]
Using $(a+b)^2\le 2a^2+2b^2$ and $\|Au\|_2\le \|A\|_{\oper}\|u\|_2$, we obtain
\begin{align}
\label{eq:pd_stab_split}
\frac1n\sum_{i=1}^n \|\beta_{\pd,\lambda}-\beta_{\pd,\lambda}^{(-i)}\|_2^2
&\le
\frac{2}{n}\sum_{i=1}^n \|M_\lambda(\widehat\Sigma)\|_{\oper}^2\,\|\beta_\lambda-\beta_\lambda^{(-i)}\|_2^2
+
\frac{2}{n}\sum_{i=1}^n
\|(M_\lambda(\widehat\Sigma)-M_\lambda(\widehat\Sigma^{(-i)}))\beta_\lambda^{(-i)}\|_2^2.
\end{align}
Since $\|M_\lambda(\cdot)\|_{\oper}\le 1$, the first term in \eqref{eq:pd_stab_split} is bounded by
$\frac{2}{n}\sum_i \|\beta_\lambda-\beta_\lambda^{(-i)}\|_2^2$, which converges to $0$ in probability by the
ridge stability result of \citet{patil2022estimating} (used there to control LOOCV/GCV).

For the second term, use the exact identity
$M_\lambda(A)=I-\lambda(A+\lambda I)^{-1}$ to write
\[
  M_\lambda(\widehat\Sigma)-M_\lambda(\widehat\Sigma^{(-i)})
  =
  -\lambda\big(Q_\lambda-Q_\lambda^{(-i)}\big).
\]
By the resolvent identity,
\[
  Q_\lambda-Q_\lambda^{(-i)}
  =
  Q_\lambda(\widehat\Sigma^{(-i)}-\widehat\Sigma)Q_\lambda^{(-i)}.
\]
Moreover,
\(
  \widehat\Sigma = \frac{n-1}{n}\widehat\Sigma^{(-i)}+\frac{1}{n}x_i x_i^\top
\),
so $\widehat\Sigma^{(-i)}-\widehat\Sigma=\frac{1}{n}\big(\widehat\Sigma^{(-i)}-x_i x_i^\top\big)$.
Define $v_i:=Q_\lambda^{(-i)}\beta_\lambda^{(-i)}$. Then
\[
  (M_\lambda(\widehat\Sigma)-M_\lambda(\widehat\Sigma^{(-i)}))\beta_\lambda^{(-i)}
  =
  \frac{\lambda}{n}\,Q_\lambda\big(x_i x_i^\top-\widehat\Sigma^{(-i)}\big)v_i.
\]
Since $\|Q_\lambda\|_{\oper}\le 1/\lambda$, it suffices to control
$n^{-1}\sum_i \|(x_i x_i^\top-\widehat\Sigma^{(-i)})v_i\|_2^2/n^2$.
Condition on $\sigma\{(x_j,y_j):j\neq i\}$, under which $v_i$ is deterministic and independent of $x_i$.
Using $\|x_i x_i^\top v_i\|_2^2=\|x_i\|_2^2(x_i^\top v_i)^2$ and bounded-spectrum $\Sigma$ from \Cref{def:dist},
\[
  \E\!\left[\|x_i x_i^\top v_i\|_2^2 \,\middle|\, X^{(-i)},y^{(-i)}\right]
  =
  \E[\|x_i\|_2^2(x_i^\top v_i)^2\mid X^{(-i)},y^{(-i)}]
  \;\lesssim\; p\,\|v_i\|_2^2,
\]
while $\|\widehat\Sigma^{(-i)}v_i\|_2^2\lesssim \|v_i\|_2^2$ since $\|\widehat\Sigma^{(-i)}\|_{\oper}=O_p(1)$ for
proportional random design.
Therefore,
\[
  \E\!\left[\|(x_i x_i^\top-\widehat\Sigma^{(-i)})v_i\|_2^2 \,\middle|\, X^{(-i)},y^{(-i)}\right]
  \;\lesssim\; p\,\|v_i\|_2^2,
\]
and hence
\[
  \E\!\left[\|(M_\lambda(\widehat\Sigma)-M_\lambda(\widehat\Sigma^{(-i)}))\beta_\lambda^{(-i)}\|_2^2\right]
  \;\lesssim\; \frac{p}{n^2}\,\E[\|v_i\|_2^2].
\]
Finally, $\|v_i\|_2=\|Q_\lambda^{(-i)}\beta_\lambda^{(-i)}\|_2\le \|Q_\lambda^{(-i)}\|_{\oper}\|\beta_\lambda^{(-i)}\|_2
\le \lambda^{-1}\|\beta_\lambda^{(-i)}\|_2$, and for fixed $\lambda>0$ we have $\|\beta_\lambda^{(-i)}\|_2=O_p(1)$
uniformly in $i$ under \Cref{def:dist}. Since $p/n\to\gamma$, we have $p/n^2=O(1/n)$, and averaging over $i$ yields
\[
  \frac{1}{n}\sum_{i=1}^n
  \|(M_\lambda(\widehat\Sigma)-M_\lambda(\widehat\Sigma^{(-i)}))\beta_\lambda^{(-i)}\|_2^2
  \pto 0.
\]
Together with the first term in \eqref{eq:pd_stab_split}, this proves the claim.
\end{proof}

\subsubsection{Diagonal-Trace Equivalence}

Let $S:=XX^\top/n\in\R^{n\times n}$ and $G_\lambda:=(S+\lambda I_n)^{-1}$.
Recall the standard identity
\[
  H_\lambda = S(S+\lambda I_n)^{-1}=I_n-\lambda G_\lambda,
\]
so
\begin{equation}
\label{eq:H2_poly_G}
  H_\lambda^2 = (I_n-\lambda G_\lambda)^2
  = I_n - 2\lambda G_\lambda + \lambda^2 G_\lambda^2.
\end{equation}

\begin{lemma}[Diagonal-trace equivalence for $H_\lambda^2$]
\label{lem:H2_diag_trace}
Fix $\lambda>0$. Under \Cref{def:dist} and $p/n\to\gamma$,
\[
  \max_{1\le i\le n}\left|(H_\lambda^2)_{ii} - \frac1n\tr(H_\lambda^2)\right| \xrightarrow{p} 0,
\]
and hence
\[
  \max_{1\le i\le n}\left|
    \frac{1}{1-\tr(H_\lambda^2)/n}
    -\frac{1}{1-(H_\lambda^2)_{ii}}
  \right| \xrightarrow{p} 0.
\]
\end{lemma}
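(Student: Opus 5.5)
The plan is to reduce the diagonal--trace equivalence for $H_\lambda^2$ to the corresponding statement for the Gram resolvent $G_\lambda=(S+\lambda I_n)^{-1}$ and its square. By \eqref{eq:H2_poly_G},
\[
(H_\lambda^2)_{ii}-\tfrac1n\tr(H_\lambda^2)
=-2\lambda\Big((G_\lambda)_{ii}-\tfrac1n\tr(G_\lambda)\Big)+\lambda^2\Big((G_\lambda^2)_{ii}-\tfrac1n\tr(G_\lambda^2)\Big).
\]
It therefore suffices to prove uniform-in-$i$ concentration of the diagonals of $G_\lambda$ and $G_\lambda^2$ around their normalized traces. The ridge case, i.e.\ the statement for $H_\lambda=I_n-\lambda G_\lambda$, is exactly the diagonal--trace equivalence of \citet[Supplement~S.1.3]{patil2022estimating}. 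So the $G_\lambda$ part is already available; I would reuse that argument and not reprove it.

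For $G_\lambda$ I would use the Schur complement formula
\[
(G_\lambda)_{ii}=\Big(\lambda+\tfrac1n x_i^\top Q^{(-i)}_{\lambda,n} x_i\Big)^{-1},
\]
where $Q^{(-i)}_{\lambda,n}:=(X_{-i}^\top X_{-i}/n+\lambda I_p)^{-1}$ is independent of $x_i$. The next step is the quadratic-form concentration
\[
\max_i\Big|\tfrac1n x_i^\top Q^{(-i)}_{\lambda,n}x_i-\tfrac1n\tr(\Sigma Q^{(-i)}_{\lambda,n})\Big|\pto 0 .
\]
This follows from the bounded $(4+\mu)$ moments of $\bz_i$ (a Bai--Silverstein type bound on moments of order $2+\mu/2$), combined with a union bound over $n$ indices. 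To make the union bound go through I would truncate the entries of $\bz_i$ at level $n^{1/2-\delta}$, which is the standard move in the cited works. One then passes from $\tr(\Sigma Q^{(-i)}_{\lambda,n})/n$ to $\tr(\Sigma Q_\lambda)/n$ using the rank-one bound $|\tr(A(Q-Q^{(-i)}))|\le \|A\|_{\oper}/\lambda$. Since the map $s\mapsto (\lambda+s)^{-1}$ is $1/\lambda^2$-Lipschitz on $s\ge 0$, every $(G_\lambda)_{ii}$ is uniformly close to a common deterministic-in-$i$ quantity $g_n$. Hence $\tfrac1n\tr(G_\lambda)$, being the average of the diagonal entries, is also close to $g_n$.

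For $G_\lambda^2$, which I expect to be the main obstacle, I would avoid a separate Schur analysis by using the derivative trick $G_\lambda^2=-\partial_\lambda G_\lambda$. The uniform approximation $(G_\lambda)_{ii}\approx g_n(\lambda)$ holds uniformly for $\lambda$ in a compact neighbourhood $[\lambda/2,2\lambda]$. Moreover, both $(G_\lambda)_{ii}$ and $g_n$ are analytic in $\lambda$ with derivatives uniformly bounded, for instance $|\partial^2_\lambda(G_\lambda)_{ii}|\le 2\lambda^{-3}$. A difference quotient with step $t_n\to0$, the same device as in the proof of \Cref{lem:response_concentration}, then upgrades the uniform closeness to closeness of the derivatives:
\[
\max_i\big|(G_\lambda^2)_{ii}-(-g_n'(\lambda))\big|\pto0 .
\]
A variant is to run the difference quotient on the difference $(G_\lambda)_{ii}-\tfrac1n\tr G_\lambda$ directly. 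The delicate point is ensuring the first-stage bound is uniform over a $\lambda$-grid and over $i$ simultaneously. This is handled by a union bound over $O(1/t_n)$ grid points together with Lipschitz continuity in $\lambda$; it requires the max-over-$i$ error to decay at a polynomial rate, which the truncation argument supplies. Averaging the diagonal entries again identifies $\tfrac1n\tr(G_\lambda^2)$ with the same limit, which gives the first display of the lemma.

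For the second display, note that $H_\lambda^2\preceq H_\lambda$. It follows that $1-(H_\lambda^2)_{ii}\ge 1-(H_\lambda)_{ii}=\lambda(G_\lambda)_{ii}\ge \lambda/(\lambda+\|S\|_{\oper})$, since $(G_\lambda)_{ii}\ge (\lambda+\|S\|_{\oper})^{-1}$. This quantity is bounded away from $0$ with high probability because $\|S\|_{\oper}=\Op(1)$, and the same bound applies to $1-\tr(H_\lambda^2)/n$. Since $u\mapsto 1/(1-u)$ is Lipschitz on $\{1-u\ge c\}$, the first display transfers to the reciprocals, uniformly in $i$.
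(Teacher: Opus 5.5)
Your argument is correct and follows the paper's skeleton: the expansion $H_\lambda^2=I_n-2\lambda G_\lambda+\lambda^2G_\lambda^2$, the ridge result of \citet{patil2022estimating} for $G_\lambda$, and a difference quotient in $\lambda$ to handle $G_\lambda^2$. The difference is that you take the limits in the opposite order.

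The paper uses the exact identity $G_\lambda^2=(G_\lambda-G_{\lambda+t})/t+t\,G_{\lambda+t}G_\lambda^2$ with $t>0$ \emph{fixed}. It first lets $n\to\infty$, so the diagonal--trace errors at the two fixed penalties $\lambda$ and $\lambda+t$ vanish, and bounds the remainder by $2t/\lambda^3$. Only then does it send $t\to0$. This uses the cited ridge statement purely as a black box, pointwise in $\lambda$, with no rates.

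Your vanishing step $t_n\to0$ instead needs an $o_p(t_n)$ rate at an $n$-dependent penalty. The black-box citation does not provide such a rate, and that is what pushes you back into the Schur-complement and quadratic-form analysis, truncation, and $\lambda$-grids. So the ``main obstacle'' you identify is avoidable: your own Taylor bound $\max_i|f_i'(\lambda)|\le t^{-1}\max_i\bigl(|f_i(\lambda)|+|f_i(\lambda+t)|\bigr)+Ct$, used with $t$ fixed, already finishes the proof. Your route still goes through, just with more work. The order-$(2+\mu/2)$ moment bound plus a union bound over $i$ gives $\max_i$ errors of order $n^{-c}$ with probability tending to one, even without truncation.

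Conversely, your handling of the second display is more complete than the paper's, which only asserts that the denominators stay away from zero. Your bound $1-(H_\lambda^2)_{ii}\ge 1-(H_\lambda)_{ii}=\lambda(G_\lambda)_{ii}\ge\lambda/(\lambda+\|S\|_{\oper})$, together with the same eigenvalue bound for the trace, supplies the missing justification.

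One minor slip: the Schur complement gives $(G_\lambda)_{ii}=\bigl(\lambda+\lambda\,x_i^\top Q^{(-i)}_{\lambda,n}x_i/n\bigr)^{-1}$, with a factor $\lambda$ on the quadratic form. This does not affect your Lipschitz step.
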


\begin{proof}
By \eqref{eq:H2_poly_G}, for each $i$,
\[
  (H_\lambda^2)_{ii} - \frac1n\tr(H_\lambda^2)
  =
  -2\lambda\Big((G_\lambda)_{ii}-\frac1n\tr(G_\lambda)\Big)
  +\lambda^2\Big((G_\lambda^2)_{ii}-\frac1n\tr(G_\lambda^2)\Big).
\]
Thus it suffices to prove
\[
  \max_i\Big|(G_\lambda)_{ii}-\frac1n\tr(G_\lambda)\Big|\xrightarrow{p}0,
  \qquad
  \max_i\Big|(G_\lambda^2)_{ii}-\frac1n\tr(G_\lambda^2)\Big|\xrightarrow{p}0.
\]

\begin{enumerate}
    \item 
    \emph{Control for $G_\lambda$:}
    Since $1-(H_\lambda)_{ii}=\lambda(G_\lambda)_{ii}$ and
    $1-\tr(H_\lambda)/n=\lambda\,\tr(G_\lambda)/n$, the diagonal--trace equivalence for ridge
    (see \citet[Supplement~S.1.3]{patil2022estimating}) implies
    $\max_i|(G_\lambda)_{ii}-\tr(G_\lambda)/n|\pto 0$.

    \item
    \emph{Control for $G_\lambda^2$:}
    Fix any $t>0$ and define $G_{\lambda+t}:=(S+(\lambda+t)I_n)^{-1}$.
    Since $G_\lambda$ and $G_{\lambda+t}$ are both functions of $S$, they commute, and we have the exact identity
    \begin{equation}
    \label{eq:G2_id}
      G_\lambda^2
      =
      \frac{G_\lambda-G_{\lambda+t}}{t}
      +t\,G_{\lambda+t}\,G_\lambda^2.
    \end{equation}
    Taking diagonal--trace differences and maxima gives
    \[
    \max_i\Big|(G_\lambda^2)_{ii}-\tfrac1n\tr(G_\lambda^2)\Big|
    \le
    \frac{1}{t}\max_i\Big|(G_\lambda)_{ii}-\tfrac1n\tr(G_\lambda)\Big|
    +
    \frac{1}{t}\max_i\Big|(G_{\lambda+t})_{ii}-\tfrac1n\tr(G_{\lambda+t})\Big|
    +
    2t\,\|G_{\lambda+t}G_\lambda^2\|_{\oper}.
    \]
    Since $\|G_\lambda\|_{\oper}\le 1/\lambda$ and $\|G_{\lambda+t}\|_{\oper}\le 1/(\lambda+t)$,
    the last term is bounded by $2t/\lambda^3$.
    Letting $n\to\infty$ and using Step~1 at $\lambda$ and at $\lambda+t$ yields
    \[
    \limsup_{n\to\infty}
    \max_i\Big|(G_\lambda^2)_{ii}-\tfrac1n\tr(G_\lambda^2)\Big|
    \;\le\; \frac{2t}{\lambda^3}
    \qquad\text{in probability.}
    \]
    Since $t>0$ was arbitrary, sending $t\to 0$ gives the desired diagonal--trace control for $G_\lambda^2$.
\end{enumerate}

This proves the first claim. For the second claim, note that $0\preceq H_\lambda^2\preceq I_n$, so
$1-(H_\lambda^2)_{ii}>0$ and $1-\tr(H_\lambda^2)/n>0$.
Moreover, for fixed $\lambda>0$ these denominators are bounded away from $0$ with high probability.
Hence
\[
\left|
\frac{1}{1-\tr(H_\lambda^2)/n}-\frac{1}{1-(H_\lambda^2)_{ii}}
\right|
=
\frac{
\left|(H_\lambda^2)_{ii}-\tr(H_\lambda^2)/n\right|
}{
\big(1-\tr(H_\lambda^2)/n\big)\big(1-(H_\lambda^2)_{ii}\big)
},
\]
and the second claim follows from the first.
\end{proof}

\section{Proofs in \Cref{sec:extensions}}
\label{app:extensions_proofs}

\subsection{Proofs and Details in \Cref{sec:multiple-rounds}}
\label{app:multiround_proofs}

\subsubsection{Proof of \Cref{eq:recursive-affine-path}}
\label{sec:recursive-affine-path-proof}

\begin{proof}[Proof of \Cref{eq:recursive-affine-path}]
Fix $k\ge 0$ and $\xi\in\RR$.
Recall that the round-$(k+1)$ SD predictor
$f^{(k+1)}_{\sd,\lambda,\xi_{k+1}}$ is defined as:
\begin{equation}
\label{eq:sd_predictor_multiround}
  f^{(k+1)}_{\sd,\lambda,\xi}(x)
  :=
  x^\top\argmin_{\beta\in\RR^p}
  \big\{
  (1-\xi)\|y^{(k)}-X\beta\|_2^2/n
  +\xi\|\hat y^{(k)}_{\lambda}-X\beta\|_2^2/n
  +\lambda\|\beta\|_2^2
  \big\}.
\end{equation}
Expand the objective in \eqref{eq:sd_predictor_multiround} (dropping constants independent of $\beta$):
\begin{align*}
  &(1-\xi)\|y^{(k)}-X\beta\|_2^2/n+\xi\|\hat y^{(k)}_{\lambda}-X\beta\|_2^2/n+\lambda\|\beta\|_2^2 \\
  &\equiv
  \|(1-\xi)y^{(k)}+\xi\hat y^{(k)}_{\lambda}-X\beta\|_2^2/n+\lambda\|\beta\|_2^2
  \;=\;
  \|y^{(k+1)}_{\lambda,\xi}-X\beta\|_2^2/n+\lambda\|\beta\|_2^2,
\end{align*}
where $y^{(k+1)}_{\lambda,\xi}=(1-\xi)y^{(k)}+\xi\hat y^{(k)}_{\lambda}$ is exactly the mixed-label vector
in \eqref{eq:recursive_multiround}.
Thus $f^{(k+1)}_{\sd,\lambda,\xi}$ coincides with ridge regression trained on $(X,y^{(k+1)}_{\lambda,\xi})$ with penalty $\lambda$.
Since ridge is linear in its response vector, we have
\[
  f^{(k+1)}_{\sd,\lambda,\xi}
  =
  (1-\xi)\,f^{(k)}_\lambda+\xi\,f^{(k)}_{\pd,\lambda},
\]
because $f^{(k)}_\lambda$ is ridge on response $y^{(k)}$ and $f^{(k)}_{\pd,\lambda}$ is ridge on response
$\hat y^{(k)}_{\lambda}=f^{(k)}_\lambda(X)$.
This is \eqref{eq:recursive-affine-path}.
\end{proof}

\subsubsection{Proof of \Cref{prop:multiround_monotone_main}}

\begin{proof}[Proof of \Cref{prop:multiround_monotone_main}]
Fix $k\ge 0$.
By \Cref{eq:recursive-affine-path}, the family $\{f^{(k+1)}_{\sd,\lambda,\xi}:\xi\in\RR\}$ is the affine path between
$f^{(k)}_\lambda$ and $f^{(k)}_{\pd,\lambda}$.

\emph{Monotonicity.}
Since $\xi=0$ is feasible and $f^{(k+1)}_{\sd,\lambda,0}=f^{(k)}_\lambda$, we have
\[
  R_{k+1}(\lambda)
  =\min_{\xi\in\RR} R\!\big(f^{(k+1)}_{\sd,\lambda,\xi}\big)
  \le R\!\big(f^{(k+1)}_{\sd,\lambda,0}\big)
  =R\!\big(f^{(k)}_\lambda\big)
  =R_k(\lambda).
\]

\emph{Closed forms.}
When $D_k(\lambda)>0$, applying \Cref{prop:closed_form_xi_R1} to the affine path between
$f^{(k)}_\lambda$ and $f^{(k)}_{\pd,\lambda}$ yields the displayed round-wise closed forms
(with $(R,C,D)$ replaced by $(R_k,C_k,D_k)$).
Likewise, \eqref{eq:multiround_tangent_rule} follows by applying \Cref{thm:tangent_sign} at round $k$,
again with $(f_\lambda,f_{\pd,\lambda},R,C,D)$ replaced by $(f^{(k)}_\lambda,f^{(k)}_{\pd,\lambda},R_k,C_k,D_k)$,
and interpreting $R_k'(\lambda)$ as the derivative with respect to the ridge penalty while holding the base labels
$y^{(k)}$ fixed.
\end{proof}

\subsection{Proofs and Details in \Cref{sec:new-features}}
\label{sec:supp_freshX_isotropic}

\subsubsection{Fresh-X Mixed-Loss Student Representation}
\label{sec:freshX-mixed-nonaffine}

Recall the fresh-$X$ mixed-loss SD predictor $f_{\sd,\lambda,\xi}^{\freshmix}$ from \eqref{eq:mixed_loss}.
Writing $f_{\sd,\lambda,\xi}^{\freshmix}(x)=x^\top \beta_{\sd,\lambda}^{\freshmix}(\xi)$, we derive an explicit
closed form for $\beta_{\sd,\lambda}^{\freshmix}(\xi)$ and show that, unlike the same-$X$ setting, the map
$\xi\mapsto f_{\sd,\lambda,\xi}^{\freshmix}$ is generally not an affine path.

Let $\hat\Sigma:=X^\top X/n$, $\tilde\Sigma:=\tilde X^\top\tilde X/m$, $\hat v:=X^\top y/n$, and define
the ridge resolvents:
\[
  Q_\lambda := (\hat\Sigma+\lambda I_p)^{-1},
  \qquad
  \tilde Q_\lambda := (\tilde\Sigma+\lambda I_p)^{-1}.
\]
The ridge teacher coefficient is $\beta_\lambda=Q_\lambda \hat v$.
The fresh-$X$ PD refit trained on $(\tilde X,\tilde y_\lambda)$ with $\tilde y_\lambda=\tilde X\beta_\lambda$
has coefficient
\[
  \beta_{\pd,\lambda}^{\fresh}
  :=(\tilde\Sigma+\lambda I_p)^{-1}\frac{1}{m}\tilde X^\top \tilde y_\lambda
  =(\tilde\Sigma+\lambda I_p)^{-1}\tilde\Sigma\,\beta_\lambda
  =\tilde Q_\lambda\,\tilde\Sigma\,\beta_\lambda.
\]
(Here $\tilde Q_\lambda\,\tilde\Sigma=\tilde\Sigma\,\tilde Q_\lambda$ since both are functions of $\tilde\Sigma$.)

\begin{lemma}[Fresh-$X$ mixed-loss student representation]
\label{lem:fresh_mixedloss_matrix_weighted}
Fix $\lambda>0$ and let $\xi\in\RR$ be such that the mixed-loss objective \eqref{eq:mixed_loss} is strictly
convex in $\beta$, equivalently  $A_\xi := (1-\xi)\hat\Sigma+\xi\tilde\Sigma+\lambda I_p \succ 0$.
(In particular, this holds for all $\xi\in[0,1]$.) Then the coefficient of the fresh-$X$ mixed-loss student
satisfies
\begin{equation}
\label{eq:beta_fresh_closed_form}
  \beta_{\sd,\lambda}^{\freshmix}(\xi)
  =
  A_\xi^{-1}\big((1-\xi)\hat v+\xi\,\tilde\Sigma\,\beta_\lambda\big)
  =
  \big((1-\xi)Q_\lambda^{-1}+\xi \tilde Q_\lambda^{-1}\big)^{-1}
  \big((1-\xi)\hat v+\xi\,\tilde\Sigma\,\beta_\lambda\big).
\end{equation}
Moreover, for $\xi\notin\{0,1\}$, writing $S:=Q_\lambda\,\tilde Q_\lambda^{-1}$ (so $S^{-1}=\tilde Q_\lambda Q_\lambda^{-1}$),
we have the matrix-weighted decomposition
\begin{equation}
\label{eq:beta_fresh_matrix_affine}
  \beta_{\sd,\lambda}^{\freshmix}(\xi)
  =
  \Big(I+\tfrac{\xi}{1-\xi}S\Big)^{-1}\beta_\lambda
  +
  \Big(I+\tfrac{1-\xi}{\xi}S^{-1}\Big)^{-1}\beta_{\pd,\lambda}^{\fresh},
\end{equation}
with the boundary cases $\xi\in\{0,1\}$ obtained by continuity.
\end{lemma}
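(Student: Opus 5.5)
The plan is to treat the lemma as a first-order-condition computation followed by a purely algebraic rewriting.

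\textbf{Step 1: first-order conditions.} Expand the objective in \eqref{eq:mixed_loss}, dropping terms that do not depend on $\beta$. The quadratic part is $\beta^\top A_\xi\beta$ with $A_\xi=(1-\xi)\hat\Sigma+\xi\tilde\Sigma+\lambda I_p$. The linear part is $-2\beta^\top\big((1-\xi)X^\top y/n+\xi\tilde X^\top\tilde y_\lambda/m\big)$. Since $\tilde y_\lambda=\tilde X\beta_\lambda$, we have $\tilde X^\top\tilde y_\lambda/m=\tilde\Sigma\beta_\lambda$.

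Under the assumption $A_\xi\succ 0$ the objective is strictly convex, so its unique minimizer solves $A_\xi\beta=(1-\xi)\hat v+\xi\tilde\Sigma\beta_\lambda$. This gives the first expression in \eqref{eq:beta_fresh_closed_form}. The second expression follows from
\[
A_\xi=(1-\xi)(\hat\Sigma+\lambda I)+\xi(\tilde\Sigma+\lambda I)=(1-\xi)Q_\lambda^{-1}+\xi\tilde Q_\lambda^{-1}.
\]
For $\xi\in[0,1]$ this is a convex combination of positive definite matrices, hence positive definite, which justifies the parenthetical remark.

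\textbf{Step 2: rewrite the right-hand side.} Use $\hat v=Q_\lambda^{-1}\beta_\lambda$. Also, since $\tilde Q_\lambda$ commutes with $\tilde\Sigma$, we have $\tilde\Sigma\beta_\lambda=\tilde Q_\lambda^{-1}\beta^{\fresh}_{\pd,\lambda}$. The solution therefore becomes
\[
A_\xi^{-1}\Big((1-\xi)Q_\lambda^{-1}\beta_\lambda+\xi\tilde Q_\lambda^{-1}\beta^{\fresh}_{\pd,\lambda}\Big).
\]

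\textbf{Step 3: identify the matrix weights.} For $\xi\notin\{0,1\}$, factor $A_\xi$ in two ways:
\[
A_\xi=(1-\xi)Q_\lambda^{-1}\Big(I+\tfrac{\xi}{1-\xi}Q_\lambda\tilde Q_\lambda^{-1}\Big)=(1-\xi)Q_\lambda^{-1}\Big(I+\tfrac{\xi}{1-\xi}S\Big),
\]
and
\[
A_\xi=\xi\tilde Q_\lambda^{-1}\Big(I+\tfrac{1-\xi}{\xi}\tilde Q_\lambda Q_\lambda^{-1}\Big)=\xi\tilde Q_\lambda^{-1}\Big(I+\tfrac{1-\xi}{\xi}S^{-1}\Big).
\]
Inverting the first factorization gives $A_\xi^{-1}(1-\xi)Q_\lambda^{-1}=\big(I+\tfrac{\xi}{1-\xi}S\big)^{-1}$. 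Inverting the second gives $A_\xi^{-1}\xi\tilde Q_\lambda^{-1}=\big(I+\tfrac{1-\xi}{\xi}S^{-1}\big)^{-1}$. Substituting both into the expression from Step 2 yields \eqref{eq:beta_fresh_matrix_affine}.

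\textbf{Main obstacle.} The delicate point is invertibility of the factors $I+\tfrac{\xi}{1-\xi}S$ and $I+\tfrac{1-\xi}{\xi}S^{-1}$. The matrix $S$ is not symmetric, but it is similar to $Q_\lambda^{1/2}\tilde Q_\lambda^{-1}Q_\lambda^{1/2}\succ 0$, so its eigenvalues are positive. More directly, each factor equals a nonsingular matrix times $A_\xi$, and $A_\xi\succ 0$ by assumption, so no spectral argument is needed. Note that these weights generally differ from the scalar weights $(1-\xi)I$ and $\xi I$, which shows the path is not affine.

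\textbf{Boundary cases.} At $\xi\in\{0,1\}$ the formula \eqref{eq:beta_fresh_closed_form} is continuous in $\xi$ wherever $A_\xi\succ 0$. As $\xi\to0$ the weights tend to $I$ and $0$, and as $\xi\to1$ they tend to $0$ and $I$. These limits recover $\beta_\lambda$ and $\beta^{\fresh}_{\pd,\lambda}$, so the boundary cases follow by continuity.
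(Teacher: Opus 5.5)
Your proposal is correct and follows the paper's proof essentially step for step. You use the normal equations for the strictly convex quadratic, the substitutions $\hat v=Q_\lambda^{-1}\beta_\lambda$ and $\tilde\Sigma\beta_\lambda=\tilde Q_\lambda^{-1}\beta_{\pd,\lambda}^{\fresh}$, and the factorizations $A_\xi=(1-\xi)Q_\lambda^{-1}(I+\tfrac{\xi}{1-\xi}S)=\xi\tilde Q_\lambda^{-1}(I+\tfrac{1-\xi}{\xi}S^{-1})$. Your justification that these factors are invertible because each is a nonsingular multiple of $A_\xi$ also covers $\xi\notin[0,1]$, where positivity of the spectrum of $S$ alone would not suffice; the paper leaves this point implicit.
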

\begin{proof}
Expanding \eqref{eq:mixed_loss} and setting the gradient with respect to $\beta$ to zero yields the normal equations
\[
  \big((1-\xi)\hat\Sigma+\xi\tilde\Sigma+\lambda I_p\big)\beta
  =
  (1-\xi)\hat v+\xi\,\tilde\Sigma\,\beta_\lambda,
\]
which gives the first identity in \eqref{eq:beta_fresh_closed_form}; the second follows from
$Q_\lambda^{-1}=\hat\Sigma+\lambda I_p$ and $\tilde Q_\lambda^{-1}=\tilde\Sigma+\lambda I_p$.
For \eqref{eq:beta_fresh_matrix_affine}, substitute $\hat v=Q_\lambda^{-1}\beta_\lambda$ and
$\tilde\Sigma\,\beta_\lambda=\tilde Q_\lambda^{-1}\beta_{\pd,\lambda}^{\fresh}$ into \eqref{eq:beta_fresh_closed_form},
then split the two terms and factor $Q_\lambda^{-1}$ (respectively $\tilde Q_\lambda^{-1}$) from the left.
\end{proof}

The upshot of \Cref{lem:fresh_mixedloss_matrix_weighted} is that, unless $S=I_p$ (equivalently $Q_\lambda=\tilde Q_\lambda$),
the map $\xi\mapsto \beta_{\sd,\lambda}^{\freshmix}(\xi)$ is not a scalar affine combination
$(1-\xi)\beta_\lambda+\xi\beta_{\pd,\lambda}^{\fresh}$.
In particular, $\xi\mapsto f_{\sd,\lambda,\xi}^{\freshmix}$ is generally not an affine path, unlike the same-$X$
case in \eqref{eq:affine_path_pred}.

\subsubsection[Same-X SD under Isotropic Setting]{Same-$X$ SD under Isotropic Setting}
\label{sec:sameX_isotropic_specialization}

Here we specialize the general deterministic equivalents from
\Cref{thm:risk-asymptotics} to the isotropic setting $\Sigma=I_p$ with isotropic signal $\beta\sim\cN(0,(r^2/p)I_p)$.

Define the companion Stieltjes transform $v=v(\lambda)>0$ as the unique solution of
\begin{equation}
\label{eq:v_fp_iso}
  \frac{1}{v}
  \;=\;
  \lambda + \frac{\gamma}{1+v},
  \qquad \lambda>0,
\end{equation}
and set $\kappa(\lambda):=1/v(\lambda)$.
In the isotropic case, these admit the explicit closed forms
\[
  \kappa(\lambda)
  =\frac{(\lambda+\gamma-1)+\sqrt{(\lambda+\gamma-1)^2+4\lambda}}{2},
  \qquad
  v(\lambda)
  =\frac{\sqrt{(\lambda+\gamma-1)^2+4\lambda}-(\lambda+\gamma-1)}{2\lambda}.
\]

Since $\Sigma=I_p$, we have $G=(\Sigma+\kappa I_p)^{-1}=(1+\kappa)^{-1}I_p$. Hence for $k\in\{2,3,4\}$,
\begin{align}
  t_k
  &:= \gamma\,\frac{\tr(\Sigma^2 G^k)}{p}
  = \gamma\Big(\frac{1}{1+\kappa}\Big)^k
  = \gamma\Big(\frac{v}{1+v}\Big)^k,
  \label{eq:tq_iso}
  \\
  q_k
  &:= \E\!\big[\beta^\top G^k\Sigma\,\beta\big]
  = r^2\Big(\frac{1}{1+\kappa}\Big)^k
  = r^2\Big(\frac{v}{1+v}\Big)^k .
  \nonumber
\end{align}
(Equivalently, $q_k=\lim_{p\to\infty}\beta^\top G^k\Sigma\,\beta$ in probability, by isotropy of $\beta$.)

Define
\[
  b := (1-t_2)^{-1},
  \qquad
  E := \kappa - b\lambda + b^2 \kappa \lambda t_3,
\]
the variance-trace limits
\[
  u_2 := t_2 b,
  \qquad
  u_3 := t_3 b^3,
  \qquad
  u_4 := t_4 b^4 + 2 t_3^2 b^5,
\]
and
\[
  a_2:=bE^2+b^4\kappa^2\lambda^2 t_4+b^5\kappa^2\lambda^2 t_3^2,
  \qquad
  a_3:=2b^2\kappa\lambda E,
  \qquad
  a_4:=b^3\kappa^2\lambda^2.
\]

\begin{lemma}[Isotropic limits for same-$X$ SD]
\label{lem:sameX_isotropic_limits}
Fix $\lambda>0$.
Under \Cref{def:dist} with $\Sigma=I_p$ and $\beta\sim\cN(0,(r^2/p)I_p)$, as $n,p\to\infty$ with
$p/n\to\gamma\in(0,\infty)$, the same-$X$ optimal SD risk satisfies
\[
  R_{\sd}^{\star,\same}(\lambda)
  \pto
  \sR_{\sd}^{\star,\same}(\lambda)
  :=
  \sR(\lambda)
  -\frac{\big(\sR(\lambda)-\sC^{\same}(\lambda)\big)^2}{\sR(\lambda)+\sR_{\pd}^{\same}(\lambda)-2\sC^{\same}(\lambda)}.
\]
Here $\sR(\lambda)$, $\sR_{\pd}^{\same}(\lambda)$, and $\sC^{\same}(\lambda)$ are the deterministic limits of the
(one-round) teacher risk $R(\lambda)$, same-$X$ PD risk $R_{\pd}^{\same}(\lambda)$, and residual correlation
$C^{\same}(\lambda)$, respectively (in the same-$X$ setting of \Cref{sec:structural}, these coincide with
$R(\lambda)$, $R_{\pd}(\lambda)$, and $C(\lambda)$), given by:
\begin{align*}
  \sR(\lambda)
  &=
  \sigma^2
  +\kappa^2 b\,q_2 + \sigma^2 u_2,
  \\
  \sC^{\same}(\lambda)
  &=
  \sigma^2
  +2\kappa^2 b\,q_2-\big(\kappa bE\,q_2+\kappa^2 b^2\lambda\,q_3\big)
     + \sigma^2\,(u_2-\lambda u_3),
  \\
  \sR_{\pd}^{\same}(\lambda)
  &=
  \sigma^2
  +4\kappa^2 b\,q_2-2\big(2\kappa bE\,q_2+2\kappa^2 b^2\lambda\,q_3\big)
     +(a_2q_2+a_3q_3+a_4q_4)
     + \sigma^2\,(u_2-2\lambda u_3+\lambda^2 u_4).
\end{align*}
\end{lemma}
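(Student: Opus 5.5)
This lemma is a direct specialization of \Cref{thm:risk-asymptotics}, so I would not redo any random-matrix analysis; the work is checking that the isotropic prior fits its hypotheses and that its scalars reduce to \eqref{eq:tq_iso}. The first step is to observe that in the same-$X$ setting the teacher, PD refit, and residual correlation are exactly $R(\lambda)$, $R_{\pd}(\lambda)$, $C(\lambda)$ of \Cref{sec:structural}. Hence the finite-sample optimal risk is given by \eqref{eq:xi_star_and_Rsd_star_finite}, and it suffices to show that $\barR$, $\barC$, $\barR_{\pd}$ converge in probability to the stated limits minus $\sigma^2$. The continuous mapping theorem then finishes the argument.

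\Cref{thm:risk-asymptotics} assumes a deterministic signal with $\|\beta\|_2=O(1)$, whereas here $\beta\sim\cN(0,(r^2/p)I_p)$. I would handle this by conditioning on $\beta$, taking $\beta$ independent of $(X,\varepsilon)$ as in the proof of \Cref{cor:xi_asymptotic}. Since $\|\beta\|_2^2\to r^2$ almost surely, on a probability-one event the conditional setting meets the hypotheses, and the expansions \eqref{eq:R_simplified}--\eqref{eq:Rpd_simplified} apply. The bias terms are bilinear forms $\beta^\top Q_k(\lambda)\beta$, and \Cref{prop:Q234_DE} replaces them by $\beta^\top G^k\Sigma\beta$ combinations. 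With $\Sigma=I_p$ we have $G=(1+\kappa)^{-1}I_p$, so $\beta^\top G^k\beta=\|\beta\|_2^2(1+\kappa)^{-k}\to r^2(1+\kappa)^{-k}=q_k$. Alternatively, Hanson--Wright concentration of $\beta^\top M\beta$ around $(r^2/p)\tr(M)$ for the $X$-measurable matrices $Q_k(\lambda)$ gives the same limit through the trace version of \Cref{prop:Q234_DE}. The variance terms $U_k\to u_k$ come from \Cref{prop:U234_limits}, which does not involve $\beta$.

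Next I would compute the isotropic scalars. The fixed point \eqref{eq:kappa_fp_mainpaper} becomes $\kappa=\lambda+\gamma\kappa/(1+\kappa)$. Substituting $\kappa=1/v$ gives \eqref{eq:v_fp_iso}, and solving the quadratic yields the stated closed forms, taking the positive root. Then $t_k=\gamma(1+\kappa)^{-k}=\gamma(v/(1+v))^k$, and $b,E,u_k,a_k$ follow verbatim. Plugging these into the formulas of \Cref{thm:risk-asymptotics} gives exactly the displayed $\sR$, $\sC^{\same}$, $\sR_{\pd}^{\same}$.

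The main obstacle is making the conditioning argument rigorous for the cross terms $\mathrm{Lin}_\bullet$. These involve $\beta$ and $\varepsilon$ jointly, and in the misspecified model $\varepsilon=y-X\beta$ depends on $\beta$. I would assume, as is implicit in \Cref{cor:xi_asymptotic}, a well-specified model in which $\varepsilon$ is independent of $(X,\beta)$. Then conditionally on $\beta$, \Cref{lem:response_concentration} applies directly. The final step is positivity of the limiting denominator $\sD$, needed for continuity of the map in \eqref{eq:xi_star_and_Rsd_star_finite}. By the eigenvalue representation in the proof of \Cref{lem:R1_asymptotic_extreme}, $\sD$ is a limit of $\tfrac{r^2}{p}\sum_i\lambda^2 s_i^2/(s_i+\lambda)^4+\tfrac{\sigma^2}{n}\sum_i\lambda^2 s_i/(s_i+\lambda)^4$, which stays bounded away from zero for fixed $\lambda>0$ under the Marchenko--Pastur law.
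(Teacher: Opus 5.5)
Your proposal is correct and follows the same route as the paper, which proves this lemma in one line by specializing \Cref{thm:risk-asymptotics} to $\Sigma=I_p$ and isotropic $\beta$. Your conditioning on $\beta$, the limit $\|\beta\|_2^2\to r^2$, and the positivity of $\sD$ fill in details the paper leaves implicit; the well-specification assumption you add for the cross terms $\mathrm{Lin}_\bullet$ is unnecessary, because once you condition on $\beta$ it is a deterministic projection coefficient and \Cref{thm:risk-asymptotics} already handles misspecified responses.
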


\begin{proof}
This follows by specializing \Cref{thm:risk-asymptotics} to $\Sigma=I_p$ and isotropic $\beta$.
\end{proof}

For an illustration of the empirical versus theoretical risks in \Cref{lem:sameX_isotropic_limits}, see the second
panel of \Cref{fig:same_x_fresh_x_affine}.

\subsubsection[Fresh-X (affine) SD under Isotropic Setting]{Fresh-$X$ (affine) SD under Isotropic Setting}
\label{sec:freshX_affine_isotropic}

Throughout, we work in the balanced proportional regime $m,n,p\to\infty$ with
$p/m\to\gamma$ and $p/n\to\gamma$, and under the isotropic assumptions $\Sigma=I_p$ and
$\beta\sim\cN(0,(r^2/p)I_p)$ from \Cref{sec:supp_freshX_isotropic}.
Recall the companion Stieltjes transform $v(\lambda)$ and $\kappa(\lambda)=1/v(\lambda)$ from
\Cref{sec:sameX_isotropic_specialization}, as well as $b(\lambda)=\kappa'(\lambda)=(1-t_2(\lambda))^{-1}$.

Let $\tilde\Sigma:=\tilde X^\top\tilde X/m$ and define the (fresh) shrinkage matrix $M_\lambda := \tilde\Sigma(\tilde\Sigma+\lambda I_p)^{-1}$.
Let
\[
  s(\lambda):=\lim_{m,p\to\infty}\frac{1}{p}\tr(M_\lambda),
  \qquad
  s_2(\lambda):=\lim_{m,p\to\infty}\frac{1}{p}\tr(M_\lambda^2).
\]
Writing $\tilde S:=\tilde X\tilde X^\top/m$ and $\tilde G_\lambda:=(\tilde S+\lambda I_m)^{-1}$, we have
$\tilde S(\tilde S+\lambda I_m)^{-1}=I_m-\lambda \tilde G_\lambda$, and therefore
\[
  \frac{1}{p}\tr(M_\lambda)
  =\frac{m}{p}\cdot \frac{1}{m}\tr\!\big(\tilde S(\tilde S+\lambda I_m)^{-1}\big)
  =\frac{1}{\gamma}\Big(1-\lambda\,\frac{1}{m}\tr(\tilde G_\lambda)\Big).
\]
Since $p/m\to\gamma$, we have $\frac{1}{m}\tr(\tilde G_\lambda)\to v(\lambda)$, where $v(\lambda)$ solves \eqref{eq:v_fp_iso}. 
Hence
\begin{equation}
\label{eq:s_iso}
  s(\lambda)=\frac{1-\lambda v(\lambda)}{\gamma}
  =\frac{1-\lambda/\kappa(\lambda)}{\gamma}.
\end{equation}
Similarly, using $M_\lambda=I_p-\lambda(\tilde\Sigma+\lambda I_p)^{-1}$ and the identity
$\frac{1}{m}\tr(\tilde G_\lambda^2)=-v'(\lambda)$, we obtain
\begin{equation}
\label{eq:s2_iso}
  s_2(\lambda)
  = \frac{1-2\lambda v(\lambda) -\lambda^2 v'(\lambda)}{\gamma}
  = \frac{1-2\lambda v(\lambda) + \lambda^2 b(\lambda) v(\lambda)^2}{\gamma}
  =\frac{1-2\lambda/\kappa(\lambda)+\lambda^2 b(\lambda)/\kappa(\lambda)^2}{\gamma}.
\end{equation}

\begin{lemma}[Isotropic limits for fresh-$X$ (affine) SD]
\label{lem:freshX_isotropic_limits}
Fix $\lambda>0$.
Under \Cref{def:dist} with $\Sigma=I_p$ and $\beta\sim\cN(0,(r^2/p)I_p)$, as $m,n,p\to\infty$ with
$p/m,p/n\to\gamma\in(0,\infty)$, the optimal fresh-$X$ SD risk satisfies
\[
  R_{\sd}^{\star,\freshaffine}(\lambda)
  \pto
  \sR_{\sd}^{\star,\freshaffine}(\lambda)
  :=
  \sR(\lambda)
  -\frac{\big(\sR(\lambda)-\sC^{\fresh}(\lambda)\big)^2}{\sR(\lambda)+\sR_{\pd}^{\fresh}(\lambda)-2\sC^{\fresh}(\lambda)}.
\]
Here $\sR(\lambda)$ is the same teacher risk limit from \Cref{lem:sameX_isotropic_limits}, and
$\sR_{\pd}^{\fresh}(\lambda)$ and $\sC^{\fresh}(\lambda)$ are the deterministic limits of the fresh-$X$ PD risk
$R_{\pd}^{\fresh}(\lambda)$ and the corresponding residual correlation $C^{\fresh}(\lambda)$, given by:
\begin{align*}
  \sC^{\fresh}(\lambda)
  &=
  \sigma^2
  + s(\lambda)\big(\sR(\lambda)-\sigma^2\big)
  + r^2\big(1-s(\lambda)\big)^2,\\
  \sR_{\pd}^{\fresh}(\lambda)
  &=
  \sigma^2
  + s_2(\lambda)\big(\sR(\lambda)-\sigma^2\big)
  + r^2\big(1-2s(\lambda)^2+(2s(\lambda)-1)s_2(\lambda)\big).
\end{align*}
\end{lemma}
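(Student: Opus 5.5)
The plan is to work with excess quantities, as in the proof of \Cref{thm:risk-asymptotics}. Write $e:=\beta_\lambda-\beta$ and $\tilde e:=\beta^{\fresh}_{\pd,\lambda}-\beta$. With $\Sigma=I_p$ we have $\barC^{\fresh}=\langle e,\tilde e\rangle$ and $\barR^{\fresh}_{\pd}=\|\tilde e\|_2^2$. Since $\beta^{\fresh}_{\pd,\lambda}=M_\lambda\beta_\lambda$ with $M_\lambda=\tilde\Sigma\tilde Q_\lambda$, we can decompose
\[
\tilde e = M_\lambda e-(I_p-M_\lambda)\beta .
\]
This gives
\[
\langle e,\tilde e\rangle = e^\top M_\lambda e - e^\top(I_p-M_\lambda)\beta ,
\qquad
\|\tilde e\|_2^2 = e^\top M_\lambda^2 e - 2e^\top M_\lambda(I_p-M_\lambda)\beta + \beta^\top(I_p-M_\lambda)^2\beta .
\]
The key structural fact is that $M_\lambda$ depends only on $\tilde X$, which is independent of $(X,y,\beta)$, and that $M_\lambda$ is a rotationally invariant matrix function of $\tilde\Sigma$ when $\tilde X$ is Gaussian. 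For general $\tilde X$ we instead use deterministic equivalents: $M_\lambda\asymp$ a scalar multiple of $I_p$. By \Cref{lem:scaled_resolvent_DE} with $\Sigma=I_p$, that scalar is $1-\lambda/(\lambda\kappa^{-1}\cdot\kappa)$, which is consistent with $s(\lambda)$ in \eqref{eq:s_iso}.

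The pure quadratic forms in $e$ are handled by conditioning on $(X,y,\beta)$ and applying trace-type concentration in $\tilde X$. A quadratic form $u^\top A u$ with $A$ a bounded-norm function of $\tilde\Sigma$ independent of $u$ satisfies $u^\top A u-\|u\|_2^2\,\otr(A)\pto0$ whenever $\|u\|_2=O_p(1)$. This gives $e^\top M_\lambda e\approx s\,\|e\|_2^2$ and $e^\top M_\lambda^2e\approx s_2\|e\|_2^2$, and $\|e\|_2^2\to\sR-\sigma^2$ by \Cref{thm:risk-asymptotics}. In the same way, $\beta^\top(I-M_\lambda)^2\beta\approx r^2(1-2s+s_2)$.

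The cross terms reduce by the same bilinear concentration to $\langle e,\beta\rangle$ times the traces $(1-s)$ and $(s-s_2)$. The first cross term gives $-(1-s)\langle e,\beta\rangle$. To evaluate $\langle e,\beta\rangle$, use \Cref{lem:error_decomp_exact}: $\langle e,\beta\rangle=-\lambda\beta^\top Q_\lambda\beta+\op(1)$, where the noise term vanishes by \Cref{lem:response_concentration}. The isotropic prior then gives $\lambda\beta^\top Q_\lambda\beta\to r^2\lambda v=r^2(1-\gamma s)$. This is the step I expect to be the main obstacle: reconciling this with the stated formula $r^2(1-s)^2$, which requires algebra using the fixed-point relation \eqref{eq:v_fp_iso} relating $\lambda v$, $s$ and $\gamma$. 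I would verify it by writing every quantity in terms of $v$ and checking the identities symbolically. The $\barR_{\pd}^{\fresh}$ coefficient $1-2s^2+(2s-1)s_2$ should arise in the same way from combining $-2(s-s_2)\langle e,\beta\rangle$ with $r^2(1-2s+s_2)$.

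Finally, adding $\sigma^2$ back gives the full quantities, exactly as in the preliminaries of \Cref{app:asymptotics_proofs}. The two-predictor formula of \Cref{prop:closed_form_xi_R1} applies to the affine path \eqref{eq:fresh_affine_def}, and the continuous mapping theorem then yields the stated limit of $R_{\sd}^{\star,\freshaffine}$. This requires the limiting discrepancy to be positive, which holds because $M_\lambda\neq I_p$ asymptotically for $\lambda>0$.
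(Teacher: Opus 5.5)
Your architecture matches the paper's. You use the decomposition $\tilde e=M_\lambda e-(I_p-M_\lambda)\beta$, trace concentration of the $M_\lambda$-forms via independence of $\tilde X$, and $\langle e,\beta\rangle=-\lambda\beta^\top Q_\lambda\beta+\op(1)$. However, the step you flag as the ``main obstacle'' is actually an error, and fixed-point algebra cannot repair it.

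Under the isotropic prior, $\lambda\beta^\top Q_\lambda\beta\approx r^2\,\lambda\,\otr(Q_\lambda)$. Here $\otr(Q_\lambda)$ is the $p\times p$ (Marchenko--Pastur) Stieltjes transform at $-\lambda$. It is not the companion transform $v$, which is the limit of $\frac1n\tr\big((XX^\top/n+\lambda I_n)^{-1}\big)$. The two are related by $\frac1n\tr\big((XX^\top/n+\lambda I_n)^{-1}\big)=\frac pn\otr(Q_\lambda)+\frac{n-p}{n\lambda}$, so they differ unless $\gamma=1$. The correct limit is
\[
\lambda\,\otr(Q_\lambda)=1-\otr(\widehat\Sigma Q_\lambda)\;\to\;1-s(\lambda)=\frac{\kappa}{1+\kappa}=\frac{1}{1+v}.
\]
This holds because $\widehat\Sigma$ and $\tilde\Sigma$ have the same aspect ratio and hence the same limiting spectrum. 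Equivalently, \Cref{lem:scaled_resolvent_DE} with $\Sigma=I_p$ gives $\lambda Q_\lambda\asymp\frac{\kappa}{1+\kappa}I_p$.

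Your value $\lambda v=1-\gamma s$ is a true identity but the wrong limit. It would give $\langle e,\beta\rangle\to-r^2(1-\gamma s)$, and hence $\barC^{\fresh}\to s(\sR-\sigma^2)+r^2(1-s)(1-\gamma s)$. This differs from the claimed $r^2(1-s)^2$ by $r^2s(1-s)(1-\gamma)$, which is nonzero for every $\gamma\neq 1$ since $s\in(0,1)$. So the symbolic check you propose would fail, and the $\sR_{\pd}^{\fresh}$ coefficient would come out wrong as well.

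Once you use $\langle e,\beta\rangle\to-r^2(1-s)$, everything closes at once. The correlation cross term gives $-(1-s)\langle e,\beta\rangle\to r^2(1-s)^2$. The PD terms give $r^2(1-2s+s_2)+2(s-s_2)\,r^2(1-s)=r^2\big(1-2s^2+(2s-1)s_2\big)$, as stated.

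Relatedly, your scalar for $M_\lambda$, ``$1-\lambda/(\lambda\kappa^{-1}\cdot\kappa)$'', simplifies to $0$. The correct equivalent is $M_\lambda=I_p-\lambda\tilde Q_\lambda\asymp\big(1-\frac{\kappa}{1+\kappa}\big)I_p=\frac{1}{1+\kappa}I_p$, which matches $s=(1-\lambda/\kappa)/\gamma$ through the fixed point $\kappa-\lambda=\gamma\kappa/(1+\kappa)$. Applying that same equivalent to $Q_\lambda$ is exactly what repairs your cross-term computation.
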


\begin{proof}
Let $\beta_{\pd,\lambda}^{\fresh}:=M_\lambda\beta_\lambda$ be the coefficient of the fresh-$X$ PD refit
(trained on $(\tilde X,\tilde y_\lambda)$ with $\tilde y_\lambda=\tilde X\beta_\lambda$),
and write $\Delta_\lambda:=\beta_\lambda-\beta$, and  $\Delta_{\pd,\lambda}^{\fresh}:=\beta_{\pd,\lambda}^{\fresh}-\beta$.
In the isotropic in-distribution setting,
the teacher and PD risks and their residual correlation satisfy the identities:
\[
  R(\lambda)=\sigma^2+\|\Delta_\lambda\|_2^2,
  \qquad
  R_{\pd}^{\fresh}(\lambda)=\sigma^2+\|\Delta_{\pd,\lambda}^{\fresh}\|_2^2,
  \qquad
  C^{\fresh}(\lambda)=\sigma^2+\langle \Delta_\lambda,\Delta_{\pd,\lambda}^{\fresh}\rangle.
\]
Since $\Delta_{\pd,\lambda}^{\fresh}=M_\lambda\beta_\lambda-\beta=M_\lambda\Delta_\lambda+(M_\lambda-I_p)\beta$, we have
\begin{equation}
\label{eq:fresh_expand_C}
  \langle \Delta_\lambda,\Delta_{\pd,\lambda}^{\fresh}\rangle
  =
  \Delta_\lambda^\top M_\lambda \Delta_\lambda
  +
  \Delta_\lambda^\top(M_\lambda-I_p)\beta,
\end{equation}
and
\begin{equation}
\label{eq:fresh_expand_B}
  \|\Delta_{\pd,\lambda}^{\fresh}\|_2^2
  =
  \Delta_\lambda^\top M_\lambda^2 \Delta_\lambda
  +
  \beta^\top(M_\lambda-I_p)^2\beta
  +
  2\,\Delta_\lambda^\top M_\lambda(M_\lambda-I_p)\beta.
\end{equation}
Thus, we need asymptotics for linear and quadratic forms involving $\Delta_\lambda$, which we obtain below.

\emph{Linear and quadratic forms of $\Delta_\lambda$.}
Write the ridge teacher error as
\[
  \Delta_\lambda
  = -\lambda(\hat\Sigma+\lambda I_p)^{-1}\beta
    +(\hat\Sigma+\lambda I_p)^{-1}\frac{X^\top\eps}{n}
  =: \Delta_\lambda^{\mathrm{bias}}+\Delta_\lambda^{\mathrm{var}},
\]
where $\hat\Sigma=X^\top X/n$ and $\eps:=y-X\beta$.
As in the proof of \Cref{thm:risk-asymptotics}, the cross term
$\langle \Delta_\lambda^{\mathrm{bias}},\Delta_\lambda^{\mathrm{var}}\rangle$ is $o_p(1)$, and $\|\Delta_\lambda\|_2^2=\|\Delta_\lambda^{\mathrm{bias}}\|_2^2+\|\Delta_\lambda^{\mathrm{var}}\|_2^2 + o_p(1)$.
Moreover, by \Cref{lem:sameX_isotropic_limits}, $\|\Delta_\lambda\|_2^2\pto \sR(\lambda)-\sigma^2$.

Because $\tilde X$ is independent of $(X,\beta,\eps)$, the matrix $M_\lambda$ is independent of
$(\Delta_\lambda,\beta)$ and is orthogonally invariant, with $\|M_\lambda\|_{\oper}\le 1$.
By standard concentration for quadratic forms of an independent orthogonally invariant matrix, similar to \Cref{lem:response_concentration} in the proof of \Cref{thm:risk-asymptotics}, we have
\begin{align}
  \Delta_\lambda^\top M_\lambda \Delta_\lambda
  &= \Big(\frac{1}{p}\tr(M_\lambda)\Big)\,\|\Delta_\lambda\|_2^2 + o_p(1), \label{eq:quad_M}\\
  \Delta_\lambda^\top M_\lambda^2 \Delta_\lambda
  &= \Big(\frac{1}{p}\tr(M_\lambda^2)\Big)\,\|\Delta_\lambda\|_2^2 + o_p(1), \label{eq:quad_M2}\\
  \Delta_\lambda^\top M_\lambda \beta
  &= \Big(\frac{1}{p}\tr(M_\lambda)\Big)\,\langle \Delta_\lambda,\beta\rangle + o_p(1), \label{eq:bilin_M}\\
  \Delta_\lambda^\top M_\lambda^2 \beta
  &= \Big(\frac{1}{p}\tr(M_\lambda^2)\Big)\,\langle \Delta_\lambda,\beta\rangle + o_p(1), \label{eq:bilin_M2}
\end{align}
and similarly,
\begin{equation}
\label{eq:quad_MI}
  \beta^\top(M_\lambda-I_p)^2\beta
  =
  \Big(\frac{1}{p}\tr\big((M_\lambda-I_p)^2\big)\Big)\,\|\beta\|_2^2 + o_p(1).
\end{equation}

Now, note that
\[
  \langle \Delta_\lambda,\beta\rangle
  =
  -\lambda\,\beta^\top(\hat\Sigma+\lambda I_p)^{-1}\beta
  + \beta^\top(\hat\Sigma+\lambda I_p)^{-1}\frac{X^\top\eps}{n}.
\]
The second term is mean-zero conditional on $(X,\beta)$ and is $o_p(1)$ by \Cref{lem:response_concentration}.
For the first term, isotropy of $\beta$ and standard quadratic-form concentration give
\[
  \beta^\top(\hat\Sigma+\lambda I_p)^{-1}\beta
  =
  \frac{\|\beta\|_2^2}{p}\tr(\hat\Sigma+\lambda I_p)^{-1}+o_p(1).
\]
Using $\|\beta\|_2^2\pto r^2$ and $\hat\Sigma(\hat\Sigma+\lambda I_p)^{-1}=I_p-\lambda(\hat\Sigma+\lambda I_p)^{-1}$,
we obtain
\[
  \lambda\cdot \frac{1}{p}\tr(\hat\Sigma+\lambda I_p)^{-1}
  = 1-\frac{1}{p}\tr\!\big(\hat\Sigma(\hat\Sigma+\lambda I_p)^{-1}\big).
\]
In the isotropic proportional regime with $p/n\to\gamma$, the term
$\frac{1}{p}\tr(\hat\Sigma(\hat\Sigma+\lambda I_p)^{-1})$ converges to the same deterministic limit as
$\frac{1}{p}\tr(M_\lambda)$ (since $\hat\Sigma$ and $\tilde\Sigma$ are independent Wishart matrices with the same aspect ratio),
namely $s(\lambda)$ from \eqref{eq:s_iso}. Therefore,
\begin{equation}
\label{eq:Delta_beta_limit}
  \langle \Delta_\lambda,\beta\rangle
  \pto -r^2\big(1-s(\lambda)\big).
\end{equation}
We are now ready to obtain the asymptotics for the fresh-$X$ PD risk and residual correlation.

\emph{Residual correlation asymptotics.}
Combining \eqref{eq:fresh_expand_C} with \eqref{eq:quad_M} and \eqref{eq:bilin_M} gives
\[
  \langle \Delta_\lambda,\Delta_{\pd,\lambda}^{\fresh}\rangle
  =
  \Big(\frac{1}{p}\tr(M_\lambda)\Big)\|\Delta_\lambda\|_2^2
  +\Big(\frac{1}{p}\tr(M_\lambda)-1\Big)\langle \Delta_\lambda,\beta\rangle
  +o_p(1).
\]
Using $\frac{1}{p}\tr(M_\lambda)\to s(\lambda)$, $\|\Delta_\lambda\|_2^2\pto \sR(\lambda)-\sigma^2$, and
\eqref{eq:Delta_beta_limit}, we obtain
\[
  \langle \Delta_\lambda,\Delta_{\pd,\lambda}^{\fresh}\rangle
  \pto
  s(\lambda)\big(\sR(\lambda)-\sigma^2\big) + r^2\big(1-s(\lambda)\big)^2.
\]
Therefore,
\[
  C^{\fresh}(\lambda)
  =\sigma^2+\langle \Delta_\lambda,\Delta_{\pd,\lambda}^{\fresh}\rangle
  \pto
  \sigma^2
  + s(\lambda)\big(\sR(\lambda)-\sigma^2\big)
  + r^2\big(1-s(\lambda)\big)^2
  = \sC^{\fresh}(\lambda).
\]

\emph{PD risk asymptotics.}
Similarly, combining \eqref{eq:fresh_expand_B} with \eqref{eq:quad_M2}, \eqref{eq:quad_MI}, and \eqref{eq:bilin_M2} yields:
\[
  \|\Delta_{\pd,\lambda}^{\fresh}\|_2^2
  =
  \Big(\frac{1}{p}\tr(M_\lambda^2)\Big)\|\Delta_\lambda\|_2^2
  + \Big(\frac{1}{p}\tr((M_\lambda-I_p)^2)\Big)\|\beta\|_2^2
  + 2\Big(\frac{1}{p}\tr(M_\lambda^2)-\frac{1}{p}\tr(M_\lambda)\Big)\langle \Delta_\lambda,\beta\rangle
  + o_p(1).
\]
Using $\frac{1}{p}\tr(M_\lambda^2)\to s_2(\lambda)$, $\frac{1}{p}\tr((M_\lambda-I_p)^2)\to 1-2s(\lambda)+s_2(\lambda)$,
$\|\beta\|_2^2\pto r^2$, $\|\Delta_\lambda\|_2^2\pto \sR(\lambda)-\sigma^2$, and \eqref{eq:Delta_beta_limit}, we get
\[
  \|\Delta_{\pd,\lambda}^{\fresh}\|_2^2
  \pto
  s_2(\lambda)\big(\sR(\lambda)-\sigma^2\big)
  + r^2\big(1-2s(\lambda)^2+(2s(\lambda)-1)s_2(\lambda)\big).
\]
Hence
\[
  R_{\pd}^{\fresh}(\lambda)
  =\sigma^2+\|\Delta_{\pd,\lambda}^{\fresh}\|_2^2
  \pto
  \sigma^2
  + s_2(\lambda)\big(\sR(\lambda)-\sigma^2\big)
  + r^2\big(1-2s(\lambda)^2+(2s(\lambda)-1)s_2(\lambda)\big)
  = \sR_{\pd}^{\fresh}(\lambda).
\]

Finally, because $f_{\sd,\lambda,\xi}^{\freshaffine}=(1-\xi)f_\lambda+\xi f_{\pd,\lambda}^{\fresh}$ is a
two-predictor affine path, combining the oracle formula in \Cref{prop:closed_form_xi_R1} with the component convergences proved above finishes the proof.
\end{proof}

For an illustration of the empirical versus theoretical risks in \Cref{lem:freshX_isotropic_limits}, see the second panel of \Cref{fig:same_x_fresh_x_affine}.

\subsubsection{Proof of \Cref{thm:freshX_dominated_by_sameX_isotropic}}
\label{proof:thm:freshX_dominated_by_sameX_isotropic}

\begin{proof}[Proof of \Cref{thm:freshX_dominated_by_sameX_isotropic}]
As before, let $v=v(\lambda)>0$ denote the (unique) solution of \eqref{eq:v_fp_iso}, and define the signal-to-noise ratio
${\SNR}:=r^2/\sigma^2>0$.
Set $D_0:=(1+v)^2-\gamma v^2=(1+v)^2\bigl(1-t_2\bigr)$,
where $t_2=\gamma\bigl(v/(1+v)\bigr)^2$ as in \eqref{eq:tq_iso}.
Note that $D_0>0$.
This is because differentiating \eqref{eq:v_fp_iso} shows that
\[
  v'(\lambda)
  =-\frac{v(\lambda)^2}{1-\gamma\bigl(v(\lambda)/(1+v(\lambda))\bigr)^2}
  =-\frac{v(\lambda)^2}{1-t_2(\lambda)}<0,
\]
so $1-t_2(\lambda)>0$ and hence $D_0>0$ for all $\lambda>0$.

Using the explicit isotropic expressions from \Cref{lem:sameX_isotropic_limits,lem:freshX_isotropic_limits} and simplifying,
one obtains the factorization
\begin{equation}
\label{eq:gap_factorization_v}
  \sR_{\sd}^{\star,\freshaffine}(\lambda)-\sR_{\sd}^{\star,\same}(\lambda)
  =
  \sigma^2\,
  \frac{\gamma v\;\Delta(v)^2\;\Xi(v)}
  {(1+v)^3\,D_0\,\Pi_1(v)\,\Pi_2(v)},
\end{equation}
where
\begin{align}
\Delta(v)
&:=\gamma\,{\SNR}\,v + \gamma v^2 + \gamma v - {\SNR}\,v - {\SNR},\\
\Pi_1(v)
&:= -\gamma\,{\SNR}\,v+\gamma\,{\SNR}+\gamma v+\gamma+{\SNR}\,v+{\SNR},\\
\Pi_2(v)
&:=\gamma^2\,{\SNR}\,v^2+\gamma^2 v^2-2\gamma\,{\SNR}\,v^2-2\gamma\,{\SNR}\,v+\gamma\,{\SNR} \nonumber \\
& \quad +\gamma v^2+2\gamma v+\gamma+{\SNR}\,v^2+2\,{\SNR}\,v+{\SNR},\\
\Xi(v)
&:=\gamma^2 v^3+{\SNR}\Bigl(\frac{(D_0-1)^2}{v}+D_0\,v+v+2\Bigr).
\end{align}

All factors on the right-hand side of \eqref{eq:gap_factorization_v} are nonnegative:
$\sigma^2>0$, $\gamma>0$, $v>0$, and $\Delta(v)^2\ge 0$.
Moreover, $D_0>0$ as shown above, and $\Xi(v)>0$ since it is a sum of strictly positive terms.

It remains to note that $\Pi_1(v)$ and $\Pi_2(v)$ are strictly positive.
Indeed, in both the same-$X$ and fresh-$X$ settings, the two-predictor oracle formula
\Cref{prop:closed_form_xi_R1} involves the discrepancy denominator
\[
  \sD^{\bullet}(\lambda)
  :=\sR(\lambda)+\sR_{\pd}^{\bullet}(\lambda)-2\sC^{\bullet}(\lambda)
  \qquad \bullet\in\{\same,\fresh\},
\]
which is strictly positive for every $\lambda>0$ (the teacher and the corresponding PD refit do not coincide in this
isotropic model).
A direct identification shows that $\Pi_1(v)$ and $\Pi_2(v)$ are proportional to $\sD^{\fresh}(\lambda)$ and
$\sD^{\same}(\lambda)$, respectively, with positive proportionality constants; hence $\Pi_1(v),\Pi_2(v)>0$.

Therefore the right-hand side of \eqref{eq:gap_factorization_v} is nonnegative for all $\lambda>0$, showing the desired domination.
\end{proof}

\begin{figure}[!t]
    \centering
    \includegraphics[width=0.48\linewidth]{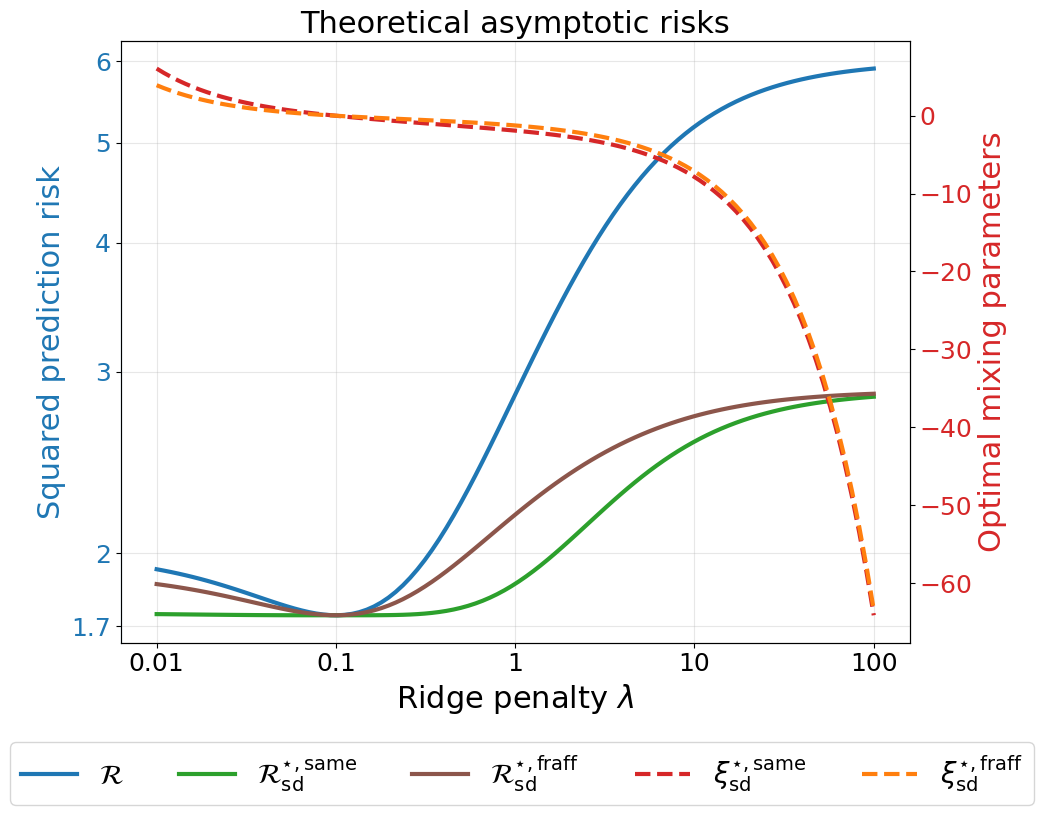}
    \includegraphics[width=0.44\linewidth]{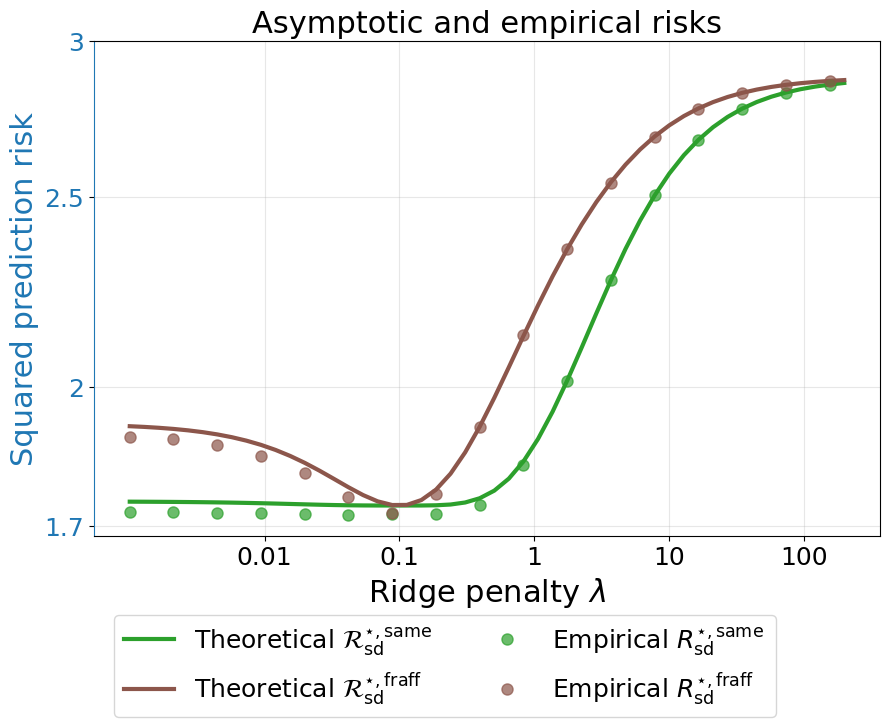}
    \caption{\textbf{Same-$X$ versus fresh-$X$ self-distillation risks.}
    Theoretical asymptotic (\Cref{thm:freshX_dominated_by_sameX_isotropic}) and empirical risks (averaged over 20 simulations)
    and optimal mixing weights for an isotropic setting with $n=400$, $p=200$, $r^2=5$, and $\sigma^2=1$.}
    \label{fig:same_x_fresh_x_affine}
\end{figure}

For a visual illustration of the risk asymptotics of the same-$X$ versus (affine) fresh-$X$ optimal SD risks, see the first panel
of \Cref{fig:same_x_fresh_x_affine}. 
Consistent with \Cref{thm:freshX_dominated_by_sameX_isotropic}, the same-$X$ optimal SD risk uniformly dominates the fresh-$X$ optimal SD risk across $\lambda$.

\subsection{Proofs and Details in \Cref{sec:ridge-variants}}
\label{sec:supp_extensions_ridge_variants}

\subsubsection{Proof of \Cref{eq:ridge_variant_affine_path}}
\label{app:ridge_variants_affine_path}

\begin{proof}[Proof of \Cref{eq:ridge_variant_affine_path}]
Recall that $f_\lambda(x)=s_\lambda(x)^\top y$ and $\hat y_\lambda=S_\lambda y$, and that the SD refit applies the
same smoother to the mixed labels $y^{(\xi)}:=(1-\xi)y+\xi\hat y_\lambda$.
Then, for every $x$,
\[
  f_{\sd,\lambda,\xi}(x)
  = s_\lambda(x)^\top y^{(\xi)}
  = (1-\xi)s_\lambda(x)^\top y + \xi s_\lambda(x)^\top \hat y_\lambda
  = (1-\xi)f_\lambda(x)+\xi f_{\pd,\lambda}(x),
\]
where $f_{\pd,\lambda}(x):=s_\lambda(x)^\top \hat y_\lambda$ by definition.
\end{proof}

\subsubsection{Proof of \Cref{thm:ridge_variants_tangent_sign}}
\label{app:ridge_variants_tangent_sign}

\begin{proof}[Proof of \Cref{thm:ridge_variants_tangent_sign}]
Fix $\lambda>0$.
By \eqref{eq:ridge_variant_affine_path}, the SD family is the affine path
$f_{\sd,\lambda,\xi}=(1-\xi)f_\lambda+\xi f_{\pd,\lambda}$.
Let $e_\lambda:=y_0-f_\lambda(x_0)$ and $e_{\pd}:=y_0-f_{\pd,\lambda}(x_0)$.
Expanding the square gives, for every $\xi\in\RR$,
\begin{align*}
  R_{\sd}(\lambda,\xi)
  &= \EE\!\Big[\big((1-\xi)e_\lambda+\xi e_{\pd}\big)^2 \,\Big|\, \cD\Big] \\
  &= (1-\xi)^2 R(\lambda) + \xi^2 R_{\pd}(\lambda) + 2\xi(1-\xi)C(\lambda) \\
  &= R(\lambda) - 2\xi\big(R(\lambda)-C(\lambda)\big) + \xi^2 D(\lambda),
\end{align*}
where $D(\lambda)=R(\lambda)+R_{\pd}(\lambda)-2C(\lambda)$.
Under $D(\lambda)>0$, this is a strictly convex quadratic in $\xi$, hence the unique minimizer is
\[
  \xi^\star(\lambda)=\frac{R(\lambda)-C(\lambda)}{D(\lambda)},
  \qquad
  R_{\sd}^\star(\lambda)=R(\lambda)-\frac{(R(\lambda)-C(\lambda))^2}{D(\lambda)}.
\]

Next, assume \eqref{eq:tangent_identity_ridge_variants_main} holds.
Since $\lambda\mapsto R(\lambda)$ is differentiable, we may differentiate inside the conditional expectation to get
\[
  R'(\lambda)
  = \partial_\lambda \EE[e_\lambda^2\mid \cD]
  = \EE\!\big[2e_\lambda\,\partial_\lambda e_\lambda \mid \cD\big]
  = -2\,\EE\!\big[e_\lambda\,\partial_\lambda f_\lambda(x_0)\mid \cD\big].
\]
On the other hand,
\begin{align*}
  R(\lambda)-C(\lambda)
  &= \EE\!\big[e_\lambda(e_\lambda-e_{\pd})\mid \cD\big]
   = \EE\!\big[e_\lambda\,(f_{\pd,\lambda}(x_0)-f_\lambda(x_0))\mid \cD\big] \\
  &= -\EE\!\big[e_\lambda\,(f_\lambda(x_0)-f_{\pd,\lambda}(x_0))\mid \cD\big]
   = -\EE\!\big[e_\lambda\,(-\lambda\,\partial_\lambda f_\lambda(x_0))\mid \cD\big] \\
  &= \lambda\,\EE\!\big[e_\lambda\,\partial_\lambda f_\lambda(x_0)\mid \cD\big]
   = -\frac{\lambda}{2}\,R'(\lambda).
\end{align*}
Substituting $R(\lambda)-C(\lambda)=-(\lambda/2)R'(\lambda)$ into the closed forms above yields
\[
  \xi^\star(\lambda)
  =-\frac{\lambda}{2}\,\frac{R'(\lambda)}{D(\lambda)},
  \qquad
  R_{\sd}^\star(\lambda)
  =
  R(\lambda)-\frac{\lambda^2}{4}\,\frac{(R'(\lambda))^2}{D(\lambda)}.
\]
Finally, if $R'(\lambda)\neq 0$ then $R_{\sd}^\star(\lambda)<R(\lambda)$ and
$\sign(\xi^\star(\lambda))=-\sign(R'(\lambda))$.
\end{proof}

\subsubsection[Verifying Derivative Property for Common Ridge Variants]{Verifying Derivative Property \eqref{eq:tangent_identity_ridge_variants_main} for Common Ridge Variants}
\label{app:ridge_variants_verify_tangent}

\begin{lemma}[Generalized ridge satisfies \eqref{eq:tangent_identity_ridge_variants_main}]
\label{lem:grr_tangent_identity}
Fix $\Omega\succ 0$ and $\lambda>0$, and let
\[
  f_\lambda^\Omega(x):=x^\top(X^\top X+n\lambda\,\Omega)^{-1}X^\top y.
\]
Let $f_{\pd,\lambda}^\Omega$ denote the PD refit obtained by training generalized ridge at the same $(X,\Omega,\lambda)$
on pseudo-labels $\hat y_\lambda^\Omega=f_\lambda^\Omega(X)$.
Then, for all $x$,
\[
  f_\lambda^\Omega(x)-f_{\pd,\lambda}^\Omega(x)=-\lambda\,\partial_\lambda f_\lambda^\Omega(x).
\]
\end{lemma}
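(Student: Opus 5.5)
The plan is to mirror the proof of \Cref{lem:tangent_identity_app}, replacing the identity resolvent by a generalized one. Set $A:=X^\top X/n$ and $P_\lambda:=(A+\lambda\Omega)^{-1}$, which is well defined since $A\succeq 0$ and $\Omega\succ 0$. Then the coefficient is
\[
\beta^\Omega_\lambda=(X^\top X+n\lambda\Omega)^{-1}X^\top y=P_\lambda\,\frac{X^\top y}{n},
\]
and $f^\Omega_\lambda(x)=x^\top\beta^\Omega_\lambda$. The PD refit trains on $\hat y^\Omega_\lambda=X\beta^\Omega_\lambda$, so its coefficient is $\beta^\Omega_{\pd,\lambda}=P_\lambda\frac{X^\top X}{n}\beta^\Omega_\lambda=P_\lambda A\,\beta^\Omega_\lambda$.

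First, we compute the teacher--PD gap. From $(A+\lambda\Omega)P_\lambda=I$ and $P_\lambda(A+\lambda\Omega)=I$ we get $P_\lambda A=I-\lambda P_\lambda\Omega$. Hence
\[
\beta^\Omega_\lambda-\beta^\Omega_{\pd,\lambda}=(I-P_\lambda A)\beta^\Omega_\lambda=\lambda P_\lambda\Omega\,\beta^\Omega_\lambda .
\]
Note that $P_\lambda$ and $A$ need not commute here, so the multiplication must be done on the correct side (left by $P_\lambda$). Only $P_\lambda A$ appears, so this is exact.

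Second, we differentiate. The generalized analogue of \Cref{lem:resolvent_derivative_app} follows by differentiating $(A+\lambda\Omega)P_\lambda=I$ and left-multiplying by $P_\lambda$, which gives $\partial_\lambda P_\lambda=-P_\lambda\Omega P_\lambda$. Therefore
\[
\partial_\lambda\beta^\Omega_\lambda=-P_\lambda\Omega P_\lambda\frac{X^\top y}{n}=-P_\lambda\Omega\,\beta^\Omega_\lambda .
\]
Multiplying by $-\lambda$ gives $-\lambda\,\partial_\lambda\beta^\Omega_\lambda=\lambda P_\lambda\Omega\beta^\Omega_\lambda$, which matches the gap from the first step. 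Left-multiplying by $x^\top$ for arbitrary $x$ then yields $f^\Omega_\lambda(x)-f^\Omega_{\pd,\lambda}(x)=-\lambda\,\partial_\lambda f^\Omega_\lambda(x)$.

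The only delicate point is the noncommutativity of $P_\lambda$ with $A$ and $\Omega$. In ordinary ridge the proof used that $Q_\lambda$ commutes with $\widehat\Sigma$; here we must check that both expressions produce the same ordering, namely $P_\lambda\Omega$ acting on $\beta^\Omega_\lambda$. They do: $P_\lambda A$ arises from left multiplication in the first step, and the derivative formula $-P_\lambda\Omega P_\lambda$ absorbs the trailing $P_\lambda X^\top y/n$ into $\beta^\Omega_\lambda$. Differentiability in $\lambda$ is immediate because $P_\lambda$ is a rational function of $\lambda$ on $(0,\infty)$.
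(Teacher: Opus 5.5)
Your proof is correct and follows essentially the same route as the paper. Both arguments identify the teacher--PD gap as $\lambda\, x^\top P_\lambda \Omega P_\lambda X^\top y/n$ and match it against $\partial_\lambda P_\lambda = -P_\lambda\Omega P_\lambda$; the paper simply works with the unnormalized $A_\lambda = X^\top X + n\lambda\Omega$ and writes the gap directly as $x^\top A_\lambda^{-1}(A_\lambda - X^\top X)A_\lambda^{-1}X^\top y$.
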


\begin{proof}
Let $A_\lambda:=X^\top X+n\lambda\,\Omega$.
Then $f_\lambda^\Omega(x)=x^\top A_\lambda^{-1}X^\top y$ and $f_{\pd,\lambda}^\Omega(x)=x^\top A_\lambda^{-1}X^\top X A_\lambda^{-1}X^\top y$.
Hence
\[
  f_\lambda^\Omega(x)-f_{\pd,\lambda}^\Omega(x)
  = x^\top A_\lambda^{-1}(A_\lambda-X^\top X)A_\lambda^{-1}X^\top y
  = n\lambda\,x^\top A_\lambda^{-1}\Omega A_\lambda^{-1}X^\top y.
\]
Moreover, $\partial_\lambda A_\lambda^{-1}=-A_\lambda^{-1}(\partial_\lambda A_\lambda)A_\lambda^{-1}
=-nA_\lambda^{-1}\Omega A_\lambda^{-1}$, so
\[
  \partial_\lambda f_\lambda^\Omega(x)
  = x^\top(\partial_\lambda A_\lambda^{-1})X^\top y
  = -n\,x^\top A_\lambda^{-1}\Omega A_\lambda^{-1}X^\top y,
\]
and multiplying by $-\lambda$ gives the claim.
\end{proof}

\begin{lemma}[Kernel ridge satisfies \eqref{eq:tangent_identity_ridge_variants_main}]
\label{lem:krr_tangent_identity}
Fix $\lambda>0$ and a PSD kernel with kernel matrix $K\in\RR^{n\times n}$ and
$k_x:=(k(x,x_1),\dots,k(x,x_n))^\top$.
Let
\[
  f_\lambda^{\mathrm{kern}}(x):=k_x^\top(K+n\lambda I_n)^{-1}y,
\]
and let $f_{\pd,\lambda}^{\mathrm{kern}}$ denote the PD refit trained at the same $\lambda$ on pseudo-labels
$\hat y_\lambda^{\mathrm{kern}}=f_\lambda^{\mathrm{kern}}(X)$.
Then, for all $x$,
\[
  f_\lambda^{\mathrm{kern}}(x)-f_{\pd,\lambda}^{\mathrm{kern}}(x)
  =-\lambda\,\partial_\lambda f_\lambda^{\mathrm{kern}}(x).
\]
\end{lemma}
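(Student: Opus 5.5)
The plan is to mirror \Cref{lem:grr_tangent_identity}: write both predictors explicitly as linear functionals of $y$ through the kernel resolvent, then compare their difference with the $\lambda$-derivative. Set $A_\lambda:=K+n\lambda I_n$, so that $f_\lambda^{\mathrm{kern}}(x)=k_x^\top A_\lambda^{-1}y$. Evaluating at the training inputs gives fitted values $\hat y_\lambda^{\mathrm{kern}}=KA_\lambda^{-1}y$, since the row of evaluations $k_{x_i}^\top$ stacks into $K$. The PD refit then satisfies $f_{\pd,\lambda}^{\mathrm{kern}}(x)=k_x^\top A_\lambda^{-1}KA_\lambda^{-1}y$.

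\textbf{Difference.} Factor the difference of the two predictors as
\[
f_\lambda^{\mathrm{kern}}(x)-f_{\pd,\lambda}^{\mathrm{kern}}(x)=k_x^\top A_\lambda^{-1}(A_\lambda-K)A_\lambda^{-1}y=n\lambda\,k_x^\top A_\lambda^{-2}y.
\]
This uses $A_\lambda-K=n\lambda I_n$, and that $K$ commutes with $A_\lambda^{-1}$, although commutativity is not even needed here.

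\textbf{Derivative.} By \Cref{lem:resolvent_derivative_app}, adapted to the scaling $n\lambda$, we have $\partial_\lambda A_\lambda^{-1}=-nA_\lambda^{-2}$. Hence $\partial_\lambda f_\lambda^{\mathrm{kern}}(x)=-n\,k_x^\top A_\lambda^{-2}y$, because $k_x$ and $y$ do not depend on $\lambda$. Multiplying by $-\lambda$ reproduces the difference computed above, which gives \eqref{eq:tangent_identity_ridge_variants_main}.

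The only point requiring care is whether $A_\lambda$ is invertible. Since $K\succeq 0$ and $\lambda>0$, we have $A_\lambda\succeq n\lambda I_n\succ 0$, so $A_\lambda$ is invertible and smooth in $\lambda$. There is no real obstacle beyond keeping the factor $n$ in the resolvent consistent between the two computations.
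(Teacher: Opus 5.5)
Your proposal is correct and follows essentially the same route as the paper's proof: write the refit as $k_x^\top A_\lambda^{-1}KA_\lambda^{-1}y$, factor the difference as $n\lambda\,k_x^\top A_\lambda^{-2}y$, and match it against $\partial_\lambda A_\lambda^{-1}=-nA_\lambda^{-2}$. Your extra remarks, that $A_\lambda\succeq n\lambda I_n$ is invertible and that commutativity is not needed for the factorization, are correct.
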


\begin{proof}
Let $B_\lambda:=K+n\lambda I_n$.
Then $f_\lambda^{\mathrm{kern}}(x)=k_x^\top B_\lambda^{-1}y$ and
$f_{\pd,\lambda}^{\mathrm{kern}}(x)=k_x^\top B_\lambda^{-1}KB_\lambda^{-1}y$.
Thus
\[
  f_\lambda^{\mathrm{kern}}(x)-f_{\pd,\lambda}^{\mathrm{kern}}(x)
  = k_x^\top B_\lambda^{-1}(B_\lambda-K)B_\lambda^{-1}y
  = n\lambda\,k_x^\top B_\lambda^{-2}y.
\]
Since $\partial_\lambda B_\lambda^{-1}=-B_\lambda^{-1}(\partial_\lambda B_\lambda)B_\lambda^{-1}
=-nB_\lambda^{-2}$, we have
\[
  \partial_\lambda f_\lambda^{\mathrm{kern}}(x)
  = k_x^\top(\partial_\lambda B_\lambda^{-1})y
  = -n\,k_x^\top B_\lambda^{-2}y,
\]
and multiplying by $-\lambda$ yields the claim.
\end{proof}

\clearpage
\newgeometry{left=0.8in,top=0.8in,right=0.8in,bottom=0.8in,head=.1in,foot=0.1in}

\section{Additional Numerical Illustrations}
\label{sec:additional_exps}

The source code for reproducing the results of this paper can be found at the following location: \url{https://github.com/hhd357/optimal_self_distillation_ridge}

\subsection{Additional Experiments on CIFAR Datasets}
\label{app:additional-experiments-cifar}

\subsubsection{CIFAR100 with Pretrained ResNet-34 Features}

\begin{figure}[!ht]
    \centering
    \includegraphics[width=0.5\textwidth]{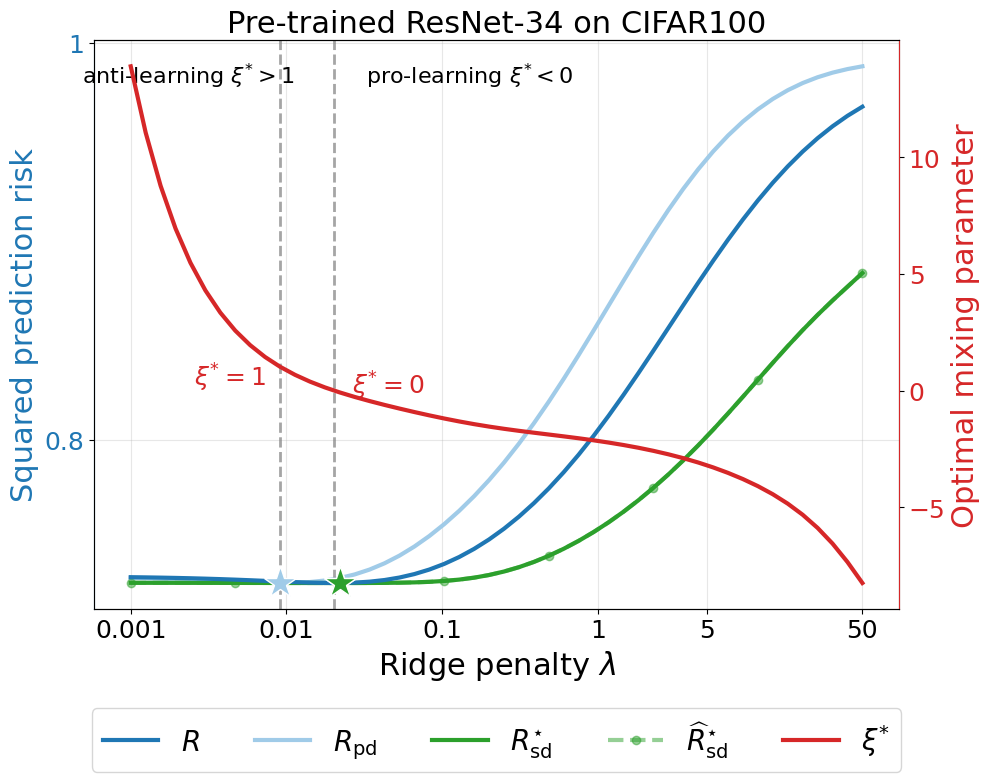}
    \caption{Squared prediction risks and optimal mixing parameter for ridge, pure-distilled and self-distilled ridge on pretrained ResNet-34 features on CIFAR100 dataset.}
    \label{fig:cifar100}
\end{figure}

\subsubsection{CIFAR10 and CIFAR100 Classification Accuracies}

\begin{figure}[!ht]
    \centering
    \includegraphics[width=0.48\textwidth]{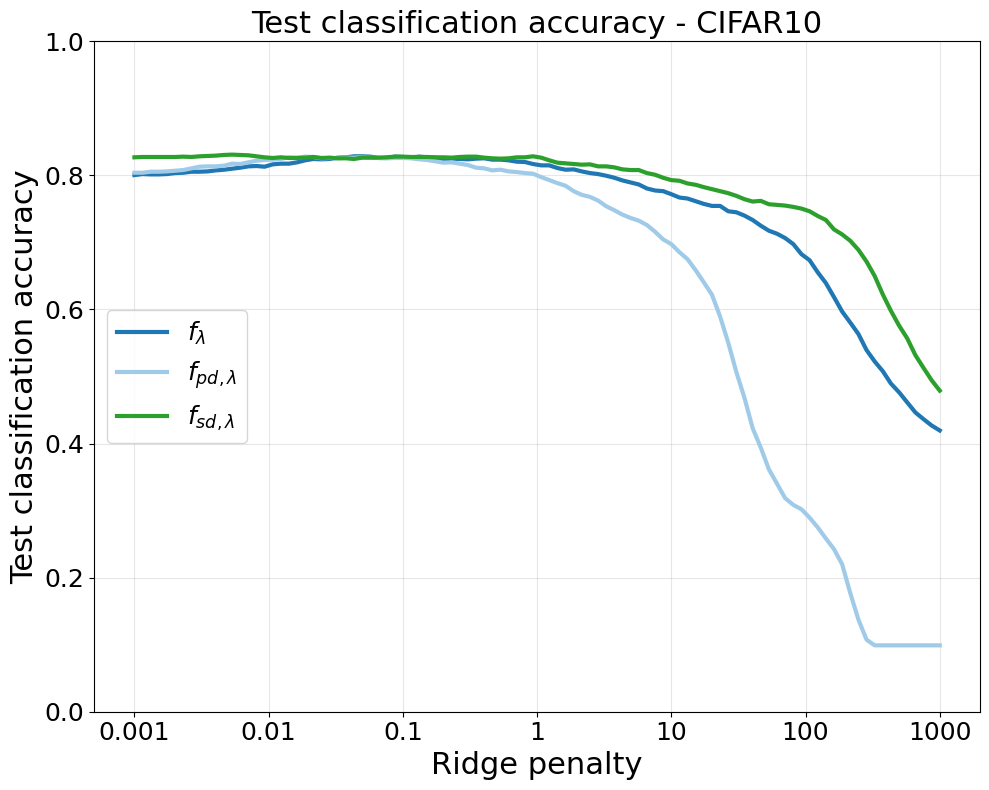}
    \includegraphics[width=0.48\textwidth]{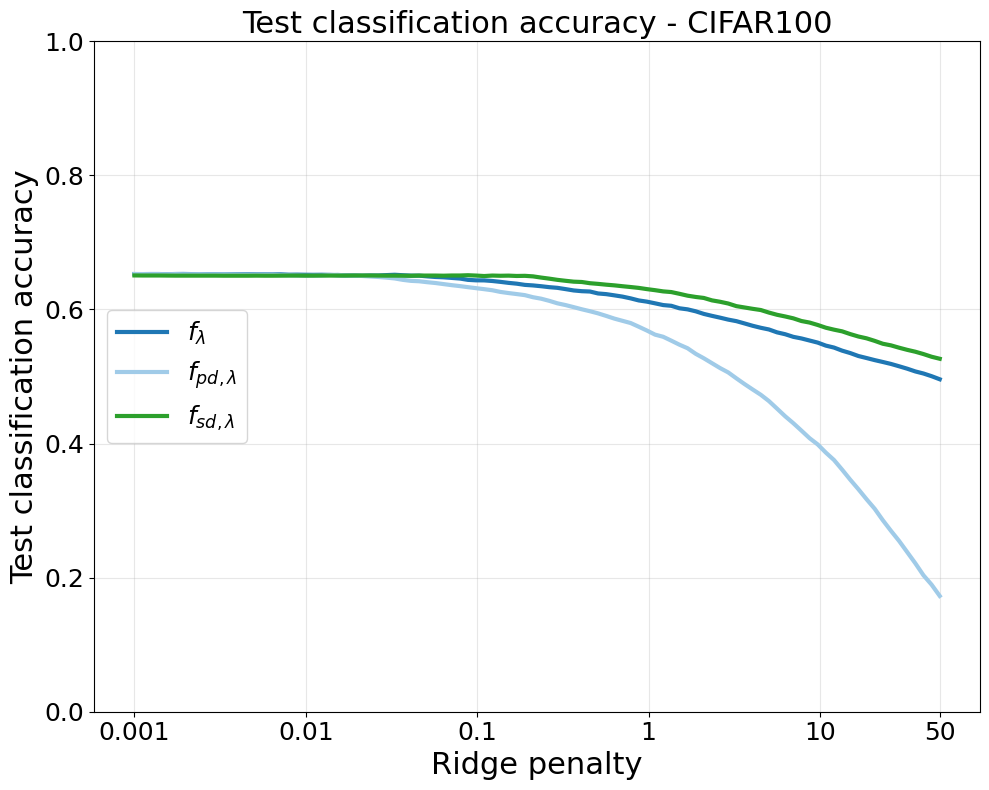}
    \caption{Test classification accuracy of the ridge and self-distilled ridges on CIFAR10 and CIFAR100 experiments. Although the optimal mixing is chosen to minimize the squared prediction risk, the strict improvement property still somewhat holds for test accuracy as well.}
    \label{fig:cifar_test_accuracy}
\end{figure}

\clearpage

\subsection{Illustration for Related Works Comparison in \Cref{sec:related_work}}
\label{app:vs_multi_round}

\begin{figure}[!ht]
    \centering
    \includegraphics[width=0.6\textwidth]{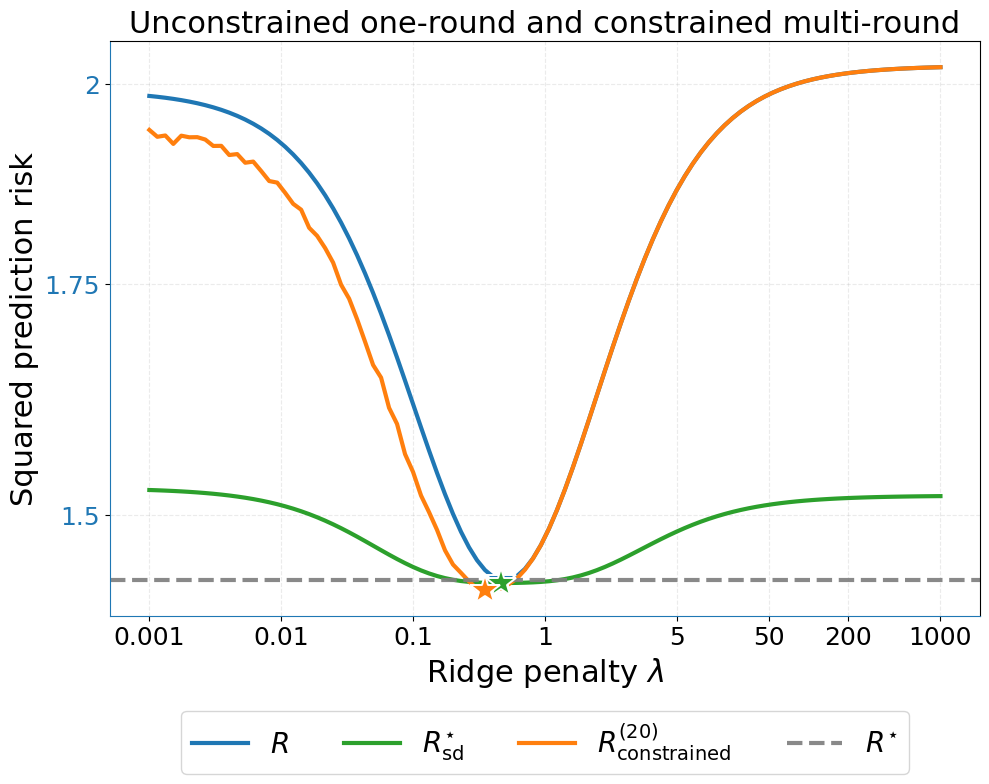}
    \caption{\textbf{One-round unconstrained versus multi-round constrained mixing weight.} Squared prediction risk empirical curves averaged over 20 simulations, $n = 400, p = 200$ with isotropic design and isotropic signal. We choose the optimal $\xi$ over a grid of 200 values in $[0,1]$ and pick the one with the lowest risk at the $20$-th round.}
    \label{fig:vs_multi_round}
\end{figure}

\clearpage
\subsection{Additional Illustrations Real-World Regression Tasks}
\label{app:additional-illustrations-real-world}

\subsubsection{Comparison with Constrained Self-Distillation}

\begin{figure}[!ht]
    \centering
    \includegraphics[width=0.48\textwidth]{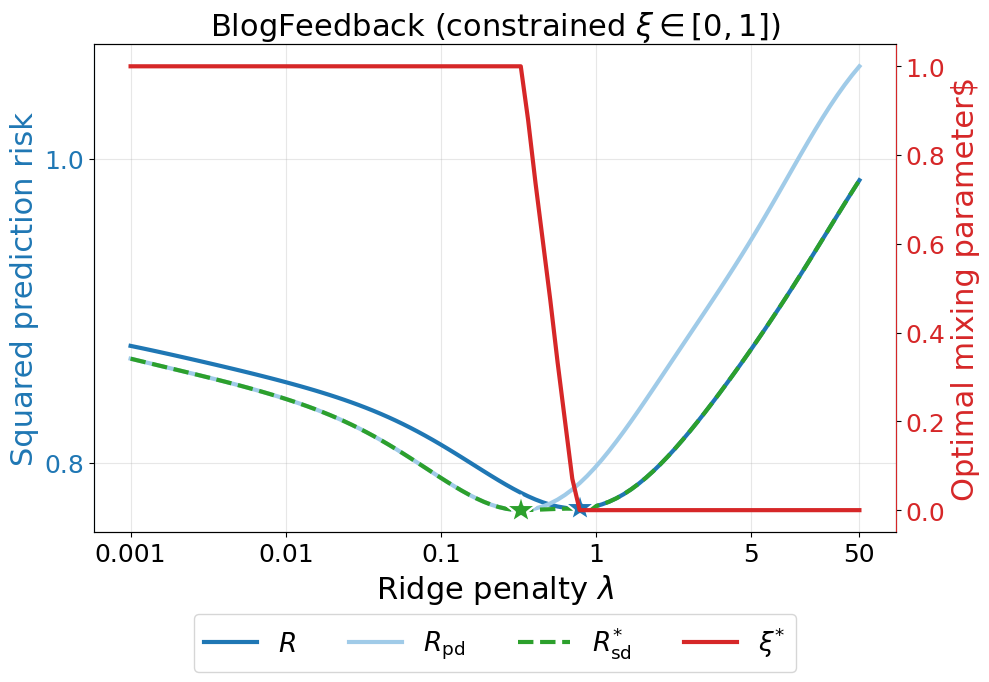}
    \includegraphics[width=0.48\textwidth]{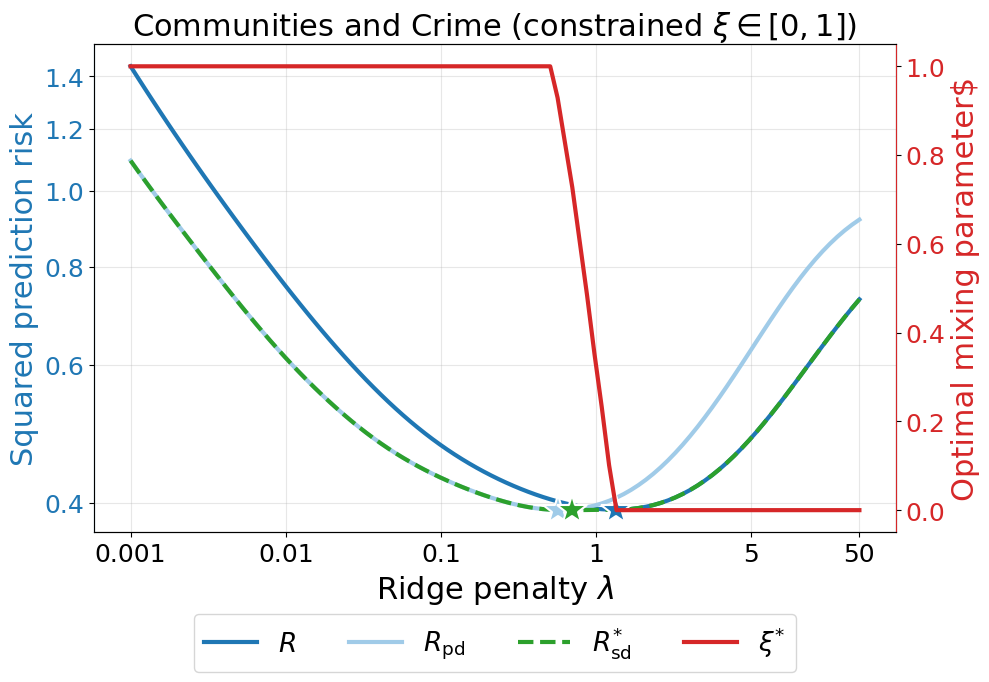}
    \includegraphics[width=0.48\textwidth]{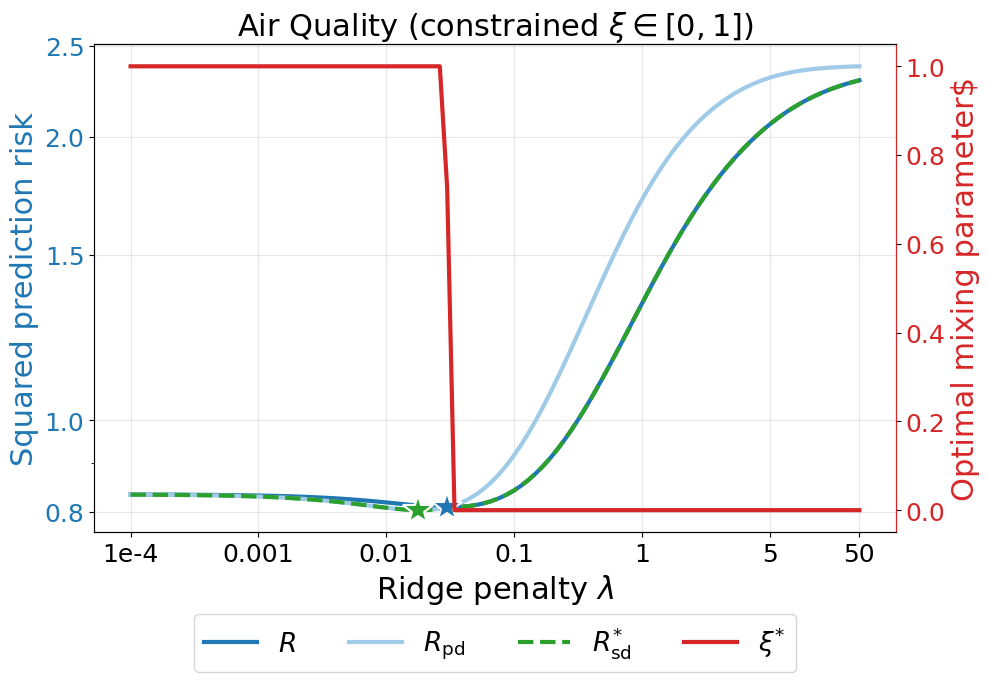}
    \caption{Test-set squared prediction risk and optimal mixing parameter, being constrained in $[0,1]$. We choose the optimal $\xi$ over a grid of 100 values in $[0,1]$ and pick the one with the lowest SD risk. The linestyle for the SD risk is changed only for this figure since it mostly overlaps with either the original ridge or the pure-distilled ridge curves. When $\xi = 1$, the SD risk exactly matches the PD risk and when $\xi = 0$, it exactly matches the teacher ridge risk.}
    \label{fig:exp_real_world_restricted}
\end{figure}

\clearpage
\subsubsection{Varying Train-to-Test Split Ratios and Gains}

BlogFeedback dataset:

\begin{figure*}[!ht]
    \centering
    \includegraphics[width=0.49 \textwidth, height = 70mm]{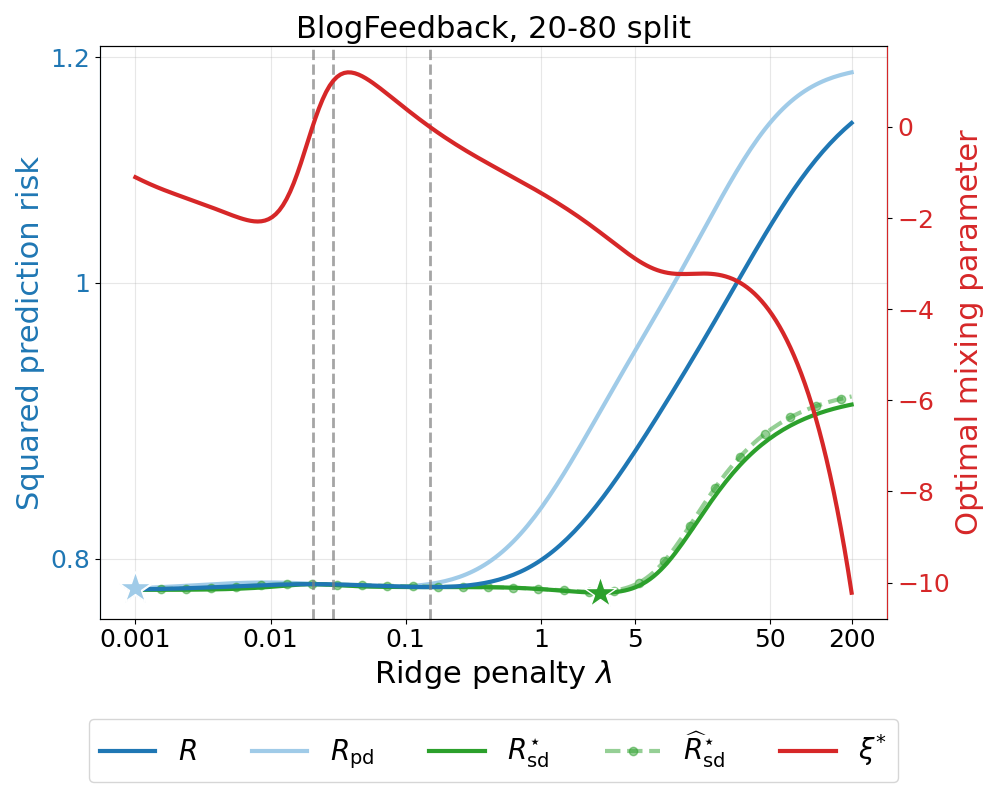}
    \includegraphics[width=0.49 \textwidth, height = 70mm]{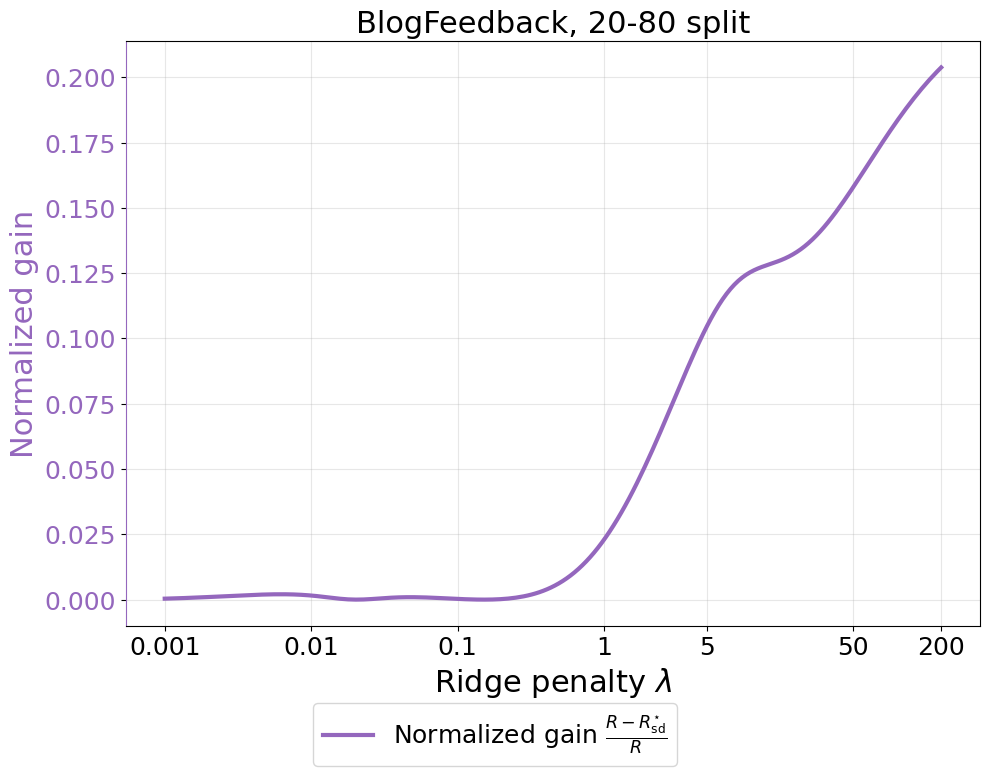}
    \includegraphics[width=0.49 \textwidth, height = 70mm]{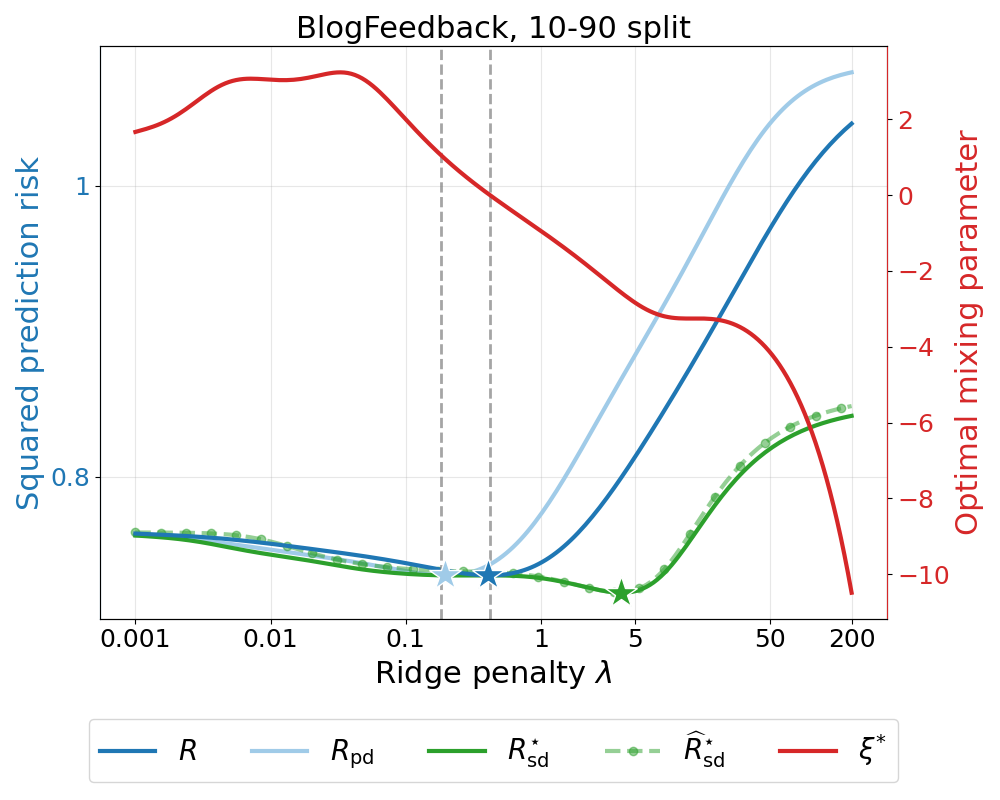}
    \includegraphics[width=0.49 \textwidth, height = 70mm]{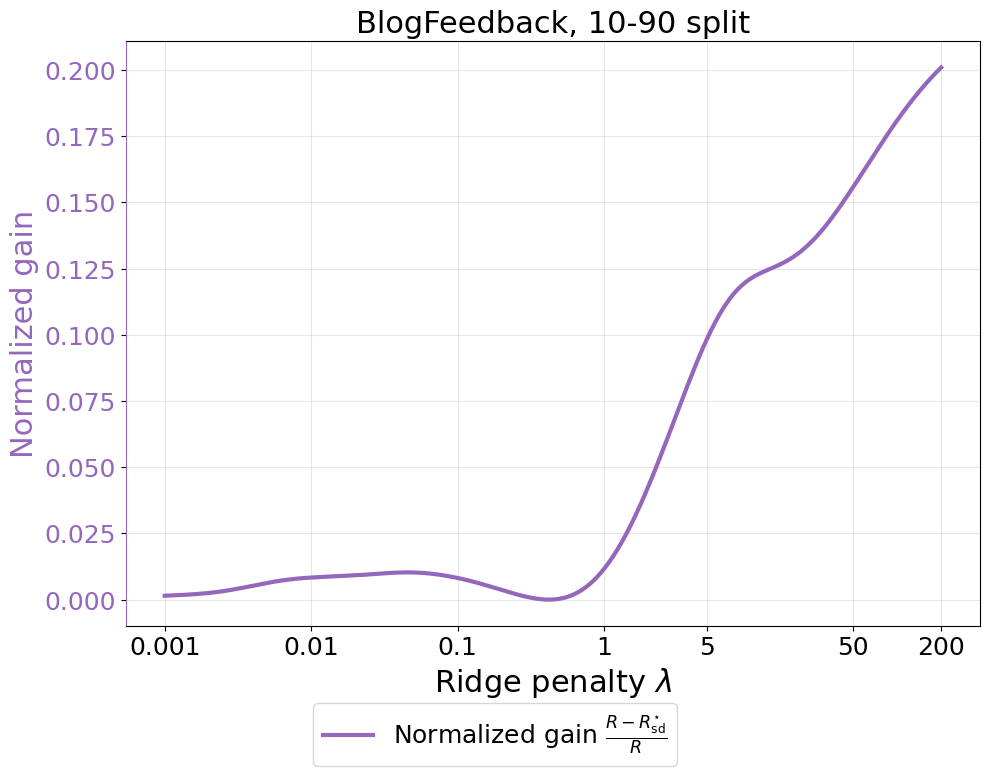}
    \includegraphics[width=0.49 \textwidth, height = 70mm]{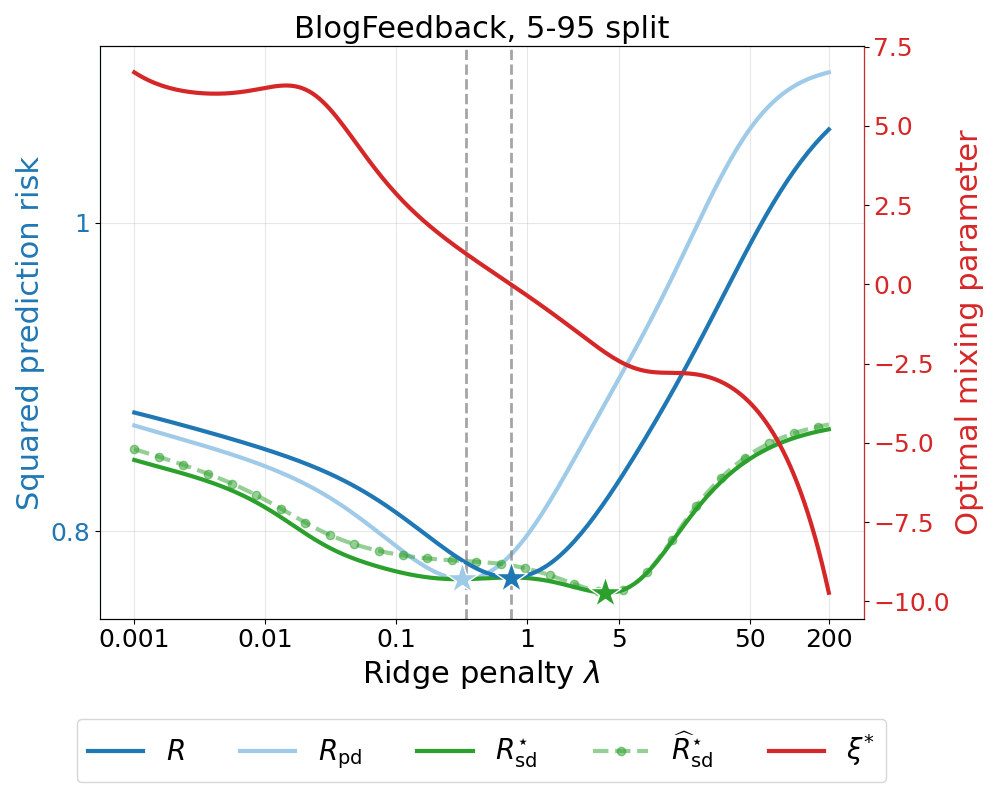}
    \includegraphics[width=0.49 \textwidth, height = 70mm]{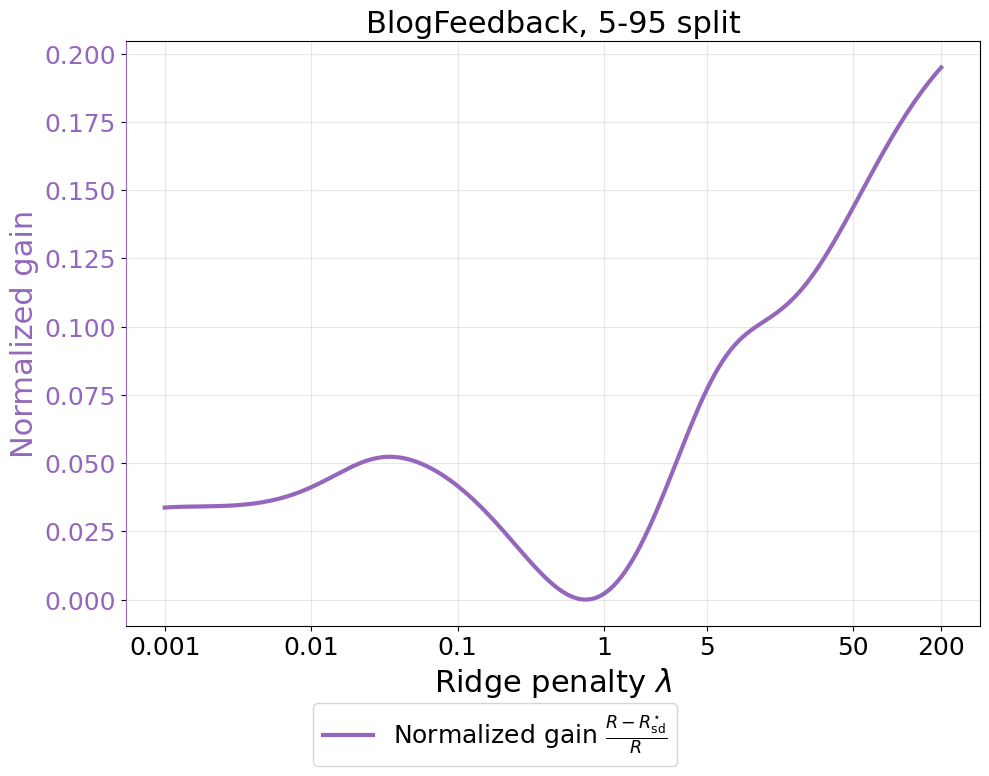}
    \caption{Squared prediction risks and gain curve on BlogFeedback dataset with different split ratios.}
    \label{fig:blog_appendix}
\end{figure*}

Communities and Crime dataset:

\begin{figure*}[!ht]
    \centering
    \includegraphics[width=0.49 \textwidth, height = 70mm]{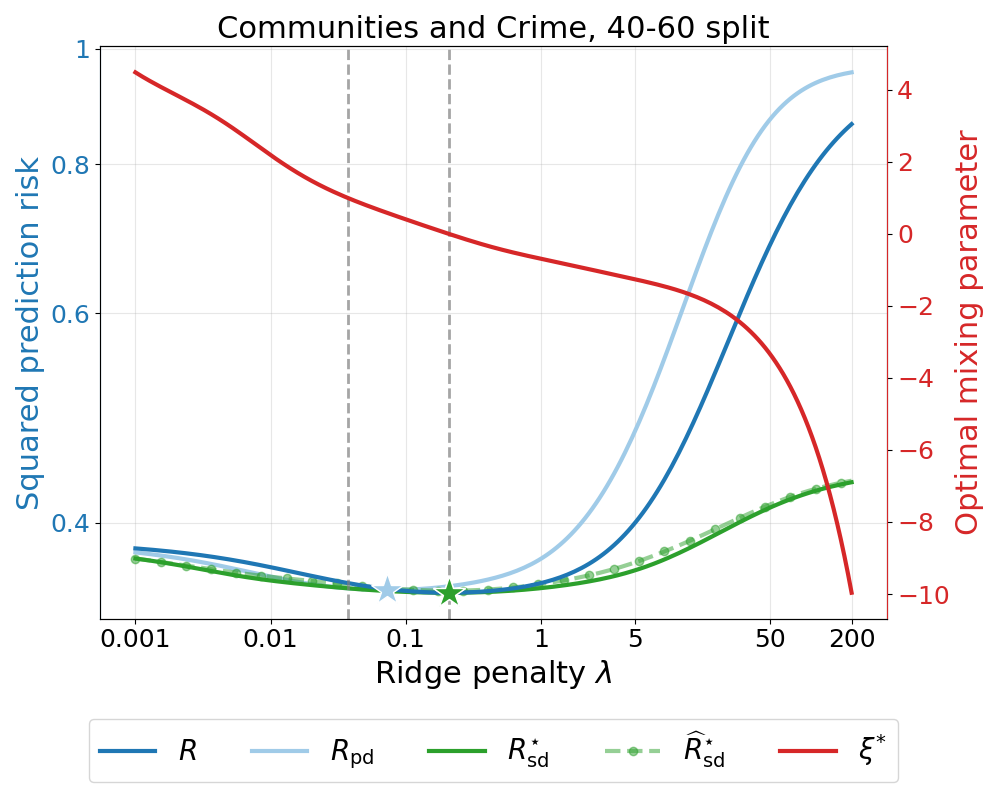}
    \includegraphics[width=0.49 \textwidth, height = 70mm]{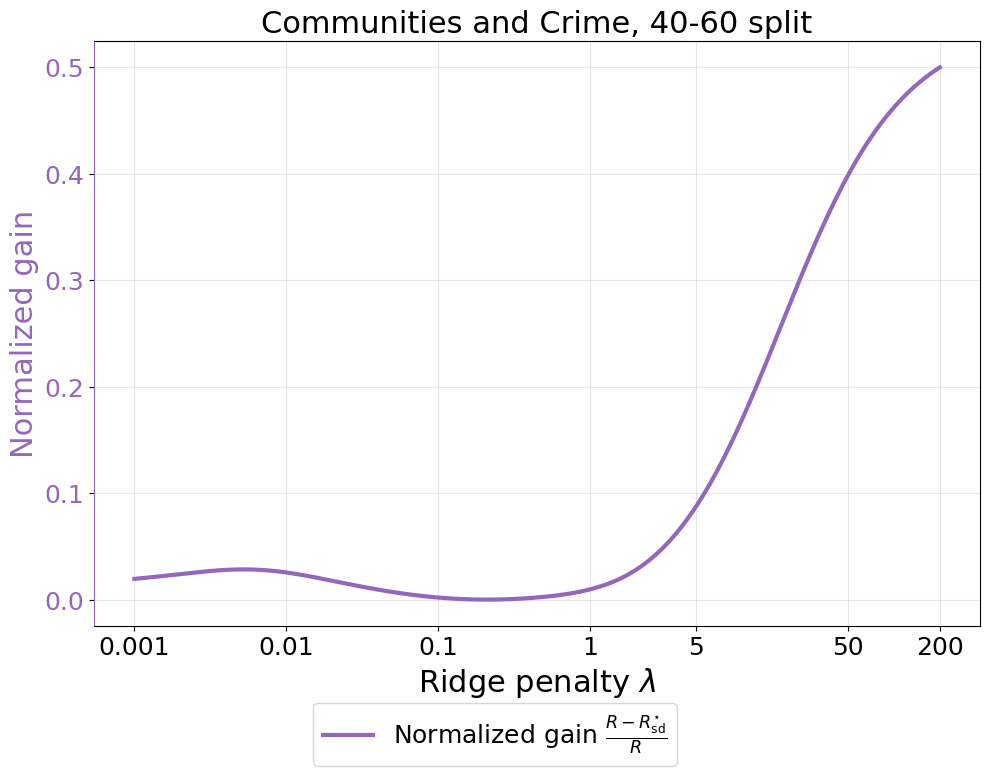}
    \includegraphics[width=0.49 \textwidth, height = 70mm]{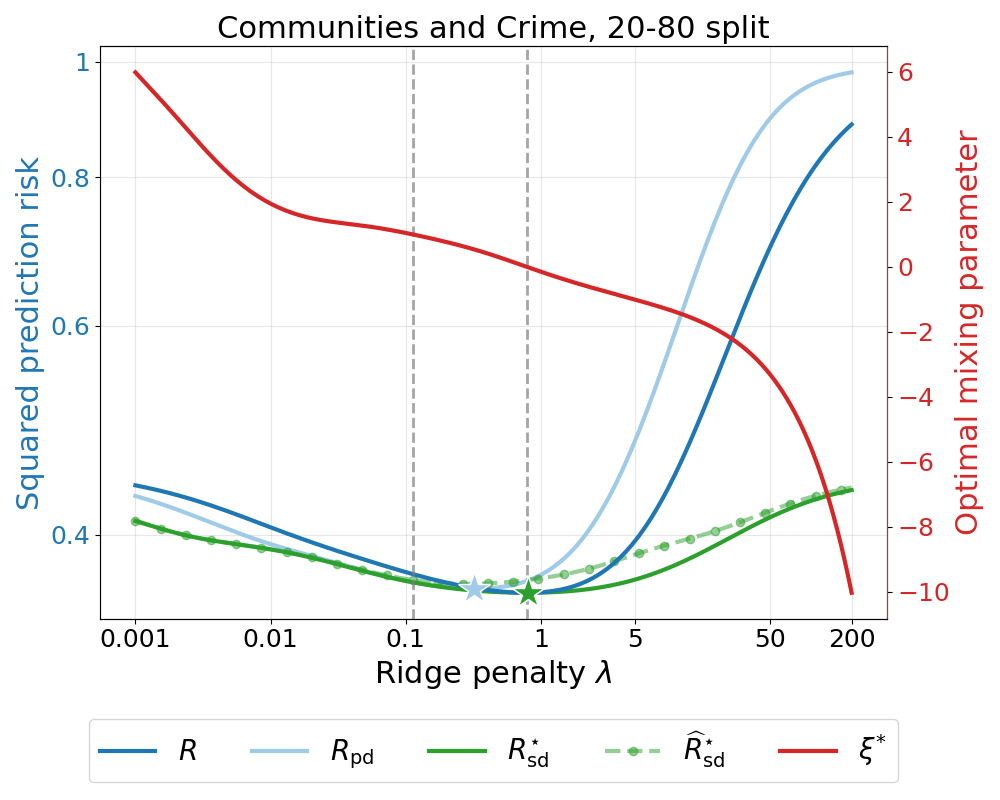}
    \includegraphics[width=0.49 \textwidth, height = 70mm]{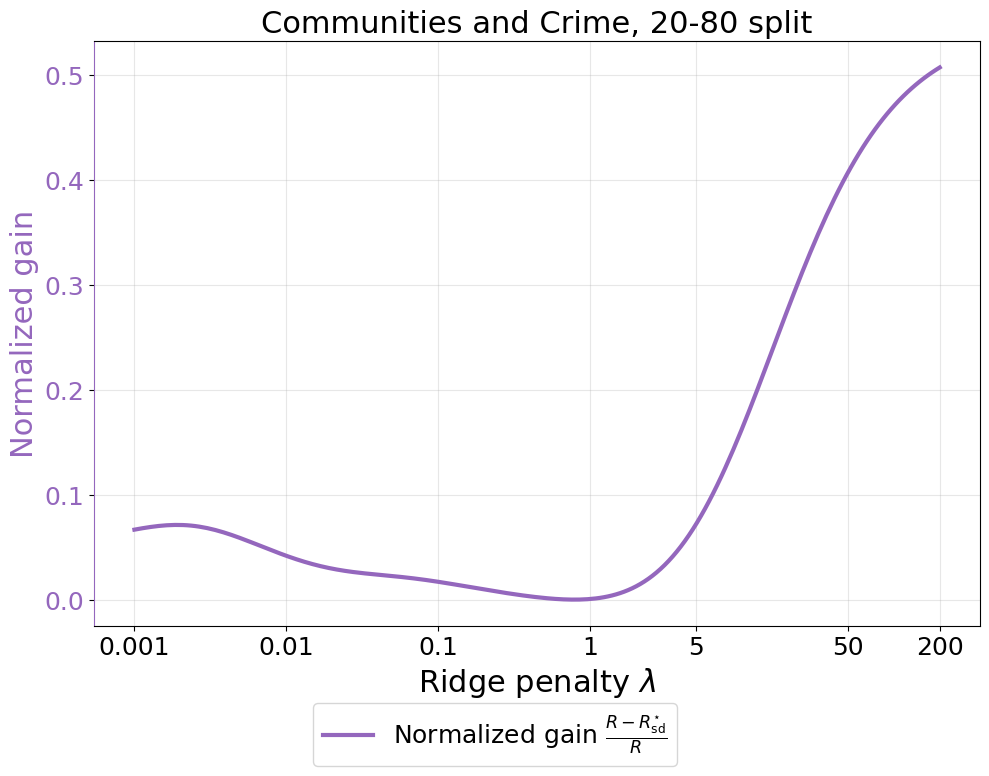}
    \includegraphics[width=0.49 \textwidth, height = 70mm]{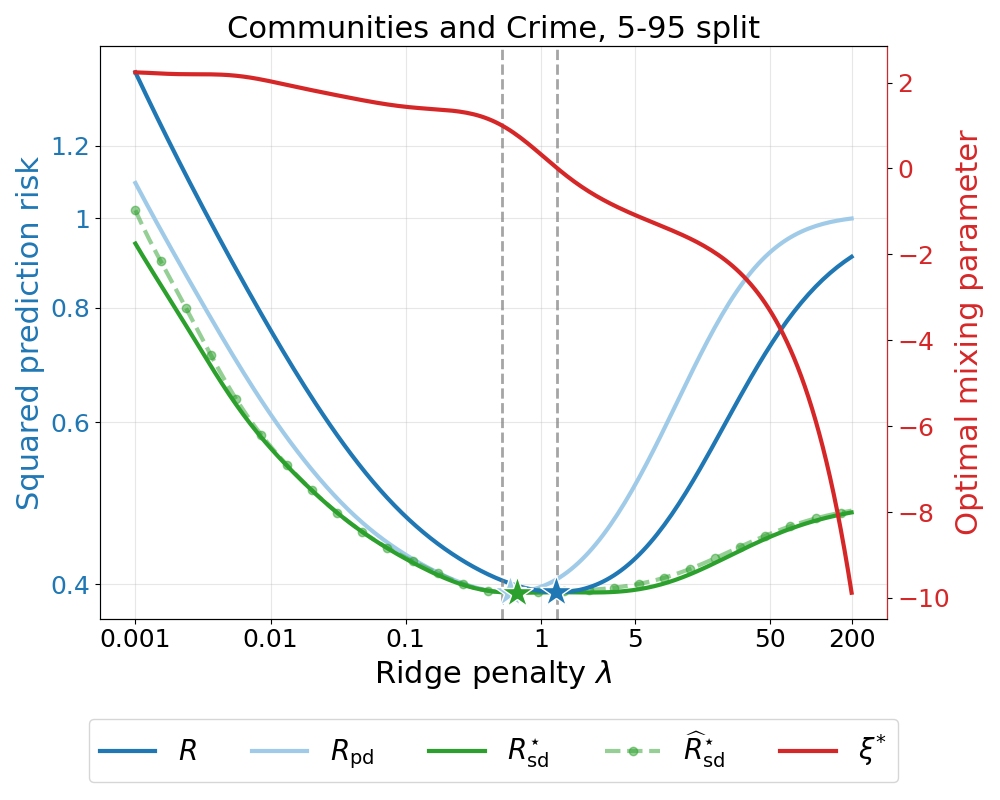}
    \includegraphics[width=0.49 \textwidth, height = 70mm]{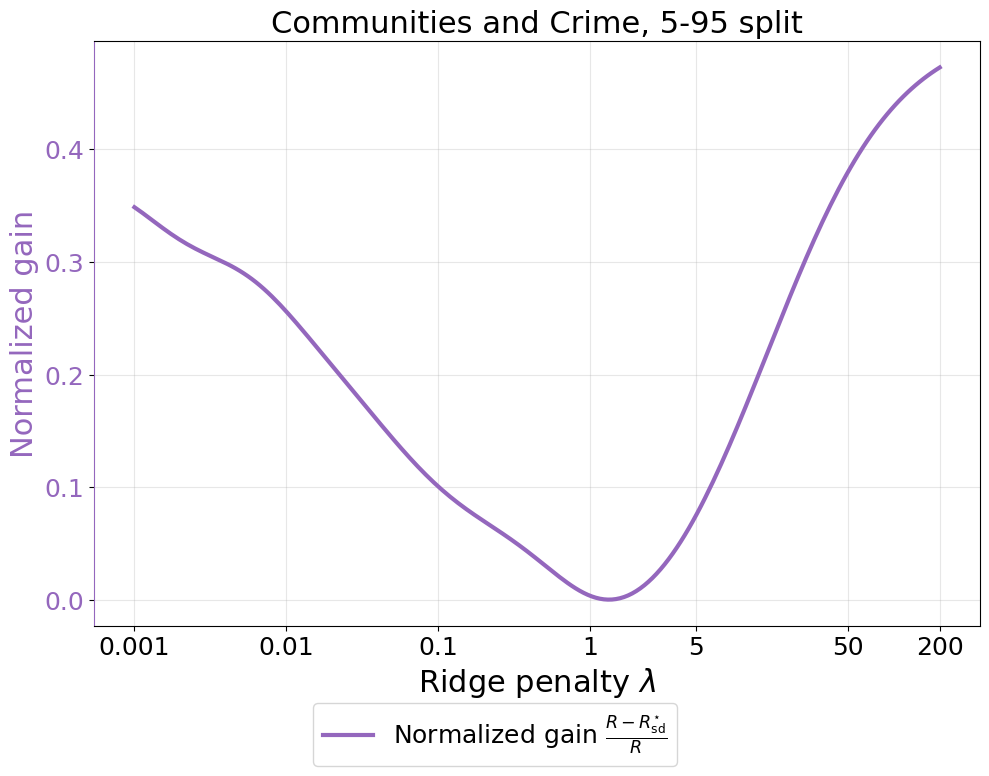}
    \caption{Squared prediction risks and gain curve on Communities and Crime dataset with different split ratios.}
    \label{fig:communites_appendix}
\end{figure*}

Air Quality dataset:

\begin{figure*}[!ht]
    \centering
    \includegraphics[width=0.49 \textwidth, height = 70mm]{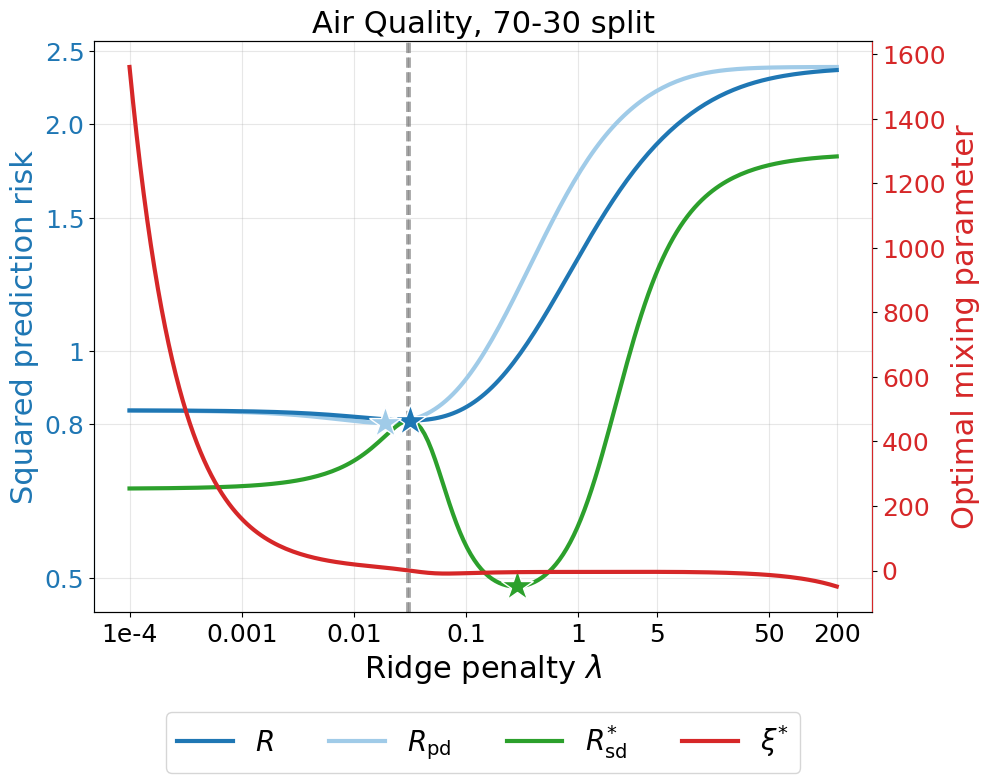}
    \includegraphics[width=0.49 \textwidth, height = 70mm]{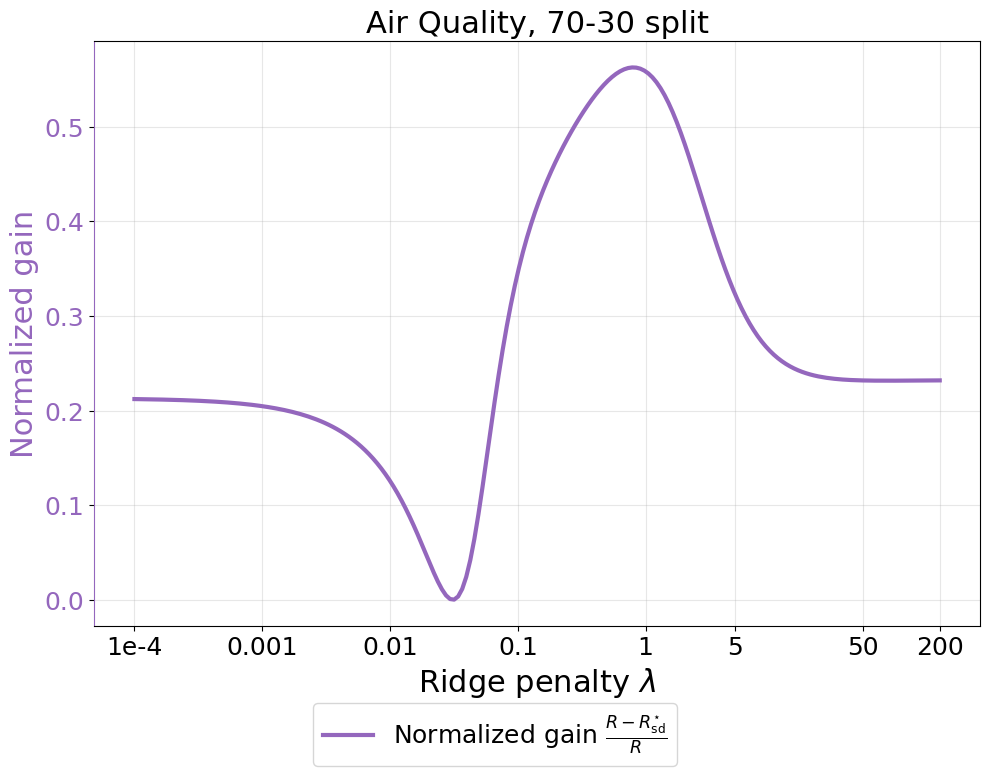}
    \includegraphics[width=0.49 \textwidth, height = 70mm]{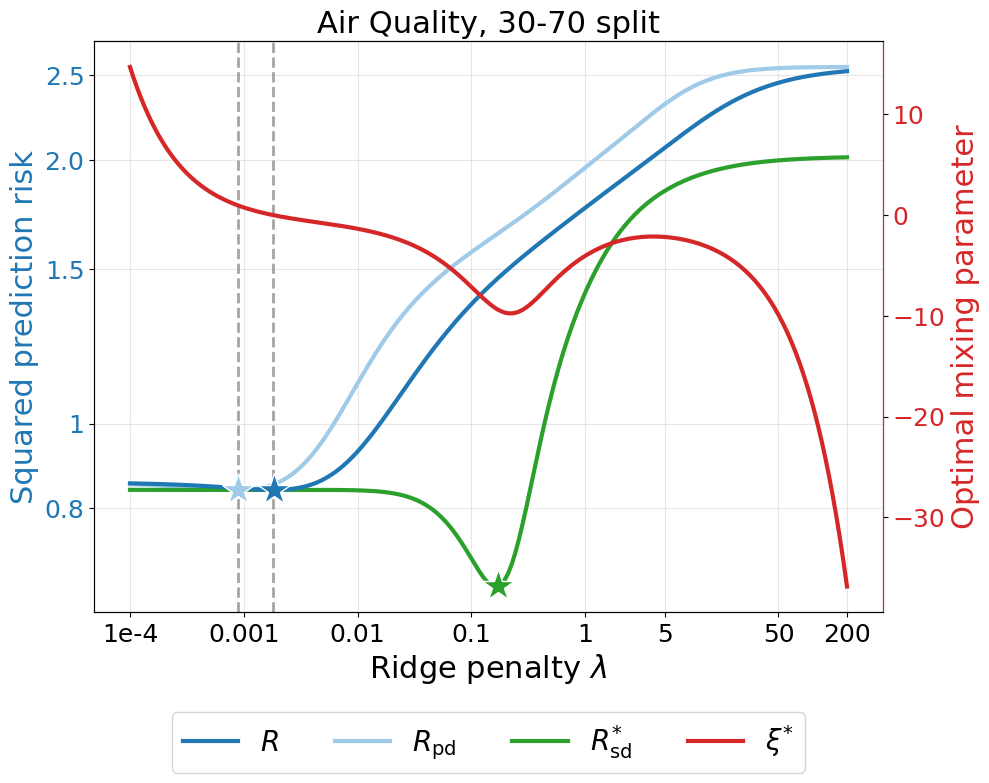}
    \includegraphics[width=0.49 \textwidth, height = 70mm]{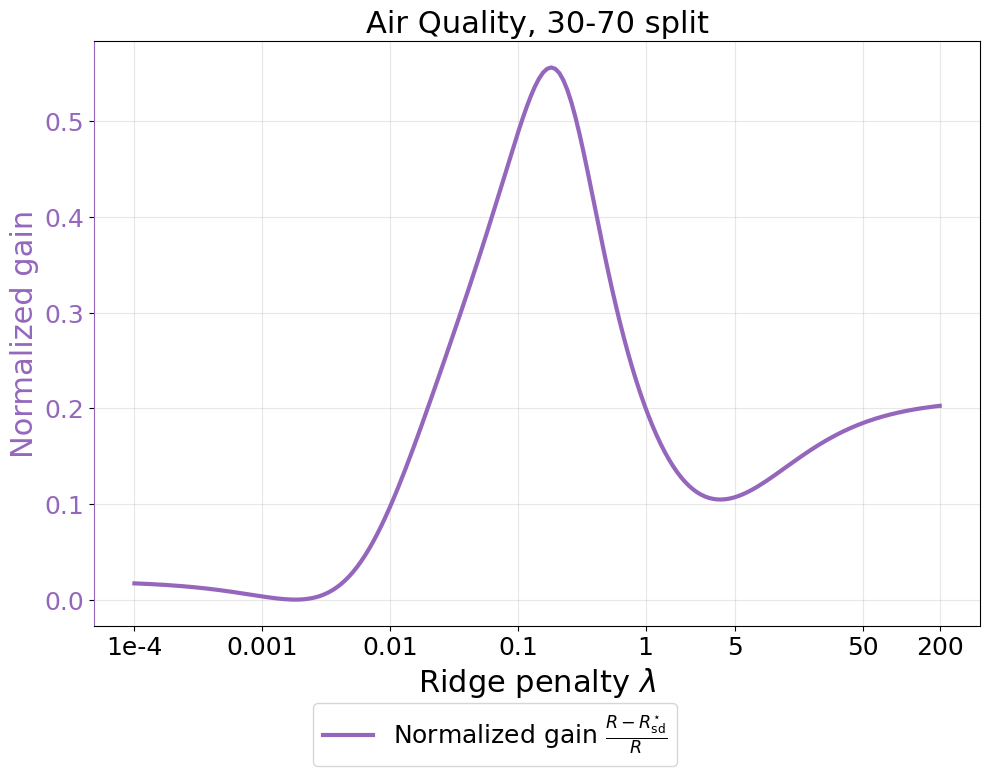}
    \caption{Squared prediction risks and gain curve on Air Quality dataset with different split ratios.}
    \label{fig:air_quality_appendix}
\end{figure*}
\clearpage

\subsection{Additional Illustrations on Proportional Asymptotic Risks}
\label{app:additional-illustrations-propasymp}

\subsubsection{Varying Signal-to-Noise Ratios}

\begin{figure}[!ht]
    \centering
    \includegraphics[width=0.8\textwidth]{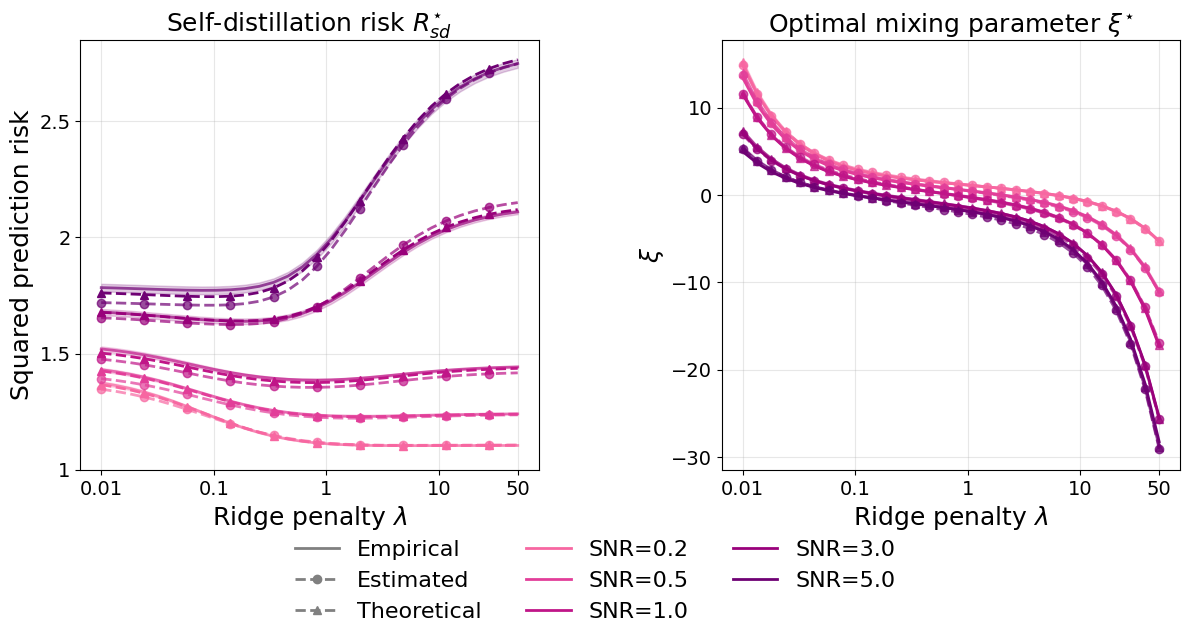}
    \includegraphics[width=0.8\textwidth]{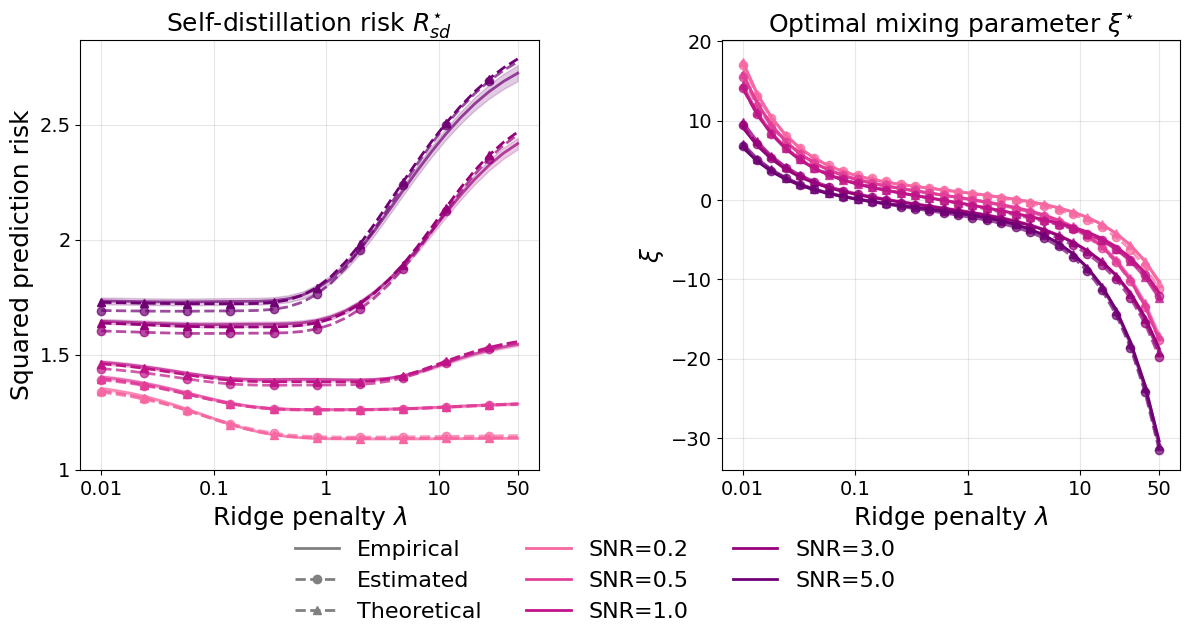}
    \caption{Asymptotic SD risk and optimal mixing parameter over various SNR ratios. Empirical curves are averaged over 30 numerical simulations, and the shaded band represents one standard deviation around the mean. The estimated are obtained using proposed tuning method (\Cref{sec:tuning}, averaged over 30 runs) and theoretical curves are from Theorem \ref{thm:risk-asymptotics}. $n = 400$, $p =200$, $\sigma^2 = 1$ and $r^2 = \sigma^2 \SNR$. \textit{Top row:} Data covariance $\Sigma$ is AR1, deterministic ground-truth signal is aligned with the bottom $10\%$ eigenvalues of $\Sigma$, with alignment factor 0.9. \textit{Bottom row:} Data covariance $\Sigma$ is a spiked covariance matrix, deterministic ground-truth signal is aligned with the top $10\%$ eigenvalues of $\Sigma$, with alignment factor 0.9.}
\end{figure}

\clearpage
\subsubsection{Varying Aspect Ratios}

\begin{figure}[!ht]
    \centering
    \includegraphics[width=0.8\textwidth]{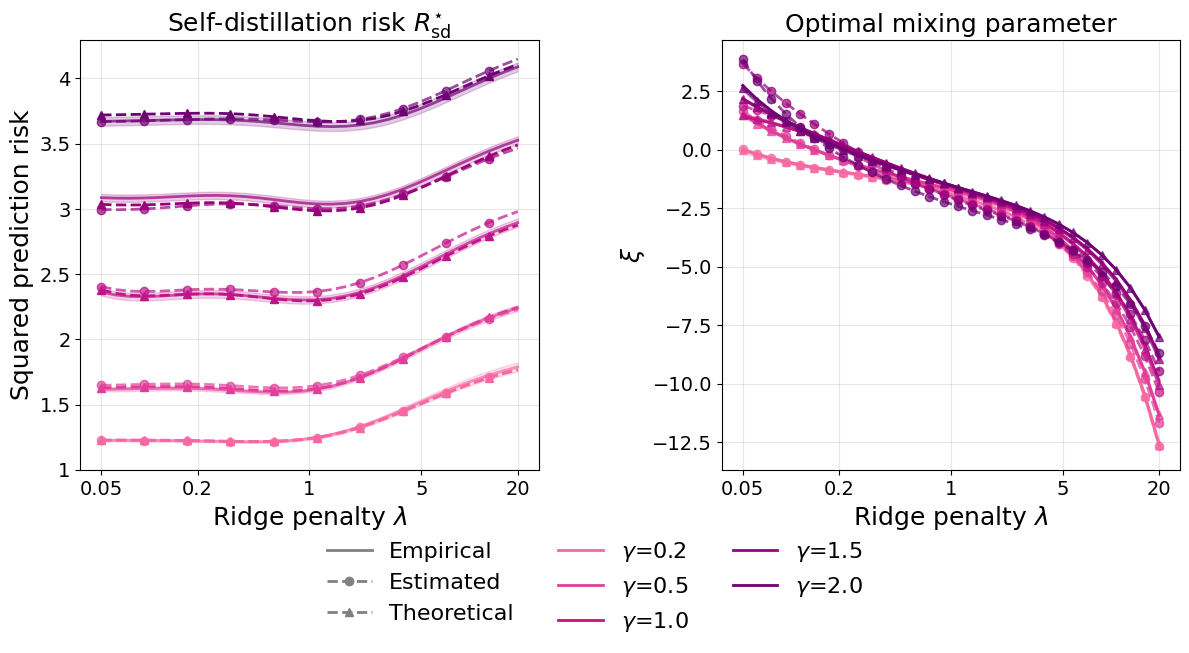}
    \includegraphics[width=0.8\textwidth]{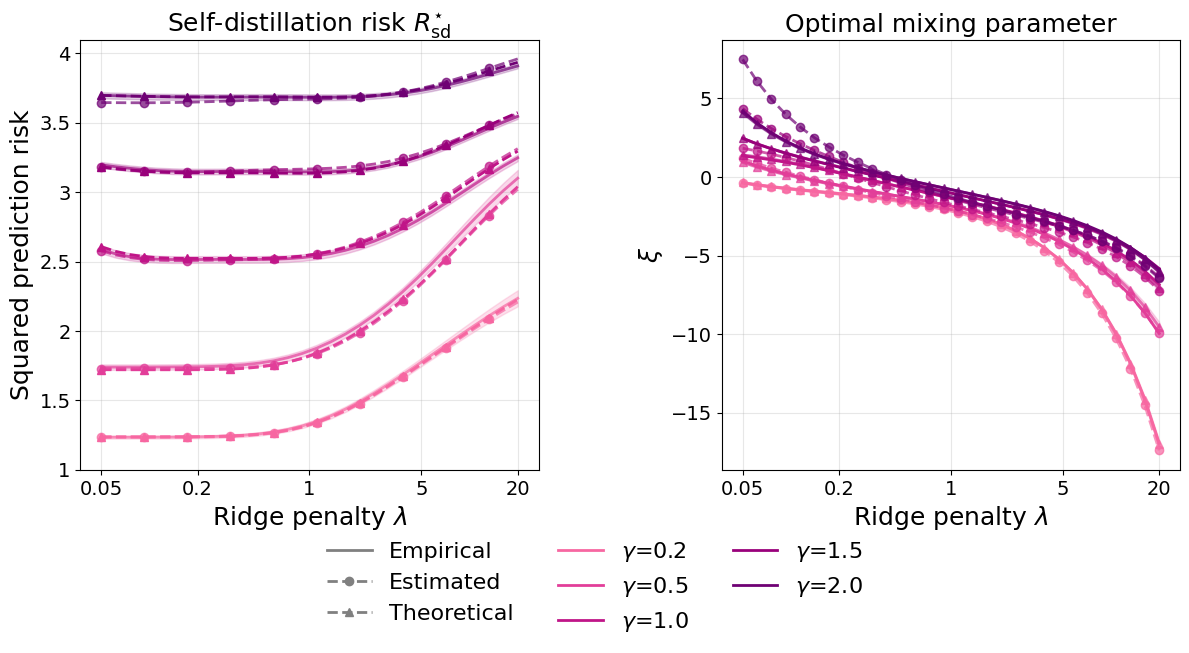}
    \caption{Asymptotic SD risk and optimal mixing parameter over different aspect ratios $\gamma = p/n$. Empirical curves are averaged over 30 numerical simulations, and the shaded band represents one standard deviation around the mean. The estimated are obtained using proposed tuning method (\Cref{sec:tuning}, averaged over 30 runs) and theoretical curves are from Theorem \ref{thm:risk-asymptotics}. $n = 300$, $p = n \gamma$, $\sigma^2 = 1$ and $r^2 = 5$. \textit{Top row:} Data covariance $\Sigma$ is AR1, deterministic ground-truth signal is aligned with the top $10\%$ eigenvalues of $\Sigma$, with alignment factor 0.9. \textit{Bottom row:} Data covariance $\Sigma$ is spiked covariance matrix with $\Sigma =  I + 5 v v^{\top}$ with $v$ is a random isotropic Gaussian vector, deterministic ground-truth signal is aligned with the top $10\%$ eigenvalues of $\Sigma$, with alignment factor 0.9.}
\end{figure}

\clearpage
\subsubsection{Additional Risk and Gain Curves}

\begin{figure*}[!ht]
    \centering
    \includegraphics[width=0.45 \textwidth]{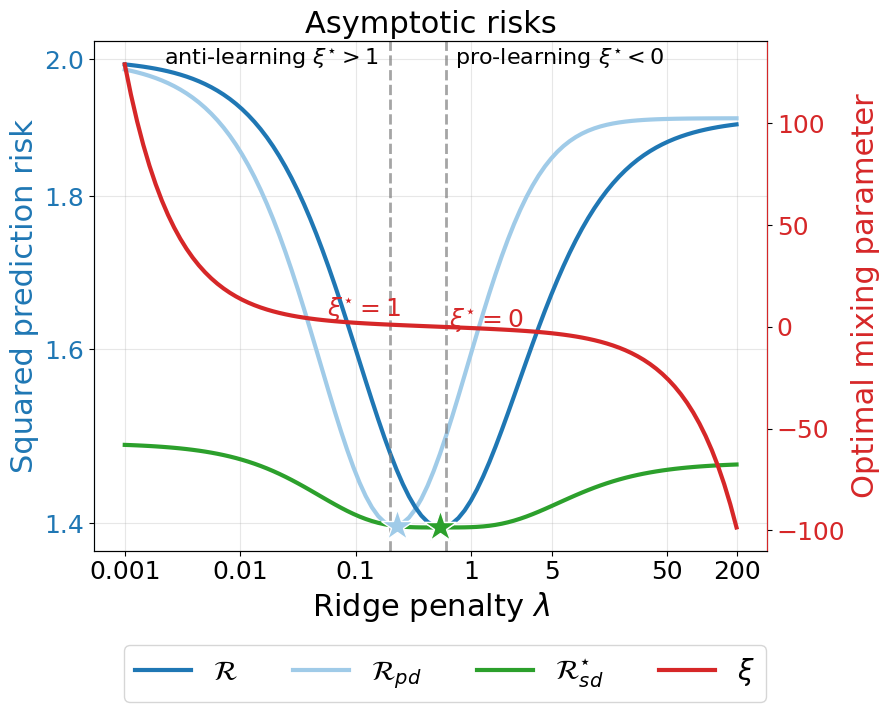}
    \includegraphics[width=0.45 \textwidth]{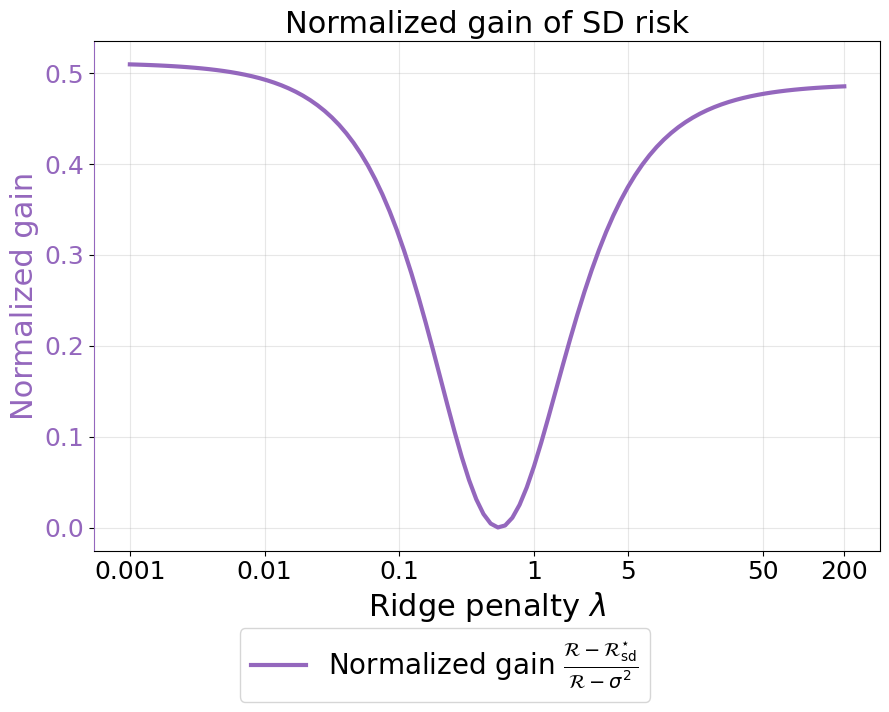}
    \includegraphics[width=0.45 \textwidth]{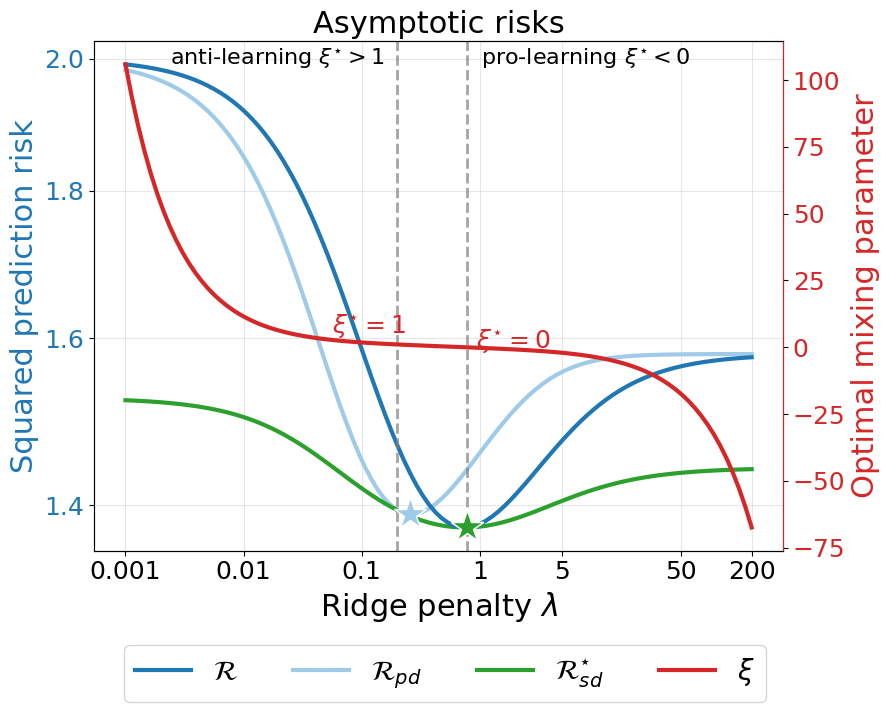}
    \includegraphics[width=0.45 \textwidth]{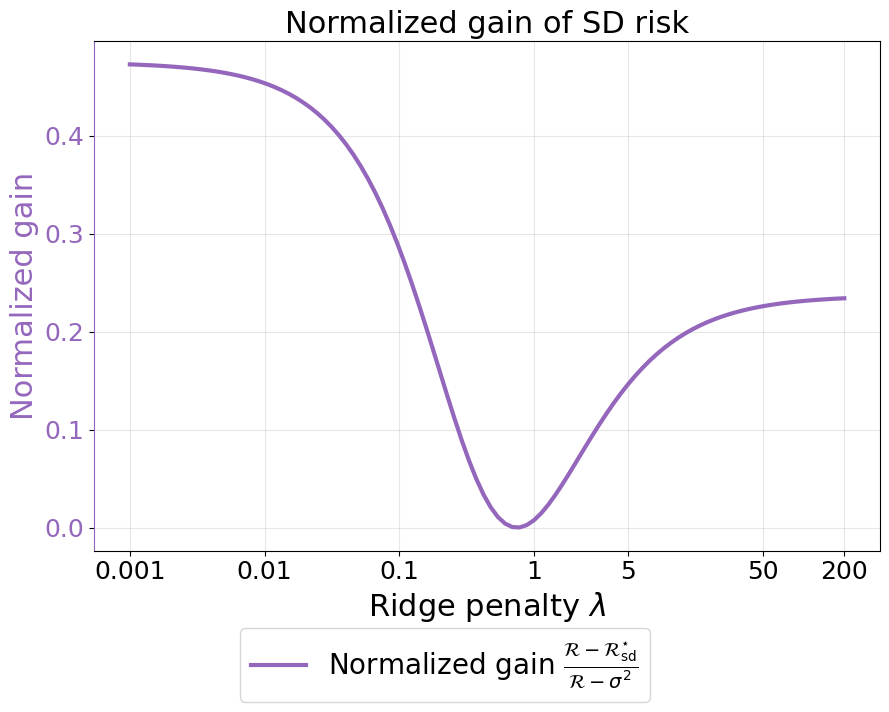}
    \caption{Asymptotic risk curves, asymptotic optimal mixing parameter and gain curves over the penalty $\lambda$. \textit{Top row:} Data covariance $\Sigma$ is isotropic, isotropic signal, $n = 400, p = 200, r^2 = \sigma^2 = 1$. \textit{Bottom row:} Data covariance $\Sigma$ is AR-1(0.25), deterministic ground-truth signal is aligned with the bottom $10\%$ eigenvalues of $\Sigma$, with alignment factor 0.9, $n = 400, p = 200, r^2 = \sigma^2 = 1$.}
    \label{fig:asym_gain_curve_appendix}
\end{figure*}

\clearpage

\subsection{Additional Illustrations on Extreme Regularized Risks}
\label{app:additional-illustrations-extremereg}

\subsubsection{Isotropic Covariance, Isotropic Signal}
\begin{figure}[!ht]
    \centering
    \includegraphics[width=\textwidth]{figures/heatmap_compare_with_optimal_iso_iso.png} \includegraphics[width=\textwidth]{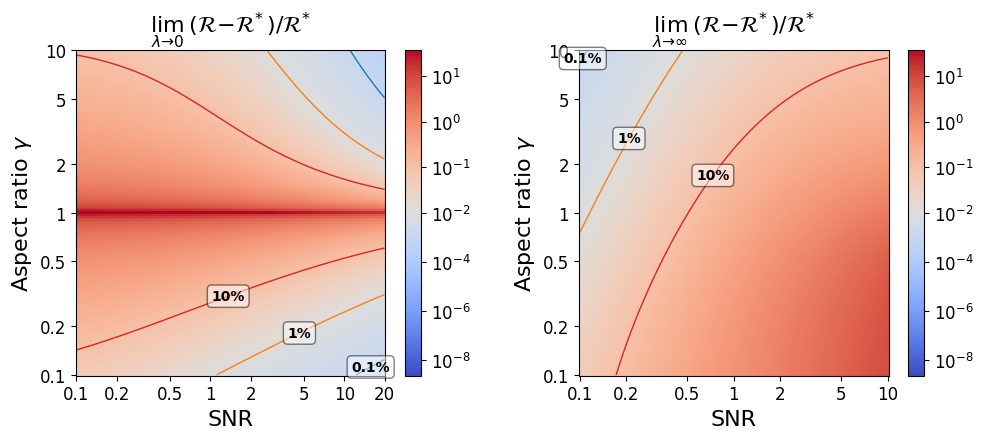}
    \caption{Percentage difference between $\sR(\text{SNR}, \gamma)$  and $\sR_{sd}^{\star}(\text{SNR}, \gamma)$ to the optimal ridge $\sR^{\star}(\text{SNR}, \gamma)$ (also the best predictor in this setting), and with isotropic design $\Sigma = I_p$, random signal follow an isotropic Gaussian distribution. The values plotted are calculated from \Cref{prop:compare_extreme_lambda}.}
    \label{fig: heatmap_r0_isotropic}
\end{figure}

\clearpage
\subsubsection{AR1 Covariance, Isotropic Signal}

\begin{figure}[!ht]
    \centering
    \includegraphics[width=\textwidth]{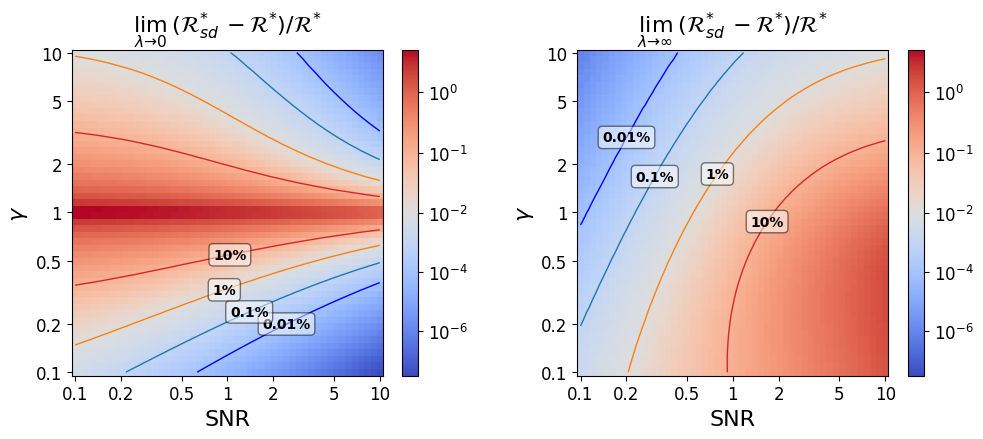} \includegraphics[width=\textwidth]{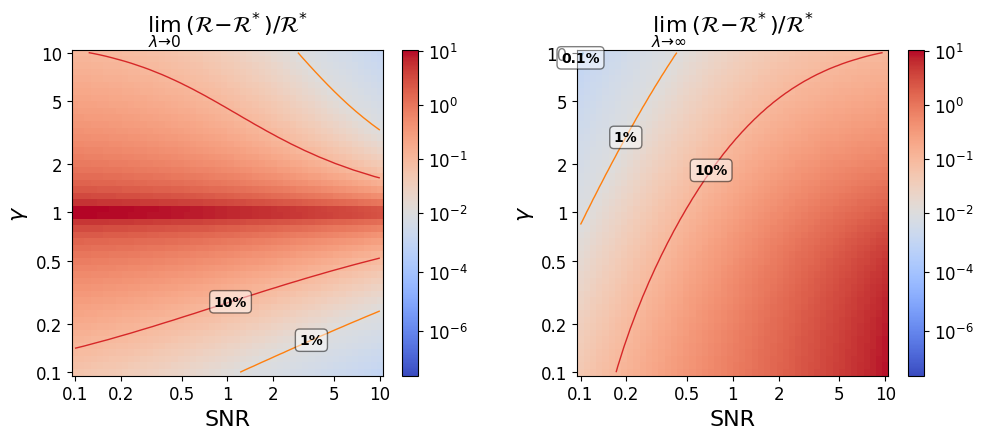}
    \caption{Percentage difference between $\sR(\text{SNR}, \gamma)$  and $\sR_{sd}^{\star}(\text{SNR}, \gamma)$ to the optimal ridge $\sR^{\star}(\text{SNR}, \gamma)$ (also the best predictor in this setting), with AR1 covariance design and random signal follows an isotropic Gaussian distribution. The ratios plotted are calculated using risk formulations at Theorem \ref{thm:risk-asymptotics} at $\lambda = 10^{-3}$ and $\lambda = 10^{6}$.}
    \label{fig: heatmap_r0_ar1}
\end{figure}

\clearpage
\subsubsection{Spiked Covariance, Isotropic Signal}

\begin{figure}[!ht]
    \centering
    \includegraphics[width=\textwidth]{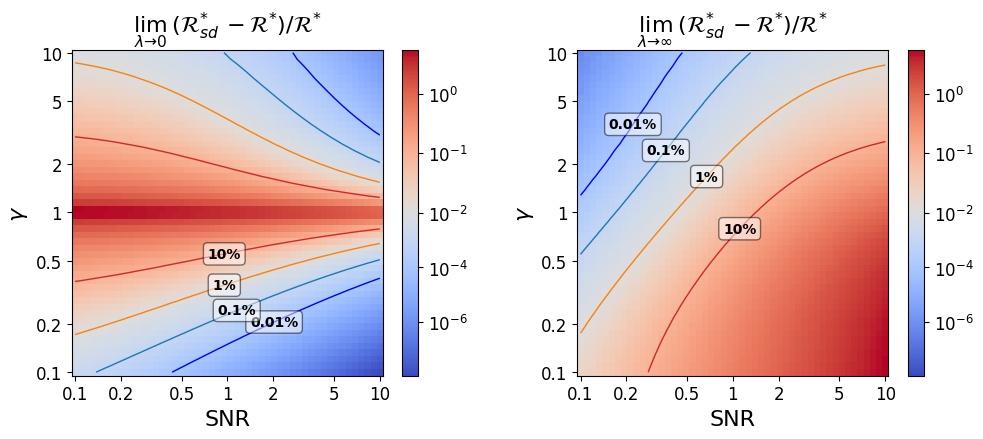} \includegraphics[width=\textwidth]{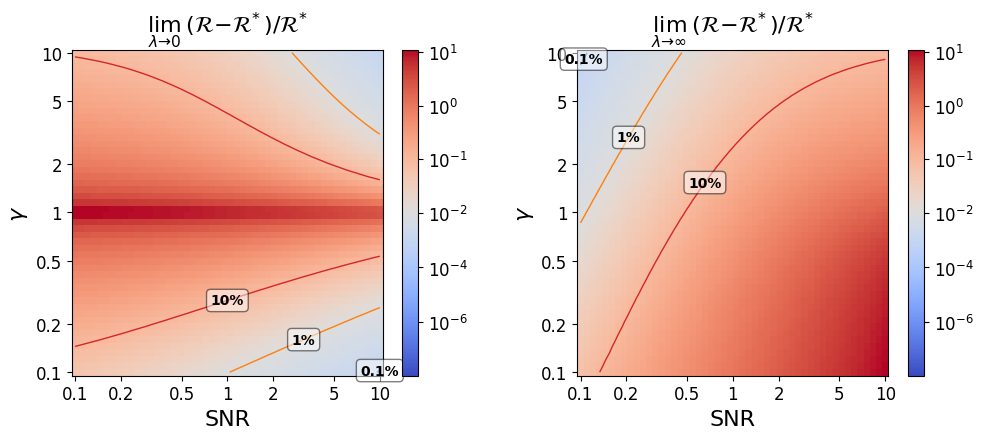}
    \caption{Percentage difference between $\sR(\text{SNR}, \gamma)$  and $\sR_{sd}^{\star}(\text{SNR}, \gamma)$ to the optimal ridge $\sR^{\star}(\text{SNR}, \gamma)$ (also the best predictor in this setting), with spiked covariance design and random signal follow an isotropic Gaussian distribution. The ratios plotted are calculated using risk formulations at Theorem \ref{thm:risk-asymptotics} at $\lambda = 10^{-3}$ and $\lambda = 10^{6}$.}
    \label{fig: heatmap_r0_spiked}
\end{figure}

\restoregeometry
\clearpage

\section{Experiment Details}
\label{sec:additional_details}

\subsection{Real-World Regression Tasks: Datasets Description, Splitting and Pre-processing}
\label{sec:real-world-regression-tasks}

For the experiments on real-world regression task, including UCI Blog Feedback \citep{blogfeedback}, UCI Communities and Crime \citep{communities_and_crime}, UCI Air Quality \citep{air_quality} datasets, we split the data into the training and test set (detailed below). Then, we process the data by removing records with missing values and we centered and standardized all the variables using the training set's mean and standard deviation. All the variables in the test set are also centered and standardized using the information from training set only.

All the risk curves shown at \Cref{fig:exp_real_main,fig:air_quality} are the squared risk measured on the test set. The optimal mixing parameter is computed using the formula from \Cref{eq:R1_star_short} where $R, R_{\pd}$ and $C$ are calculated using the test data. 

\textbf{UCI Blogfeedback.} The UCI Blogfeedback dataset \citep{blogfeedback} originates from blog posts, where the raw HTML-documents of the blog posts were crawled and processed. We use only the training set of this dataset, which contains 52,397 samples and perform a random 5-95 split ($5 \%$ for training and $95 \%$ for test set). The dataset consists of $280$ features that capture many aspects of blog content and metadata, such as post length, number of links, number of comments in the first 24 hours after the publication of the blog post. The target variable is the number of comments in the next 24 hours (relative to base time).
Thus, the training set in our experiment contains $p = 280$ covariates and $n = 2{,}619$ samples.
The additional results of different split ratios are shown at Figure \ref{fig:blog_appendix}.

\textbf{UCI Communities and Crime.}
The UCI Communities and Crime dataset \citep{communities_and_crime}
are authentic data that combines socio-economic data from the 1990 US Census, law enforcement data from the 1990 US LEMAS survey, and crime data from the 1995 FBI UCR. The dataset consists of 1994 samples with 127 covariates. The goal is to predict the value of ``ViolentCrimesPerPop'', which is the rate of violent crimes per 100,000 population. For this dataset, we removed 5 nonpredictive identifiers and 23 covariates from the LEMAS survey that contain 1,675 missing values out of 1,994 instances. The remaining $100$ covariates have no missing values and used in our experiments. We perform a random 20-80 split ($20 \%$ for training and $80 \%$ for test set). Thus, the training set in our experiment contains $p = 99$ covariates and $n = 398$ samples. The additional results of different split ratios are shown at Figure \ref{fig:communites_appendix}.

\textbf{UCI Air Quality.} The UCI Air Quality dataset \citep{air_quality} contains records of hourly averaged responses from an array of 5 metal oxide chemical sensors. Data were recorded at an Italy city from March 2004 to February 2005. Similar as \cite{pareek2024understanding}, we use $p = 8$ covariates for this task, which include 5 metal oxide chemical readings \texttt{PT08.S1(CO), PT08.S2(NMHC), PT08.S3(NOx), PT08.S4(NO2), PT08.S5(O3)} and 3 other covariates including Temperature \texttt{T}, Relative Humidity \texttt{RH}, and Absolute Humidity \texttt{AH}. The goal is to predict the Nitrogen Dioxide \texttt{NO2(GT)}. After removing missing records for these variables, we are left with $n = 7{,}393$ samples. We perform a \textit{sequential} 70-30 split since the data is heavily time-dependent. Thus, the training set in our experiment contains $p = 8$ covariates and $n = 5{,}175$ samples. The additional results of different split ratios are shown at Figure \ref{fig:air_quality_appendix}.

For the multi-round distillation experiment shown in \Cref{fig:multiround_monotonicity}, they are performed on a sequential 70-30 split of Air Quality dataset and a random 20-80 split of Communities and Crime dataset.

For the kernel ridge regression experiment shown in \Cref{fig:kernel-air-quality}, we use a Gaussian kernel with bandwidth estimated as the median of the $\ell_2$ distances between covariates in the training set.

\subsection{CIFAR10 and CIFAR100 Experiments}
\label{sec:resnet-cifar-details}

For the experiment on CIFAR10 and CIFAR100 datasets, we extract the pretrained ResNet-18 and ResNet-34 features \citep{he2016deep}, that trained on ImageNet dataset (available in Pytorch). For CIFAR10, we randomly sample 2,000 samples for training and 2,000 samples for the test set. For CIFAR100, we randomly sample 20,000 samples for training and 10,000 samples for the test set. Let $K$ is the number of classes. We then perform ridge regression on the last-layer features of the pretrained models. The predictor is now defined as the vector-valued function $f: \RR^{512} \to \RR^{K}$. To aggregate the risk, for an one-hot label vector $ y \in \RR^{K}$ where , we simply sum the mean squared error over the $K$ input dimensions,
\begin{align}
    R(f) = \EE_{(x,  y)} \bigg[
    \sum_{k=1}^{K} (y_k - f(x)_k)^2
    \bigg],
\end{align}
where $f_k: \RR^{512} \to \RR$ is a ridge predictor that predict the probability that the input belong to class $k$. The optimal mixing parameter is calculated from \Cref{eq:R1_star_short}.

\subsection{Synthetic Asymptotic Experiments}
\label{sec:synthetic_details}

We give more details here about the data covariance $\Sigma$ and signal $\beta$ (defined in \Cref{def:dist}) used in the proportional asymptotic experiments in \Cref{sec:asymptotics,app:additional-illustrations-propasymp,app:additional-illustrations-extremereg}.

\begin{itemize}[leftmargin=5mm,nosep,itemsep=2pt]
    \item Isotropic covariance: $\Sigma = I_p$
    
    \item AR1 covariance: $\rho$-autoregressive covariance with $\rho = 0.25$, $\Sigma_{ij} = \rho^{| i -j |}$ for all $i,j$.

    \item Spiked covariance: $\Sigma = I_p + 5 v v^{\top}$ where $v \in \mathbb{R}^{p}$ is a random isotropic Gaussian vector.

    \item Isotropic signal: $\beta \sim \mathcal{N}(\bm{0}, (r^2/p)  I_p)$.

    \item Top-aligned signal with alignment ratio $m \%$ and alignment factor of $a$: let $k = \frac{m}{100} \cdot p$, then $\beta \sim \mathcal{N}(\bm{0}, \Sigma_{\beta})$ where $\Sigma_{\beta} = p V \operatorname{diag}(\frac{a}{k}, \frac{a}{k}, \ldots, \frac{a}{k}, \frac{1 - a}{p - k}, \ldots, \frac{1 - a}{p - k}) V^{\top}$ where $V = [v_1, \ldots, v_p]$ contain the eigenvectors of $\Sigma$ with $v_k$ corresponds to $k$-th largest eigenvalue.

    \item Bottom-aligned signal with alignment ratio $m \%$ and alignment factor of $a$: let $k = \frac{m}{100} \cdot p$, then $\beta \sim \mathcal{N}(\bm{0}, \Sigma_{\beta})$ where $\Sigma_{\beta} = p V \operatorname{diag}(\frac{1 - a}{p - k}, \ldots, \frac{1 - a}{p - k}, \frac{a}{k}, \ldots, \frac{a}{k}) V^{\top}$ where $V = [v_1, \ldots, v_p]$ contain the eigenvectors of $\Sigma$ with $v_k$ corresponds to $k$-th largest eigenvalue.
\end{itemize}

\end{document}